\documentclass{amsart}
\usepackage{pdfpages}
\usepackage{amscd}
\usepackage[mathscr]{euscript}
\RequirePackage{xspace}
\usepackage{enumerate}        % Fancy enumerations;  used for Roman numbering.
\usepackage{color}
\numberwithin{equation}{section}
\usepackage[all]{xy}
\usepackage{verbatim}
\usepackage{amssymb}

%  amstexincl.tex
%

%\input{scr}
%
%  scr.tex
%
\let\cal\mathcal
\def\Ascr{{\cal A}}
\def\Bscr{{\cal B}}
\def\Cscr{{\cal C}}
\def\Dscr{{\cal D}}
\def\Escr{{\cal E}}
\def\Fscr{{\cal F}}

\def\Hscr{{\cal H}}
\def\Iscr{{\cal I}}

\def\Kscr{{\cal K}}
\def\Lscr{{\cal L}}
\def\Mscr{{\cal M}}
\def\Nscr{{\cal N}}
\def\Oscr{{\cal O}}

\def\Rscr{{\cal R}}

\def\Tscr{{\cal T}}

\def\Xscr{{\cal X}}
\def\Yscr{{\cal Y}}

%\input {blb}
%
%  blb.tex
%
\let\blb\mathbb
\def\CC{{\blb C}}

\def\FF{{\blb F}} 
\def\QQ{{\blb Q}}

\def \PP{{\blb P}}

\def \ZZ{{\blb Z}}
\def \TT{{\blb T}}

\def \HH{{\blb H}}

\def\proj{\operatorname{proj}}
\def\id{\text{id}}
\def\Id{\operatorname{id}}
\def\pr{\mathop{\text{pr}}\nolimits}

\def\Bimod{\operatorname{Bimod}}
\def\Ab{\mathbb{Ab}}

\def\Lotimes{\overset{L}{\otimes}}

\def\Mod{\operatorname{Mod}}
\def\mod{\operatorname{mod}}

\def\gr{\operatorname{gr}}

\def\ch{\mathop{\text{Ch}}}

\def\Qch{\operatorname{Qch}}
\def\coh{\mathop{\text{\upshape{coh}}}}

\def\gr{\operatorname {gr}}
\def\Spec{\operatorname {Spec}}

\def\Ext{\operatorname {Ext}}
\def\Hom{\operatorname {Hom}}

\def\End{\operatorname {End}}
\def\RHom{\operatorname {RHom}}
\def\REnd{\operatorname {REnd}}
\def\uRHom{\operatorname {R\mathcal{H}\mathit{om}}}

\def\cd{\operatorname {cd}}

\def\coker{\operatorname {coker}}
\def\ker{\operatorname {ker}}

\def\Tor{\operatorname {Tor}}
\def\End{\operatorname {End}}

\def\id{{\operatorname {id}}}

\def\Tot{\operatorname {Tot}}

\def\gldim{\operatorname {gl\,dim}}

\def\r{\rightarrow}

\DeclareMathOperator{\Proj}{Proj}
\DeclareMathOperator{\Pro}{Pro}

\DeclareMathOperator{\HHom}{\mathcal{H}\mathit{om}}

\DeclareMathOperator{\cor}{cor}

%
%  sectionlemma.tex
%
% theoremstyle plain

\newtheorem{lemma}{Lemma}[section]
\newtheorem{proposition}[lemma]{Proposition}
\newtheorem{theorem}[lemma]{Theorem}
\newtheorem{theoremdefinition}[lemma]{Theorem-Definition}
\newtheorem{corollary}[lemma]{Corollary}

\newtheorem{conclusion}[lemma]{Conclusion}

\newtheorem{lemmas}{Lemma}[subsection]
\newtheorem{propositions}[lemmas]{Proposition}

\newtheorem{corollarys}[lemmas]{Corollary}

\newtheorem{observations}[lemmas]{Observation}

\theoremstyle{definition}

\newtheorem{example}[lemma]{Example}
\newtheorem{definition}[lemma]{Definition}

{

\newtheorem{case}{Case}

}

\theoremstyle{remark}

\newtheorem{remark}[lemma]{Remark}
\newtheorem{remarks}[lemmas]{Remark}
\newtheorem{notation}{Notation}

\newdimen\uboxsep \uboxsep=1ex
\def\uboxn#1{\vtop to 0pt{\hrule height 0pt depth 0pt\vskip\uboxsep
\hbox to 0pt{\hss #1\hss}\vss}}

\def\uboxs#1{\vbox to 0pt{\vss\hbox to 0pt{\hss #1\hss}
\vskip\uboxsep\hrule height 0pt depth 0pt}}

\def\Sp{\operatorname{Sp}}
\def\HH{\operatorname{HH}}

\def\Inj{\operatorname{Inj}}
\def\Ob{\operatorname{Ob}}
\def\Tw{\operatorname{Tw}}
\def\aa{\mathfrak{a}}
\def\bb{\mathfrak{b}}
\def\cc{\mathfrak{c}}
\def\dd{\mathfrak{d}}

\def\Ab{\mathbf{Ab}}

\let\oldmarginpar\marginpar
\def\marginpar#1{\oldmarginpar{\tiny #1}}
\def\cone{\operatorname{cone}}
\def\BB{\mathbb{B}}
\def\Free{\operatorname{Free}}

\def\CC{\mathbf{C}}
\def\HKR{\operatorname{HKR}}

\def\Perf{\operatorname{Perf}}
\def\dg{\operatorname{dg}}

\makeatletter\let\@wraptoccontribs\wraptoccontribs\makeatother

\title[An example of a non-Fourier-Mukai functor]{An example of a non-Fourier-Mukai functor between derived categories of coherent sheaves}
\author{Alice Rizzardo}
\address[Alice Rizzardo]{School of Mathematics\\The University of Edinburgh\\James Clerk Maxwell Building\\The King's Buildings\\Peter Guthrie Tait Road\\Edinburgh, EH9 3FD\\Scotland, UK}
\email{alice.rizzardo@ed.ac.uk}
\author{Michel Van den Bergh}
\address[Michel Van den Bergh]{Universiteit Hasselt\\ Universitaire Campus\\ 3590 Diepenbeek\\Belgium}
\email{michel.vandenbergh@uhasselt.be}
\contrib[With an appendix by]{Amnon Neeman}
\address[Amnon Neeman]{Centre for Mathematics and its Applications \\
        Mathematical Sciences Institute\\
        John Dedman Building\\
        The Australian National University\\
        Canberra, ACT 0200\\
        AUSTRALIA}
\email{Amnon.Neeman@anu.edu.au}

\thanks{The first author is a Postdoctoral Research Fellow at the University of Edinburgh. The second author is a senior researcher at the FWO}
\thanks{This research started at the Mathematical Sciences Research
Institute in 2013 with the support of the National Science Foundation under Grant
No.\ 0932078\,000. A number of results were obtained during research visits of the first author to the University of Hasselt,
supported by
ESF Exchange Grant 4498  in the framework of the project ``Interactions of Low-Dimensional Topology and Geometry with Mathematical Physics (ITGP)'' and by the FWO grant 1503512N ``Non-commutative algebraic geometry''
and by the second author to the International School for Advanced Studies (SISSA) at Trieste.
The
authors are very grateful to the NSF, the MSRI, the ESF, the steering committee of the ITGP project, the FWO and SISSA for their financial and moral support.}
\thanks{The research by Amnon Neeman was partly supported by the Australian Research Council}
\keywords{Fourier-Mukai functor, Orlov's theorem}
\subjclass{13D09, 18E30, 14A22}

\def\Ho{\operatorname{Ho}}
\begin{document}\begin{abstract}
Orlov's
 famous representability theorem asserts that any fully
faithful exact functor between the bounded derived categories of
coherent sheaves on smooth projective varieties is a Fourier-Mukai
functor.  In this paper we show that this result is false without the
fully faithfulness hypothesis. 
We also show that our functor
does not lift to the homotopy category of spectral categories if the ground field is $\QQ$.
\end{abstract}

\maketitle
\setcounter{tocdepth}{1}
\tableofcontents
\section{Introduction}
Throughout $k$ is a field of characteristic zero. All objects and constructions are assumed to be $k$-linear. 

Recall the following seminal theorem proved by 
Orlov almost 20 years ago:
\begin{theorem} \cite[Thm 2.2]{Orlov4}
\label{ref-1.1-0}
Let $X/k$, $Y/k$ be smooth projective schemes.
Then every fully faithful exact functor $\Psi:D^b(\coh(X))\r
  D^b(\coh(Y))$ is isomorphic to  a Fourier-Mukai functor associated to an
object in $D^b(\coh(X\times_k Y))$.
\end{theorem}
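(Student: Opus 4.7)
The plan is to construct an object $\mathcal{K}\in D^b(\coh(X\times_k Y))$ together with a natural isomorphism $\Psi\cong\Phi_{\mathcal{K}}$, where $\Phi_{\mathcal{K}}(-)=Rp_{Y\ast}(\mathcal{K}\otimes^L p_X^\ast(-))$. Two preliminary reductions are immediate from the smooth projective hypothesis. First, each of $D^b(\coh(X))$ and $D^b(\coh(Y))$ carries a Serre functor (tensoring with the canonical bundle, shifted by the dimension), so full faithfulness of $\Psi$ forces the existence of both a left and a right adjoint $\Psi^L,\Psi^R$. Second, if $\mathcal{O}_X(1)$ is ample, the family $\{\mathcal{O}_X(-i)\}_{i\ge 0}$ is both a spanning class and an ample sequence in $D^b(\coh(X))$: the groups $\bigoplus_i\Hom(\mathcal{O}_X(-i),F[k])$ detect any object $F$ and vanish in positive cohomological degrees for $i\gg 0$.

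The core construction is to package the objects $P_i:=\Psi(\mathcal{O}_X(-i))$ into a single kernel. Full faithfulness gives, for $i\le j$, canonical isomorphisms $\Hom(P_j,P_i)\cong\Hom(\mathcal{O}_X(-j),\mathcal{O}_X(-i))\cong H^0(X,\mathcal{O}_X(j-i))$, compatible with composition; in other words the data on the $Y$-side carries exactly the structure of a truncation of the homogeneous coordinate ring of $X$ acting on the $P_i$. To turn this into a geometric object I would resolve the diagonal $\Delta_X\subset X\times X$ by a bounded complex $C^\bullet$ whose terms are finite direct sums of boxes $\mathcal{O}_X(-i)\boxtimes\mathcal{O}_X(i)$, a Beilinson-type resolution made available by Castelnuovo-Mumford regularity and the smoothness of $X$. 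Applying $\Psi$ in the second factor to $C^\bullet$ produces a candidate complex $\mathcal{K}$ on $X\times Y$ satisfying $\Phi_{\mathcal{K}}(\mathcal{O}_X(-i))\simeq P_i$ canonically for $i$ in an appropriate range, and these isomorphisms respect the graded morphism structure.

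Once $\mathcal{K}$ is in place, a natural transformation $\eta:\Phi_{\mathcal{K}}\Rightarrow\Psi$ is obtained via the unit of the adjunction $(\Psi^L\dashv\Psi)$ and the universal property defining $\Phi_{\mathcal{K}}$. By construction $\eta$ is an isomorphism on each $\mathcal{O}_X(-i)$, and since the ample sequence spans $D^b(\coh(X))$ and every $F\in D^b(\coh(X))$ admits a finite resolution by direct sums of $\mathcal{O}_X(-i)$ with $i\gg 0$, a standard five-lemma dévissage extends $\eta$ to a natural isomorphism on all of $D^b(\coh(X))$.

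The main obstacle will be the construction of $\mathcal{K}$ itself: passing from the inverse system $\{P_i\}$, which a priori only represents a pro-object, to a genuine bounded coherent complex on $X\times Y$. This requires a uniform amplitude bound for $\Psi$ on sheaves of controlled Castelnuovo-Mumford regularity, so that the infinite system may be safely truncated at a finite stage. The finite global dimension of $X$ (from smoothness) and the projectivity of $Y$ are precisely what provide such a bound, and handling this boundedness correctly is the crux of the argument.
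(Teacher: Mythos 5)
You should first note that the paper does not prove this statement at all: it is Orlov's theorem, quoted from \cite{Orlov4}, and the whole point of the paper is to show that the full faithfulness hypothesis cannot be dropped. So your sketch has to be measured against Orlov's argument and against the counterexample constructed here.

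The genuine gap is the sentence ``Applying $\Psi$ in the second factor to $C^\bullet$ produces a candidate complex $\mathcal{K}$ on $X\times Y$.'' An exact functor of triangulated categories does not act on complexes of objects. What $\Psi$ gives you is the collection of objects $\mathcal{O}_X(-i)\boxtimes P_i$ on $X\times Y$ together with the images of the differentials of $C^\bullet$ as morphisms in $D^b(\coh(X\times_k Y))$; assembling these into a single object requires convolving a Postnikov system (a twisted complex in the sense of \cite{Bondal5}), and the existence and uniqueness of that convolution is obstructed by the groups $\Hom(\mathcal{O}_X(-j)\boxtimes P_j,\,\mathcal{O}_X(-i)\boxtimes P_i[k])$ with $k<0$, which reduce to the negative Ext's $\Hom(P_j,P_i[k])$. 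These vanish precisely because $\Psi$ is fully faithful, so that $\Hom(P_j,P_i[k])\cong\Hom(\mathcal{O}_X(-j),\mathcal{O}_X(-i)[k])=0$ for $k<0$. This is the single place where full faithfulness is indispensable, and it is exactly the step your proposal treats as automatic; the present paper is a demonstration that it is not, the failure being governed by homotopy-theoretic obstructions of precisely this kind (compare the discussion of the gap in negative Ext's in the introduction and the obstruction classes of \S\ref{ref-7.2-82}). Two secondary issues: the uniform amplitude bound you single out as ``the crux'' is in fact the easy part (it follows from the existence of the adjoints you already constructed); and the final d\'evissage is not a five-lemma argument, since objects of $D^b(\coh(X))$ do not admit finite resolutions by sums of $\mathcal{O}_X(-i)$ and naturality of $\eta$ on all morphisms does not propagate formally. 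Orlov (and Canonaco--Stellari in later refinements) handle this via the formalism of ample sequences: one shows the two functors agree on the ample sequence compatibly with the action of the homogeneous coordinate ring and then invokes a uniqueness/extension theorem for exact functors determined by their restriction to an ample sequence.
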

Although this theorem was generally believed to be false without the hypothesis
that $\Psi$ is fully faithful, no counterexamples were known. \emph{In the current paper
we fill this gap by constructing such a counterexample.} 

\medskip

Another example of a functor that is not Fourier-Mukai was obtained by Vologodsky \cite{Vologodsky} shortly after this paper appeared on the arXiv. While our base field has characteristic zero, the example in loc.\ cit.\ 
is over $\FF_p$.
Vologodsky's
functor is in fact a composition of derived functors and while it is not a Fourier-Mukai functor over $\FF_p$,
it is still represented by a morphism in the homotopy category of $\ZZ$-linear DG-categories (see Lemma \ref{lem:vg} below).
In contrast we will show in Appendix \ref{stable infinity}  that if $k=\QQ$ then our functor does not even have a lift to the homotopy category of spectral categories.

\medskip

Our counterexample is presented in Theorem \ref{ref-11.1-160} below, but
we will first discuss the underlying ideas on which it is based.
Some initial progress toward the construction of non-Fourier-Mukai functors had already been made in \cite{RizzardoVdB} where we systematically analyzed
 functors whose source category is the derived category of a field.
Leveraging this theory, we were able to construct a non-Fourier-Mukai
functor $D^b(\coh(X))\r D^b(\Qch(Y))$
by factoring through the localization at the generic point of
$X$. Unfortunately, such methods do not allow one to replace $\Qch(Y)$
by $\coh(Y)$.

\medskip

A highly nontrivial topological example of an exact functor which is not Fourier-Mukai in an appropriate sense is given in the beautiful paper \cite{Neeman6}.
Let $\Ho(\Sp)$ be the homotopy category of spectra. In \cite{Neeman6}, Neeman constructs an exact functor\footnote{The actual result proved
in loc.\ cit.\ is for $D(\ZZ[1/2])$.}
\begin{equation}
\label{ref-1.1-1}
D^b(\ZZ[1/2])\r \Ho(\Sp)[1/2]
\end{equation}
which sends $\ZZ[1/2]$ to the sphere spectrum $S^0$. 

\medskip

Now note that
$
\Ho(\Sp)^{-1}(S^0,S^0)=\pi_1(S^0)=\ZZ/2\ZZ
$.
So  
\[
\Ho(\Sp)[1/2]^{-1}(S^0,S^0)=0
\] and Neeman's proof strongly
suggests that it is precisely
this gap in the negative $\Ext$'s  that makes this example work.

\medskip

In the first version of this paper which appeared on the arXiv the authors
proved a generalization of this result, valid for
more complicated categories, at the cost of requiring the vanishing of more negative $\Ext$-groups.
Our proof was for triangulated categories of algebraic nature so it did not recover Neeman's original result.
However Amnon Neeman succeeded in finding a yet more general
argument 
which is valid for triangulated categories satisfying Neeman's 
more restrictive axioms \cite{Neeman} 
(i.e.\ all those that occur in nature)
so it encompasses both our algebraic result and Neeman's result for $\Ho(\Sp)[1/2]$. 
Amnon Neeman's proof is included as Appendix~\ref{sec:amnon} in the current paper.
Its main results are summarized in \S\ref{sec:construction_} and in particular
the now simple construction of the functor \eqref{ref-1.1-1} is presented
in Example \ref{ex:original_example}.

\medskip

For the purpose of this introduction we state a simple corollary of these results:
\begin{theorem} \label{ref-1.2-2} (see Appendix \ref{ref-B-182}) 
\begin{enumerate}
\item Let $B$ be a DG-algebra and let $R\r H^0(B)$ be a $k$-algebra
  morphism. Assume that $\gldim R=m$ and $H^i(B)=0$ for
  $i=-1,\ldots,-m$. Then the natural functor
  \[
R\r D(B):R\mapsto B
\]
($R$ considered as a one-object category)
extends to an exact functor
\begin{equation}
\label{ref-1.2-3}
{L}:D^b(R)\r D(B).
\end{equation}
\item
\label{one}
If ${L}$ is  isomorphic to a functor of the form
$U\Lotimes_R-$, for $U$ a complex of $k$-central $B{-}R$-bimodules, then the graded $k$-algebra morphism
$
R\r H^\ast(B)
$
may be lifted to an $A_\infty$-morphism $R\r B$.
\end{enumerate}
\end{theorem}
As expected, if $R$ is a field  Theorem \ref{ref-1.2-2}(1) imposes no conditions on~$B$  and hence this theorem
may be regarded as an extension of the basic principle underlying the constructions in \cite{RizzardoVdB}.

\medskip

In order do be able to apply part (2) of Theorem \ref{ref-1.2-2} we note that there are very concrete obstructions against the lifting of a
morphism of graded rings to an $A_\infty$-morphism (see
\S\ref{ref-7.2-82} below). In the setting of Theorem \ref{ref-1.2-2}, these obstructions
take values in the Hochschild cohomology groups
\begin{equation}
\label{ref-1.3-4}
\HH^{i}(R,H^{2-i}(B))
\end{equation}
for $i\ge m+3$.  Since such obstructions are easily controlled,
Theorem \ref{ref-1.2-2}(1) immediately gives a supply of functors
which are non-Fourier-Mukai in an appropriate sense. The most basic
case is the following.
\begin{proposition} \label{ref-1.3-5} (see Appendix \ref{ref-C.2-184}).
Let $R$ be a $k$-algebra of global dimension~$m$ and let $M$ be a $k$-central $R$-bimodule. Let
$\eta\in\HH^n(R,M)$ be a non-zero class in Hochschild cohomology, with $n\ge m+3$. Let $R_\eta$ be the
$A_\infty$-algebra $R\oplus \Sigma^{n-2} M\epsilon$, $\epsilon^2=0$, whose multiplication is twisted by $\eta$ (i.e.\ $m_{R_\eta,n}$ is given by $\eta:R^{\otimes n}\r M$). 
Finally, let $R^{\dg}_\eta$ be the DG-hull of $R_\eta$ (see Appendix \ref{ref-C.1-183} below). Then the functor~$L$ in Theorem \ref{ref-1.2-2}(1), with $B=R^{\dg}_\eta$ is not isomorphic to a functor of the form
$U\Lotimes_R-$ for $U$ a  $R^{\dg}_\eta{-}R$-DG-bimodule.
\end{proposition}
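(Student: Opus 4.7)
The plan is to derive a contradiction from the assumption that $L\cong U\Lotimes_R -$ by invoking Theorem \ref{ref-1.2-2}(2) to produce an $A_\infty$-morphism $R\to R_\eta$ whose Stasheff obstruction at level $n$ is precisely $\eta$, thereby forcing $[\eta]=0$ in $\HH^n(R,M)$.

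First I would check that Theorem \ref{ref-1.2-2}(1) is actually applicable to $B=R^{\dg}_\eta$. By construction of the DG-hull, $R^{\dg}_\eta$ is $A_\infty$-quasi-isomorphic to $R_\eta$, so $H^\ast(R^{\dg}_\eta)\cong R\oplus \Sigma^{n-2}M\epsilon$ as graded algebras, concentrated in degrees $0$ and $2-n$. The hypothesis $n\ge m+3$ gives $2-n\le -m-1$, so $H^i(R^{\dg}_\eta)=0$ for $-m\le i\le -1$, and Theorem \ref{ref-1.2-2}(1) produces $L$.

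Now suppose for contradiction that $L\cong U\Lotimes_R-$. Theorem \ref{ref-1.2-2}(2) supplies an $A_\infty$-morphism $R\to R^{\dg}_\eta$ lifting the identity on $H^0$; composing with the $A_\infty$-quasi-isomorphism $R^{\dg}_\eta\to R_\eta$ yields, up to homotopy, an $A_\infty$-morphism $f=(f_k)_{k\ge 1}\colon R\to R_\eta$ whose induced map on cohomology is the canonical inclusion $R\hookrightarrow R_\eta$. Since $R$ is concentrated in degree $0$, $R_\eta$ is concentrated in degrees $0$ and $2-n$, and $f_k$ has degree $1-k$, only $f_1$ and $f_{n-1}$ can be nonzero. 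One verifies that $f_1=\id_R$ (it lands in the $R$-summand and induces the identity on $H^0$), while $f_{n-1}\colon R^{\otimes(n-1)}\to M$ is the only other possibly nontrivial component.

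The decisive step is the Stasheff $A_\infty$-morphism relation at $n$ inputs, projected onto the $M$-summand. Since on the source only $m_2^R$ is nonzero, and on the target only $m_2^{R_\eta}$ and $m_n^{R_\eta}=\eta$ contribute, the relation rearranges to the Hochschild cochain identity
\[
\eta(r_1,\ldots,r_n)=(\delta f_{n-1})(r_1,\ldots,r_n)
\]
in $C^n(R,M)$, where $\delta$ is the Hochschild differential with values in the bimodule $M$. Hence $[\eta]=0$ in $\HH^n(R,M)$, contradicting our hypothesis. The main obstacle I expect is the sign bookkeeping in this final Stasheff computation together with the careful verification that the DG-hull really does transport $A_\infty$-morphisms and their obstructions from $R^{\dg}_\eta$ onto $R_\eta$ faithfully, so that the obstruction class genuinely equals $\eta$ rather than some a priori distinct representative.
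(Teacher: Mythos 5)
Your proposal is correct and follows essentially the same route as the paper: invoke Theorem \ref{ref-1.2-2}(2), transfer the lifting problem from $R^{\dg}_\eta$ to the minimal model $R_\eta$ via the $A_\infty$-quasi-isomorphism, and identify the unique obstruction in degree $n$ with $\eta$ itself. The paper simply packages your explicit Stasheff computation into its pre-established obstruction machinery (Corollary \ref{ref-7.2.2-89} for the transfer and Remark \ref{ref-7.2.5-94} for the identification $o_n(f)=\eta$), so the two arguments coincide in substance.
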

In this proposition we have introduced $R^{\dg}_\eta$ to stay within the world of DG-algebras but in fact
the distinction between $R_\eta$ and $R^{\dg}_\eta$ is only of technical relevance and we will ignore it in the rest of this introduction.

\medskip

The functors constructed via Proposition \ref{ref-1.3-5} may be called ``non-Fourier-Mukai'' in a generalized sense.
Unfortunately they are
essentially non-geometric because:
\begin{enumerate}
\item They involve DG-algebras.
\item  If $R$ is commutative and finitely generated over an algebraically closed 
field,  the Hochschild dimension of $R$ is equal to the global
dimension. This implies for example that the inequality $n\ge m+3$ cannot be satisfied if we want
$\Spec R$ to be an affine variety. Note however that the equality can be
satisfied with $R$ being \emph{essentially} of finite type (e.g. take
$R=M=k(x,y,z)$).
\end{enumerate}
To get rid of the first problem  in the ring case, one may consider the situation where there is some $k$-algebra morphism $f:S\r R$ such that\footnote{Here $f_\ast$ is shorthand for
  the composition ${}_SS_S\xrightarrow{f} {}_SR_S\xrightarrow{{}_S \eta_S} {}_SM_S[n]$ in $D(S\otimes_k S^{\circ})$.
}
 $0=f_\ast(\eta)\in \HH^n(S,M)$.
In that case one may show that there is a commutative diagram  of $A_\infty$-algebras (see Proposition \ref{ref-7.2.6-95} below)
\begin{equation}
\label{ref-1.4-6}
\xymatrix{
R_\eta\ar[dr]&&\ar@{.>}[ll]_{\tilde{f}} S\ar[dl]^{f}\\
&R
}
\end{equation}
Starting from   \eqref{ref-1.4-6} one could hope that in some cases the composition
\begin{equation}
\label{ref-1.5-7}
\Psi:D^b(R)\xrightarrow{\eqref{ref-1.2-3}}  D(R_\eta)\xrightarrow{\tilde{f}_\ast} D(S)
\end{equation}
is not given by tensoring with a complex of bimodules. We have not really investigated how well this method works for contructing non-Fourier-Mukai functors between derived categories of rings
but the underlying idea is used in the geometric case we consider below. 
By a stroke of luck the geometric construction will also be seen to yield an example of a non-Fourier-Mukai functor between derived categories of finite dimensional algebras.
See Corollary \ref{ref-1.5-11} below.

\medskip

To deal with the second problem note that, if $X$ is a smooth projective variety of dimension $m$, then $\gldim \Qch(X)=m$, whereas the Hochschild dimension of $X$ is $2m$
(see e.g. \S\ref{ref-9.6-144} below).  So in that case there is no problem satisfying the inequality $n\ge m+3$ when $m\ge 3$.

\medskip

As a thought experiment we will consider the following situation: $X$ is as in the previous paragraph, $n\ge m+3$, $M$ is an $\Oscr_X$-module. Let $0\neq \eta
\in \HH^{n}(X,M)\overset{\text{def}}{=}\Ext^n_{X\times_k X}(i_{\Delta,\ast}\Oscr_X,i_{\Delta,\ast} M)$ ($i_{\Delta}:X\r X\times X$ being the diagonal).
Then we expect there to be some kind of derived deformation (or infinitesimal thickening) $X\r X_\eta$ corresponding to $\eta$. We will discuss this further below,
but for now assume $X_\eta$ exists.
Assume furthermore there is some morphism $f:X\r Y$ with $Y$ smooth such that $f_\ast(\eta)=0$. Then, as in the ring case, we \emph{expect} there to be a diagram of the type
\begin{equation}
\label{eq:geometric}
\xymatrix{
X_\eta\ar@{.>}[rr]^{\tilde{f}}&& Y\\
&X\ar[ul]\ar[ur]_f
}
\end{equation}
(the arrows are reversed with respect to \eqref{ref-1.4-6} by the usual algebra-geometry duality) which should in principle allow us to define a functor as in \eqref{ref-1.5-7}.

\medskip

Now we have to deal with the question: what is $X_\eta$? One canonical answer is to use $\eta$ to deform an enhancement of $D(\Qch(X))$ \cite{Kuznetsov1,OrlovLunts,LuntsSchnurer}. But then one hits the so-called 
``curvature'' problem: the result will in general only be a $\operatorname{cA}_\infty$-category \cite{DeDekenLowen,LowenVdBFormal}, i.e.\ roughly speaking it will not satisfy $d^2= 0$.
Homological algebra over $\operatorname{cA}_\infty$-categories is possible \cite{DeDekenLowen,Positselski1} but presents rather serious technical difficulties.

Another approach is to view $X_\eta$ as a kind of DG-gerbe on $X$. However in our examples $\eta$ will be very non-local, so the ``higher gluing'' 
required to understand $X_\eta$ will be necessarily subtle.

In this paper we have
 opted for a third approach (based on \cite{lowenvdb2}) which is much cheaper but nonetheless sufficient for our purposes. The idea is to embed $\Qch(X)$
into a \emph{category of presheaves} associated to an affine covering of $X$. Such presheaves form a module category so we can directly apply
the algebraic constructions discussed above. In particular there is no curvature problem.

Let $X=\bigcup_{i=1}^n
U_i$ be an affine covering. For $I\subset \{1,\ldots,n\}$ put
$U_I=\bigcap_{i\in I} U_i$. 
Let $\Iscr$ be the set $\{I\subset\{1,\ldots,n\}\mid I\neq \emptyset\}$
  and let $\Xscr$ be the category with objects
 $\Iscr$ and $\Hom$-sets
\begin{equation}
\label{ref-1.6-8}
\Xscr(I,J)=
\begin{cases}
\Oscr_X(U_J)&\text{$I\subset J$}\\
0&\text{otherwise.}
\end{cases}
\end{equation}
In other words $\Xscr$ is the subcategory of $\Qch(X)$ spanned by the objects
$(i_{U_I,\ast} \Oscr_{U_I}))_{I\in\Iscr}$ where we only allow maps $I\r J$ when $I\subset J$. Since $\Iscr$ is finite one may even think of $\Xscr$ as an actual ring $\bigoplus_{I,J\in\Iscr}\Xscr(I,J)$.

It is easy to see that $\Mod(\Xscr)$ is the category of modules over the presheaf of rings $(I,\Gamma(U_I,\Oscr_{U_I}))_{I\in \Iscr}$.
In particular $\Mod(\Xscr)$ contains $\Qch(X)$ as a full
subcategory. Furthermore by the ``Special Cohomology Comparison
Theorem'' \cite{GS1,lowenvdb3} one has
$\HH^\ast(\Xscr,\Mscr)=\HH^\ast(X,M)$ (see \S\ref{ref-8.2-103} below) where
$\Mscr$ is the $\Xscr-\Xscr$-bimodule associated to $M$ defined by a
similar formula as \eqref{ref-1.6-8}. It follows that we may define and $A_\infty$-category
$\Xscr_\eta$ in exactly the same way as we defined $R_\eta$.

\medskip

Assume now that $f,X,Y,\eta$ are as above and that in addition $f:X\r Y$ is a closed immersion, and start with an affine covering of $Y$. By giving $X$ the induced covering, we may then construct a diagram of $A_\infty$-categories and functors
\[
\xymatrix{
\Xscr_\eta\ar[dr]&&\ar@{.>}[ll]_{\tilde{f}} \Yscr\ar[dl]^{f}\\
&\Xscr
}
\]
(the arrows are again reversed with respect to \eqref{eq:geometric} since we are now back in an algebraic framework as in \eqref{ref-1.4-6}) 
and we may construct an exact functor
\begin{equation}
\label{ref-1.7-9}
\Psi: D^b(\Qch(X))\xrightarrow{L} D_{\Qch}(\Xscr_\eta) \xrightarrow{\tilde{f}_\ast} D_{\Qch}(\Yscr)\cong D(\Qch(Y))\, ,
\end{equation}
where the first functor is a geometric version of \eqref{ref-1.2-3} and
where $D_{\Qch}(-)$ means that we only consider complexes with quasi-coherent cohomology, through the embedding $\Qch(X)\subset \Mod(\Xscr)$. In fact, since $\Qch(X)$ has enough
injectives but not projectives, the first functor
is given by a  construction dual to the one presented in Theorem \ref{ref-1.2-2}(1). See Remark \ref{ref-10.2-152} below.

Now we may state our main theorem.
\begin{theorem} \label{ref-11.1-160} (see \S\ref{ref-11-159} below) Let $X$
  be a smooth quadric in $Y=\PP^{4}$ whose defining equation has maximal isotropy index\footnote{This condition is only relevant for a non-algebraically closed base field.
One may take  $x_0^2+x_1x_2+x_3x_4=0$.
}
and let $f:X\r
  Y$ be the inclusion. Let $M=\omega_X^{\otimes 2}$ and let $0\neq \eta\in\HH^{6}(X,\omega_X^{\otimes 2})\cong k$. Then
  $f_\ast\eta\in \HH^{6}(Y,f_\ast (\omega_X^{\otimes 2}))$ is zero. The functor
  $\Psi$ in \eqref{ref-1.7-9} restricts to an exact functor
\[
\Psi:D^b(\coh(X))\r D^b(\coh(Y))
\]
which is not a Fourier-Mukai functor. 
\end{theorem}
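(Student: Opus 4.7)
The plan is to verify the theorem in four steps: compute $\HH^6(X,\omega_X^{\otimes 2})$, establish the vanishing of $f_*\eta$, check that $\Psi$ sends $D^b(\coh(X))$ into $D^b(\coh(Y))$, and finally apply Theorem \ref{ref-1.2-2}(2) to rule out a Fourier--Mukai representation.

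For the first step, $X\subset\PP^4$ is a smooth quadric threefold, so $\dim X=3$ and adjunction gives $\omega_X\cong\Oscr_X(-3)$. The Hochschild--Kostant--Rosenberg decomposition in characteristic zero yields
\[
\HH^6(X,\omega_X^{\otimes 2})\;\cong\;\bigoplus_{p+q=6}H^p(X,\wedge^q T_X\otimes\omega_X^{\otimes 2}),
\]
and because $\dim X=3$ the only surviving summand is $(p,q)=(3,3)$; using $\wedge^3 T_X\cong\omega_X^{-1}$ this reduces to $H^3(X,\omega_X)\cong k$ by Serre duality, and we fix a generator $\eta$. For the second step, one shows by a direct cohomological computation that $f_*\eta=0$: the target $\HH^6(Y,f_*\omega_X^{\otimes 2})$ has HKR decomposition $\bigoplus_{p+q=6,\,p,q\le 4}H^p(\PP^4,\wedge^q T_{\PP^4}\otimes f_*\Oscr_X(-6))$, and tensoring the short exact sequence $0\to\Oscr_Y(-8)\to\Oscr_Y(-6)\to f_*\Oscr_X(-6)\to 0$ by $\wedge^q T_{\PP^4}$ and invoking Bott-type vanishing on $\PP^4$ delivers the required vanishing. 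This step is the most delicate because it is precisely the numerical balance (dimension, codimension, power of $\omega_X$) specific to the quadric in $\PP^4$ that makes both $\HH^6(X,\omega_X^{\otimes 2})\ne 0$ and $f_*\eta=0$ hold simultaneously.

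With $\eta$ and $f_*\eta=0$ in hand, the lift $\tilde f:\Yscr\to\Xscr_\eta$ exists and fits into the analog of \eqref{ref-1.4-6}, using the Special Cohomology Comparison Theorem of \cite{lowenvdb3} to identify $\HH^\ast(\Xscr,\Mscr)$ with $\HH^\ast(X,\omega_X^{\otimes 2})$ in the construction of $\Xscr_\eta$. The composite $\Psi$ from \eqref{ref-1.7-9} is then defined. Its restriction to $D^b(\coh(X))\to D^b(\coh(Y))$ is checked piecewise: for $L$ one applies Theorem \ref{ref-1.2-2}(1) with $m=\gldim\Qch(X)=3$ and $n=6$, noting that $H^i(\Xscr_\eta)=0$ for $i=-1,-2,-3$ because the only non-zero cohomology outside degree $0$ sits in degree $-(n-2)=-4$; and $\tilde f_*$ preserves coherence and boundedness since $f$ is a closed immersion. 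The equivalence $D_{\Qch}(\Yscr)\cong D(\Qch(Y))$ of \cite{lowenvdb2} then returns the output to the classical derived category of $Y$.

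Finally, assume for contradiction that $\Psi$ is Fourier--Mukai. Chasing through \eqref{ref-1.7-9} and using that $\tilde f_*$ is essentially fully faithful at the level of bimodule-representations, the first factor $L:D^b(\Qch(X))\to D_{\Qch}(\Xscr_\eta)$ would itself have to be representable by an $\Xscr_\eta$--$\Xscr$-DG-bimodule. Theorem \ref{ref-1.2-2}(2), in the geometric guise of Proposition \ref{ref-1.3-5}, then forces the canonical morphism $\Xscr\to H^0(\Xscr_\eta)=\Xscr$ to lift to an $A_\infty$-morphism $\Xscr\to\Xscr_\eta$, whose first obstruction is precisely the class $\eta\in\HH^6(\Xscr,\Mscr)$. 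Since $\eta\ne 0$, no such lift exists, yielding the desired contradiction and completing the proof.
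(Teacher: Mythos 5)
Your first three steps are essentially sound. The computation $\HH^6(X,\omega_X^{\otimes 2})\cong k$ via HKR plus Serre duality on $X$ is a legitimate alternative to the paper's Lemma \ref{ref-9.6.1-145} (which works with Serre duality on $X\times_k X$ directly). For $f_*\eta=0$ you in fact prove more than the paper does: your Bott-vanishing computation shows the whole group $\HH^6(Y,f_*\omega_X^{\otimes 2})$ vanishes (the $(4,2)$ summand dies for support reasons, and the $(2,4)$ and $(3,3)$ summands reduce to $H^2(X,\Oscr_X(-1))$ and $H^3(X,\Omega^1_{\PP^4}|_X(-1))$, both zero), whereas the paper only shows the \emph{map} $f_*$ is zero via the long exact sequence of Proposition \ref{ref-9.5.1-142} and the non-splitting of the conormal sequence (Propositions \ref{ref-9.3.1-140} and \ref{ref-9.6.3-148}). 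The paper's route is less computational and more robust, but yours works for this example. Your step 3 glosses over the fact that one must use the \emph{dual} (injective) construction of Proposition \ref{ref-5.3.1-69} rather than Theorem \ref{ref-1.2-2}(1) literally (cf.\ Remark \ref{ref-10.2-152}), but the numerics you check are the right ones.

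The final step, however, has a genuine gap, and it is located exactly at the hard point of the theorem. You assert that if $\Psi=\tilde f_*\circ\psi_{\Xscr_\eta,*}\circ L$ is Fourier--Mukai then $L$ itself must be representable by an $\Xscr_\eta$--$\Xscr$-bimodule, ``using that $\tilde f_*$ is essentially fully faithful at the level of bimodule-representations.'' This is not justified and is false in the generality you need: $\tilde f_*$ is restriction of scalars along $\Yscr\to\Xscr_\eta$ for a closed immersion, and it genuinely loses information on morphism spaces --- the long exact sequence \eqref{ref-9.13-141} shows that $f_*$ on $\Ext$-groups has kernel and cokernel controlled by $\xi_{X/Y}\otimes-$. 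A priori the composite could be Fourier--Mukai even though $L$ is not bimodule-representable, so no contradiction follows from Proposition \ref{ref-1.3-5} alone. This is precisely why the paper's \S\ref{ref-11-159} is long: one must find an obstruction that is (a) nonzero upstairs and (b) provably survives pushforward to $Y$. The paper achieves this with the tilting bundle $T$ and $\Gamma=\End_X(T)$: the obstruction $o_3(\tilde\Tscr)\in\HH^3(\Gamma,\Ext^{-1}_{\Xscr^{\dg}_\eta}(\tilde\Tscr,\tilde\Tscr))$ against lifting the $\Gamma$-action is identified with the image of $\eta$ under the characteristic map $c_{T,\Gamma}$, which is an isomorphism because $T$ is tilting (Proposition \ref{ref-8.8.2-125}); and then the specific vanishings $\Ext^{1}_X(T,T(-4))=\Ext^2_X(T,T(-4))=0$ (Lemma \ref{ref-11.3-164}, using the matrix-factorization description of the spinor bundle $C$) are needed to show that $\tilde f_*\circ\psi_{\Xscr_\eta,*}$ is an isomorphism on the relevant $\Ext^{-1}$ group, so that $o_3(\Psi(T))\neq 0$. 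Without an argument of this kind your proof does not rule out that $\Psi$ is Fourier--Mukai.
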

Recall \cite{CS2,Rizzardo1} that even for a non-Fourier-Mukai functor one may still define sheaves $\Hscr^i$ on $X\times_k Y$ which would be the cohomology of the kernel - if the latter existed. In our case
we have (see \eqref{ref-10.6-158} below)
\begin{equation}
\label{eq:virtual}
\Hscr^i=
\begin{cases}
\Oscr_{\Gamma_f}&\text{if $i=0$}\\
\omega_{\Gamma_f}^{\otimes -2}&\text{if $i=4$}\\
0&\text{otherwise.}
\end{cases}
\end{equation}
where $\Gamma_f\subset X\times_k Y$ is the graph of $f$.

\medskip

To conclude this introduction, let us indicate how we prove that $\Psi$ in Theorem \ref{ref-11.1-160} 
is not Fourier-Mukai (see \S\ref{ref-11-159} below). The technical details are somewhat involved
but the underlying idea is the following. The basic feature of a Fourier-Mukai functor is 
that it is compatible with \emph{base change}. In fact, if $\Psi:D^b(\coh(X))\r D^b(\coh(Y))$ 
is a Fourier-Mukai functor and $Z$ is a smooth proper scheme over $k$, then by extending 
the kernel of $\Psi$ we obtain a Fourier-Mukai functor $D^b(\coh(X\times Z))\r D^b(\coh(Y\times Z))$. If $Z=X$ then the kernel of $\Psi$ is the image via this functor of the structure sheaf $\Oscr_\Delta$ of the diagonal $\Delta\subset X\times X$.

Because of this last fact we do not expect base change to hold for non-Fourier-Mukai functors suggesting
 a possible method to identify them.
Unfortunately it seems not so obvious to give a workable definition of base change in this more general setting. There is one situation
however which is easier to handle and which applies to our example. Assume that $X$ has a tilting bundle $T$ and let $\Gamma=\End_X(T)$. Then
$D^b(\coh(X\times X))$ is equivalent to $D^b(\coh(X)_\Gamma)$ where $\coh(X)$ is the abelian category of
coherent sheaves on $X$ equipped with a $\Gamma$-action and under this equivalence $\Oscr_\Delta$
correponds to $T$ (which is indeed a coherent sheaf on $X$, naturally equipped with a $\Gamma$-action).
So it is a natural idea to replace 
$D^b(\coh(X\times X))$ by the more algebraic $D^b(\coh(X)_\Gamma)$.  

If $\Psi$ is a Fourier Mukai functor then since the standard constructions of derived pullback, tensor product and pushforward are compatible with the action of $\Gamma$, the kernel for $\Psi$
defines at the same time a ``$\Gamma$-equivariant lift'' $\Psi_\Gamma:D^b(\coh(X)_\Gamma)\r D^b(\coh(Y)_\Gamma)$ of $\Psi$, i.e.
a functor which behaves as $\Psi$ if we ignore the $\Gamma$-action. So we should try to prove that
such a $\Gamma$-equivariant lift of $\Psi$ does not exist. The above discussion suggests we should only consider the object $\Psi_\Gamma(T)$ since it represents the (would be) kernel of $\Psi$ under the equivalence
$D^b(\coh(X\times Y))\cong D^b(\coh(Y)_\Gamma)$.

\medskip

We are now ready to describe the key argument. By functoriality $\Psi(T)$ is an
object in $D(\coh(Y))_\Gamma$ (that is: the category of objects in $D(\coh(Y))$ equipped with a
$\Gamma$-action) and as explained above, if $\Psi$ is Fourier-Mukai then $\Psi(T)$ lifts to the object $\Psi_\Gamma(T)$ in $D^b(\coh(Y)_\Gamma)$.
It turns out that in our setting there is a homological obstruction against lifting $\Psi(T)$ under the obvious functor
\begin{equation}
\label{eq:lift}
D(\coh(Y)_\Gamma)\r D(\coh(Y))_\Gamma
\end{equation}
and we have to prove it is non-zero. Now it is more or less a tautology that $L(T)\in D^b(\Xscr_\eta)_\Gamma$ (with $L$ as in \eqref{ref-1.7-9}) does not lift to $D(\Xscr_\eta\otimes
\Gamma)$ (as $L$ is a non-Fourier-Mukai functor). We show in \S\ref{sec:obstructions} that by general principles the obstruction to the latter lift is sent by a suitable
incarnation of $f_\ast$ to the corresponding
obstruction against the lift \eqref{eq:lift}. Now all we have to do is to show in our example that $f_\ast$ induces an isomorphism between the cohomology groups which are the targets for the obstruction.
This is carried out \S\ref{ref-11-159}.

\medskip

Valery Lunts suggested adding the following immediate corollary of Theorem \ref{ref-11.1-160} for finite dimensional algebras.
 Recall that Rickard proved in \cite[Theorem 3.3]{Ri2} that if  $\Gamma$ and $\Lambda$ are derived equivalent $k$-algebras, then there is an equivalence of the form
\[
U\stackrel{L}{\otimes}_{\Gamma}-: D^b(\mod(\Gamma))\to D^b(\mod(\Lambda))
\]
where $U$ is a complex of $\Lambda-\Gamma$ bimodules. However, in contrast with the geometric situation (see Theorem \ref{ref-1.1-0}), it is unknown whether all derived equivalences between rings are of this form. The next corollary states that there
do indeed exist exact functors between derived categories of finite dimensional algebras that are not given by
 tensoring with a bimodule. They are however not derived equivalences.
\begin{corollary}\label{ref-1.5-11}
With the notation of Theorem \ref{ref-11.1-160}, let $\Gamma=\End_X(T)$ where $T$ is the tilting bundle on $X$ described in Theorem \ref{ref-11-159},
$\Lambda=\End_{\PP^{4}}\allowbreak\left(\oplus_{i=0}^{4}\Oscr(i)\right)$. Then the functor $\Psi$ induces a functor 
\begin{equation}
\label{eq:Phifin}
\Phi: D^b(\mod(\Gamma))\to D^b(\mod(\Lambda))
\end{equation}
which is not of the form $U\Lotimes_{\Gamma}-$ for $U$ a complex of $\Lambda-\Gamma$-bimodules.
\end{corollary}
\begin{proof} We have $D^b(\coh(X))\cong D^b(\mod(\Gamma))$, $D^b(\coh(Y)\cong D^b(\mod(\Lambda))$ so that we may indeed define the functor $\Phi$ as in \eqref{eq:Phifin}. If $\Phi$ were of
the form $U\Lotimes_{\Gamma}-$ then it would be induced from a DG-functor and hence the same would be true for $\Phi$ as in Theorem \ref{ref-11.1-160}. But then the latter would be a Fourier-Mukai
functor by \cite[Thm 8.15]{Toen}.
\end{proof}

A number of extensions and variants of Orlov's theorem are known. See e.g.\ \cite{Ballard,COV,CS4,CS3,CS2,Kawamata2,OrlovLunts,Rizzardo1,Rizzardo}. For excellent surveys on the current state of knowledge see~\cite{CS1,CanonacoStellari}.
\section{Outline}
The paper consists of a number of parts which are  independent of each other.
\begin{itemize}
\item In \S\ref{sec:maintechnical} we discuss our main technical result 
(a dual version of Theorem \ref{ref-1.2-2})
which is at the heart
of our construction of a non-Fourier-Mukai functor. The proof reduces quickly to the general result by Amnon Neeman contained in Appendix~\ref{sec:amnon}.
\item In \S\ref{sec:deformations},\S\ref{sec:obstructions} we discuss the main facts concerning $A_\infty$-categories that we will need in the rest of the paper. This culminates in \S\ref{ref-7.3-97} where we discuss the obstructions for lifting objects under the
functor $D(\bb\otimes_k \Gamma)\r D(\bb)_\Gamma$. The relevance of this has been explained in the introduction.
\item In \S\ref{sec:sheaves} we  relate the properties of a quasi-compact separated scheme $X$ to
similar properties
of the corresponding category $\Xscr$ defined in the introduction. This material is necessary as we use $\Xscr$ to
deform the derived category of quasi-coherent sheaves on $X$.
\item In \S\ref{sec:divisors} we discuss the behavior of Hochschild cohomology under restriction to a smooth hypersurface. This is used to
verify that the quadruple $(X,Y,f,\eta)$ in Theorem \ref{ref-11.1-160} indeed has the property $\eta\neq 0$, $f_\ast \eta=0$.
\item In \S\ref{sec:construction}, \S\ref{ref-11-159} we give the proof of Theorem \ref{ref-11.1-160}.
\item We include several appendices. In Appendix \ref{sec:virtual} we prove \eqref{eq:virtual}. In Appendix \ref{stable infinity} we prove that $\Psi$ does not lift to a functor of spectral categories (if $k=\QQ$).
In Appendices \ref{ref-B-182}, \ref{sec:ref-1.3-5}                 we prove Theorem \ref{ref-1.2-2} and Proposition \ref{ref-1.3-5} from the introduction. Finally Appendix \ref{sec:amnon} is written by Amnon Neeman. In contrast to the other appendices, this one is essential for our paper!
\end{itemize}
\section{Acknowledgements}
The authors thank Greg Stevenson and Adam-Christiaan Van Roosmalen for
a number of interesting discussions which reinforced our interest in the
problem.

The first author would like to thank the University of Hasselt for its
hospitality during  two recent visits to Belgium. She would also like to
thank MSRI for providing a welcoming environment during the
Noncommutative Geometry and Representation Theory semester, which
encouraged many fruitful conversations, some of which got this project
started. She is grateful to Jon Pridham for providing very useful ideas and guidance for Appendix \ref{stable infinity}, and to Julian Holstein for help with understanding stable infinity categories. Finally, she would like to thank her advisor, Johan de Jong,
for originally suggesting she look into exact functors between derived categories in the non fully faithful case.

The second author thanks  the International School for Advanced Studies (SISSA) in the beautiful city  of Trieste for the wonderful working conditions it provides.

The authors thank Andrei C\u{a}ld\u{a}raru for useful discussions concerning derived self intersections and Fourier-Mukai functors. They  are deeply grateful to Wendy Lowen for allowing the use of some ideas developed during  an ongoing cooperation with the second author.

Finally the authors thank Amnon Neeman for providing them with an alternative and much more general proof for the extension of \eqref{ref-1.1-1}.
\section{Notation and preliminaries}
Throughout $k$ is a field of characteristic zero and all constructions will be $k$-linear. 
\subsection{Modules and bimodules over categories}
Let $\aa$ be a pre-additive category. A left $\aa$-module $M$ is a covariant
additive functor $M:\aa\r \Ab$. We view it as a collection of
abelian groups $M(A)_{A\in \Ob(\aa)}$ depending covariantly on $A$. 
We write $\Mod(\aa)$ for the additive category of left $\aa$-modules. 

A right $\aa$-module is an object in $\Mod(\aa^\circ)$. If $\aa$,
$\bb$ are $k$-linear categories, a ($k$-linear) $\aa$-$\bb$-bimodule
is an object in $\Mod(\aa\otimes_{k} \bb^\circ)$. We view it as a 
collection of $k$-vector spaces $M(B,A)_{B\in \Ob(\bb),A\in\Ob(\aa)}$ depending
contravariantly on $B$ and covariantly on $A$. We will sometimes write
$\Bimod_k(\aa,\bb)$ for $\Mod(\aa\otimes_k \bb^\circ)$. Given $M\in \Mod(\bb)$ and a functor $\bb\to \aa$, we will write $\aa\otimes_{\bb} M$ to denote the tensor product of $M$ with the $\aa$-$\aa$ bimodule $\aa(-,-)$ viewed as an $\aa$-$\bb$-bimodule.

If $\aa$, $\bb$ are DG-categories, the above notions have obvious generalizations
to DG-modules, DG-bimodules, etc. We write $\underline{\Mod}(\aa)$ for the
category of left DG-$\aa$-modules.
\subsection{$A_\infty$-notions}
If $\aa$ is a DG-graph, we denote by $\BB \aa$ its 
bar-cocategory. I.e.\ $\Ob(\BB\aa)=\Ob(\aa)$ and
\[
\BB\aa(A,B)=\bigoplus_{A_1,\ldots,A_{i-1}\in \Ob(\aa)}\Sigma \aa(A_{i-1},B)\otimes \ldots \otimes \Sigma \aa(A,A_1)
\]
An $A_\infty$-structure on $\aa$ is a codifferential $b_\aa$ on $\BB\aa$.
Similarly, an $A_\infty$-functor between $A_\infty$-categories $\aa$, $\bb$
is a cofunctor $f:\BB\aa\r\BB\bb$ 
such that $b_{\bb}\circ f=f\circ b_{\aa}$.  

As usual, we describe $A_\infty$-structures and morphisms via their
Taylor coefficients: $(b_{\aa,j})_j$, $(f_j)_j$ which may be evaluated
on sequences of $j$ composable maps.

\emph{All $A_\infty$-constructions will always be implicitly assumed to be  strictly unital}. Note that any reasonable $A_\infty$-construction can be strictified, which is ultimately due
to the fact that Hochschild cohomology may be computed using normalized cocycles. See
\cite[Ch.\ 3]{Lefevre}. We routinely apply standard constructions for $A_\infty$-algebras to $A_\infty$-categories. This simply means
that operations are only applied to composable arrows.

If $\bb$ is an $A_\infty$-category, we will denote by $C_\infty^u(\bb)$ the category of  (strictly unital) $A_\infty$-$\bb$-modules with $A_\infty$-morphisms.  Let $D_\infty(\bb)$
be obtained from $C_\infty^u(\bb)$ by identifying homotopic maps. This is one of several equivalent constructions for the derived category
of an $A_\infty$-category. See \cite{Lefevre} for details. 

\section{Construction of a functor}
\label{sec:construction_}
We briefly summarize the results we will need 
from Appendix \ref{sec:amnon} written by Amnon Neeman 
and also present a relevant example.

If $\Hscr$ is a full subcategory of a triangulated category $\Tscr$
then we denote by $\Hscr^\ast$ the \emph{extension closure} of $\Hscr$, i.e.
the smallest full subcategory of $\Tscr$ which contains 
$\Hscr$ and which has the property that if there
is a distinguished triangle  $x\r y\r z$ with $x,z\in\Hscr^\ast$ 
then $y\in \Hscr^\ast$. We say that $\Hscr$ is \emph{extension closed}
if $\Hscr^\ast=\Hscr$.

\begin{definition}
Let $H:\Rscr\r\Tscr$ be an exact functor between triangulated
categories. The pair of full subcategories $(\Ascr\subset\Rscr,\Bscr\subset\Rscr)$ is called a
\emph{good couple with respect to $H$} if
\begin{enumerate}
\item
$\Sigma^{-1}\Ascr\subset\Ascr$ and $\Sigma\Bscr\subset\Bscr$.
\item
The map $\Rscr(a,b)\r\Tscr(Ha,Hb)$ is an isomorphism if $a\in\Ascr$ and
$b\in\Bscr$, and is surjective if $a\in\Ascr$ and $b\in\Sigma^{-1}\Bscr$.
\end{enumerate}
A good couple $(\Ascr,\Bscr)$ is called \emph{excellent} if $\Ascr$,
$\Bscr$ are extension closed.
\end{definition}
\begin{remark}
\label{rem:digest}
 If $(\Ascr,\Bscr)$ is a good couple
 for $H$, then it is clear that the restriction of $H$ to $\Ascr\cap\Bscr\subset\Rscr$ is
 fully faithful.
\end{remark}
\begin{proposition}[See Corollaries \ref{C1.5}, \ref{C22.5} and their proofs]
\label{prop:digest}
If $(\Ascr,\Bscr)$ is a good couple then $(\Ascr^\ast,\Bscr^\ast)$ is
an excellent couple. Moreover if $\Cscr=H(\Ascr\cap \Bscr)$ is
the essential image of $\Ascr\cap \Bscr$ then $\Cscr^\ast\subset H(\Ascr^\ast
\cap \Bscr^\ast)$.
\end{proposition}
The following result is a version of Theorem \ref{T1.13} of Appendix \ref{sec:amnon}. We have slightly
altered the notation so as to be more compatible with the body of the paper.
\begin{theorem}
\label{thm:digest}
Let $H:\Rscr\r\Tscr$ be an exact functor. Assume the
category $\Rscr$ satisfies the axioms of 
the article \cite{Neeman91}. 
Suppose further that
$\Tscr$ has a non-degenerate
$t$-structure with heart $\Tscr^\heartsuit$, let $\Hscr:\Tscr\r\Tscr^\heartsuit$
be the
standard homological functor from $\Tscr$ to the heart, and
let $\Dscr\subset\Tscr^\heartsuit$ be a
full, abelian subcategory closed under extensions and define
\[
\Tscr^b_{\Dscr}=\left\{t\in\Tscr\left|
\begin{array}{c}
\Hscr^i(t)=0\text{ for all but finitely many }i\in\ZZ\\
\Hscr^i(t)\in\Dscr\text{ for every }i\in\ZZ  
\end{array}
\right.
\right\}
\]
Assume now that $(\Ascr,\Bscr)$ is an excellent couple in $\Rscr$ such that $\Dscr\subset H(\Ascr\cap\Bscr)$.
Then there exists an exact functor $G:\Tscr^b_{\Dscr}\r\Rscr$ which fits in a
commutative diagram
\vspace*{2mm}
\begin{equation}
\label{eq:basicdiagram}
\xymatrix@=4em{
\Dscr\ar@{^(->}@/^1.5em/[rr]\ar@{_(.>}[r]_-{G\mid \Dscr}\ar@{^(->}[d] & \Ascr\cap \Bscr\ar@{^(->}[d]\ar[r]^{\cong}_-{H\mid \Ascr\cap \Bscr}&H(\Ascr\cap\Bscr)\ar@{^(->}[d]\\
\Tscr^b_{\Dscr}\ar@{_(->}@/_1.5em/[rr]\ar@{.>}[r]^G & \Rscr \ar[r]^H & \Tscr
}
\vspace*{3mm}
\end{equation}
\end{theorem}
\begin{example}[Neeman]
\label{ex:original_example}
Suppose $\Rscr=\Ho(\Sp)[1/2]$ is the homotopy category of spectra
with 2 inverted, let $\Tscr=D(\ZZ[1/2])$, and let
$H:\Rscr\r\Tscr$ be the functor taking a spectrum to its
singular chain complex. 
We define $\Free(S^0)$ to be the full subcategory
of $\Rscr$ whose objects are bouquets of zero-spheres.
Set $\Ascr=\{\Sigma^n P\mid P\in \Free(S^0),\,n\leq 1\}$
and $\Bscr=\{\Sigma^n P\mid P\in \Free(S^0),\,n\geq 0\}$.
That is the objects of $\Ascr$
and $\Bscr$ are just shifts of bouquets of the zero-sphere
$S^0$, with the shifts
as prescribed. 
We claim that $(\Ascr,\Bscr)$ is a good couple. This comes down to
$\Rscr(S^0,S^0)=\pi_0(S^0)[1/2]=\ZZ[1/2]$, $\Rscr(S^0,\Sigma^{-1}S^0)=\pi_1(S^0)[1/2]=0$.

We define $\Free(\ZZ[1/2])$ to be
the category of free $\ZZ[1/2]$--modules, viewed
as objects of $\Tscr$ concentrated in degree 0.
Let $\Cscr$ be the essential image of $\Ascr\cap\Bscr$. Then $\Cscr$
contains $\{\Sigma^n P\mid P\in \Free(\ZZ[1/2]),\,n=0\text{ or }1\}$,
and it's easy to see that  $\Cscr^*$ contains the heart $\Tscr^{\heartsuit}$ of the
standard $t$-structure on $\Tscr$ (a single extension is enough as $\ZZ[1/2]$ is
hereditary).
Proposition \ref{prop:digest} now informs us that $(\Ascr^\ast,\Bscr^\ast)$, is an
excellent couple and that the essential image of $\Ascr^\ast\cap\Bscr^\ast$ contains 
$\Tscr^{\heartsuit}$. Applying Theorem \ref{thm:digest} with $\Dscr=\Tscr^{\heartsuit}$ we
obtain a functor $G:D^b(\ZZ[1/2])\r \Ho(\Sp)$ such that the composition
$HG$ is the inclusion $D^b(\ZZ[1/2])\r D(\ZZ[1/2])$.
\end{example}
Our main application of Theorem \ref{thm:digest} will be Proposition \ref{ref-5.3.1-69} below. Another
application will be given in Appendix \ref{ref-B-182}.
\section{The main technical result}
\label{sec:maintechnical}
\subsection{Derived injectives}
\label{ref-5.1-60}
This is part of ongoing work of the second author 
with Francesco Genovese and Wendy Lowen. Assume that $\Tscr$ is a well generated triangulated category
  equipped with a
  t-structure with heart $\Tscr^{\heartsuit}$, such that $H^0(-)$ respects coproducts. If $I\in \Inj\Tscr^{\heartsuit}$, by Brown
  representability \cite{Neeman11}  the cohomological functor $\Tscr\mapsto\Ab$ given
  by $\Tscr^{\heartsuit}(H^0(-),I)$ is representable. Denote the representing
  object by ${L}(I)$. So we have that for $X\in \Tscr$
\begin{equation}
\label{ref-5.1-61}
\Tscr^{\heartsuit}(H^0(X),I)=\Tscr(X,{L}(I)).
\end{equation}
We will call ${L}(I)$ the \emph{derived injective}
  associated to $I$. 
\begin{remarks} If $\Tscr=\Ho(\Sp)$ is the homotopy category of spectra with the standard $t$-structure with heart
$\Ab$ and $I=\QQ/\ZZ$, then ${L}(I)$ is the Brown-Comenetz dual of the sphere
spectrum.
\end{remarks}
The following properties are easily verified:
\begin{equation}
{L}(I)\in \Tscr_{\ge 0},
\end{equation}
\begin{equation}
\label{ref-5.3-62}
 H^0({L}(I))=I.
\end{equation}
For $I,J\in \Inj \Tscr^{\heartsuit}$, one has
\begin{equation}
\label{ref-5.4-63}
\Tscr({L}(I),\Sigma^i {L}(J))=
\begin{cases}
\Tscr^{\heartsuit}(I,J)&\text{if $i=0$}\\
0&\text{if $i>0$}
\end{cases}
\end{equation}
Thus, in particular, we have a fully faithful functor
\begin{equation}
\label{ref-5.5-64}
{L}:\Inj\Tscr^{\heartsuit}\r \Tscr:I\mapsto {L}(I).
\end{equation}
\subsection{Derived injectives in a DG-category}
Now assume that $\Dscr$ is $D(\cc)$ with $\cc$ a DG-category concentrated in
degrees $\le 0$.
Equip $\Dscr$ with the standard t-structure \cite{Keller} with heart $\Tscr^{\heartsuit}=\Mod(H^0(\cc))$. One verifies
for $I\in \Inj \Tscr^{\heartsuit}$
\begin{equation}
\label{ref-5.6-65}
H^\ast({L}(I))=\Hom_{H^0(\cc)}(H^{-\ast}(\cc),I)
\end{equation}
as graded $H^\ast(\cc)$-modules.
We also find
\begin{equation}
\label{ref-5.7-66}
\Hom^i_{D(\cc)}({L}(I),{L}(J))=\Hom_{H^0(\cc)}(H^{-i}({L}(I)),J)=\Hom_{H^0(\cc)}(\Hom_{H^0(\cc)}(H^{i}(\cc),I),J)
\end{equation}
Finally note the following
\begin{lemmas} 
\label{ref-5.2.1-67}
There is a commutative diagram 
\[
\xymatrix{
\Inj \Mod(H^0(\cc))\ar[r]^-L\ar@{_(->}[dr] & D(\cc)\ar[d]^{\RHom_{\cc}(H^0(\cc),-)}\\
&
 D(H^0(\cc))
}
\]
were the horizontal map is obtained from \eqref{ref-5.5-64}.
\end{lemmas}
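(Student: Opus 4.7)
The plan is to prove the desired isomorphism $\RHom_\cc(H^0(\cc), {L}(I)) \cong I$ in $D(H^0(\cc))$ by a Yoneda-style argument, combining the adjunction along the canonical DG-functor $\cc \to H^0(\cc)$ with the defining universal property \eqref{ref-5.1-61} of the derived injective ${L}(I)$.

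First I would set up the adjunction. Since $\cc$ is concentrated in non-positive degrees, the canonical DG-functor $G : \cc \to H^0(\cc)$ from \S\ref{ref-4.1-14} endows $H^0(\cc)$ with a $\cc$-$\cc$-bimodule structure. The restriction of scalars $\text{res} : D(H^0(\cc)) \to D(\cc)$ along $G$ then admits $\RHom_\cc(H^0(\cc), -) : D(\cc) \to D(H^0(\cc))$ as a right adjoint in the usual way. Note that $\text{res}$ preserves the underlying complex, and in particular $H^0(\text{res}(N)) = H^0(N)$ as $H^0(\cc)$-modules for any $N \in D(H^0(\cc))$.

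Next I would compute, for arbitrary $N \in D(H^0(\cc))$, using the adjunction followed by \eqref{ref-5.1-61}:
\[
\Hom_{D(H^0(\cc))}(N, \RHom_\cc(H^0(\cc), {L}(I))) \cong \Hom_{D(\cc)}(\text{res}(N), {L}(I)) \cong \Hom_{H^0(\cc)}(H^0(N), I).
\]
On the other hand, since $I$ is injective as an $H^0(\cc)$-module, placed in degree $0$ it is K-injective and $\Hom(-, I)$ is exact, so
\[
\Hom_{D(H^0(\cc))}(N, I) = H^0(\Hom^\bullet(N, I)) = \Hom_{H^0(\cc)}(H^0(N), I).
\]
Combining these two natural isomorphisms yields a functorial isomorphism
$\Hom_{D(H^0(\cc))}(N, \RHom_\cc(H^0(\cc), {L}(I))) \cong \Hom_{D(H^0(\cc))}(N, I)$,
and Yoneda's lemma in $D(H^0(\cc))$ then gives the required identification. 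Commutativity of the diagram is immediate since this isomorphism is natural in $I \in \Inj\Mod(H^0(\cc))$.

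There is no serious obstacle here; the only care required is to check the t-structure compatibilities — specifically that $H^0$ commutes with restriction along $\cc \to H^0(\cc)$ and that $\Hom(-, I)$ is exact — both of which are immediate from our hypotheses.
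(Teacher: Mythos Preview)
Your proof is correct and follows essentially the same approach as the paper: both use the change-of-rings adjunction $\Hom_{D(H^0(\cc))}(-,\RHom_\cc(H^0(\cc),{L}(I)))\cong\Hom_{D(\cc)}(-,{L}(I))$, then the defining property \eqref{ref-5.1-61} of ${L}(I)$, then the injectivity of $I$ to identify $\Hom_{D(H^0(\cc))}(N,I)$ with $\Hom_{H^0(\cc)}(H^0(N),I)$, and conclude by Yoneda. Your write-up is slightly more explicit about the adjunction and the K-injectivity of $I$, but the argument is the same.
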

\begin{proof} 
Let $I\in \Inj \Mod(H^0(\cc))$. We have to contruct a natural isomorphism
\begin{equation}
\label{ref-5.8-68}
\RHom_{\cc}(H^0(\cc),L(I))\cong I
\end{equation}
in $D(H^0(\cc))$.
Let $Y\in D(H^0(\cc))$. We have 
\begin{multline*}
\label{eq:yoneda}
\Hom_{H^0(\cc)}(Y,\RHom_{\cc}(H^0(\cc),L(I)))=\Hom_{\cc}(Y,L(I))\\=\Hom_{H^0(\cc)}(H^0(Y),I)=\Hom_{H^0(\cc)}(Y,I)
\end{multline*}
The first equality is ``change of rings'', the second equality is \eqref{ref-5.1-61} and the
last equality is because $I$ is injective. The isomorphism \eqref{ref-5.8-68} now follows
by Yoneda's lemma.
\end{proof}
\subsection{The main technical result}
\begin{propositions} 
\label{ref-5.3.1-69}
Let $\cc$ be a DG-category concentrated in degree $\le 0$, satisfying in addition
\[
H^i(\cc)=0\qquad \text{for $i=-1,\ldots,-m$}
\]
and assume that there is
a full abelian subcategory $\Dscr\subset \Mod(H^0(\cc))$ such that
\begin{itemize}
\item $\Dscr$ has enough injectives and $\gldim \Dscr\le m$.
\item $\Inj\Dscr\subset \Inj\Mod H^0(\cc)$ (and hence $D^b(\Dscr)=D^b_{\Dscr}(H^0(\cc))\subset D^b(H^0(\cc))$, in particular $\Dscr$ is extension closed).
\end{itemize}
Then there is a commutative diagram of additive functors where the top arrow is the functor $L$ introduced in \eqref{ref-5.5-64} (with $\Dscr=D(\cc)$).
\begin{equation}
\label{ref-5.9-70}
\xymatrix{
\Inj \Dscr\ar@{^(->}[r]^-{L}\ar@{^(->}[d]_{
}&D(\cc)\ar@{=}[d]\\
D^b(\Dscr)\ar@{.>}[r]_{\text{exact}}^-{L}\ar@{_(->}@/_2em/[rrr]_{
}& D(\cc)\ar[rr]^-{\RHom_{\cc}(H^0(\cc),-)}&& D(H^0(\cc))
}
\end{equation}
\end{propositions}

\begin{proof}
We will use the notation of \S\ref{sec:construction_}. Let $\Rscr=D(\cc)$, $\Tscr=D(H^0(\cc))$ and let $H:\Rscr\r\Tscr$
be given by $\RHom_{\cc}(H^0(\cc),-)$.
Let 
\begin{align*}
\Ascr &= \{\Sigma^n I | I\in L(\Inj \Dscr),n\leq 0\} \ \ \ \,\subset \Rscr\\
\Bscr &= \{\Sigma^n I | I\in L(\Inj \Dscr),n\geq -m\}  \subset\Rscr
\end{align*}
Then we claim
 that $(\Ascr,\Bscr)$ is a good couple with respect to $H$. In fact by \eqref{ref-5.8-68} we have $\RHom_{\cc}(H^0(\cc),L(I))\cong I$
and for $I,J\in \Inj\Dscr$ we have vanishing $\Hom_{D^b(H^0(\cc))}\allowbreak(\Sigma^n I,\Sigma^{n'} J)=0$ for $n\neq n'$ and moreover
\begin{itemize}
\item $\Hom_{D(\cc)}(\Sigma^n L(I),\Sigma^{n'}L(J))=0$ for $n\leq 0, n'\geq -m, n\neq n'$: in fact
\begin{itemize}
\item[(i)] $\Hom_{D(\cc)}(\Sigma^n L(I),\Sigma^{n'} L(J))=0$ if $n'-n>0$  by \eqref{ref-5.4-63};
\item[(ii)] $\Hom_{D(\cc)}(\Sigma^n L(I),\Sigma^{n'} L(J))=0$ if $-m\leq n'-n<0$ by \eqref{ref-5.7-66} since $H^i(\cc)=0$ for $i=-1,\ldots,-m$.
\end{itemize}
\item $\Hom_{D(\cc)}(\Sigma^n L(I),\Sigma^n L(J))=\Hom_{D^b(H^0(\cc))}(\Sigma^n I,
\Sigma^n J)$ by \eqref{ref-5.4-63}.
\end{itemize}
The rest of the proof follows by observing that the essential image
$\Cscr$ of $\Ascr\cap\Bscr$ under the functor $\RHom_\cc(H^0(\cc),-)$
contains all the objects
$\{I[n] \, | \, I \in \Inj\Dscr, -m \leq n \leq 0\}$. Since
$\Inj \dim \Dscr \leq m$, it is clear that $\Cscr^\ast$ contains all
of the category $\Dscr \subset \Mod–H^0(\cc)$. 
Proposition \ref{prop:digest} now informs us that 
$(\Ascr^\ast,\Bscr^\ast)$ is  an excellent couple
 such that the essential image of $\Ascr^\ast\cap\Bscr^\ast$ contains $\Dscr \subset \Mod H^0(\cc)=\Tscr^{\heartsuit}$.  
Applying Theorem \ref{thm:digest}  we find an exact functor
$G:D^b_{\Dscr}(H^0(\cc))\r D(\cc)$ such that the composition $HG$ is the inclusion
$D^b_{\Dscr}(H^0(\cc))\r D(\cc)$.  We now put $L=G$.
\end{proof}
\begin{comment}
We have used the following lemma.
\begin{lemma} 
\label{lem:extension} Let $\Dscr$ be an abelian subcategory of an abelian category $\Escr$. Let
$\Dscr^\ast$ be the extension closure $\Dscr$ in $\Escr$, i.e.\ the full subcategory
of all objects of $\Escr$ which admit filtrations with subquotients in $\Dscr$. Then
$\Dscr^\ast$ is an abelian subcategory of $\Escr$.
\end{lemma}
\begin{proof}
Let $f:D\r D'$ be a morphism in $\Dscr^\ast$. We have to show that $\ker f$ and $\coker f$ are in $\Dscr^\ast$.
By hypothesis $D$ and $D'$ are equipped with finite ascending filtrations $F$, $F'$ such that $\gr_F D$, $\gr_{F'}D'$ are
in $\Dscr$. We may assume that $F$, $F'$ are $\ZZ$-indexed and by suitably shifting $F'$ we may assume that
$f$ is a filtered map. If we now consider $C^\bullet=(D\xrightarrow{f} D')$ as a 2-term complex then the spectral sequence for filtered complexes
\[
H^\ast(\gr C^\bullet)\Rightarrow H^\ast(C^\bullet)
\]
(whose first page is in $\Dscr$)  yields a filtration on $H^\ast(C^\bullet)$ with subquotients
in $\Dscr$, finishing the proof. 
\end{proof}
\end{comment}
We will need the following technical property of $L$ later.
\begin{corollarys} Let $L$ be as in  the lower row of \eqref{ref-5.9-70}.
If $X\in D^b(\Dscr)$ and $N\in D^b(H^0(\cc)\otimes H^0(\cc)^\circ)$
then 
\begin{equation}
\label{ref-5.11-72}
\RHom_{\cc}(N,{L}(X))\cong \RHom_{H^0(\cc)}(N,X)
\end{equation}
in $D^b(H^0(\cc))$.
\end{corollarys}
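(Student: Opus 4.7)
The plan is to reduce this to the case $N = H^0(\cc)$, which is already encoded in the commutativity of the outer triangle of diagram \eqref{ref-5.9-70}: applied to $X \in D^b(\Ascr)$, that commutativity reads
\[
\RHom_\cc(H^0(\cc), L(X)) \cong X \quad \text{in } D^b(H^0(\cc)).
\]
The idea is that a general bimodule $N$ can be recovered from the diagonal bimodule $H^0(\cc)$ by tensor product, so the corollary should fall out of tensor-hom adjunction once this base case is known.

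Concretely, I view $H^0(\cc)$ as a $(\cc, H^0(\cc))$-bimodule, with the left $\cc$-action obtained from the canonical DG-functor $G: \cc \to H^0(\cc)$ of \S\ref{ref-4.1-14} and the right $H^0(\cc)$-action being the regular one. Since $H^0(\cc)$ is free of rank one over itself, the canonical map
\[
N \otimes^L_{H^0(\cc)} H^0(\cc) \longrightarrow N
\]
is an isomorphism in $D(\cc)$, where the left $\cc$-module structure on the left-hand side is obtained from $G$. Applying derived tensor-hom adjunction then gives
\begin{align*}
\RHom_\cc(N, L(X))
&\cong \RHom_\cc(N \otimes^L_{H^0(\cc)} H^0(\cc), L(X)) \\
&\cong \RHom_{H^0(\cc)}(N, \RHom_\cc(H^0(\cc), L(X))) \\
&\cong \RHom_{H^0(\cc)}(N, X),
\end{align*}
where the last isomorphism uses Proposition \ref{ref-5.3.1-69}.

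The only thing requiring care is tracking the left $H^0(\cc)$-module structure through each step — the structure coming from the right $H^0(\cc)$-action on $N$ converted to a left action by the contravariance in $\RHom$. Since $H^0(\cc)$ is flat (in fact free) over itself, no cofibrant replacement is needed and the adjunction is valid at the derived level as stated. Consequently there is no real analytic or homological obstacle; the substantive content of the corollary is entirely provided by Proposition \ref{ref-5.3.1-69}, and the rest is a formal bookkeeping exercise.
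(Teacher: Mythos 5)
Your argument is correct and is essentially the paper's own proof: the chain of isomorphisms you derive from $N\cong N\Lotimes_{H^0(\cc)}H^0(\cc)$ and tensor-hom adjunction is exactly the ``change of rings'' identity $\RHom_{\cc}(N,L(X))\cong\RHom_{H^0(\cc)}(N,\RHom_{\cc}(H^0(\cc),L(X)))$ that the paper invokes in one line, and the remaining input $\RHom_{\cc}(H^0(\cc),L(X))\cong X$ is, as you say, the lower row of diagram \eqref{ref-5.9-70}, i.e.\ Proposition \ref{ref-5.3.1-69}. No gap; you have merely unpacked the adjunction the paper takes as standard.
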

\begin{proof}
 By the standard ``change of rings'' identity we have
\[
\RHom_{\cc}(N,{L}(X))=\RHom_{H^0(\cc)}(N,\RHom_{\cc}(H^0(\cc),L(X))).
\]
Using the lower row in \eqref{ref-5.9-70}, we obtain
\[
\RHom_{\cc}(H^0(\cc),L(X))\cong X.
\]
This finishes the proof.
\end{proof}
\section{Deformations}
\label{sec:deformations}
\subsection{$A_\infty$-deformations of linear categories}
\label{ref-6.1-74}
If $\aa$ is a $k$-linear category and $M$ is a $k$-central $\aa$-bimodule, we write $\CC^\bullet(\aa,M)$ for the Hochschild complex of $M$ and $\bar{\CC}^\bullet(\aa,M)$ for its subcomplex of normalized cochains
(i.e.\ those cochains vanishing on identity morphisms). The
inclusion $\bar{\CC}^\bullet(\aa,M)\r \CC^\bullet(\aa,M)$ is a quasi-isomorphism
by \cite[\S1.5.7]{Loday1}. We write $\HH^\bullet(\aa,M)$ for the corresponding
cohomology. Note that we have
\[
\HH^n(\aa,M)=\Ext_{\aa\otimes_k\aa^\circ}^n(\aa,M).
\]

Now let $\eta\in Z^n\bar{\CC}^\bullet(\aa,M)$. Let $\tilde{\aa}$ be the DG-category $\aa\oplus \Sigma^{n-2}M$: its objects are the objects of $\aa$, morphisms are given by $\aa(-,-)\oplus \Sigma^{n-2}M(-,-)$ and composition is coming from the composition in $\aa$ and the action of $\aa$ on $M$. 

We denote by $\aa_\eta$ the $A_\infty$-category 
$\tilde{\aa}$,
with deformed $A_\infty$-structure given by
\[
b_{\aa_\eta}=b_{\tilde{\aa}}+\eta,
\]
where we view $\eta$ as a map of degree one $(\Sigma \aa)^{\otimes n}\r \Sigma (\Sigma^{n-2}M)$ and extend it 
to a map $\eta:(\Sigma \aa_\eta)^{\otimes n}\r \Sigma \aa_\eta$ of degree one by making the unspecified components zero.  Clearly we have
$H^\ast(\aa_\eta)=\tilde{\aa}$.
Furthermore, since $\eta$ is normalized, it is clear that $\aa_\eta$ is strictly
unital. 
\begin{lemmas} If $\eta$, $\eta'$ represent the same element of 
$\HH^n(\aa,M)$ then there is an $A_\infty$-isomorphism $f:\aa_\eta\r\aa_{\eta'}$
whose only non-trivial component is of the form 
\[
f_{n-1}:(\Sigma \aa)^{\otimes n-1}\r \Sigma (\Sigma^{n-2}M)
\]
\end{lemmas}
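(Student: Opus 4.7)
The plan is to construct $f:\aa_\eta\to \aa_{\eta'}$ explicitly by prescribing its Taylor coefficients, and then to verify the $A_\infty$-morphism equation arity by arity. Since $\eta$ and $\eta'$ represent the same Hochschild class, I may write $\eta'-\eta=d\xi$ for some normalized cochain $\xi\in\bar{\CC}^{n-1}(\aa,M)$. I would then set $f_1=\id$, take $f_{n-1}$ to be $\xi$ viewed (with the Koszul sign induced by the suspensions) as a degree zero map $(\Sigma\aa)^{\otimes n-1}\to\Sigma(\Sigma^{n-2}M)\subset \Sigma\aa_{\eta'}$, extended by zero whenever any input lies in the $\Sigma^{n-2}M$-summand of $\aa_\eta$, and set $f_i=0$ for $i\notin\{1,n-1\}$. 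Strict unitality of $f$ follows from the fact that $\xi$ is normalized, and $f_1=\id$ guarantees that, once $f$ is known to be an $A_\infty$-morphism, it is automatically an $A_\infty$-isomorphism.

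The task that remains is to verify the cofunctor identity $F\circ b_{\aa_\eta}=b_{\aa_{\eta'}}\circ F$, where $F:\BB\aa_\eta\to \BB\aa_{\eta'}$ is determined by $(f_k)_k$. The bookkeeping is drastically reduced by two elementary vanishing observations. First, the underlying DG-category $\tilde{\aa}=\aa\oplus\Sigma^{n-2}M$ has no $M$-$M$ product, so any term $m_2(f_{n-1}\otimes f_{n-1})$ vanishes automatically. Second, $\eta$, $\eta'$, and $\xi$ are normalized Hochschild cochains on $\aa$ with values in $M$, extended by zero on the $M$-summand; consequently any nested composition in which an $M$-valued output of one of these operations is plugged into another is zero.

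Using that the only nontrivial higher products in $\aa_\eta$ and $\aa_{\eta'}$ are $m_2$ and $m_n$, a short case analysis on the pairs $(j,i+k+1)$ appearing on the $F\circ b_{\aa_\eta}$ side and on the tuples $(i_1,\ldots,i_r)$ with $i_s\in\{1,n-1\}$ appearing on the $b_{\aa_{\eta'}}\circ F$ side shows that at every arity $N\neq n$ the surviving terms fall into one of the two vanishing classes above (the delicate value $N=2n-2$ is handled precisely by those two observations, since only $m_2(f_{n-1}\otimes f_{n-1})$ and $m_n$ or $f_{n-1}$ evaluated on an $M$-input can contribute). At arity $N=n$, the equation becomes
\[
\eta(a_1,\ldots,a_n) \;+\; \sum_{i=1}^{n-1}(-1)^{\ast}\xi(a_1,\ldots,a_ia_{i+1},\ldots,a_n) \;=\; \eta'(a_1,\ldots,a_n) \;+\; a_1\xi(a_2,\ldots,a_n) \;+\; (-1)^{\ast}\xi(a_1,\ldots,a_{n-1})a_n,
\]
which is exactly the cocycle identity $\eta'-\eta=d\xi$.

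The main obstacle is not conceptual but the careful bookkeeping of Koszul signs arising from the shifts $\Sigma$ in the bar construction, from the conversion between a normalized Hochschild cochain $\xi$ and its corresponding cofunctor component on $\BB\aa$, and from the Hochschild differential. Once a consistent convention is fixed (for example the one used in \cite[Ch.\ 3]{Lefevre}), these signs align so that the arity-$n$ relation becomes precisely $\eta'-\eta=d\xi$ and every other arity is trivially satisfied.
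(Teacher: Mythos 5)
Your construction ($f_1=\id$, $f_{n-1}=\xi$ with $\eta'-\eta=d\xi$ normalized, all other components zero, followed by the arity-by-arity check in which only arity $n$ gives a non-trivial identity) is exactly the ``easy and standard verification'' that the paper invokes without writing it out. The argument is correct, including the two vanishing observations that dispose of the arities $N>n$, so nothing further is needed.
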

\begin{proof} This is an easy and standard verification.
\end{proof}
Because the construction of $\aa_{\eta}$ only depends on the cohomology class of $\eta$, we will often write $\aa_{\bar{\eta}}$ with $\bar{\eta}\in \HH^n(\aa,M)$ to denote $\aa_{\eta}$, where $\eta$ is a lift of $\bar{\eta}$ to  $Z^n\bar{\CC}^\bullet(\aa,M)$.

\subsection{Tensoring with a DG-category}
\label{ref-6.2-75}
Let $\aa$, $\bb$ be $k$-linear DG-categories, and let
$M$ be a $k$-central $\aa$-bimodule. Then then there is a morphism of
complexes
\[
\CC(\aa,M)\r \CC(\aa\otimes_k \bb,M\otimes_k \bb):\eta\mapsto \eta\cup 1
\]
where $\eta\cup 1$ is defined by (for suitable composable arrows)
\[
(\eta\cup 1)(a_1\otimes b_1\otimes\cdots \otimes a_n\otimes b_n)=\pm\eta(a_1,\ldots,a_n)\otimes b_1\cdots b_n
\]
with the sign given by the Koszul convention.
It is easy to see that on the level of cohomology $\eta\cup 1$ has the usual interpretation as a map
\[
\Ext^\ast_{\aa\otimes_k \bb^\circ}(\aa,M)\r \Ext^\ast_{\aa\otimes_k \bb\otimes_k \aa^\circ\otimes_k \bb^\circ}(\aa\otimes_k\bb,M\otimes_k\bb)
\]
where $1$ now refers to the identity element of $\Hom_{\bb\otimes_k\bb^\circ}(\bb,\bb)$.

\medskip

It is nontrivial to construct the tensor product of two $A_\infty$-categories. However, no difficulty arises when one of the $A_\infty$-categories is a DG-category: this is a special case of tensoring an 
algebra over an asymmetric operad with a DG-algebra, where again only suitable composable arrows should be considered. Specifically, assume that $\aa$ is a $k$-$A_\infty$-category and $\bb$
is a $k$-DG-category. Then we define 
\begin{align*}
&b^n_{\aa\otimes_k\bb}(s(a_1\otimes b_1),\ldots,s(a_n\otimes b_n))
=\pm b^n_{\aa}(sa_1,\ldots,sa_n)\otimes b_1\cdots b_n \\
&b^1_{\aa\otimes_k\bb}(s(a\otimes b))=b^1(sa)\otimes b + (-1)^{|sa|} sa\otimes d(b)
\end{align*}
(for suitably composable arrows)
where the sign is given by the Koszul convention after making the identification $s(a_i\otimes_k b_i)=(sa_i)\otimes_k b_i$.

With this definition is easy to see that, if $\eta\in Z^n \bar{\CC}(\aa,M)$ and
$\eta\cup 1\in \bar{\CC}(\aa\otimes_k \bb,M\otimes_k \bb)$ is the extended cocycle, then
\[
\aa_\eta\otimes\bb=(\aa\otimes\bb)_{\eta\cup 1}.
\]
\subsection{The characteristic morphism}
\label{ref-6.3-76} Assume again that $\aa$ is a $k$-linear category
and let $N$ be an $\aa$-module.
Then there is a so-called characteristic map \cite{lowen6}
\[
c_N:\HH^n(\aa,M)\r \Ext^n_{\aa}(N,M\Lotimes_\aa N),
\]
which may be constructed by interpreting $\eta\in \HH^n(\aa,M)$ as a map
$\aa\r \Sigma^n M$ in $D(\aa\otimes_k \aa^\circ)$. Applying the functor $-\Lotimes_\aa N$ to  $\eta$ we
obtain a map $N\r \Sigma^nM\Lotimes_\aa N$ which is $c_N(\eta)$.

There is a dual characteristic map
\[
c_N:\HH^n(\aa,M)\r \Ext^n_{\aa}(\RHom_{\aa}(M,N),N),
\]
obtained by applying $\RHom_{\aa}(-,N)$ to $\eta$. For the sequel we note the following obvious fact.
\begin{lemmas} \label{ref-6.3.1-77}
Assume that $M$ is an invertible $\aa$-bimodule. In that case we have a commutative
diagram
\[
\xymatrix{
&  \Ext^n_{\aa}(N,M\Lotimes_\aa N)\ar[dd]_{\cong}^{M^\ast\Lotimes_{\aa}-}\\
\HH^n(\aa,M)\ar[ru]^{c_N(\eta)}\ar[rd]_{c^\ast_N(\eta)}\\
& \Ext^n_{\aa}(\RHom_{\aa}(M,N),N)
}
\]
\end{lemmas}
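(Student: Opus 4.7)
The plan is to show that both compositions around the triangle, evaluated on $\eta\in\HH^n(\aa,M)$, coincide with a single canonical morphism $M^{\ast}\Lotimes_{\aa}N\to\Sigma^{n}N$ built from $\eta$ and the duality data of $M$. Since $M$ is invertible with inverse $M^{\ast}=\RHom_{\aa}(M,\aa)$, the counit
\[
\epsilon\colon M^{\ast}\Lotimes_{\aa}M\xrightarrow{\sim}\aa
\]
is an isomorphism in $D(\aa\otimes_{k}\aa^{\circ})$, and as a formal consequence there is, for every $X\in D(\aa)$, a natural isomorphism
\[
\alpha_{X}\colon M^{\ast}\Lotimes_{\aa}X\xrightarrow{\sim}\RHom_{\aa}(M,X)
\]
adjoint to $\epsilon\Lotimes\id_{X}$. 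Specializing $X=\aa$ recovers the identity $\RHom_{\aa}(M,\aa)=M^{\ast}$, and specializing $X=N$ identifies $\RHom_{\aa}(M,N)$ with $M^{\ast}\Lotimes_{\aa}N$.

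First I compute the top--right composition. By definition $c_{N}(\eta)=\eta\Lotimes_{\aa}\id_{N}\colon N\to\Sigma^{n}M\Lotimes_{\aa}N$, and the vertical arrow sends it to
\[
M^{\ast}\Lotimes_{\aa}N\xrightarrow{M^{\ast}\Lotimes_{\aa}\eta\Lotimes_{\aa}N}\Sigma^{n}M^{\ast}\Lotimes_{\aa}M\Lotimes_{\aa}N\xrightarrow{\epsilon\Lotimes\id_{N}}\Sigma^{n}N\,,
\]
which, after pre-composing with $\alpha_{N}^{-1}$, is a morphism $\RHom_{\aa}(M,N)\to\Sigma^{n}N$.

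Next I identify the bottom map. By definition $c^{\ast}_{N}(\eta)=\RHom_{\aa}(\eta,N)$, obtained from $\eta\colon\aa\to\Sigma^{n}M$ by applying $\RHom_{\aa}(-,N)$ and using $\RHom_{\aa}(\aa,N)=N$. Naturality of $\alpha$ in its bimodule argument, applied to $\eta$, yields a commutative square identifying $\RHom_{\aa}(\eta,N)$ (under $\alpha_{N}$ on the source and the canonical identification $\RHom_{\aa}(\aa,N)=\aa\Lotimes_{\aa}N=N$ on the target) with exactly the composition displayed above. Combining the two computations gives the commutativity of the triangle.

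The only genuine point is the last naturality statement, i.e.\ that $\alpha$ is natural in the bimodule variable in the sense that the square
\[
\xymatrix{
M^{\ast}\Lotimes_{\aa}N\ar[r]^-{\alpha_{N}}\ar[d]_{M^{\ast}\Lotimes_{\aa}\eta\Lotimes_{\aa}N} & \RHom_{\aa}(M,N)\ar[d]^{\RHom_{\aa}(\eta,N)^{-1}\text{-style comparison}}\\
\Sigma^{n}M^{\ast}\Lotimes_{\aa}M\Lotimes_{\aa}N\ar[r]_-{\epsilon\Lotimes\id_{N}} & \Sigma^{n}N
}
\]
commutes; this is a diagram chase from the defining adjunction between $M\Lotimes_{\aa}-$ and $\RHom_{\aa}(M,-)$ and the fact that $\epsilon$ is its counit, and it is the main (but entirely routine) obstacle. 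Once it is in hand, the lemma follows.
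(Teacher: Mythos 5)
Your argument is correct, and it is precisely the verification the paper omits: Lemma \ref{ref-6.3.1-77} is stated in the text as an ``obvious fact'' with no proof, and its content is exactly the compatibility you isolate, namely that the canonical isomorphism $M^{\ast}\Lotimes_{\aa}N\cong\RHom_{\aa}(M,N)$ intertwines $-\Lotimes_{\aa}N$ and $\RHom_{\aa}(-,N)$ applied to $\eta\colon\aa\to\Sigma^{n}M$, which is standard dualizable-object yoga. One small bookkeeping point: to obtain $\alpha_{X}$ by adjunction you need the evaluation $M\Lotimes_{\aa}M^{\ast}\to\aa$ rather than $\epsilon\colon M^{\ast}\Lotimes_{\aa}M\to\aa$ (the tensor factors occur in the other order); since $M$ is invertible both maps exist and are isomorphisms, so this does not affect the argument.
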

In other words in the context of Lemma \ref{ref-6.3.1-77} we do not have to make 
a distinction between the two characteristic maps. 

\medskip

Now assume that $M$ is right flat over $\aa$. It is well-known 
that in that case $c_N$ can be constructed directly on the level of complexes.
One starts with the identification
\[
\Ext^n_{\aa}(N,M\otimes_\aa N)=\HH^n(\aa,\underline{\Hom}_k(N,M\otimes_\aa N)).
\]
With this identification $c_N$ is obtained by passing to cohomology
from the map of complexes
\[
c_N:\CC^\bullet(\aa,M)\r \CC^\bullet(\aa,\underline{\Hom}_k(N,M\otimes_\aa N)),
\]
which is  obtained from the obvious map of $\aa$-bimodules 
\begin{equation}
\label{ref-6.1-78}
M\r \underline{\Hom}_k(N,M\otimes_\aa N).
\end{equation}
A similar statement holds for $c^\ast_N$. In this case \eqref{ref-6.1-78} is replaced
by the equally obvious map of $\aa$-bimodules
\[
M\r \underline{\Hom}_k(\Hom(M,N),N).
\]

\subsection{Deformation of objects}
\label{ref-6.4-79}
Let the notation be as in \S\ref{ref-6.1-74}, but to simplify things we
will restrict to the case $n\ge 3$. 

Assume that $M$ is right flat over
$\aa$.  Let $U\in \Mod(\aa)$. A lift of $U$ to $\aa_\eta$ 
is a pair $(V,\phi)$ where $V$ is in $D_\infty(\aa_\eta)$ and $\phi$ is an isomorphism of graded $H^\ast(\aa_\eta)$-modules
$H^\ast(V)\cong H^\ast(\aa_\eta)\otimes_{\aa} U$. 

Similarly, if $M$ is left projective, then a colift  of $U$ to $\aa_\eta$ 
is a pair $(V,\phi)$, where $V$ is in $D_\infty(\aa_\eta)$ and $\phi$ is an isomorphism of graded $H^\ast(\aa_\eta)$-modules
$H^\ast(V)\cong \Hom_{\aa}(H^\ast(\aa_\eta),U)$. 

We recall the following well-known fact. 
\begin{lemmas} \label{ref-6.4.1-80}
The object $U\in \Mod(\aa)$ has a lift to $\aa_\eta$ if and only if $c_U(\bar{\eta})=0$. 
It has a colift if and only if $c^\ast_U(\bar{\eta})=0$. 
\end{lemmas}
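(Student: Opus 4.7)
The plan is to prove both claims by the standard obstruction theory for deforming modules along a Hochschild class; the two cases are strictly dual, obtained from one another by substituting $\RHom_\aa(-,U)$ for $-\otimes_\aa U$, so I sketch only the lift statement.

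For the ``only if'' direction, I would begin by assuming a lift $V$ exists and, using right flatness of $M$, choose a strict model whose underlying graded $\aa$-module is $V = U \oplus \Sigma^{n-2}(M\otimes_\aa U)$, carrying $A_\infty$-module structure maps $(m^V_k)_{k\ge 1}$ over $\aa_\eta$. Since $b_{\aa_\eta}$ differs from $b_{\tilde\aa}$ only by the arity-$n$ operation $\eta$, the $A_\infty$-module relations for $V$ coincide with ordinary DG-module relations except at a single critical arity, where $\eta$ acting on $V_0 = U$ makes a new contribution. Unwinding that one relation on inputs of the form $a_1 \otimes \cdots \otimes a_n \otimes u$ shows that the chain-level representative $c_U(\eta) \in \bar{\CC}^{\,n}(\aa, \underline{\Hom}_k(U, M\otimes_\aa U))$ produced by the recipe of \S\ref{ref-6.3-76} via \eqref{ref-6.1-78} is the Hochschild coboundary of a cochain assembled from the lower $m^V_k$; hence $c_U(\bar\eta) = 0$.

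For the ``if'' direction I would invert this construction: the hypothesis $c_U(\bar\eta) = 0$ supplies a cochain $\theta \in \bar{\CC}^{\,n-1}(\aa, \underline{\Hom}_k(U, M\otimes_\aa U))$ with Hochschild differential $d\theta = c_U(\eta)$. I would then build $V$ on the graded object $U \oplus \Sigma^{n-2}(M\otimes_\aa U)$ by setting $m^V_1 = 0$, letting $m^V_2$ encode simultaneously the $\aa$-actions on the two summands and the canonical pairing $M\otimes_k U \to M\otimes_\aa U$, and using $\theta$ to define the higher $A_\infty$-module operation in the critical arity. All but one of the resulting $A_\infty$-module relations hold automatically, and the one remaining relation unwinds to exactly $d\theta = c_U(\eta)$, which holds by construction.

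The hard part will be the sign and suspension bookkeeping: matching the precise $A_\infty$-module relation for $V$ against the chain-level formula for $c_U(\eta)$ requires careful Koszul-sign tracking through the prescription of \S\ref{ref-6.3-76}, and one must also check invariance under the choice of representative $\eta$ of the class $\bar\eta$. Conceptually, however, this is a paradigmatic instance of obstruction theory: because $\aa_\eta$ has only two non-trivial operations and $V$ has only two non-zero graded pieces, essentially one non-trivial $A_\infty$-equation survives, and it is governed precisely by the class $c_U(\bar\eta)$.
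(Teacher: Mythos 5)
Your proposal is correct and is essentially the paper's argument: both reduce to a single surviving $A_\infty$-relation on the two-term graded object $U\oplus\Sigma^{n-2}(M\otimes_\aa U)$, which unwinds to the statement that a chain-level representative of $c_U(\eta)$ is a Hochschild coboundary. The only cosmetic difference is that the paper packages the module structure as an $A_\infty$-morphism from $\aa_\eta$ into the upper-triangular endomorphism algebra $\Lambda$ (which makes the "one critical arity" observation immediate after projecting $\BB\Lambda\to\Lambda$), whereas you work with the $A_\infty$-module relations directly; these formulations are equivalent.
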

\begin{proof}
Both cases are similar, so we will consider the case of a lift. Thus in that case
we assume that $M$ is right flat.
Let $U'$ be the graded $H^\ast(\aa_\eta)$-module $U\oplus \Sigma^{n-2}M\otimes_\aa U$.
If $V$ is an $\aa_\eta$-lifting of $U$ then we may assume that $V$ is represented by a ``minimal model'' object $V=(U',b_V)$ with $b_{V,1}=0$.
We now have a graded functor between graded categories
\[
\aa\oplus\Sigma^{n-2}M\xrightarrow{f}
\Lambda:=\begin{pmatrix}
\End_k(U)&0\\
\Hom_k(U,\Sigma^{n-2}M\otimes_{\aa} U) &\End_k(U)
\end{pmatrix}
\]
representing the action of $H^\ast(\aa_\eta)$ on $U'$, 
and we have to change it to an $A_\infty$-morphism
\[
(\aa\oplus\Sigma^{n-2}M,b_{\aa_\eta})\xrightarrow{f+\xi} 
\begin{pmatrix}
\End_k(U)&0\\
\Hom_k(U,\Sigma^{n-2}M\otimes_{\aa} U) &\End_k(U)
\end{pmatrix}
\]
with $\xi:(\Sigma\aa)^{\otimes n-1}\r \Sigma \Hom_k(U,\Sigma^{n-2}M\otimes_{\aa} U)$ and ${b_{\aa_\eta}}=b_{\aa}+\eta$.
The required compatibility between cofunctors and codifferentials may be expressed
as
\[
(f+\xi)\circ (b_{\aa}+\eta)-b_\Lambda\circ (f+\xi)=0,
\]
with $b_\Lambda$ being the codifferential on $\Lambda$.
As usual we only have to check this after performing the projection $\BB\Lambda\r\Lambda$. So
the only possible non-trivial evaluation is on $\Sigma \aa^{\otimes n}$ and we get
\[
\xi\circ b_{\aa}-b_\Lambda\circ \xi+f\circ \eta=0,
\]
which may be rewritten as 
\[
d_{\text{Hoch}}(\xi)=f\circ\eta=c_U(\eta).
\]
This proves what we want.
\end{proof}

\section{Obstruction theory}
\label{sec:obstructions}
\subsection{Preliminaries on $A_n$-categories and $A_n$-functors}
\label{ref-7.1-81}
$A_n$ categories and functors are defined by replacing $\BB\bb$ 
with the $n$-truncated $(\BB \bb)_{\le n}$
bar-cocategory. I.e.\ $\Ob(\BB\bb)_{\le n}=\Ob(\bb)$ and
\[
\BB\bb_{\le n}(A,B)=\bigoplus_{i\le n, A_1,\ldots,A_{i-1}\in \Ob(\bb)}\Sigma \bb(A_{i-1},B)\otimes \ldots \otimes \Sigma \bb(A,A_1)
\]
As usual, we describe $A_n$-structures and morphisms via their
Taylor coefficients, which may be evaluated
on sequences of $i\le n$ composable maps. 

Given an $A_n$ category $\bb$, we write $\bb_{\le m}$ for the corresponding
category viewed as an $A_m$-category for $m\le n$. A similar convention applies to functors. 

Like for the $A_\infty$-case, all $A_n$-notions will be assumed to be strictly unital.
\subsection{Obstructions for $A_\infty$-morphisms}
\label{ref-7.2-82}
We will use \cite[\S10]{RizzardoVdB} as a convenient reference. The following lemma is a more precise version of
\cite[Lemma 10.3.1]{RizzardoVdB} (see \cite{BKS} for a related result). 
\begin{lemmas} 
\label{ref-7.2.1-83} 
Let $f_i:\cc\r \tilde{\cc}$ be an $A_i$-functor
between $A_\infty$-categories. Then there
is an ``obstruction'' $o_{i+1}(f_i)\in \HH^{i+1}(H^\ast(\cc),H^\ast(\tilde{\cc}))^{-i+1}$
with the following property: $o_{i+1}(f_i)$ vanishes if and only if there exists
$\delta_i:(\Sigma\cc)^{\otimes i}\r \Sigma \tilde{\cc}$ such that
$b_{\tilde{\cc},1}\circ \delta_i-\delta_i \circ b_{\cc,1}=0$ and such that $f_i+\delta_i$
extends to an $A_{i+1}$-functor. The obstruction $o_{i+1}(f_i)$ is natural in the following sense:
for $A_\infty$-functors $G:\cc'\to\cc$, $g:\tilde{\cc}\to \tilde{\cc}'$ we have
\begin{equation}
\label{ref-7.1-84}
o_{i+1}(g\circ f_i\circ G)=H^\ast(g)\circ o_{i+1}(f_i)\circ H^\ast(G).
\end{equation}
\end{lemmas}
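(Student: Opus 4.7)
The plan is to realize $o_{i+1}(f_i)$ as the cohomology class of the Maurer--Cartan deviation measuring the failure of $f_i$ to extend, and to read off both the vanishing criterion and the naturality from this identification.

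First, I would make explicit what it means to extend $f_i$ to an $A_{i+1}$-functor. An $A_{i+1}$-functor is a cofunctor $F:(\BB\cc)_{\le i+1}\r(\BB\tilde{\cc})_{\le i+1}$ intertwining the codifferentials. Writing this compatibility on the weight-$(i+1)$ component isolates a single equation on a prospective Taylor coefficient $f_{i+1}:(\Sigma\cc)^{\otimes (i+1)}\r\Sigma\tilde{\cc}$ of the form
\[
b_{\tilde\cc,1}\circ f_{i+1} \pm f_{i+1}\circ(\text{differential induced by }b_{\cc,1})=E(f_1,\dots,f_i),
\]
where $E$ assembles all the terms in the Stasheff relation produced by the higher $b_{\cc,j}$, $b_{\tilde\cc,j}$ (with $j\ge 2$) and by the given $f_j$ (with $j\le i$). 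Passing to $b_1$-cohomology identifies the right-hand side with a normalized Hochschild $(i+1)$-cochain on $H^\ast(\cc)$ valued in $H^\ast(\tilde\cc)$. I would then define $o_{i+1}(f_i)$ to be its class; the degree count $|f_j|=1-j$ on shifted tensor powers places it in $\HH^{i+1}(H^\ast(\cc),H^\ast(\tilde\cc))^{-i+1}$.

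Second, I would check that $o_{i+1}(f_i)$ is well-defined and that vanishing is the precise obstruction. Closedness under $d_{\mathrm{Hoch}}$ follows from the $A_\infty$-relations on $\cc,\tilde\cc$ combined with the $A_i$-relations for $f_i$, applied to the Maurer--Cartan expression at weight $i+2$; strict unitality of all data ensures normalization. For the vanishing criterion, if $o_{i+1}(f_i)=0$ then there is a Hochschild $i$-cochain $\gamma$ with $d_{\mathrm{Hoch}}\gamma$ representing $E(f_1,\dots,f_i)$; lifting $\gamma$ to a $b_1$-cocycle $\delta_i:(\Sigma\cc)^{\otimes i}\r\Sigma\tilde\cc$ and absorbing the resulting coboundary into a suitable choice of $f_{i+1}$ produces the desired extension of $f_i+\delta_i$. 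Conversely, if $f_i+\delta_i$ extends then its obstruction vanishes, and since $\delta_i$ is a $b_1$-cocycle the obstructions of $f_i$ and $f_i+\delta_i$ coincide in Hochschild cohomology, proving $o_{i+1}(f_i)=0$.

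Third, I would verify naturality. For $A_\infty$-functors $G:\cc'\r\cc$ and $g:\tilde\cc\r\tilde\cc'$, the Maurer--Cartan expression for $g\circ f_i\circ G$ at weight $i+1$ agrees with $H^\ast(g)\circ E(f_1,\dots,f_i)\circ H^\ast(G)^{\otimes(i+1)}$ up to terms involving the higher Taylor coefficients $g_j,G_j$ for $j\ge 2$; an elementary tally shows that each such term is a Hochschild coboundary, yielding \eqref{ref-7.1-84} after passage to cohomology. The main obstacle I expect is not conceptual but bookkeeping: extracting $E(f_1,\dots,f_i)$ from the Stasheff relations with correct signs, confirming normalization, and controlling the lower-order terms in the naturality comparison cleanly enough to identify them as coboundaries.
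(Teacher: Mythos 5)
Your proposal is correct and takes essentially the same route as the paper: the obstruction is the class of the weight-$(i+1)$ Taylor coefficient of the deviation $D=b_{\tilde{\cc}}\circ f_i-f_i\circ b_{\cc}$ (your $E(f_1,\dots,f_i)$), the cocycle condition and the vanishing criterion come from the weight-$(i+2)$ and weight-$(i+1)$ components of the Maurer--Cartan identity exactly as you describe, and naturality comes from $D(g\circ f_i\circ G)=g\circ D(f_i)\circ G$. The one simplification you miss is that since $D_n=0$ for $n\le i$, the higher Taylor coefficients $g_j,G_j$ ($j\ge 2$) contribute nothing at all to the weight-$(i+1)$ component of $g\circ D(f_i)\circ G$ --- only the linear parts survive --- so the naturality identity holds on the nose at the cochain level and no coboundary bookkeeping is required.
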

\begin{proof}
First note that in \cite{RizzardoVdB} we worked with non-strictly unital $A_\infty$-functors (in fact: morphisms). 
We may however equally well perform the construction in the strictly unital context by 
working with the normalized Hochschild complex. Here we will follow this approach. 

We view $f_i$ as a cofunctor $f_i:\BB\cc_{\le i+1}\r \BB \tilde{\cc}_{\le i+1}$ by making its $i{+}1$'th Taylor coefficient zero.
Define the $f_i$-coderivation
$D:\BB\cc_{\le i+1}\r \BB \tilde{\cc}_{\le i+1}$ as follows:
\[
D=b_{\tilde{\cc}} \circ f_i- f_i \circ b_{\cc}\,.
\]
It is clear that
\begin{equation}\label{ref-7.2-85}
b_{\tilde{\cc}}\circ D+D\circ b_{\cc}=0.
\end{equation}
Moreover, by construction we have that the Taylor coefficients $D_n$ of $D$ satisfy 
\begin{equation}
\label{ref-7.3-86}
D_n=0\qquad  \text{for $n=1,\ldots,i$ }
\end{equation}
so that the only data in $D$ is $D_{i+1}$, which by \eqref{ref-7.2-85}
descends to a linear map $\bar{D}_{i+1}:H^{\ast}(\Sigma\cc)^{\otimes i+1} \r
H^{\ast}(\Sigma\tilde{\cc})$. As in the proof of \cite[Lemma
10.3.1]{RizzardoVdB}, from \eqref{ref-7.2-85} one computes
\begin{equation}
0=d_{\text{Hoch}}(\bar{D}_{i+1})
\end{equation}
where $d_{\text{Hoch}}$ represent the Hochschild differential. Computing degrees one sees that  $\bar{D}_{i+1}$ represents an element of $\HH^{i+1}(H^\ast(\cc),H^\ast(\tilde{\cc}))^{-i+1}$, which we will call $o_{i+1}(f_i)$.

Furthermore - again as in the proof of \cite[Lemma 10.3.1]{RizzardoVdB} - one sees that, if $o_{i+1}(f_i)$ vanishes, then  $f_i$ extends to $f_{i+1}$ in the way described in the statement
of the lemma. 

To show that the implication goes in both directions, let us repeat the argument. The data $f_{i+1}$, $\delta_i$ as in the statement of the lemma will exist if and only the following equation
has a solution in $(f_{i+1})_{i+1}$, $\delta_i$:
\begin{equation}
\label{ref-7.5-87}
\begin{aligned}
0&=(b_{\tilde{\cc}}\circ f_{i+1}-f_{i+1}\circ b_{\cc})_{i+1}\\
&=D_{i+1}+(b_{\tilde{\cc}}\circ (f_{i+1}-f_i)-(f_{i+1}-f_i)\circ b_{\cc})_{i+1}\\
&=D_{i+1}+b_{\tilde{\cc},1}\circ (f_{i+1})_{i+1}-(f_{i+1})_{i+1}\circ b_{\cc,1}\\
&\qquad+b_{\tilde{\cc},2}\circ (\delta_i\otimes f_1+f_1 \otimes \delta_i)-\sum_{a+b+2=i+1} (\delta_i\circ (\Id^{\otimes a}\otimes b_{\cc,2}\otimes\Id^{\otimes b}) 
\end{aligned}
\end{equation}
It is clear that this has a solution if and only if the corresponding equation in cohomology $0=\bar{D}_{i+1}+d_{\text{Hoch}}\bar{\delta}_i=0$ has a solution, i.e.\ if and only the obstruction $o_{i+1}(f_i)$ vanishes.

Naturality: let us write $D(f_i)$ for $D$ as introduced above. Then it
follows from the definition \eqref{ref-7.2-85} that for
$f'_i=g\circ f_i\circ G$
\[
D(f'_i)=g\circ D(f_i)\circ G.
\]
By \eqref{ref-7.3-86} this yields
\[
D(f'_i)_{i+1}=g_0\circ D(f_i)_{i+1}\circ G_0,
\]
and passing to cohomology
\[
\overline{D(f'_i)}_{i+1}=\overline{g}_0\circ \overline{D(f_i)}_{i+1}\circ \overline{G}_0
\]
which is \eqref{ref-7.1-84}.\qed
\def\qed{}\end{proof}

If $f:H^\ast(\cc)\r H^\ast(\tilde{\cc})$ is a graded functor, then it can always
be completed to an $A_2$-functor $f_2:\cc\r \tilde{\cc}$ (this is essentially choosing homotopies). So the
first non-trivial obstruction is 
$o_{3}(f_2)\in \HH^{3}(H^\ast(\cc),H^\ast(\tilde{\cc}))^{-1}$. 
It is easy to see that it only depends on $f$. Indeed, two choices of $f_2$
only differ by a $\delta_2:(\Sigma\cc)^{\otimes 2}\r \Sigma\tilde{\cc}$ commuting with differentials, and this $\delta_2$ disappears in the obstruction. 
See Corollary \ref{ref-7.2.4-92} for
a variation on this fact.

In general, if we start from $f:H^\ast(\cc)\r H^\ast(\tilde{\cc})$ we may compute
obstructions
\[
o_{3}(f_2),o_4(f_3),o_5(f_4),\ldots
\] 
We will informally write these as
\[
o_{3}(f),o_4(f),o_5(f),\ldots
\]
with the proviso that $o_{i+1}(f)$ only exists when $o_3(f),\ldots,o_{i}(f)$
vanish, and furthermore $o_{i+1}(f)$ depends on prior choices. So it may
be zero for one such choice and non-zero for another. 

We will always apply
Lemma \ref{ref-7.2.1-83} with $\cc$ being a $k$-linear category (i.e.\ concentrated
in degree zero). In that case we have
\begin{equation}
\label{ref-7.6-88}
o_{i}(f)\in  \HH^{i}(\cc,H^{-i+2}(\tilde{\cc})).
\end{equation}
\begin{corollarys} 
\label{ref-7.2.2-89}
Consider a commutative diagram of graded functors 
\begin{equation}
\label{ref-7.7-90}
\xymatrix{
H^\ast(\cc_1)\ar[r]^-{f_1}&H^\ast(\tilde{\cc}_1)\ar[d]^{H^\ast(g)}\\
H^\ast(\cc_2)\ar[u]^{H^\ast(G)}\ar[r]_-{f_2}
&H^\ast(\tilde{\cc}_2)
}
\end{equation}
with $g,G$ being  $A_\infty$-quasi-isomorphisms. Let $\{i_1,i_2\}=\{1,2\}$.
When making choices for computing the obstructions for $f_{i_1}$, we may make
corresponding choices for computing the obstructions for $f_{i_2}$ such that
$o_\ast(f_{2})=H^\ast(g)\circ o_\ast(f_{1})\circ H^\ast(G)$.
\end{corollarys}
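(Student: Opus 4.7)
The plan is to reduce the corollary to the naturality formula \eqref{ref-7.1-84} from Lemma \ref{ref-7.2.1-83}, which asserts $o_{i+1}(g\circ f_i\circ G)=H^\ast(g)\circ o_{i+1}(f_i)\circ H^\ast(G)$ for any composable $A_\infty$-functors. The main auxiliary ingredient I would invoke is the well-known fact that an $A_\infty$-quasi-isomorphism admits an $A_\infty$-quasi-inverse (Lef\`evre-Hasegawa). So I fix once and for all $A_\infty$-functors $g^{-1}, G^{-1}$ such that $H^\ast(g^{-1})=H^\ast(g)^{-1}$ and $H^\ast(G^{-1})=H^\ast(G)^{-1}$.

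I would then argue by induction on $i\ge 2$, treating the two cases $\{i_1,i_2\}=\{1,2\}$ in parallel. In the case $i_1=1$, $i_2=2$, assume the obstruction construction for $f_1$ has produced an $A_i$-functor $f_{1,i}:\cc_1\r\tilde{\cc}_1$ extending $f_1$. I simply set $f_{2,i}:=g\circ f_{1,i}\circ G$; by the commutativity of \eqref{ref-7.7-90} this is an $A_i$-extension of $f_2$, and formula \eqref{ref-7.1-84} at once gives
\[
o_{i+1}(f_{2,i})=H^\ast(g)\circ o_{i+1}(f_{1,i})\circ H^\ast(G).
\]
In the opposite case $i_1=2$, $i_2=1$, I start from $f_{2,i}$ and define $f_{1,i}:=g^{-1}\circ f_{2,i}\circ G^{-1}$; its underlying graded functor is $H^\ast(g)^{-1}\circ f_2\circ H^\ast(G)^{-1}=f_1$, and applying \eqref{ref-7.1-84} to this composition yields $o_{i+1}(f_{1,i})=H^\ast(g)^{-1}\circ o_{i+1}(f_{2,i})\circ H^\ast(G)^{-1}$, which rearranges to the asserted identity.

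To close the induction I need to propagate choices: if $o_{i+1}(f_{i_1,i})=0$ one selects a cochain $\delta_{i_1,i}$ (commuting with the $b_1$'s) together with an $A_{i+1}$-extension $f_{i_1,i+1}$ of $f_{i_1,i}+\delta_{i_1,i}$ as in Lemma~\ref{ref-7.2.1-83}. I then transport this data through the same composition rule used above to obtain a matching $A_{i+1}$-extension on the other side.

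The main point to verify, and where I expect the technical work to be, is that this transport is clean: namely that pre/post-composition with $g$ and $G$ (or their quasi-inverses) sends an admissible correction datum $(\delta_{i_1,i},f_{i_1,i+1})$ satisfying equation \eqref{ref-7.5-87} to an admissible correction datum on the other side, in the correct truncation degree and degree of cocycle. This follows from the fact that composition of $A_\infty$-cofunctors respects the bar truncation $(\BB\bb)_{\le n}$ and preserves equation \eqref{ref-7.5-87} at each truncation level, but it requires a careful bookkeeping check. Once this is in hand, the corollary follows by iterating naturality.
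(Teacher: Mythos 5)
Your proof is correct and follows essentially the same route as the paper: the case $i_1=1$, $i_2=2$ is immediate from the naturality formula \eqref{ref-7.1-84}, and the case $i_1=2$, $i_2=1$ uses the existence of $A_\infty$-quasi-inverses of $g$ and $G$ (which are genuine inverses on cohomology) followed by naturality again. The only difference is that you spell out explicitly the inductive transport of the correction data $(\delta,f_{i+1})$ that the paper compresses into the phrase ``we may make corresponding choices''.
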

\begin{proof} When $i_1=1$, $i_2=2$ this is follows from naturality  (see Lemma \ref{ref-7.2.1-83}). 
For the case $i_1=2$, $i_2=1$ we use the fact that $g,G$ have inverses up to homotopy in the $A_\infty$-category.
Such inverses are true inverses on cohomology. Now use again the naturality for 
$A_\infty$-morphisms.
\end{proof}
\begin{remarks}
\label{ref-7.2.3-91}
It follows from Corollary \ref{ref-7.2.2-89} that in order to calculate
obstructions for $f:\cc\r H^\ast(\tilde{\cc})$ with $\cc$ a $k$-linear
category we may replace $\tilde{\cc}$ with a (strictly unital) minimal
model~$\bar{\cc}$.  By definition the underlying complex of $\bar{\cc}$ is
$H^\ast(\tilde{\cc})$, with zero differential and there is an
$A_\infty$-quasi-isomorphism $g:\bar{\cc}\r \tilde{\cc}$ such that
$g_1$ induces the identity on cohomology. Let $f_1:\cc\r H^\ast(\bar{\cc})$
be such that $H^\ast(g_1)\circ f_1=f$. 
By naturality
 we have $o_\ast^{\tilde{\cc}}(f)=o_\ast^{\tilde{\cc}}(H^\ast(g_1)\circ f_1)
=H^\ast(g_1) \circ o_\ast^{\bar{\cc}}(f_1)$. How we use that $H^\ast(g_1)$
is the identity to obtain $o_\ast^{\tilde{\cc}}(f)=o_\ast^{\bar{\cc}}(f_1)$.
\end{remarks}
\begin{corollarys}
\label{ref-7.2.4-92}
Assume that $\cc$ is a $k$-linear category and let $f:\cc\to H^*(\tilde{\cc})$ be a $k$-linear functor.
If $-{n}<0$ is maximal with the property that
$H^{-{n}}(\tilde{\cc})\neq 0$ then 
\begin{equation}
\label{ref-7.8-93}
o_3(f)=\cdots=o_{{n}+1}(f)=0
\end{equation}
and
$o_{{n}+2}(f)$  does not depend
on any choices.
\end{corollarys}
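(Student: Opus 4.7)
The plan is first to invoke Remark \ref{ref-7.2.3-91} in order to replace $\tilde{\cc}$ with a strictly unital minimal model $\bar{\cc}$, whose underlying complex is $H^\ast(\tilde{\cc})$ equipped with zero differential. By naturality the obstructions $o_j(f)$ are unchanged by this replacement, so nothing is lost. After this reduction, \eqref{ref-7.6-88} gives $o_i(f)\in \HH^i(\cc,H^{-i+2}(\tilde{\cc}))$, and for $3\le i\le n+1$ the exponent $-i+2$ lies in $\{-1,\ldots,-(n-1)\}$; by the maximality of $-n$ every such cohomology group vanishes. Thus the coefficient module is zero and \eqref{ref-7.8-93} holds automatically, with no actual computation needed.

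Next I would track precisely what freedom is available in extending $f$ stepwise to an $A_{n+1}$-functor $f_{n+1}:\cc\to\bar{\cc}$. In the minimal model a Taylor coefficient $(f_j)_j$ is simply a map $\cc^{\otimes j}\to\bar{\cc}^{1-j}=H^{1-j}(\tilde{\cc})$, and the cocycle $\delta_j$ of Lemma \ref{ref-7.2.1-83} parametrizing the ambiguity at step $j$ takes values in the same group (its cocycle condition is vacuous because $b_{\bar{\cc},1}=0$). For $2\le j\le n$ this group vanishes, so the higher components and the successive cocycle freedoms at these stages are all forced to be zero. Consequently $f_n$ is rigid: it equals $f$ in degree one and vanishes in all strictly higher Taylor components. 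The only genuine choice occurs at the last stage, in the top component
\[
(f_{n+1})_{n+1}:\cc^{\otimes(n+1)}\to\bar{\cc}^{-n}=H^{-n}(\tilde{\cc}),
\]
which is completely unconstrained.

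To conclude I would compare the obstruction cocycles produced by two different choices of $(f_{n+1})_{n+1}$. If $\phi$ denotes their difference, then the induced change in $D_{n+2}$, as defined in the proof of Lemma \ref{ref-7.2.1-83}, only receives contributions from $b_{\bar{\cc},2}$ paired with one instance of $\phi$ and one instance of $f_1=f$, together with insertions of $b_{\cc,2}$ into $\phi$; the higher products $b_{\bar{\cc},k}$ for $k\ge 3$ are ruled out by an elementary input count. Reading off the result one finds that the shift in the obstruction cocycle is exactly the Hochschild coboundary of $\phi$, viewed as an element of $\bar{\CC}^{n+1}(\cc,H^{-n}(\tilde{\cc}))$ with bimodule structure induced by $f$. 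Hence the class of $o_{n+2}(f)$ in $\HH^{n+2}(\cc,H^{-n}(\tilde{\cc}))$ is independent of choices. The main obstacle will be the bookkeeping in this last step: one must verify that the intermediate freedoms really do vanish in the strictly unital minimal-model setting, and that the change-of-cocycle formula reduces cleanly to a Hochschild coboundary without stray contributions from higher $A_\infty$-products.
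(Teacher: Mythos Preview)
Your proposal is correct and follows essentially the same route as the paper's proof: pass to a minimal model via Remark \ref{ref-7.2.3-91}, observe that the Taylor components $(f_j)_j$ and the ambiguities $\delta_j$ land in $\bar{\cc}^{1-j}=H^{1-j}(\tilde{\cc})$, which vanishes for $2\le j\le n$, so that $f=f_1=\cdots=f_n$ and the only free datum is $(f_{n+1})_{n+1}$; then check that changing this datum alters $D_{n+2}$ by the Hochschild coboundary of the difference.

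The one small difference is that the paper writes out the explicit formula
\[
D_{n+2}=b_{\bar{\cc},n+2}\circ(f\otimes\cdots\otimes f)+d_{\mathrm{Hoch}}\bigl((f_{n+1})_{n+1}\bigr),
\]
which makes visible the canonical class $p_{n+2}(\tilde{\cc})=\bar{b}_{\bar{\cc},n+2}$ (used in Remark \ref{ref-7.2.5-94}) and then separately verifies that $b_{\bar{\cc},n+2}$ changes only by a Hochschild boundary under change of minimal model. Your argument avoids this extra verification by appealing to naturality (Corollary \ref{ref-7.2.2-89}) to transport any choice made for $\tilde{\cc}$ to some choice for a fixed $\bar{\cc}$; since you show all choices on the $\bar{\cc}$ side give the same class, independence on the $\tilde{\cc}$ side follows. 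Both arguments are valid; yours is a touch more economical, while the paper's yields the explicit formula needed downstream.
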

\begin{proof} 
Since by \eqref{ref-7.6-88} $o_j(f)\in \HH^j(\cc,H^{-j+2}(\tilde{\cc}))$ we have $o_j(f)=0$ for $-{n}+1\le -j+2\le -1$
which yields \eqref{ref-7.8-93}.

To prove the statement about $o_{{n}+2}(f)$ we may as in Remark \ref{ref-7.2.3-91} replace $\tilde{\cc}$ with a minimal model $\bar{\cc}$. But then $\bar{\cc}^{-{n}+1}=\cdots =\bar{\cc}^{-1}=0$.
Following the proof of Lemma \ref{ref-7.2.1-83}, we have
\[
f=f_1=f_2=f_3=\cdots=f_{{n}}
\]
To compute $f_{{n}+1}$ we first have to compute 
\[
D(f_{n})_{{n}+1}=(b_{\bar{\cc}}\circ f-f\circ b_{\cc})_{{n}+1}
\]
If ${n}=1$ then this is zero since $f$ respects the multiplication. If ${n}>1$ then using the fact that
the lowest $b_{\bar{\cc},j}$ for $j>2$ which can be non-zero is $b_{\bar{\cc},{n}+2}$
we also get zero
which is compatible with the fact that we already know $o_{{n}+1}(f_{n})=0$.

To compute
the possible lifts of $f_{n}$ we have to solve (see \eqref{ref-7.5-87})
\[
0=[d,(f_{{n}+1})_{{n}+1}]+d_{\text{Hoch}} (\delta_{n})
\]
For degree reasons we must have $\delta_{n}=0$. Since both $\cc$ and $\bar{\cc}$ have zero differential, it follows that we may choose $(f_{{n}+1})_{{n}+1}:(\Sigma \cc)^{\otimes {n}+1}\r \Sigma(\cc)^{-{n}}$ freely.

By definition
$o_{{n}+2}(f)$ is the class of
\[
(b_{\bar{\cc}}\circ f_{{n}+1}-f_{{n}+1}\circ b_\cc)_{{n}+2}=b_{\bar{\cc},{n}+2}\circ (f\otimes\cdots\otimes f)+d_{\operatorname{Hoch}}((f_{{n}+1})_{{n}+1})
\]
One easily checks that  $b_{\bar{\cc},{n}+2}$ is a Hochschild cocycle, and moreover if we replace $\bar{\cc}$ by another $A_\infty$-isomorphic minimal model (see Remark \ref{ref-7.2.3-91})
then $b_{\bar{\cc},n+2}$ changes only by a Hochschild boundary. From this we deduce that $o_{{n}+2}(f)$ is well defined.
\end{proof}
\begin{remarks}
\label{ref-7.2.5-94}
If we write $p_{{n}+2}(\tilde{\cc})\overset{\text{def}}{=}\bar{b}_{\bar{\cc},{n}+2}\in \HH^{{n}+2}(H^0(\cc),H^{-{n}}(\tilde{\cc}))$ then it was shown in the previous proof that $p_{{n}+2}(\tilde{\cc})$ is well defined and moreover
\[
o_{{n}+2}(f)=p_{{n}+2}(\tilde{\cc})\circ f
\]
Note that we may also think of $p_{{n}+2}(\tilde{\cc})$ as $o_{{n}+2}(j)$ where $j:H^0(\tilde{\cc})\r H^\ast(\tilde{\cc})$ is the inclusion.

If $\cc$ is a $k$-linear category, $\eta\in \HH^{{n}+2}(\cc,M)$ for ${n}\ge 1$ and $j:\cc\r H^\ast(\cc_\eta)=\cc\oplus \Sigma^i M$ is the inclusion then we obtain 
$o_3(j)=\cdots=o_{{n}+1}(j)=0$ and $o_{{n}+2}(j)=\eta$.
\end{remarks}
We will use the following application.
\begin{propositions} 
\label{ref-7.2.6-95}
Let $\aa$, $\cc$ be $k$-linear categories, $M$ an $\aa$-bimodule, $\eta\in \HH^n(\aa,M)$. Assume $f:\cc\r \aa$ is an additive
functor such that $f_\ast(\eta)\overset{\text{def}}{=}\eta\circ f=0$. Then there is a commutative diagram of $A_\infty$-categories
\begin{equation}
\label{ref-7.9-96}
\xymatrix{
\aa_\eta\ar[dr]&&\ar@{.>}[ll]_{\tilde{f}} \cc\ar[dl]^{f}\\
&\aa
}
\end{equation}
\end{propositions}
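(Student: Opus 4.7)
The plan is to construct $\tilde{f}:\cc\to \aa_\eta$ as an $A_\infty$-lift of the graded functor
\[
\phi := j\circ f:\cc\xrightarrow{f}\aa\xrightarrow{\;j\;} H^\ast(\aa_\eta)=\aa\oplus \Sigma^{n-2}M,
\]
where $j$ is the canonical inclusion.  The projection $p:\aa_\eta\to\aa$ is a strict $A_\infty$-functor (its only non-trivial Taylor coefficient is the obvious projection, which clearly commutes with both $b_{\aa_\eta,2}$ and $b_{\aa_\eta,n}=\eta$, since $\eta$ and the $M$-components of $b_{\aa_\eta,2}$ land in $M$).  Once $\tilde{f}$ is produced so that $\tilde{f}_1=\phi$, commutativity $p\circ\tilde{f}=f$ is forced on the nose by degree considerations: $\tilde{f}_k$ has degree $1-k$, so it lands in $\aa_\eta^{1-k}$, which is zero unless $k=1$ or $k=n-1$ (giving image in $M$); since $p$ annihilates $M$, we get $(p\circ\tilde{f})_1=f$ and $(p\circ\tilde{f})_k=0$ for $k\geq 2$.

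Next I would invoke the obstruction machinery of \S\ref{ref-7.2-82} to produce the lift.  Applying Lemma \ref{ref-7.2.1-83} and Corollary \ref{ref-7.2.4-92} to $\phi$, the obstructions to lifting $\phi$ to an $A_\infty$-functor take values in $\HH^{k}(\cc,H^{-k+2}(\aa_\eta))$ for $k\geq 3$.  Since $H^\ast(\aa_\eta)$ is concentrated in degrees $0$ and $-(n-2)$, the group $H^{-k+2}(\aa_\eta)$ vanishes except when $k=2$ (already handled, since $\phi$ is a genuine functor) or $k=n$.  Thus the only potentially non-zero obstruction is $o_n(\phi)\in \HH^n(\cc,M)$.

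To compute $o_n(\phi)$ I would use the naturality statement \eqref{ref-7.1-84} together with Remark \ref{ref-7.2.5-94}.  That remark identifies the obstruction $o_n(j)$ for the inclusion $j:\aa\hookrightarrow H^\ast(\aa_\eta)$ with the class $\eta$ itself (after the preceding obstructions $o_3(j),\ldots,o_{n-1}(j)$ vanish trivially for degree reasons).  Since $f:\cc\to\aa$ is an honest functor, viewed as a strict $A_\infty$-functor with all higher components zero, naturality gives
\[
o_n(\phi)=o_n(j\circ f)=o_n(j)\circ H^\ast(f)=\eta\circ f=f_\ast(\eta)=0
\]
by hypothesis.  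All remaining obstructions $o_{k}(\phi)$ for $k>n$ live in zero groups.  Hence $\phi$ extends to an $A_\infty$-functor $\tilde{f}:\cc\to\aa_\eta$, completing the construction.

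The main (and only) real point is the identification $o_n(\phi)=f_\ast(\eta)$; everything else is automatic from degree bookkeeping.  As a sanity check, one can also do the construction by hand: set $\tilde{f}_1=\phi$ and $\tilde{f}_k=0$ for $k\neq 1,n-1$; the $A_\infty$-functor equations then reduce (at level $n$) to $d_{\mathrm{Hoch}}(\tilde{f}_{n-1})=-f_\ast(\eta)$ as a normalized Hochschild cochain on $\cc$ valued in the $\cc$-bimodule $M$ pulled back along $f$, and (at all other levels) to trivial identities using $M\cdot M=0$ in $\tilde{\aa}$ and the vanishing of $\eta$ on tensors containing an $M$-component.  The hypothesis $f_\ast(\eta)=0$ in $\HH^n(\cc,M)$ produces the required primitive~$\tilde{f}_{n-1}$.
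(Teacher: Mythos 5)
Your proposal is correct and follows essentially the same route as the paper: the paper's proof is a one-line appeal to Corollary \ref{ref-7.2.4-92} (degree reasons leave $o_n$ as the only possible obstruction) and Remark \ref{ref-7.2.5-94} (which identifies $o_n(\phi)$ with $p_n(\aa_\eta)\circ f=\eta\circ f=f_\ast(\eta)$), exactly the two ingredients you unpack. Your additional observations — that strict commutativity of the triangle is forced by degree bookkeeping since $\tilde{f}_k$ can only be nonzero for $k=1$ and $k=n-1$, and the explicit reduction to $d_{\mathrm{Hoch}}(\tilde{f}_{n-1})=-f_\ast(\eta)$ — are correct and make explicit what the paper leaves implicit.
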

\begin{proof}
According to Corollary \ref{ref-7.2.4-92} and Remark \ref{ref-7.2.5-94} $f_\ast(\eta)$ is the single obstruction against
the existence of the diagram \eqref{ref-7.9-96}. 
\end{proof}

\subsection{Scalar extensions of derived categories}
\label{ref-7.3-97}
Let $\Gamma$ a $k$-algebra. For a $k$-linear category $\Ascr$ we write $\Ascr_\Gamma$ for the category $\Gamma$-objects
in $\Ascr$, i.e.\ pairs $(M,\rho)$ where $M\in \Ob(\mathfrak{\Ascr})$ and $\rho:\Gamma\r \mathfrak{\Ascr}(M,M)$
is a~$k$-algebra morphism. 

There is a forgetful functor \cite{Rizzardo}
\[
F:D(\bb\otimes_k\Gamma)\r D(\bb)_\Gamma
\]
forgetting the action of $\Gamma$ on the level of complexes but remembering it on the level of the derived category. There is a similar result as Lemma \ref{ref-7.2.1-83}:
\begin{lemmas} 
\label{ref-7.3.1-98}
\begin{enumerate}
\item
Let $T\in D(\bb)_\Gamma$.
 Then there is a sequence
of obstructions
\[
o_{i+2}(T)\in \HH^{i+2}(\Gamma,\Ext^{-i}_{\bb}(T,T))
\]
for $i\ge 1$ such that $T$ lifts to an object in $D(\bb\otimes_k\Gamma)$  if and only if all obstructions vanish. More
precisely $o_{i+1}(T)$ is only defined if $o_3(T),\ldots,o_i(T)$
vanish and it depends on choices. 
\item 
If $f:\cc\r \bb$ is a DG-functor and $f_\ast:D(\bb)\r D(\cc)$ is the corresponding change of rings functor then after having made
choices for $T$ we may make
corresponding choices for $f_\ast(T)$ in such a way that
\[
f_\ast(o_{i+2}(T))=o_{i+2}(f_\ast(T))
\]
\end{enumerate}
\end{lemmas}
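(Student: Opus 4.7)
The plan is to reduce this lemma to Lemma \ref{ref-7.2.1-83} by translating the lifting problem for $T$ into an $A_\infty$-extension problem for a $k$-algebra morphism.

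First I would fix, after passing to a suitable cofibrant/h-injective DG-model of $T$, the DG-algebra $\REnd_\bb(T)$ whose cohomology is $\Ext^\ast_\bb(T,T)$. The $\Gamma$-object structure on $T$ is precisely a $k$-algebra morphism
\[
f : \Gamma \longrightarrow H^0(\REnd_\bb(T)) = \End_{D(\bb)}(T).
\]
The key identification is that lifts of $T$ to $D(\bb\otimes_k\Gamma)$ are in bijection with (strictly unital) $A_\infty$-algebra morphisms $\Gamma \to \REnd_\bb(T)$ extending $f$ on $H^0$; indeed, such an $A_\infty$-morphism is exactly the extra structure needed to promote the DG-$\bb$-module $T$ to an object of $D(\bb\otimes_k\Gamma)$ (compare with the proof of Lemma \ref{ref-6.4.1-80}, which is the analogous statement for deformations along an $A_\infty$-cocycle on $\bb$).

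With this identification, I would apply Lemma \ref{ref-7.2.1-83} with $\cc=\Gamma$ (a one-object $k$-linear category, concentrated in degree $0$ with zero differential) and $\tilde{\cc}=\REnd_\bb(T)$. Specializing \eqref{ref-7.6-88} to this situation gives obstructions
\[
o_i(f)\in \HH^i(\Gamma, H^{-i+2}(\REnd_\bb(T))) = \HH^i(\Gamma, \Ext^{-i+2}_\bb(T,T)),
\]
and after reindexing $i = j+2$ for $j\ge 1$ these are precisely the classes $o_{j+2}(T)\in \HH^{j+2}(\Gamma,\Ext^{-j}_\bb(T,T))$ in the statement. Lemma \ref{ref-7.2.1-83} then yields, inductively in $j$, both the well-definedness (conditional on the vanishing of all prior obstructions) and the fact that their joint vanishing is equivalent to the existence of a full $A_\infty$-morphism extending $f$, i.e.\ to the existence of a lift of $T$.

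For part (2), I would invoke the naturality \eqref{ref-7.1-84} from Lemma \ref{ref-7.2.1-83}. The DG-functor $f\colon \cc\to\bb$ induces a DG-algebra morphism
\[
g : \REnd_\bb(T)\longrightarrow \REnd_\cc(f_\ast T)
\]
by post-composition with $f_\ast$, and the $\Gamma$-action on $f_\ast T$ is $H^0(g)\circ f$. Applying \eqref{ref-7.1-84} with this $g$ and $G=\id_\Gamma$ gives $o_{i+2}(H^0(g)\circ f) = H^\ast(g)\circ o_{i+2}(f)$, which is exactly the assertion $f_\ast(o_{i+2}(T)) = o_{i+2}(f_\ast T)$ once one checks that the change of coefficient systems $\Ext^{-i}_\bb(T,T)\to \Ext^{-i}_\cc(f_\ast T, f_\ast T)$ induced by $g$ agrees with the map denoted $f_\ast$ on Hochschild cohomology.

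The main obstacle I expect is the first step: making precise and verifying the bijection between lifts of $T$ to $D(\bb\otimes_k\Gamma)$ and $A_\infty$-extensions $\Gamma\to \REnd_\bb(T)$ of $f$. One has to choose models carefully (say fibrant $T$, so that $\REnd_\bb(T)$ really computes derived endomorphisms) and argue that two lifts giving $A_\infty$-homotopic morphisms are isomorphic in $D(\bb\otimes_k\Gamma)$ and conversely. Once this bookkeeping is in place the rest of the proof is a formal consequence of Lemma \ref{ref-7.2.1-83} and its naturality.
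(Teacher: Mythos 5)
Your overall strategy coincides with the paper's: represent $T$ by a cofibrant $\tilde{T}\in\underline{\Mod}(\bb)$, identify lifts of $T$ to $D(\bb\otimes_k\Gamma)$ with $A_\infty$-morphisms $\Gamma\r\End_\bb(\tilde{T})$ extending the given $\Gamma\r H^0(\End_\bb(\tilde{T}))$, and then invoke Lemma \ref{ref-7.2.1-83}. However, there are two genuine gaps. First, you do not show that $o_i(T)$ is independent of the chosen cofibrant representative $\tilde{T}$; this is needed for the obstruction to be an invariant of $T\in D(\bb)_\Gamma$ at all, and the paper devotes half of its proof to it. Two cofibrant models $\tilde{T}_1,\tilde{T}_2$ give two DG-algebras $\End_\bb(\tilde{T}_1)$, $\End_\bb(\tilde{T}_2)$ with no direct morphism between them; the paper bridges them with the upper-triangular sub-DG-algebra of $\End_\bb(\cone u)$ whose two projections are quasi-isomorphisms, and then applies Corollary \ref{ref-7.2.2-89} (invariance of obstructions under $A_\infty$-quasi-isomorphisms), which your argument never uses.

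Second, and more seriously, in part (2) the DG-algebra morphism $g:\REnd_\bb(T)\r\REnd_\cc(f_\ast T)$ that you propose to feed into the naturality formula \eqref{ref-7.1-84} does not exist on the nose: restriction of scalars along $f$ does not preserve cofibrancy, so $\End_\cc(f_\ast\tilde{T})$ does not compute $\REnd_\cc(f_\ast T)$, and there is no strict map from $\End_\bb(\tilde{T})$ to the endomorphism algebra of a cofibrant replacement $\tilde{T}_1\r f_\ast\tilde{T}$ in $\underline{\Mod}(\cc)$. One only has a zig-zag, which the paper again encodes in a triangular-matrix DG-algebra built from $\tilde{\Gamma}_1=\End_\cc(\tilde{T}_1)$, $\tilde{\Gamma}_2=\End_\bb(\tilde{T})$ and $\Hom_\cc(\Sigma\tilde{T}_1,f_\ast\tilde{T})$, with one projection a quasi-isomorphism; Corollary \ref{ref-7.2.2-89} then transports the obstructions across the zig-zag and identifies the resulting comparison map on coefficients with $f_\ast:\Ext^\ast_\bb(T,T)\r\Ext^\ast_\cc(f_\ast T,f_\ast T)$. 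Without this (or an equivalent homotopy-invariance argument) your appeal to \eqref{ref-7.1-84} does not apply, since that formula requires an actual $A_\infty$-functor $g$.
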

\begin{proof}
If $T$ has a lift,
then it is represented by a cofibrant object $\tilde{T}$ in $\underline{\Mod}(\bb\otimes_k\Gamma)$ for the standard projective model structure, which is in particular a cofibrant object in $\underline{\Mod}(\bb)$ equipped with an $A_\infty$-$\Gamma$-action. Conversely, 
an object in $\underline{\Mod}(\bb)$ with an  $A_\infty$-$\Gamma$-action may be regarded as an object in $D(\bb\otimes_k\Gamma)$.
See \cite[Lemma 10.2.1]{RizzardoVdB} for an analogous statement which is proved in the same way. 

Let $\tilde{T}$ be a cofibrant object in $\underline{\Mod}(\bb)$ representing $T$. Put $\tilde{\Gamma}=\Hom_\bb(\tilde{T},\tilde{T})$.
We have a graded $k$-algebra morphism $f:\Gamma\r H^\ast(\tilde{\Gamma})$, and the question is when can we lift it to an $A_\infty$-morphism
$\Gamma\r \tilde{\Gamma}$. This is controlled by the obstructions $o_i(T)\overset{\text{def}}{=}o_i(f)$. We still have to show, however, that 
$o_i(T)$ is independent of the choice of $\tilde{T}$. Suppose that $\tilde{T}_1$, $\tilde{T}_2$ are cofibrant objects both representing
$T$, i.e.\ that there is a quasi-isomorphism $u:\tilde{T}_1\r \tilde{T}_2$ inducing the identity on $T$. Put $\tilde{\Gamma}_i=\End_{\bb}(\tilde{T}_i)$
and let $f_i:\Gamma\r H^i(\tilde{\Gamma}_i)$ be the corresponding actions. Let $\tilde{X}=\cone u$. Note that $\tilde{X}$ is cofibrant and acyclic.  Let
 $\tilde{\Gamma}$ be the sub-DG-algebra of $\End_\bb(\tilde{X})$ given by
\[
\tilde{\Gamma}=
\begin{pmatrix}
\tilde{\Gamma}_1 & 0\\
\Hom_{\bb}(\Sigma \tilde{T}_1,\tilde{T}_2) & \tilde{\Gamma}_2
\end{pmatrix}.
\]
The kernels of the projections $\pr_1:\tilde{\Gamma}\r \tilde{\Gamma}_1$, $\pr_2:\tilde{\Gamma}\r \tilde{\Gamma}_2$ are respectively given by $\Hom_{\bb}(\tilde{X},\tilde{T}_2)$
and
$\Hom_{\bb}(\Sigma\tilde{T}_1,\tilde{X})$ and hence they are acyclic. It follows that $\pr_1$, $\pr_2$ are quasi-isomorphisms.

We obtain a morphism of graded rings $f:\Gamma\r H^\ast(\tilde{\Gamma})$ given by
\[
f=
\begin{pmatrix}
f_1&0\\
0&f_2
\end{pmatrix}.
\]
Since $\pr_i:\tilde{\Gamma}\r \tilde{\Gamma}_i$  are quasi-isomorphisms of DG-algebras, and the resulting
 identification of $H^\ast(\tilde{\Gamma}_1)$ and $H^\ast(\tilde{\Gamma}_2)$ is conjugation by $H^\ast(u)$,
we conclude 
by  Corollary \ref{ref-7.2.2-89} that the obstructions against lifting $f$, $f_1$ and $f_2$ all coincide, and those of $f_1$, $f_2$
are naturally identified. This finishes the proof that $o_i(T)$ is well defined. 

\medskip

The verification that the obstructions are natural is similar. Let $\tilde{T}_2\r T$ be a cofibrant replacement of $T$ in $\underline{\Mod}(\bb)$  and
let $u:\tilde{T}_1\r f_\ast\tilde{T}_2$ be a cofibrant replacement of $f_\ast\tilde{T}_2$ in $\underline{\Mod}(\cc)$. Put 
$\tilde{\Gamma}_1=\End_\cc(\tilde{T}_1)$, $\tilde{\Gamma}_2=\End_{\bb}(\tilde{T}_2)$ and consider $\Hom_{\cc}(\Sigma \tilde{T}_1,f_\ast\tilde{T}_2)$
as $\tilde{\Gamma}_2-\tilde{\Gamma}_1$-module. Put
\[
\tilde{\Gamma}=
\begin{pmatrix}
\tilde{\Gamma}_1 & 0\\
\Hom_{\cc}(\Sigma \tilde{T}_1,f_\ast\tilde{T}_2) & \tilde{\Gamma}_2
\end{pmatrix}.
\]
with the differential being the sum of the natural one given by the differentials on $ \tilde{\Gamma}_1$, $ \tilde{\Gamma}_2$ and the commutator with $\left(\begin{smallmatrix} 0&0\\ u&0\end{smallmatrix}\right)$. We now have 
projection maps $p_i:\tilde{\Gamma}\r \tilde{\Gamma}_i$ with $p_2$ being a quasi-isomorphism (since its kernel is given by $\Hom_{\cc}(\Sigma \tilde{T}_1,\cone u$), $\Sigma \tilde{T}_1$
is cofibrant and $\cone u$ is acyclic). Moreover we find that $H^\ast(p_1)\circ H^\ast(p_2)^{-1}:
H^\ast(\tilde{\Gamma}_2)\r H^\ast(\tilde{\Gamma}_1)$ is the map $f_\ast:\Ext_{\bb}^\ast(T,T)\r \Ext_{\cc}^\ast(f_\ast T,f_\ast T)$ given by functoriality. 
The naturally of the obstructions now follows by applying Corollary \ref{ref-7.2.2-89}.
\end{proof}

From Corollary \ref{ref-7.2.4-92} and the proof of Lemma \ref{ref-7.3.1-98} we also deduce:
\begin{corollarys}
\label{ref-7.3.2-99}
Let $T\in D(\bb)_\Gamma$.
  If $-n<0$ is maximal with the property that
  $\Ext^{-n}_\bb(T,T)\neq 0$ then $o_3(T)=\cdots=o_{n+1}(T)=0$,
  and $o_{n+2}(T)$ does not depend on any choices.
\end{corollarys}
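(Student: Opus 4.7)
The plan is to reduce the statement directly to Corollary \ref{ref-7.2.4-92} via the translation between $\Gamma$-objects in $D(\bb)$ and graded algebra morphisms into endomorphism rings that was developed in the proof of Lemma \ref{ref-7.3.1-98}.

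First I would pick a cofibrant replacement $\tilde{T}$ of $T$ in $\underline{\Mod}(\bb)$, set $\tilde{\Gamma}:=\End_{\bb}(\tilde{T})$, and let $f:\Gamma\r H^\ast(\tilde{\Gamma})=\Ext^\ast_{\bb}(T,T)$ be the graded $k$-algebra morphism encoding the $\Gamma$-action on $T$. By the very definition used in the proof of Lemma \ref{ref-7.3.1-98}, $o_i(T)=o_i(f)$ for all $i$, where $\Gamma$ is viewed as a one-object $k$-linear category.

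Next, the hypothesis that $-n<0$ is maximal with $\Ext^{-n}_{\bb}(T,T)\neq 0$ is exactly the condition that $-n$ is maximal with $H^{-n}(\tilde{\Gamma})\neq 0$. Applying Corollary \ref{ref-7.2.4-92} with $\cc=\Gamma$ and $\tilde{\cc}=\tilde{\Gamma}$ immediately yields
\[
o_3(f)=\cdots=o_{n+1}(f)=0,
\]
and also gives that $o_{n+2}(f)$ is independent of the intermediate choices (i.e.\ of the chosen $A_i$-lifts of $f$).

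What remains is to upgrade this to independence from the choice of the cofibrant model $\tilde{T}$ itself, so that $o_{n+2}(T)$ is truly intrinsic to $T$. For this I would reuse verbatim the mapping cone construction from the proof of Lemma \ref{ref-7.3.1-98}: given two cofibrant representatives $\tilde{T}_1,\tilde{T}_2$ with quasi-isomorphism $u:\tilde{T}_1\r \tilde{T}_2$, the upper-triangular DG-algebra
\[
\tilde{\Gamma}=\begin{pmatrix}\tilde{\Gamma}_1 & 0\\ \Hom_{\bb}(\Sigma\tilde{T}_1,\tilde{T}_2) & \tilde{\Gamma}_2\end{pmatrix}
\]
projects quasi-isomorphically onto each $\tilde{\Gamma}_i$, and the induced identification of $H^\ast(\tilde{\Gamma}_1)$ with $H^\ast(\tilde{\Gamma}_2)$ is conjugation by $H^\ast(u)$. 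Corollary \ref{ref-7.2.2-89} then identifies the obstructions for the three relevant graded morphisms, so the class $o_{n+2}(f)$ transports unambiguously between $\tilde{T}_1$ and $\tilde{T}_2$.

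The main obstacle, such as it is, is just bookkeeping: checking that the hypotheses of Corollary \ref{ref-7.2.4-92} are literally met (in particular that $\Gamma$ is concentrated in degree zero, which is given) and that the independence argument from Lemma \ref{ref-7.3.1-98}, originally stated for the existence-of-lift obstructions as a whole, refines to the specific obstruction class $o_{n+2}$; the latter is immediate since Corollary \ref{ref-7.2.2-89} identifies obstructions level by level.
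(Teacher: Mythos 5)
Your proposal is correct and follows exactly the route the paper intends: the paper derives this corollary by combining Corollary \ref{ref-7.2.4-92} with the translation and well-definedness argument from the proof of Lemma \ref{ref-7.3.1-98}, which is precisely what you do. No gaps.
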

\section{Sheaves and presheaves}
\label{sec:sheaves}
\subsection{Introduction}
This technical section is mainly concerned with relating properties of quasi-compact separated schemes $X$ to
similar properties
of the corresponding categories $\Xscr$ defined in the introduction. 
This material is necessary as we will use $\Xscr$ to
deform the derived category of quasi-coherent sheaves on $X$. We discuss:
\begin{itemize}
\item the relation on the level of modules and bimodules (the functors $w$, $W$ in \S\ref{ref-8.2-103});
\item compatibility with Hochschild cohomology (see \eqref{eq:hochschild});
\item  compatibility with certain Fourier-Mukai functors  (Lemma \ref{ref-8.3.1-107}, \eqref{eq:equivariant});
\item compatibility with the characteristic morphism (see \eqref{ref-8.11-117});
\item functoriality of $X\mapsto \Xscr$ for closed immersions (see \S\ref{sec:functoriality});
\item vector bundles (see \S\ref{sec:vectorbundles}).
\end{itemize}
The last section \S\ref{sec:compactgenerators} discusses an auxilliary result which will be needed later. The reader may be willing to skip this section on
first reading.
\subsection{Presheaves}
We discuss some notions introduced in \cite{GS1,lowenvdb3}. We follow
more or less \cite{lowenvdb3}, but as we use left instead of right modules
the conventions will be slightly different.

Let $(I,\le)$ be a poset and let $\Oscr$ be a presheaf of $k$-algebras on $I$.
For $j\le i$, we denote the corresponding 
restriction morphism $\Oscr(i)\r \Oscr(j)$ by $\rho_{ij}$.
In \cite[\S2.2]{lowenvdb3} (following a similar construction in \cite{GS1})
a $k$-linear category, which we will denote by $\tilde{\Oscr}$, 
is associated to $\Oscr$ as follows: $\Ob(\tilde{\Oscr})=I$ and 
\[
\tilde{\Oscr}(i,j)=
\begin{cases} 
\Oscr(j)&\text{if $j\le i$}\\
0&\text{otherwise.}
\end{cases}
\]
The non-trivial compositions for $k\le j\le i$
\[
\tilde{\Oscr}(j,k)\otimes_k \tilde{\Oscr}(i,j)\r \tilde{\Oscr}(i,k)
\]
are given by
\[
\Oscr(k)\otimes_k \Oscr(j)\xrightarrow{\text{restriction}} \Oscr(k)
\otimes_k \Oscr(k)\xrightarrow{\text{multiplication}} \Oscr(k).
\]

Let $\Mod(\Oscr)$ be the category of presheaves of $\Oscr$-modules. 
There is an equivalence of categories
\[
\pi^\ast:\Mod(\Oscr)\r \Mod(\tilde{\Oscr})
\]
such that
\[
\pi^\ast(M)(i)=M(i),
\]
and the multiplication map for $j\le i$ 
\[
\tilde{\Oscr}(i,j)\otimes_k (\pi^\ast M)(i)\r (\pi^\ast M)(j)
\]
is given by
\begin{equation}
\label{ref-8.1-100}
\Oscr(j)\otimes_k M(i)\xrightarrow{\text{restriction}} \Oscr(j)\otimes_k M(j)\xrightarrow{\text{action}} M(j).
\end{equation}
Conversely, we may recover the restriction map $\rho_{ij}:M(i)\r M(j)$ as the action of the element
$1_{\Oscr(j)}\in \tilde{\Oscr}(i,j)$.

Assume that $\Oscr'$ is a second presheaf of rings on $I$. Write
$\Bimod_k(\Oscr,\Oscr')=\Mod(\Oscr\otimes_k \Oscr^{\prime \circ})$. 
Then
there is a functor (see \cite{GS1}\cite[\S3.4, Lemma 5.2]{lowenvdb3})
\[
\Pi^\ast:\Bimod_k(\Oscr,\Oscr')\r \Bimod(\tilde{\Oscr},\tilde{\Oscr}')
\]
defined as follows
\[
\Pi^\ast(M)(i,j)=
\begin{cases}
M(j)&\text{if $j\le i$}\\
0&\text{otherwise.}
\end{cases}
\]
The bimodule structure on $\Pi^\ast(M)(i,j)$ is defined using a similar formula as \eqref{ref-8.1-100}.

The functor $\Pi^\ast$ is obviously exact and by  \cite{GS1}\cite[Thm 4.1, Lemma 5.2]{lowenvdb3} the corresponding
derived functor
\[
\Pi^\ast:D(\Bimod_k(\Oscr,\Oscr'))\r D(\Bimod(\tilde{\Oscr},\tilde{\Oscr}'))
\] 
is fully
faithful.
\begin{lemmas}
\label{ref-8.1.1-101}
Let $U\in D(\Oscr')$ and $M\in D(\Bimod_k(\Oscr,\Oscr'))$. Then there is a natural isomorphism
\begin{equation}
\label{ref-8.2-102}
\pi^\ast(M\Lotimes_{\Oscr'} U)=\Pi^\ast(M)\Lotimes_{\tilde{\Oscr}'} \pi^\ast U
\end{equation}
as objects in $D(\tilde{\Oscr})$.
\end{lemmas}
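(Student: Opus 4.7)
The strategy is to reduce to the underived setting and then verify the isomorphism pointwise on the poset $I$. Because $\pi^*$ is an exact equivalence of abelian categories, it preserves K-flatness, so any K-flat resolution $U'\to U$ in $C(\Oscr')$ yields a K-flat resolution $\pi^*U'\to\pi^*U$ in $C(\tilde\Oscr')$. Consequently both derived tensor products in \eqref{ref-8.2-102} are represented by their underived counterparts with $U$ replaced by $U'$, reducing the problem to producing a natural isomorphism
\[
\pi^*(M\otimes_{\Oscr'} U')\cong \Pi^*(M)\otimes_{\tilde\Oscr'}\pi^*(U')
\]
of ordinary presheaves, natural in $M$ and $U'$.

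For the underived identity I would produce a canonical comparison morphism and then show it is an isomorphism pointwise. On the presheaf side, the tensor product of a presheaf bimodule with a presheaf module is computed pointwise, so $(M\otimes_{\Oscr'}U')(i)=M(i)\otimes_{\Oscr'(i)}U'(i)$. On the tilde side, the coend $(\Pi^*M\otimes_{\tilde\Oscr'}\pi^*U')(i)$ is a quotient of $\bigoplus_{j\le i}M(j)\otimes_k U'(j)$ by relations encoding the right $\tilde\Oscr'$-action on $\Pi^*M(i,-)$ and the left $\tilde\Oscr'$-action on $\pi^*U'$; by the formulas analogous to \eqref{ref-8.1-100} these relations are built precisely out of the restriction maps of $M$ and $U'$ and the ring restrictions of $\Oscr'$. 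The $j=i$ inclusion $M(i)\otimes_k U'(i)\to \bigoplus_{j\le i}M(j)\otimes_k U'(j)$ descends to the $\Oscr'(i)$-balanced quotient, giving a natural map
\[
M(i)\otimes_{\Oscr'(i)} U'(i)\longrightarrow (\Pi^*M\otimes_{\tilde\Oscr'}\pi^*U')(i),
\]
and the task is to show this is bijective: each pure tensor $m\otimes u$ with $m\in M(j)$, $u\in U'(j)$, $j<i$, lies in the same class as a tensor supported at $j=i$ (via the bimodule structure on $\Pi^*M$ that binds the index $i$ to $j$), and the balancing left at $j=i$ is exactly the $\Oscr'(i)$-balancing.

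Naturality in $M$ and $U$ is automatic from the functoriality of every construction involved, and the derived statement then follows immediately from the underived one via the K-flat resolution. The main obstacle is the bookkeeping for the coend collapse in the previous paragraph: it requires carefully unpacking the variance conventions in the definitions of $\pi^*$ and $\Pi^*$, using that $\Pi^*M(i,j)=M(j)$ for $j\le i$ couples the left $\tilde\Oscr$ index $i$ and the right $\tilde\Oscr'$ index $j$ through the common presheaf $M$. Once this is untangled the identification becomes routine, since both $\pi^*$ and $\Pi^*$ are designed so that sheaf-theoretic tensor products match linear-categorical ones.
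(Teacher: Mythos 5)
Your overall strategy---reduce to the underived statement via a resolution, then verify the underived isomorphism pointwise on the poset---is sound, but at the underived step the paper takes a shorter route: it reduces not merely to $U$ projective but all the way to $U=\pi_\ast\tilde{\Oscr}'(i,-)$ representable, after which both sides collapse by Yoneda ($N\otimes_{\tilde{\Oscr}'}\tilde{\Oscr}'(i,-)=N(i,-)$) and no coend manipulation is needed. Your version instead proves the underived isomorphism for arbitrary $U'$, which is more information but is also where all the bookkeeping lives, and your description of the coend is mis-indexed: with the paper's conventions, $(\Pi^\ast M\otimes_{\tilde{\Oscr}'}\pi^\ast U')(i)$ is a quotient of $\bigoplus_{i'\ge i}\Pi^\ast M(i',i)\otimes_k U'(i')=\bigoplus_{i'\ge i}M(i)\otimes_k U'(i')$ --- the $M$-factor is pinned at the outer (covariant, $\tilde{\Oscr}$) index $i$ and only the $U'$-factor varies --- not of $\bigoplus_{j\le i}M(j)\otimes_k U'(j)$. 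The collapse mechanism you invoke is nevertheless the correct one: the identities $1_{\Oscr'(i)}\in\tilde{\Oscr}'(i',i)$ identify $m\otimes u$, $u\in U'(i')$, with $m\otimes u|_{i}$, and the residual relations at $i'=i$ are exactly the $\Oscr'(i)$-balancing, yielding $M(i)\otimes_{\Oscr'(i)}U'(i)$ as required.

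The one genuine gap is your opening claim that $\pi^\ast$ preserves K-flatness because it is an exact equivalence. Flatness is not an abelian-categorical notion; it refers to the tensor structure, and the compatibility of the two tensor structures under $\pi^\ast$ and $\Pi^\ast$ is precisely what the lemma asserts, so as written the reduction is circular (K-flatness of $\pi^\ast U'$ is a condition against \emph{all} right $\tilde{\Oscr}'$-modules, not just those of the form $\Pi^\ast M(-,j)$). The fix is cheap: resolve $U$ by a K-projective complex (these exist in the module category $\Mod(\Oscr')$). K-projectivity is a $\Hom$-condition in the homotopy category and is therefore preserved by the equivalence $\pi^\ast$, and K-projective complexes of modules are K-flat; alternatively reduce to representables as the paper does. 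With that substitution your argument goes through.
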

\begin{proof} It suffices to prove the non-derived statement for $U$ a projective object in $\Mod(\Oscr')$,
as $\pi^\ast U$ is then also projective in $\Mod(\tilde{\Oscr}')$ since
$\pi^\ast$ is an equivalence of categories. Equivalently, we may assume
 $U=\pi_\ast V$, with $V$ a projective object in $\Mod(\tilde{\Oscr}')$ where $\pi_\ast=(\pi^\ast)^{-1}$.
Furthermore, we may assume that $V$ is of the form $\tilde{\Oscr}'(i,-)$.

Then we find 
\begin{align*}
\pi^\ast(M\otimes_{\Oscr'} \pi_\ast \tilde{\Oscr}'(i,-))(j)&=M(j)\otimes_{\Oscr'(j)}  \tilde{\Oscr}'(i,j)\\
&=
\begin{cases}
M(j)\otimes_{\Oscr'(j)}  \Oscr'(j)=M(j)&\text{if $j\le i$}\\
0&\text{otherwise.}
\end{cases}
\end{align*}
Similarly, by the general property of tensor products
\[
\Pi^\ast(M)\otimes_{\tilde{\Oscr}'}\tilde{\Oscr}'(i,-)=\Pi^\ast(M)(i,-),
\]
and hence, by the definition of $\Pi^\ast(-)$,
\[
(\Pi^\ast(M)\otimes_{\tilde{\Oscr}'}\tilde{\Oscr}'(i,-))(j)
=
\begin{cases}
M(j)&\text{if $j\le i$}\\
0&\text{otherwise.}
\end{cases}
\]
In other words, for all $i,j$
\[
\pi^\ast(M\otimes_{\Oscr'} \pi_\ast \tilde{\Oscr}'(i,-))(j)=(\Pi^\ast(M)\otimes_{\tilde{\Oscr}'}\tilde{\Oscr}'(i,-))(j)
\]
It remains to show that this identification is natural in $i$, $j$. This is a routine verification, which we omit. 
\end{proof}
\subsection{Sheaves}
\label{ref-8.2-103}
Here we recall some results from \cite[\S7.5ff]{lowenvdb2}.  Unless
otherwise specified, in the rest of this section $X$ will be a
quasi-compact separated $k$-scheme. The separatedness hypothesis
ensures that $D(\Qch(X))\cong D_{\Qch}(\Mod(X))$ \cite{Neeman}.  Hence
we will make no distinction between those two categories. Note, in
particular, that $D(\Qch(X))$ has enough homotopically flat objects
(see \cite{AJL1}), so the derived tensor product may be computed
entirely on the quasi-coherent level.

\medskip

Let $X=\bigcup_{i=1}^n
U_i$ be an affine covering. For $I\subset \{1,\ldots,n\}$ put
$U_I=\bigcap_{i\in I} U_i$. 
Let $\Iscr$ be the poset $\{I\subset\{1,\ldots,n\}\mid I\neq \emptyset\}$,
ordered in such a way that $I\le J$ if $J\subset I$ (the strange ordering
is motivated by the fact that $J\subset I$ implies $U_I\subset U_J$). 

Let $\widehat{\Oscr}_X$ be the presheaf of rings on $\Iscr$ associated to $\Oscr_X$,
and, for a quasi-coherent sheaf $M$ on $X$, let $\epsilon^\ast M$ be the
corresponding presheaf of $\widehat{\Oscr}_X$-modules.
The corresponding derived functor 
\[
\epsilon^\ast:D(\Qch(X))\r D(\Mod(\widehat{\Oscr}_X))
\]
has a right adjoint \cite{lowenvdb2}, which we will denote by $R\epsilon_\ast$. It may be computed using
a version of the \v{C}ech complex. More precisely
\begin{equation}
\label{ref-8.3-104}
R\epsilon_\ast(M)=\left(\bigoplus_{I\in \Iscr} j_{U_I,\ast} \Sigma^{-|I|+1}\widetilde{M(I)}, d \right),
\end{equation}
where $\tilde{?}$ is the quasi-coherent sheaf associated to a module over a commutative ring, and $j_{U_I}:U_I\r X$ is the inclusion map.
The differential $d$ is the usual alternating sum of restriction morphisms. Recall the following\footnote{This is stated
in somewhat greater generality than in loc.\ cit. However, it can be proved in the same way.}:
\begin{lemmas} \cite[Theorem 7.6.6]{lowenvdb2} \label{ref-8.2.1-105} The functor $\epsilon^\ast:D(\Qch(X))\r D(\widehat{\Oscr}_X)$ is fully faithful
and a left inverse is given by $R\epsilon_\ast$. Furthermore, the essential image of $\epsilon^\ast$ is $D_{\epsilon^*\Qch(X)}(\widehat{\Oscr}_X)$.
\end{lemmas}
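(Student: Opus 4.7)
The plan is to verify the two standard adjunction criteria simultaneously: show the unit $\eta_M : M \to R\epsilon_\ast \epsilon^\ast M$ is an isomorphism for every $M \in D(\Qch(X))$, which gives both fully faithfulness of $\epsilon^\ast$ and the left-inverse statement for $R\epsilon_\ast$; and show the counit $\varepsilon_N : \epsilon^\ast R\epsilon_\ast N \to N$ is an isomorphism precisely when $N$ has cohomology in $\epsilon^\ast \Qch(X)$, which identifies the essential image.

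\textbf{Unit is a quasi-isomorphism.} First I would make the unit explicit via formula \eqref{ref-8.3-104}:
\[
R\epsilon_\ast \epsilon^\ast M \;=\; \Tot\!\left(\bigoplus_{I\in\Iscr} j_{U_I,\ast}\,\Sigma^{-|I|+1}\,\widetilde{(\epsilon^\ast M)(I)}\right).
\]
Since $U_I$ is affine and $M$ is quasi-coherent we have $(\epsilon^\ast M)(I)=\Gamma(U_I,M|_{U_I})$ and $\widetilde{(\epsilon^\ast M)(I)} = M|_{U_I}$, so the right-hand side is exactly the totalization of the classical alternating \v{C}ech complex of $M$ for the affine cover $\{U_i\}$. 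For a single quasi-coherent sheaf the augmented \v{C}ech complex is a resolution since $X$ is quasi-compact and separated; for a bounded complex this extends by a standard spectral-sequence argument. For unbounded $M$, one invokes the fact (recalled before the lemma) that $C(\Qch(X))$ has enough homotopically flat objects \cite{AJL1}, so the totalization commutes with the relevant limits and the unit isomorphism extends from the heart to all of $D(\Qch(X))$.

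\textbf{Essential image.} The inclusion $\epsilon^\ast D(\Qch(X)) \subseteq D_{\epsilon^\ast \Qch(X)}(\widehat{\Oscr}_X)$ is tautological: $\epsilon^\ast$ is an exact functor (in fact the derived version of an equivalence between $\Mod(\Oscr_X)$ understood as presheaves and $\Mod(\widehat{\Oscr}_X)$), so $H^i(\epsilon^\ast M) = \epsilon^\ast H^i(M)$ sits in $\epsilon^\ast \Qch(X)$. For the reverse inclusion, take $N \in D_{\epsilon^\ast \Qch(X)}(\widehat{\Oscr}_X)$, set $M := R\epsilon_\ast N$, and show that $\varepsilon_N$ is a quasi-isomorphism. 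The \v{C}ech description \eqref{ref-8.3-104} shows that $R\epsilon_\ast$ has bounded cohomological amplitude (at most $n$), so by a way-out argument one reduces to the case where $N$ is concentrated in a single degree. But then $N \cong \epsilon^\ast M'$ for some $M' \in \Qch(X)$ by definition of $\epsilon^\ast\Qch(X)$, and $\varepsilon_N$ is an iso by the triangle identities combined with the unit computation above.

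\textbf{Main obstacle.} The footnote flags "somewhat greater generality" than in \cite{lowenvdb2}; in practice this amounts to handling \emph{unbounded} complexes. The difficulty is twofold: one must ensure $\epsilon^\ast$ and $R\epsilon_\ast$ are well-behaved on the unbounded derived category (which requires the homotopically flat resolutions guaranteed by the separatedness of $X$), and one must control the interaction between the infinite totalization implicit in unbounded $R\epsilon_\ast$ and the unit map. Once these technical points are settled, the explicit \v{C}ech formula reduces everything to the classical statement that an affine open cover of a quasi-compact separated scheme computes quasi-coherent cohomology.
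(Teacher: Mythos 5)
The paper does not actually prove this lemma: it is quoted from \cite{lowenvdb2}, with a footnote saying only that the statement there is slightly less general ``but can be proved in the same way.'' So there is no in-paper argument to compare against; judged on its own, your proposal follows the standard and correct route (unit is an isomorphism $\Rightarrow$ full faithfulness and left inverse; counit is an isomorphism on complexes with cohomology in $\epsilon^\ast\Qch(X)$ $\Rightarrow$ identification of the essential image), and the reduction of the unit to the classical \v{C}ech resolution via \eqref{ref-8.3-104} is exactly the right computation.

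Two of your justifications are off, though neither is fatal. First, the appeal to homotopically flat resolutions for the unbounded case does no work here: flat resolutions are relevant to derived tensor products, not to $R\epsilon_\ast$. The correct reason the argument extends to unbounded complexes is that the cover is \emph{finite}, so the \v{C}ech functor $M\mapsto\bigl(\bigoplus_I j_{U_I,\ast}\Sigma^{-|I|+1}M|_{U_I},d\bigr)$ is a finite complex of exact functors (exact because each $U_I$ is affine --- using separatedness for $|I|\ge 2$ --- and each $j_{U_I}$ is an affine morphism); hence it has bounded amplitude, preserves quasi-isomorphisms, and the filtration by \v{C}ech degree is finite, so no convergence issue arises. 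The same finite-amplitude observation is what legitimizes your way-out reduction for the counit. Second, the parenthetical claim that $\epsilon^\ast$ is ``the derived version of an equivalence between $\Mod(\Oscr_X)$ understood as presheaves and $\Mod(\widehat{\Oscr}_X)$'' is not right: the equivalence in the paper is $\pi^\ast:\Mod(\widehat\Oscr_X)\to\Mod(\Xscr)$, whereas $\epsilon^\ast$ restricted to $\Qch(X)$ is only fully faithful onto $\epsilon^\ast\Qch(X)$, and its exactness (which is all you need for $H^i(\epsilon^\ast M)=\epsilon^\ast H^i(M)$) again rests on the affineness of the $U_I$. With these two points repaired, the proof is complete.
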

Below, if $X$ is a quasi-compact separated scheme, we will denote by the corresponding
curly letter $\Xscr$ the category $\widetilde{\widehat{\Oscr}}_X$, as introduced in this section and the previous one. 
There is now a fully faithful embedding 
\begin{equation}
\label{eq:fullyfaithful}
w:D(\Qch(X))\r D(\Xscr),
\end{equation}
given by the composition
\[
D(\Qch(X))\xrightarrow{\epsilon^\ast} D(\Mod(\widehat{\Oscr}_X))\xrightarrow[\cong]{\pi^\ast} D(\Xscr).
\]
We have a similar statement for bimodules. 
Let $D^{\delta}(\Qch(X))$ be the category
whose objects are the same as those of $D(\Qch(X))$, but whose
$\Hom$-sets are given by
\[
\Hom_{D^{\delta}(\Qch(X))}(M,N)=\Hom_{D(\Qch(X\times_k X))}(i_{\Delta,\ast} M,i_{\Delta,\ast} N),
\]
where $i_{\Delta}:X\r X\times_k X$ is the diagonal. Let $Z=\bigcup_{i=1}^n U_i\times_k U_i$. Note that $\widehat{\Oscr}_Z=\widehat{\Oscr}_X\otimes_k\widehat{\Oscr}_X$.
Then the map 
\[
D^\delta(\Qch(X))\xrightarrow{M\mapsto i_{\Delta,\ast} M{|} Z} D(\Qch(Z))
\]
is fully faithful, since the support of $i_\ast M$ is closed in $X\times_k X$ and contained in the open set $Z$. By the
above discussion, we obtain a fully faithful embedding 
\[
W:D^\delta(\Qch(X))\r D(\Xscr\otimes_k \Xscr^\circ)
\]
given as the composition
\[
D^\delta(\Qch(X))\xrightarrow{M\mapsto i_\ast M{|} Z} D(\Qch(Z))
\xrightarrow{\epsilon^\ast} D(\widehat{\Oscr}_X\otimes_k \widehat{\Oscr}_X)
\xrightarrow{\Pi^\ast} D(\Xscr\otimes_k\Xscr^\circ).
\]
If $M$ is a quasi-coherent $\Oscr_X$-module, then following \cite{Swan}
its Hochschild cohomology is defined as
\[
\HH^\ast(X,M)\overset{\text{def}}{=} \Ext^\ast_{X\times_k X}(i_{\Delta,\ast}\Oscr_X,i_{\Delta,\ast}M).
\]
Hence by the full faithfulness of $W$ we have a canonical isomorphism \cite{lowenvdb2}
\begin{equation}
\label{eq:hochschild}
\HH^\ast(X,M)\cong \HH^\ast(\Xscr,W(M)).
\end{equation}
\subsection{Actions of bimodules on modules}
Consider the following  bifunctor, 
\begin{equation}
\label{ref-8.4-106}
\Fscr:D^\delta(\Qch(X))\times D(\Qch(X))\r D(\Qch(X)):(M,U)\mapsto R\pr_{1\ast}(i_{\Delta,\ast}M\Lotimes_{\Oscr_{X\times X}} \pr_2^\ast U)
\end{equation}
\begin{lemmas}
\label{ref-8.3.1-107}
The following diagram is commutative:
\[
\xymatrix@C=1pt{
D^\delta(\Qch(X))\ar[d]_W&\times & D(\Qch(X))\ar[d]^w\ar[rrrrr]^{\Fscr} &&&&& D(\Qch(X))\ar[d]^w\\
D(\Xscr\otimes_k \Xscr^\circ)&\times & D(\Xscr)\ar[rrrrr]_{-\Lotimes_{\Xscr}-} &&&&& D(\Xscr)
}
\]
\end{lemmas}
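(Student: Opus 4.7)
The plan is to reduce to Lemma \ref{ref-8.1.1-101} and then verify the reduced statement pointwise on the cover $(U_I)_{I \in \Iscr}$. Unraveling definitions,
\[
W(M) \Lotimes_{\Xscr} w(U) = \Pi^\ast\bigl(\epsilon^\ast(i_{\Delta,\ast}M|Z)\bigr) \Lotimes_{\Xscr} \pi^\ast(\epsilon^\ast U),
\]
and regarding $\epsilon^\ast(i_{\Delta,\ast}M|Z)$ as an $\widehat{\Oscr}_X$-bimodule via $\widehat{\Oscr}_Z = \widehat{\Oscr}_X\otimes_k\widehat{\Oscr}_X$ (its right action being the one used in the tensor), Lemma \ref{ref-8.1.1-101} identifies this expression with $\pi^\ast\bigl(\epsilon^\ast(i_{\Delta,\ast}M|Z) \Lotimes_{\widehat{\Oscr}_X} \epsilon^\ast U\bigr)$. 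Since $\pi^\ast$ is an equivalence and $w = \pi^\ast \epsilon^\ast$, the lemma reduces to producing a natural isomorphism
\[
\epsilon^\ast\bigl(\Fscr(M,U)\bigr) \cong \epsilon^\ast(i_{\Delta,\ast}M|Z) \Lotimes_{\widehat{\Oscr}_X} \epsilon^\ast U
\]
in $D(\Mod(\widehat{\Oscr}_X))$.

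To establish this, I would fix a homotopically flat representative of $U$ (available by \cite{AJL1}), which renders $\pr_2^\ast U$ homotopically flat on $X\times_k X$ and $\epsilon^\ast U$ pointwise flat over $\widehat{\Oscr}_X$, so that all the tensors appearing below may be taken underived. At each $I \in \Iscr$ the right-hand side equals $\Gamma(U_I,M) \otimes_{\Oscr_X(U_I)} \Gamma(U_I,U)$, since $i_{\Delta,\ast}M$ is supported on the diagonal and hence $\Gamma(U_I\times U_I,\,i_{\Delta,\ast}M|Z)=\Gamma(U_I,M)$. For the left-hand side, separatedness of $X/k$ makes $i_\Delta:X\hookrightarrow X\times_k X$ a closed immersion whose base change along $U_I \times_k X \hookrightarrow X\times_k X$ is a closed embedding $i:U_I\hookrightarrow U_I\times_k X$ satisfying $\pr_1\circ i = \id_{U_I}$. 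The projection formula combined with flatness of $\pr_2$ yields
\[
(i_{\Delta,\ast}M \otimes_{\Oscr_{X\times X}} \pr_2^\ast U)\big|_{U_I\times X} \cong i_\ast\bigl(M|_{U_I} \otimes_{\Oscr_{U_I}} U|_{U_I}\bigr),
\]
and applying $R\pr_{1\ast}$ followed by $\Gamma(U_I,-)$ (which equals $R\Gamma$ since $U_I$ is affine) recovers $\Gamma(U_I,M) \otimes_{\Oscr_X(U_I)} \Gamma(U_I,U)$.

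The last step is to verify naturality in $I$: for $I\subset J$ the transition maps on both sides arise from restriction of sections along $U_J\subset U_I$ and agree thanks to the naturality of the projection formula, base change, and pullback along the open immersions $U_J\hookrightarrow U_I$. The main obstacle is not any single computation above but the bookkeeping --- making sure the local identifications assemble into one natural isomorphism in $D(\Mod(\widehat{\Oscr}_X))$ rather than merely an identification object by object. This is handled by fixing a single homotopically flat resolution of $U$ once and for all and by exploiting the exactness of $\epsilon^\ast$, $\Pi^\ast$ and $\pi^\ast$ on quasi-coherent input.
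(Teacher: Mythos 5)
Your reduction to Lemma \ref{ref-8.1.1-101} and the target isomorphism $\epsilon^\ast(\Fscr(M,U))\cong \epsilon^\ast(i_{\Delta,\ast}M|Z)\Lotimes_{\widehat{\Oscr}_X}\epsilon^\ast U$ match the paper exactly, and your objectwise computation of both sides at each $I$ (projection formula, affine base change, $R\Gamma=\Gamma$ on affines) is correct. The genuine gap is in the assembly step, which you yourself flag as ``the main obstacle'' but then dispose of too quickly. An object of $D(\Mod(\widehat{\Oscr}_X))$ is a complex of presheaves; producing an isomorphism there is not the same as producing, for each $I$, an isomorphism of the values in $D(\Oscr_X(U_I))$ together with compatibility with restrictions ``up to the naturality of the projection formula''. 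That data does not glue to a morphism in the derived category of presheaves (this is the standard failure of descent for derived categories), and your proposed remedy --- fixing one homotopically flat resolution of $U$ and using exactness of $\epsilon^\ast$, $\pi^\ast$, $\Pi^\ast$ --- does not help, because the left-hand side contains $R\pr_{1\ast}$, which is not computed termwise on a flat resolution; the chain-level models of the two sides are simply different complexes, and you never exhibit a single map between them.

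The paper resolves exactly this point by first constructing a \emph{global canonical} comparison morphism and only then checking it is an isomorphism. Concretely: it proves $R\alpha_\ast P=R\epsilon^{\mathrm{left}}_\ast(\epsilon^\ast P)$ using the explicit \v{C}ech-complex formula \eqref{ref-8.3-104} for $R\epsilon_\ast$ (Lemma \ref{ref-8.3.3-109}), deduces $R\alpha_\ast(N\Lotimes_{\Oscr_Z}\beta^\ast U)=R\epsilon_\ast(\epsilon^\ast N\Lotimes_{\widehat{\Oscr}_X}\epsilon^\ast U)$ (Lemma \ref{ref-8.3.4-112}), and then obtains the comparison map \eqref{ref-8.9-114} from the counit $\epsilon^\ast R\epsilon_\ast\to\id$; that this map is an isomorphism for $N=i_{\Delta,\ast}M$ follows because $\epsilon^\ast i_{\Delta,\ast}M\Lotimes_{\widehat{\Oscr}_X}\epsilon^\ast U=\epsilon^\ast(M\Lotimes_{\Oscr_X}U)$ lies in the essential image of $\epsilon^\ast$ (Lemma \ref{ref-8.2.1-105}). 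This also delivers naturality in $N$ as an object of $D(\Qch(Z))$ for free, which is essential: the lemma is later applied not to an object $M$ but to a \emph{morphism} $\eta:\Oscr_X\to\Sigma^n M$ in $D^\delta(\Qch(X))$, so an isomorphism assembled pointwise without a canonical global construction would not suffice for the intended use. To repair your argument, replace the pointwise gluing by the adjunction-theoretic construction of the comparison map; your local computations then serve as the verification that it is an isomorphism.
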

\begin{proof}  
Let $\alpha,\beta:Z\r X$
be the first and the second projection respectively. Then we have
\[
R\pr_{1\ast}(i_{\Delta,\ast}M\Lotimes_{\Oscr_{X\times X}} \pr_2^\ast U)=R\alpha_\ast(i_{\Delta,\ast}M|_{Z}\Lotimes_{\Oscr_Z} \beta^\ast U).
\]
So, by the definition of $w$ and $W$, we have to prove that for $N=i_{\Delta,\ast}M\in D(\Qch(Z))$ there is an isomorphism
\[
\pi^\ast \epsilon^\ast (R\alpha_\ast(N\Lotimes_{\Oscr_Z} \beta^\ast U))=\Pi^\ast \epsilon^\ast N\Lotimes_{\Xscr} \pi^\ast \epsilon^\ast U,
\]
which is natural in $N$ considered
as an object in $D(\Qch(Z))$.

It follows from \eqref{ref-8.9-114} and Lemma \ref{ref-8.3.5-115} below that for any
$N\in D(\Qch(Z))$ there is a canonical
morphism
\[
\epsilon^\ast R\alpha_\ast(N\Lotimes_{\Oscr_Z}\beta^\ast U)
\r
\epsilon^\ast N\Lotimes_{\widehat{\Oscr}_X} \epsilon^\ast U,
\]
which is moreover an isomorphism if $N=i_{\Delta,\ast}M$.

Applying $\pi^\ast$ and using Lemma \ref{ref-8.1.1-101} we get a canonical
morphism
\[
\pi^\ast\epsilon^\ast R\alpha_\ast(N\Lotimes_{\Oscr_Z}\beta^\ast U)
\r
\Pi^\ast\epsilon^\ast N\Lotimes_{\Xscr} \pi^\ast\epsilon^\ast U
\]
having the same property.
This finishes the proof.
\end{proof}
We now give the lemmas on which the previous proof was based.
\begin{lemmas} Let $N,U\in D(\Qch(X))$. Then 
\begin{equation}
\label{ref-8.5-108}
\epsilon^\ast(N\Lotimes_{\Oscr_X}U)\cong \epsilon^\ast N
\Lotimes_{\widehat{\Oscr}_X} \epsilon^\ast U
\end{equation}
\end{lemmas}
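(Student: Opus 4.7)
The plan is to check the identity level-wise on the poset $\Iscr$. For presheaves $\Fscr,\Gscr$ of $\widehat{\Oscr}_X$-modules the tensor product is defined section-wise, $(\Fscr\otimes_{\widehat{\Oscr}_X}\Gscr)(I)=\Fscr(I)\otimes_{\widehat{\Oscr}_X(I)}\Gscr(I)$, and the same description holds in the derived setting provided one uses section-wise K-flat resolutions. So everything reduces to understanding $\epsilon^\ast$ at each object $I\in\Iscr$, where it is the functor of sections over the affine $U_I$.

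At the underived level, for $N,U\in\Qch(X)$ and $I\in\Iscr$, since $U_I$ is affine (being the intersection of affines in the separated scheme $X$), the restriction $(-)|_{U_I}:\Qch(X)\r\Qch(U_I)$ is exact and strong symmetric monoidal, and $\Gamma(U_I,-):\Qch(U_I)\xrightarrow{\sim}\Mod(\Oscr_X(U_I))$ is a monoidal equivalence. This yields the natural identification
\[
\epsilon^\ast(N\otimes_{\Oscr_X}U)(I)=\Gamma(U_I,N|_{U_I})\otimes_{\Oscr_X(U_I)}\Gamma(U_I,U|_{U_I})=(\epsilon^\ast N\otimes_{\widehat{\Oscr}_X}\epsilon^\ast U)(I),
\]
which is visibly compatible with the restriction maps $\rho_{IJ}$ for $J\subset I$, producing an isomorphism of presheaves.

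To pass to the derived version, I pick a K-flat resolution $U^\bullet\r U$ in $C(\Qch(X))$, which exists by \cite{AJL1} (as noted in \S\ref{ref-8.2-103}). Then $N\Lotimes_{\Oscr_X}U=N\otimes_{\Oscr_X}U^\bullet$, and the underived identity applied term-wise gives an isomorphism $\epsilon^\ast(N\Lotimes_{\Oscr_X}U)\cong\epsilon^\ast N\otimes_{\widehat{\Oscr}_X}\epsilon^\ast U^\bullet$.

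The main technical step---which I expect to be the only real obstacle---is to verify that the presheaf $\epsilon^\ast U^\bullet$ is section-wise K-flat, so that the ordinary tensor product on the right actually computes $\epsilon^\ast N\Lotimes_{\widehat{\Oscr}_X}\epsilon^\ast U$. Concretely I need each $\Gamma(U_I,U^\bullet|_{U_I})$ to be K-flat over $\Oscr_X(U_I)$. Given any acyclic complex of modules $W^\bullet$, the corresponding complex $\widetilde{W}^\bullet$ on $U_I$ is acyclic, and I will push it forward to $X$ via $j_{U_I,\ast}$: since $X$ is separated the inclusion $j_{U_I}$ is affine, so $j_{U_I,\ast}$ is exact on quasi-coherents. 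K-flatness of $U^\bullet$ then makes $U^\bullet\otimes_{\Oscr_X}j_{U_I,\ast}\widetilde{W}^\bullet$ acyclic, and restricting back to $U_I$ and using the projection formula yields the desired acyclicity of $\Gamma(U_I,U^\bullet|_{U_I})\otimes_{\Oscr_X(U_I)}W^\bullet$, completing the argument.
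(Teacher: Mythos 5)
Your argument is correct and follows the same route as the paper: reduce to the underived statement (which holds section-wise because each $U_I$ is affine) by choosing a K-flat (homotopically flat) resolution of $U$ and checking that $\epsilon^\ast$ preserves K-flatness. The paper dismisses that last point with ``it is easy to see''; your pushforward-along-the-affine-immersion argument is a valid way of seeing it.
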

\begin{proof} We may assume that $U$ is homotopically flat \cite{AJL1}
  and it is easy to see that this implies that $\epsilon^\ast U$ is also
  homotopically flat.  Hence we have to prove \eqref{ref-8.5-108} for
  quasi-coherent sheaves, which is obvious.
\end{proof}
\begin{lemmas} \label{ref-8.3.3-109}
Let $P$ be in $D(\Qch(Z))$. Then 
\begin{equation}
\label{ref-8.6-110}
R\alpha_\ast P=R\epsilon^{\mathrm{left}}_\ast(\epsilon^\ast P),
\end{equation}
where $\epsilon^{\mathrm{left}}$ refers to the fact that we only consider 
the left $\widehat{\Oscr}_X$-structure on $\epsilon^\ast P$.
\end{lemmas}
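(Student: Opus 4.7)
The plan is to compute both sides via a \v{C}ech-type complex for the cover $\{V_i := U_i \times_k U_i\}_{i=1}^n$ of $Z$ and match the answers term by term. Since $X$ is separated, so is $Z$, and every intersection $V_I = U_I \times_k U_I$ is affine; in particular each open immersion $j_{V_I}\colon V_I \to Z$ is affine. The augmented \v{C}ech resolution associated to this cover therefore yields a natural quasi-isomorphism in $D(\Qch(Z))$
\[
P \;\cong\; \Tot\!\left( \bigoplus_{I\in\Iscr} j_{V_I,\ast}(P|_{V_I})\,\Sigma^{-|I|+1}, \; d\right),
\]
where $d$ is the alternating sum of restriction maps.

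Next I would apply $R\alpha_\ast$ term by term. Writing $\alpha\circ j_{V_I} = j_{U_I}\circ (\alpha|_{V_I})$, the first projection $\alpha|_{V_I}\colon U_I\times_k U_I\to U_I$ is affine (a morphism of affine schemes), and $j_{U_I}\colon U_I\to X$ is an affine open immersion by separatedness of $X$. Hence both are acyclic for quasi-coherent pushforward, so
\[
R\alpha_\ast\, j_{V_I,\ast}(P|_{V_I}) \;=\; j_{U_I,\ast}\,(\alpha|_{V_I})_{\ast}(P|_{V_I}).
\]
On the affine scheme $U_I$, the sheaf $(\alpha|_{V_I})_{\ast}(P|_{V_I})$ is the quasi-coherent sheaf associated to the $\Oscr_X(U_I)$-module $\Gamma(V_I,P) = (\epsilon^\ast P)(I)$, where the action is through $a\mapsto a\otimes 1 \in \widehat{\Oscr}_Z(I) = \widehat{\Oscr}_X(I)\otimes_k \widehat{\Oscr}_X(I)$; this is by definition the left $\widehat{\Oscr}_X$-module structure on $(\epsilon^\ast P)(I)$.

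Finally I would compare with formula \eqref{ref-8.3-104} written for $R\epsilon_\ast$ applied to $\epsilon^\ast P$ regarded as a presheaf of left $\widehat{\Oscr}_X$-modules: the resulting total complex is precisely the one obtained above for $R\alpha_\ast P$. The main thing to verify is that the two differentials agree; both are assembled from the alternating sums of restrictions along the inclusions $V_J\hookrightarrow V_I$ for $J\supset I$, and the identifications above are compatible with these restrictions, so this is a bookkeeping check rather than a substantive obstacle.
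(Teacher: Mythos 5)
Your argument is essentially identical to the paper's proof: both replace $P$ by its \v{C}ech complex for the cover $\{U_i\times_k U_i\}$ of $Z$, use that $j_{U_I\times_k U_I}$ and $\alpha\circ j_{U_I\times_k U_I}$ are affine to push forward term by term, and recognize the result as the formula \eqref{ref-8.3-104} for $R\epsilon_\ast^{\mathrm{left}}$ applied to $\epsilon^\ast P$. The only detail the paper records that you elide is that $\alpha_\ast$ has finite cohomological dimension, which is what justifies computing $R\alpha_\ast$ termwise on a possibly unbounded complex of $\alpha_\ast$-acyclic sheaves.
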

\begin{proof}  Since $P$ is a complex of  quasi-coherent sheaves
$P$ is quasi-isomorphic to its \v{C}ech complex. In other words it is isomorphic to
\begin{equation}
\label{ref-8.7-111}
\left(\bigoplus_{I\in \Iscr} j_{U_I\times_k U_I,\ast} \Sigma^{-|I|+1}P(U_I\times_k U_I)\,\tilde{}, d \right)
\end{equation}
\eqref{ref-8.7-111}  consists of modules which
are acyclic for $\alpha_\ast$. In fact, $j_{U_I\times_k U_I}$ and $\alpha\circ j_{U_I\times_k U_I}$ are affine and hence have no higher direct images for quasi-coherent sheaves. It follows by the Leray spectral sequence that the same is true for $\alpha$. Moreover, $\alpha_\ast$ has finite cohomological dimension. Hence we have 
\begin{align*}
  R\alpha_\ast P&=\left(\bigoplus_{I\in \Iscr} \alpha_\ast j_{U_I\times_k U_I,\ast} 
\Sigma^{-|I|+1}P(U_I\times_k U_I)\,\tilde{}, d \right)\\
  &=\left(\bigoplus_{I\in \Iscr} j_{U_I\ast} \pr_{U_I\times_k
      U_I,U_I\ast} \Sigma^{-|I|+1}P(U_I\times_k U_I)\,\tilde{}, d \right)
\end{align*}
where $\pr_{U_I\times_k U_I,U_I\ast}$ is the projection map. This is precisely $R\epsilon_\ast^{\text{left}}$ applied to the 
presheaf on $\Iscr$ given by $I\mapsto P(U_I\times_k U_I)$, and the latter
is of course $\epsilon^\ast P$.
\end{proof}\begin{lemmas}
\label{ref-8.3.4-112}
Let $N$ be in $D(\Qch(Z))$ and $U\in D(\Qch(X))$. Then we have
\begin{equation}
\label{ref-8.8-113}
R\alpha_\ast(N\Lotimes_{\Oscr_Z}\beta^\ast U)=
R\epsilon_\ast(\epsilon^\ast N\Lotimes_{\widehat{\Oscr}_X} \epsilon^\ast U)
\end{equation}
\end{lemmas}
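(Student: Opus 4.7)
The strategy is to combine Lemma \ref{ref-8.3.3-109} with a $Z$-version of the monoidal identity \eqref{ref-8.5-108}, keeping careful track of the two $\widehat{\Oscr}_X$-actions on objects over $Z$ via the identification $\widehat{\Oscr}_Z = \widehat{\Oscr}_X \otimes_k \widehat{\Oscr}_X$.

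Concretely, I would proceed as follows. First, apply \eqref{ref-8.6-110} to $P := N\Lotimes_{\Oscr_Z}\beta^\ast U \in D(\Qch(Z))$, yielding
\[
R\alpha_\ast(N\Lotimes_{\Oscr_Z}\beta^\ast U) = R\epsilon^{\mathrm{left}}_\ast\bigl(\epsilon^\ast(N\Lotimes_{\Oscr_Z}\beta^\ast U)\bigr).
\]
Second, establish the analogue of \eqref{ref-8.5-108} on $Z$ (using the affine covering $(U_I\times_k U_I)_{I\in\Iscr}$): choosing $U$ homotopically flat on $X$ forces $\beta^\ast U$ to be homotopically flat on $Z$, hence $\epsilon^\ast(\beta^\ast U)$ is homotopically flat over $\widehat{\Oscr}_Z$, and the identity
\[
\epsilon^\ast(N\Lotimes_{\Oscr_Z}\beta^\ast U) \cong \epsilon^\ast N \Lotimes_{\widehat{\Oscr}_Z} \epsilon^\ast(\beta^\ast U)
\]
reduces to the obvious non-derived statement.

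Third, identify $\epsilon^\ast(\beta^\ast U)$ as a module over $\widehat{\Oscr}_Z$. Sections over $U_I\times_k U_I$ are $\Oscr(U_I\times_k U_I)\otimes_{\Oscr(U_I)} U(U_I)$, where $\Oscr(U_I\times_k U_I)$ becomes an $\Oscr(U_I)$-module through the projection $\beta$. This is precisely the base change $\widehat{\Oscr}_Z \otimes_{\widehat{\Oscr}_X} \epsilon^\ast U$ along the second factor inclusion $\widehat{\Oscr}_X\hookrightarrow \widehat{\Oscr}_X\otimes_k\widehat{\Oscr}_X$ corresponding to $\beta$. By associativity of tensor product,
\[
\epsilon^\ast N \Lotimes_{\widehat{\Oscr}_Z} \bigl(\widehat{\Oscr}_Z \otimes_{\widehat{\Oscr}_X}\epsilon^\ast U\bigr) \;\cong\; \epsilon^\ast N \Lotimes_{\widehat{\Oscr}_X} \epsilon^\ast U,
\]
where on the right the tensor product uses the ``$\beta$-side'' (right) $\widehat{\Oscr}_X$-structure of the bimodule $\epsilon^\ast N$. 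The residual left $\widehat{\Oscr}_X$-structure is exactly the $\alpha$-side one, which is the structure used by $R\epsilon^{\mathrm{left}}_\ast$ in the first step. Substituting back gives \eqref{ref-8.8-113}.

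The one step requiring attention is the bookkeeping in the third paragraph: making sure the $\widehat{\Oscr}_X$-structure on $\epsilon^\ast N$ that gets contracted in the tensor product is the one coming from $\beta$, while the one preserved under $R\epsilon^{\mathrm{left}}_\ast$ is the one coming from $\alpha$. Everything else is formal manipulation once the monoidal property of $\epsilon^\ast$ on $Z$ and the compatibility of $\beta^\ast$ with the base change $\widehat{\Oscr}_X\hookrightarrow\widehat{\Oscr}_Z$ are in place.
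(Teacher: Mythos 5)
Your proposal is correct and follows essentially the same route as the paper: apply Lemma \ref{ref-8.3.3-109} to $P=N\Lotimes_{\Oscr_Z}\beta^\ast U$, use the $Z$-version of \eqref{ref-8.5-108}, identify $\epsilon^\ast\beta^\ast U$ with $\widehat{\Oscr}_Z\otimes_{\widehat{\Oscr}_X}\epsilon^\ast U$, and cancel by associativity. Your explicit bookkeeping of the $\alpha$-side versus $\beta$-side $\widehat{\Oscr}_X$-structures is left implicit in the paper but is exactly the right point to check.
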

\begin{proof}
We have
\begin{align*}
  R\alpha_\ast(N\Lotimes_{\Oscr_Z}\beta^\ast U)&
=R\epsilon^{\text{left}}_\ast(\epsilon^\ast(N\Lotimes_{\Oscr_Z}\beta^\ast U))\\
  &=R\epsilon_\ast(\epsilon^\ast N\Lotimes_{\widehat{\Oscr}_Z}\epsilon^\ast\beta^\ast U)\\
  &=R\epsilon_\ast(\epsilon^\ast N\Lotimes_{\widehat{\Oscr}_Z}
(\widehat{\Oscr}_Z\otimes_{\widehat{\Oscr}_X} \epsilon^\ast U))\\
  &=R\epsilon_\ast(\epsilon^\ast N\Lotimes_{\widehat{\Oscr}_X}
  \epsilon^\ast U)
\end{align*}
where in the first equality we use \eqref{ref-8.6-110}, and in the second equality we use \eqref{ref-8.5-108}.
\end{proof}
By adjointness, we obtain from \eqref{ref-8.8-113} a canonical morphism
\begin{equation}
\label{ref-8.9-114}
\epsilon^\ast R\alpha_\ast(N\Lotimes_{\Oscr_Z}\beta^\ast U)
\xrightarrow{\cong} \epsilon^\ast R\epsilon_\ast (\epsilon^\ast N\Lotimes_{\widehat{\Oscr}_X} \epsilon^\ast U)
\xrightarrow{\text{counit}} \epsilon^\ast N\Lotimes_{\widehat{\Oscr}_X} \epsilon^\ast U.
\end{equation}
\begin{lemmas}
\label{ref-8.3.5-115} \eqref{ref-8.9-114} is an isomorphism if $N$ is of the form
$i_{\Delta,\ast} M$.
\end{lemmas}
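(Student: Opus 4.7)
The plan is to exploit the fact that the map \eqref{ref-8.9-114} is, by construction, the composition of the isomorphism of Lemma \ref{ref-8.3.4-112} with the counit $\epsilon^\ast R\epsilon_\ast V \to V$ evaluated at $V = \epsilon^\ast N \Lotimes_{\widehat{\Oscr}_X} \epsilon^\ast U$. So the task reduces to showing that when $N = i_{\Delta,\ast} M$ this counit is an isomorphism. By Lemma \ref{ref-8.2.1-105} the counit is an isomorphism on precisely those $V$ which lie in the essential image $D_{\epsilon^\ast \Qch(X)}(\widehat{\Oscr}_X)$ of $\epsilon^\ast$, so it suffices to identify $\epsilon^\ast N \Lotimes_{\widehat{\Oscr}_X} \epsilon^\ast U$ with the $\epsilon^\ast$-image of an object of $D(\Qch(X))$.

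To do this I would use the fact that the bimodule structure on $i_{\Delta,\ast}M$ is symmetric: the support of $i_{\Delta,\ast} M$ meets $U_I \times_k U_I$ in the diagonal of $U_I$, so
\[
(i_{\Delta,\ast} M)(U_I\times_k U_I) \;=\; M(U_I),
\]
and under this identification both factors of $\widehat{\Oscr}_Z(I) = \Oscr_X(U_I)\otimes_k \Oscr_X(U_I)$ act via multiplication in $\Oscr_X(U_I)$ followed by the $\Oscr_X$-action on $M$. Consequently the left $\widehat{\Oscr}_X$-module underlying $\epsilon^\ast(i_{\Delta,\ast} M)$ is canonically isomorphic to $\epsilon^\ast M$.

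Combining this identification with the derived projection formula \eqref{ref-8.5-108}, one obtains
\[
\epsilon^\ast N \Lotimes_{\widehat{\Oscr}_X} \epsilon^\ast U \;\cong\; \epsilon^\ast M \Lotimes_{\widehat{\Oscr}_X} \epsilon^\ast U \;\cong\; \epsilon^\ast(M\Lotimes_{\Oscr_X} U),
\]
which visibly lies in the essential image of $\epsilon^\ast$, so the counit at this object is an isomorphism and \eqref{ref-8.9-114} is an isomorphism as claimed.

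The only genuinely delicate point is checking that these identifications are compatible with the canonical map \eqref{ref-8.9-114} itself (as opposed to producing some other iso between the same objects); this will follow by chasing through the naturality of the adjunction $(\epsilon^\ast, R\epsilon_\ast)$ and of the projection formula, using that \eqref{ref-8.9-114} is constructed entirely from those adjunctions. This is the main, if mostly bookkeeping, obstacle.
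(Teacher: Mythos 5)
Your proposal is correct and follows essentially the same route as the paper's proof: one identifies $\epsilon^\ast (i_{\Delta,\ast}M)\Lotimes_{\widehat{\Oscr}_X}\epsilon^\ast U$ with $\epsilon^\ast(M\Lotimes_{\Oscr_X}U)$ and concludes that the counit is an isomorphism because this object is quasi-coherent, i.e.\ lies in the essential image of $\epsilon^\ast$. The paper dispatches the identification with ``one easily checks''; you merely spell out the local computation behind it.
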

\begin{proof}
In that case, one easily checks that
\[
\epsilon^\ast i_{\Delta,\ast} M\Lotimes_{\widehat{\Oscr}_X} \epsilon^\ast U
=\epsilon^\ast(M\Lotimes_{\Oscr_X} U).
\]
Since $M\Lotimes_{\Oscr_X} U$ is quasi-coherent, we obtain that the counit morphism
in \eqref{ref-8.9-114} is an isomorphism.
\end{proof}
\subsection{Equivariant version}
Assume now that $\Gamma$ is a $k$-algebra (non necessarily commutative). Let $\Qch(X)_\Gamma$ be the category
of quasi-coherent sheaves on $X$ equipped with a left $\Gamma$-action. 
Let $\Oscr_{X,\Gamma}=\Oscr_X\otimes_k \Gamma$ so that $\Qch(X)_\Gamma\cong
\Qch(\Oscr_{X,\Gamma})$. Furthermore put $\Xscr_\Gamma=\Xscr\otimes_k\Gamma$.
Then $\Xscr_\Gamma$ is obtained from $\Oscr_{X,\Gamma}$ in the same way
as $\Xscr$ is obtained from $\Oscr_X$.
Then using \cite[Theorem 7.6.6]{lowenvdb2} 
we obtain as in \eqref{eq:fullyfaithful} a full faithful embedding
\begin{equation}
\label{ref-8.10-116}
w:D(\Qch(X)_\Gamma)\r D(\Xscr_\Gamma)
\end{equation}
and furthermore we have a  commutative diagram with the same proof as Lemma \ref{ref-8.3.1-107}
\begin{equation}
\label{eq:equivariant}
\xymatrix@C=1pt{
D^\delta(\Qch(X))\ar[d]_W&\times & D(\Qch(X)_\Gamma)\ar[d]^w\ar[rrrrr]^{\Fscr} &&&&& D(\Qch(X)_\Gamma)\ar[d]^w\\
D(\Xscr\otimes_k \Xscr^\circ)&\times & D(\Xscr_\Gamma)\ar[rrrrr]_{-\Lotimes_{\Xscr}-} &&&&& D(\Xscr_\Gamma)
}
\end{equation}
\subsection{The characteristic morphism}
In \S\ref{ref-6.3-76} we introduced the characteristic morphism for
DG-categories (following \cite{lowen6}). A similar definition works
for schemes. We present a restricted version which is sufficient
for our applications. Let $X$ be as above and let 
$M,U\in D(\Qch(X))$. Then the characteristic morphism
\[
c_U:\HH^\ast(X,M)\r \Ext^\ast_X(U,M\Lotimes_X U)
\]
is defined as follows. Let $\eta\in \HH^n(X,M)$, and view it as a map
$\Oscr_X\r \Sigma^n M$ in the category $D^\delta(\Qch(X))$. Then
\[
c_U(\eta)=\Fscr(\eta,\Id),
\]
with $\Fscr$ as in \eqref{ref-8.4-106}. From Lemma \ref{ref-8.3.1-107} we immediately
obtain the following commutative diagram:
\begin{equation}
\label{ref-8.11-117}
\xymatrix{
\HH^\ast(X,M)\ar[d]_W^{\cong}\ar[r]^{c_U} & \Ext^\ast_X(U,M\Lotimes_X U)\ar[d]_{\cong}^{w}\\
\HH^\ast(\Xscr,W(M))\ar[r]_-{c_{w(U)}} & \Ext^\ast_\Xscr(w(U),W(M)\Lotimes_\Xscr w(U))
}
\end{equation} 
(the vertical maps are isomorphism because of \eqref{eq:hochschild}, 
Lemma \ref{ref-8.3.1-107}, and the fact that $w$ is fully faithful (see \S\ref{ref-8.2-103}).
If $U$ is an object in $D(\Qch(X)_\Gamma)$, then there is a characteristic map
\begin{equation}
\label{ref-8.12-118}
c_{U,\Gamma}:\HH^\ast(X,M)\r \Ext^\ast_{\Qch(X)_\Gamma}(U,M\Lotimes_X U)
\end{equation}
which fits in a similar commutative diagram as \eqref{ref-8.11-117}.

\subsection{Functoriality}
\label{sec:functoriality}
Now assume that $X,Y$ are quasi-compact separated $k$-schemes, and let $f:X\r Y$ be a closed immersion.
Let $Y=\bigcup_i^n V_i$ be an affine covering, and let $U_i=f^{-1}(V_i)$ be the induced covering on $X$.

The map $f$ induces a dual functor
\[
f:\Yscr\r \Xscr
\]
and hence a ``change of rings'' functor
\[
f_\ast:D(\Xscr)\r D(\Yscr).
\]
\begin{lemmas} \label{ref-8.6.1-119}
The following diagram is commutative:
\[
\xymatrix{
D(\Qch(X))\ar[r]^{f_\ast}\ar[d]_w& D(\Qch(Y))\ar[d]^w\\
D(\Xscr)\ar[r]_{f_\ast}& D(\Yscr)
}
\]
\end{lemmas}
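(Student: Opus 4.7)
The plan is to factor $w = \pi^{\ast}\circ\epsilon^{\ast}$ and reduce the claim to a natural isomorphism of $\Yscr$-modules, which then lifts termwise to the derived level. On the abelian side $\pi^{\ast}$ is an equivalence and $\epsilon^{\ast}\colon \Qch(X)\to\Mod(\widehat{\Oscr}_X)$ is exact (its effect is simply $M\mapsto (I\mapsto M(U_I))$), so $w$ is exact on modules. On the other side, $f_{\ast}\colon \Qch(X)\to\Qch(Y)$ is exact because $f$ is a closed immersion (so it coincides with $Rf_{\ast}$), while $f_{\ast}\colon\Mod(\Xscr)\to\Mod(\Yscr)$ is change of rings along the dual functor $f\colon\Yscr\to\Xscr$ and hence always exact. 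Consequently it suffices to produce, functorially in $M\in\Qch(X)$, an isomorphism $w(f_{\ast}M)\cong f_{\ast}w(M)$ in $\Mod(\Yscr)$.

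Since $U_i=f^{-1}(V_i)$ and preimages commute with intersections, $U_I=f^{-1}(V_I)$ for every $I\in\Iscr$. Unpacking both sides on objects one obtains
\[
w(f_{\ast}M)(I)=(\epsilon^{\ast}f_{\ast}M)(I)=(f_{\ast}M)(V_I)=M(f^{-1}(V_I))=M(U_I),
\]
whereas $f_{\ast}w(M)(I)=w(M)(I)=M(U_I)$; so the underlying abelian groups agree. It remains to match the actions of $\Yscr(I,J)=\Oscr_Y(V_J)$ on $M(U_I)$ for $J\le I$. In $f_{\ast}w(M)$ this action factors as $f^{\#}\colon\Oscr_Y(V_J)\to\Oscr_X(U_J)$ followed by the $\Xscr$-action of $\Oscr_X(U_J)$ on $M(U_I)$, which in turn is sheaf restriction $M(U_I)\to M(U_J)$ followed by the standard $\Oscr_X(U_J)$-module structure on $M(U_J)$. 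In $w(f_{\ast}M)$ the action factors as presheaf restriction $(f_{\ast}M)(V_I)\to(f_{\ast}M)(V_J)$ (which is again $M(U_I)\to M(U_J)$) followed by the $\Oscr_Y(V_J)$-action on $M(U_J)=(f_{\ast}M)(V_J)$, itself given by $f^{\#}$ and the $\Oscr_X(U_J)$-action. The two orderings yield the same result because $f^{\#}$ and the sheaf restriction $M(U_I)\to M(U_J)$ operate on disjoint tensor factors and hence commute.

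The only real obstacle is notational: one must carefully distinguish the two meanings of $f_{\ast}$, respect the convention that $I\le J$ in $\Iscr$ iff $J\subset I$, and keep track of when the $\Oscr_Y$-action is converted to an $\Oscr_X$-action via $f^{\#}$. Once the dictionary provided by $w=\pi^{\ast}\epsilon^{\ast}$ is in hand, the verification is immediate, and the derived statement follows from the exactness facts above.
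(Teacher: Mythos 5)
Your proof is correct and follows exactly the paper's (one-line) argument: all four functors are induced by exact functors on the abelian level, so it suffices to check commutativity on sheaves, which the paper declares ``obvious'' and which you verify explicitly via $U_I=f^{-1}(V_I)$ and the compatibility of $f^{\#}$ with restriction. No gaps.
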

\begin{proof}
All functors are induced from exact functors on the level of abelian categories.
Hence it suffices to check the commutativity on the level of sheaves, which is obvious. 
\end{proof}
The functor $(f,f)_\ast:D(\Qch(X\times_k X))\r D(\Qch(Y\times_k Y))$ descends to a functor
\[
f_\ast :D^\delta(\Qch(X))\r D^\delta(\Qch(Y)).
\]
\begin{lemmas} The following diagram is commutative:
\begin{equation}
\label{ref-8.13-120}
\xymatrix{
D^\delta(\Qch(X))\ar[d]_W \ar[r]^{f_\ast} &D^\delta(\Qch(Y))\ar[d]^W\\
D(\Xscr\otimes_k\Xscr^\circ)\ar[r]_{(f,f)_\ast}& D(\Yscr\otimes_k \Yscr^\circ)
}
\end{equation}
\end{lemmas}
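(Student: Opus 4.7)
The plan is to mirror the argument of Lemma \ref{ref-8.6.1-119}: observe that all four functors in the diagram \eqref{ref-8.13-120} are derived from exact functors between the underlying abelian categories, so commutativity will reduce to an abelian check on sections.

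First I would note that $f\colon X\r Y$ is a closed immersion and hence affine, so both $f_\ast\colon \Qch(X)\r\Qch(Y)$ and $(f\times f)_\ast\colon \Qch(X\times_k X)\r \Qch(Y\times_k Y)$ are exact. Similarly, $i_{\Delta_X,\ast}$ is exact, and restriction from $X\times_k X$ to the open subset $Z=\bigcup U_i\times_k U_i$ is exact; the analogous statements hold on the $Y$-side with $Z'=\bigcup V_i\times_k V_i$. Moreover the functors $\epsilon^\ast$ and $\Pi^\ast$ defined in \S\ref{ref-8.2-103} are exact on presheaves (they are computed sectionwise on the poset $\Iscr$). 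Since everything in sight is exact, it suffices to check commutativity of the corresponding diagram of abelian categories and exact functors. In particular, the fact that the covering of $X$ is \emph{induced} from that of $Y$ by $U_i=f^{-1}(V_i)$ means that both sides use the same indexing poset $\Iscr$.

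Next, for $M\in\Qch(X\times_k X)$ supported on the diagonal (i.e.\ representing an object of $D^\delta(\Qch(X))$), I would compute the sections at a pair $(I,J)$ with $J\subset I$. On the one hand,
\[
\bigl(\Pi^\ast\epsilon^\ast((f,f)_\ast M|_{Z'})\bigr)(I,J)=((f,f)_\ast M)(V_I\times_k V_J)=M(U_I\times_k U_J),
\]
using the fact that $(f\times f)^{-1}(V_I\times_k V_J)=U_I\times_k U_J$. On the other hand, the change-of-rings functor $(f,f)_\ast\colon \Mod(\Xscr\otimes_k\Xscr^\circ)\r \Mod(\Yscr\otimes_k\Yscr^\circ)$ keeps the underlying $\Iscr\times\Iscr$-indexed collection of abelian groups and only modifies the action via the DG-functor $f\colon \Yscr\r\Xscr$; thus its value at $(I,J)$ is
\[
\bigl(\Pi^\ast\epsilon^\ast(M|_Z)\bigr)(I,J)=M(U_I\times_k U_J)
\]
as well. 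The compatibility of the $\Yscr$-bimodule structures on the two sides follows from the definition of the morphisms $\Yscr(I,I')\r \Xscr(I,I')$ (which are just the restriction maps $\Oscr_Y(V_{I'})\r\Oscr_X(U_{I'})$) together with the corresponding restriction maps on $M$. Naturality in $(I,J)$ and in $M$ is routine to check.

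The mildly delicate point is verifying that these sectionwise identifications assemble into a natural isomorphism of $\Yscr\otimes_k\Yscr^\circ$-bimodules, but this amounts to unwinding the definitions of $\Pi^\ast$, $\epsilon^\ast$ and of the change-of-rings functor $(f,f)_\ast$, in the spirit of Lemma \ref{ref-8.1.1-101}. Once this is done, passing to derived functors—which, as observed, is transparent here because all functors are already exact—yields the commutativity of \eqref{ref-8.13-120}.
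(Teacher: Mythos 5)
Your proposal is correct and follows essentially the same route as the paper: the paper's proof simply observes that all four functors are induced by exact functors on the underlying abelian categories and declares the sheaf-level commutativity obvious, which is exactly the reduction you make (you merely spell out the sectionwise verification over the poset $\Iscr$). No further comment is needed.
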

\begin{proof}
All functors are induced from exact functors on the level of abelian categories.
Hence it suffices to check the commutativity on the level of sheaves, which is obvious. 
\end{proof} Applying \eqref{ref-8.13-120} to morphisms $\Oscr_X\r \Sigma^n M$ in $D^\delta(\Qch(X))$, we get a commutative diagram
\[
\xymatrix{
D^\delta(\Qch(X))(\Oscr_X,\Sigma^n M)\ar[d]_{W}^\cong\ar[r]^-{f_\ast}
&
D^\delta(\Qch(Y))(f_\ast \Oscr_X,\Sigma^n f_\ast M)\ar[d]^W_{\cong}\ar[r]&D^\delta(\Qch(Y))(\Oscr_Y,\Sigma^n f_\ast M)\ar[d]^W_{\cong}
\\
\Ext^n_{\Xscr\otimes_k \Xscr^\circ}(\Xscr,W(M))\ar[r]_-{(f,f)_\ast}
&
\Ext^n_{\Yscr\otimes_k \Yscr^\circ}(\Xscr,(f,f)_\ast(W( M)))\ar[r]&\Ext^n_{\Yscr\otimes_k \Yscr^\circ}(\Yscr,(f,f)_\ast(W( M)))
}
\]
where the rightmost square is obtained by precomposing with $\Oscr_Y\r f_\ast\Oscr_X$ and $\Yscr\r \Xscr$
respectively.

We obtain a commutative diagram on the level of Hochschild cohomology
\begin{equation}
\label{ref-8.14-121}
\xymatrix{
\HH^\ast(X,M)\ar[r]^{f_\ast}\ar[d]_W^{\cong} & \HH^\ast(Y,f_\ast M)\ar[d]^W_{\cong}\\
\HH^\ast(\Xscr,W(M))\ar[r]_-{(f,f)_\ast} & \HH^\ast(\Yscr, (f,f)_\ast(W(M)))
}
\end{equation}
\subsection{Vector bundles and projectives}
\label{sec:vectorbundles}
\begin{lemmas}
\label{ref-8.7.1-122}
\begin{enumerate}
\item Let $M,N$ we quasi-coherent sheaves on $X$ and put $\Mscr=W(M)$, $\Nscr=W(N)$. Then
$
W(M\otimes_{\Oscr_X} N)=\Mscr\otimes_{\Xscr} \Nscr
$.
\item Assume that $M$ is a vector bundle on $X$. Then $\Mscr$ is projective on the left and on the right. That is, for every $I\in \Iscr$ we have that
$\Mscr(I,-)$ and $\Mscr(-,I)$ are respectively projective left and right $\Xscr$-modules. 
\end{enumerate}
\end{lemmas}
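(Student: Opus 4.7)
The plan is to compute both sides explicitly using the formulas for $\Xscr$ and $W(M)$ that fall out of the constructions of $\epsilon^\ast$ and $\Pi^\ast$ in \S\ref{ref-8.2-103}. Since $i_{\Delta,\ast}M$ is supported on the diagonal, its restriction to $U_I\times_k U_I$ has global sections $M(U_I)$, and unwinding the definitions one reads off
\[
\Mscr(I,J)=\begin{cases} M(U_J) & I\subseteq J \\ 0 & \text{otherwise,}\end{cases}
\qquad
\Xscr(I,J)=\begin{cases} \Oscr_X(U_J) & I\subseteq J \\ 0 & \text{otherwise.}\end{cases}
\]
Once these formulas are in hand, both statements reduce to local calculations on the affine opens $U_J$.

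For (1), both sides vanish at $(I,J)$ unless $I\subseteq J$, and when $I\subseteq J$ quasi-coherence yields
\[
W(M\otimes_{\Oscr_X}N)(I,J) = (M\otimes_{\Oscr_X}N)(U_J) = M(U_J)\otimes_{\Oscr_X(U_J)} N(U_J).
\]
For the right-hand side I would write out
\[
(\Mscr\otimes_\Xscr\Nscr)(I,J)=\Bigl(\bigoplus_{K\colon I\subseteq K\subseteq J} M(U_J)\otimes_k N(U_K)\Bigr)\big/\sim,
\]
and use the relation coming from the unit $1\in\Xscr(K,J)=\Oscr_X(U_J)$ to collapse every summand onto the term $K=J$ (replacing $n\in N(U_K)$ by its restriction $n|_{U_J}$); the surviving $\Xscr(J,J)=\Oscr_X(U_J)$-relations then cut the $K=J$ piece down to $M(U_J)\otimes_{\Oscr_X(U_J)} N(U_J)$, matching the left-hand side.

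For (2), the main observation is that quasi-coherence of $M$ gives $M(U_J)=\Oscr_X(U_J)\otimes_{\Oscr_X(U_I)} M(U_I)$ whenever $I\subseteq J$, yielding the identification
\[
\Mscr(I,-)\;\cong\;\Xscr(I,-)\otimes_{\Oscr_X(U_I)}M(U_I)
\]
as left $\Xscr$-modules. The induction functor $\Xscr(I,-)\otimes_{\Oscr_X(U_I)}-$ is left adjoint to the exact evaluation functor $P\mapsto P(I)\colon\Mod(\Xscr)\to\Mod(\Oscr_X(U_I))$, and therefore carries projectives to projectives. Since $M$ is a vector bundle and $U_I$ is affine, $M(U_I)$ is finitely generated projective over $\Oscr_X(U_I)$, and projectivity of $\Mscr(I,-)$ follows. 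The right-module case is symmetric, via the analogous identification $\Mscr(-,J)\cong\Xscr(-,J)\otimes_{\Oscr_X(U_J)}M(U_J)$.

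The only real bookkeeping hurdle is the tensor-product computation in part (1): because $\Xscr$ has many objects the colimit defining $\Mscr\otimes_\Xscr\Nscr$ is a priori large, and one must be careful about which copy of $\Xscr$ is being tensored over in order to convince oneself that it truly collapses to a single local tensor product. Beyond that, both parts simply transport local properties of $M$ (quasi-coherence and local freeness) through the explicit formula for~$W$.
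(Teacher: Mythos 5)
Your proof is correct and follows essentially the same route as the paper: part (2) rests on exactly the paper's key identification $\Mscr(I,-)\cong\Xscr(I,-)\otimes_{\Oscr_X(U_I)}M(U_I)$ (a consequence of quasi-coherence) together with projectivity of representables, and part (1) is the same local computation, with your hands-on collapse of the coend onto the $K=J$ summand replacing the paper's slightly slicker use of that identification plus $\Xscr(-,J)\otimes_\Xscr\Xscr(I,-)=\Xscr(I,J)$.
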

\begin{proof}
\begin{enumerate}
\item
It is an immediate verification that 
\begin{equation}
\label{ref-8.15-123}
\begin{aligned}
\Mscr(I,-)&=\Xscr(I,-)\otimes_{\Oscr(U_I)} M(U_I)\\
\Mscr(-,I)&=M(U_I)\otimes_{\Oscr(U_I)} \Xscr(-,I)
\end{aligned}
\end{equation}
We compute
\begin{align*}
(\Mscr\otimes_{\Xscr} \Nscr)(I_1,I_2)&=\Mscr(-,I_2)\otimes_{\Xscr} \Nscr(I_1,-)\\
&=M(U_{I_2})\otimes_{\Oscr(U_{I_2})}\Xscr(-,I_2)\otimes_{\Xscr} \Xscr(I_1,-)\otimes_{\Oscr(U_{I_1})} N(U_{I_1})\\
&=M(U_{I_2})\otimes_{\Oscr(U_{I_2})}\Xscr(I_1,I_2)\otimes_{\Oscr(U_{I_1})} N(U_{I_1}).
\end{align*}
Assume now $I_2\subset I_1$ (for otherwise there is nothing to prove). Then we have
\begin{align*}
M(U_{I_2})\otimes_{\Oscr(U_{I_2})}\Xscr(I_1,I_2)\otimes_{\Oscr(U_{I_1})} N(U_{I_1})
&=M(U_{I_2})\otimes_{\Oscr(U_{I_2})}\Oscr_X(U_{I_2})\otimes_{\Oscr(U_{I_1})} N(U_{I_1})\\
&=M(U_{I_2})\otimes_{\Oscr(U_{I_2})} N(U_{I_2})\\
&=(M\otimes_{\Oscr_X} N)(U_{I_2})\\
&=W(M\otimes_{\Oscr_X} N)(I_1,I_2)
\end{align*}
\item
Now $M(U_I)$ is a finitely generated "projective $\Oscr(U_I)$-module, and hence a
summand of a free module. By \eqref{ref-8.15-123} this implies that $\Mscr(I,-)$ is a summand of
 $\Xscr(I,-)^{\oplus n}$ for some $n$, and similarly for
$\Mscr(-,I)$. This means both are projective.
\end{enumerate}
\def\qed{}\end{proof}
\subsection{Compact generators}
\label{sec:compactgenerators}
For a perfect complex $P$ in $D(\Qch(X))$, put $P^D=\uRHom_X(P,\Oscr_X)$. 
Recall the following
\begin{lemmas} \label{ref-8.8.1-124} Let $P$ be perfect object in  
$D(\Qch(X))$. Then $P$ generates $D(\Qch(X))$ if and only if $P^D$ generates
$D(\Qch(X))$.
\end{lemmas}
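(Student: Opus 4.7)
The plan is to reduce to a symmetric statement about thick subcategories of perfect complexes, where the duality $(-)^D$ acts as an anti-equivalence, making the symmetry between $P$ and $P^D$ manifest.

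First, I would recall two standard facts that place the statement in the right framework. Since $X$ is quasi-compact and separated, the theorem of Bondal--Van den Bergh and Neeman says that $D(\Qch(X))$ is compactly generated and its compact objects coincide with $\Perf(X)$. Moreover, a perfect complex $P$ generates $D(\Qch(X))$ (in the sense that the smallest localizing subcategory containing $P$ is everything, or equivalently that the right orthogonal of $\{P[n]\}_n$ is zero) if and only if $P$ \emph{classically} generates $\Perf(X)$, i.e. the smallest thick subcategory of $\Perf(X)$ containing $P$ is all of $\Perf(X)$. Call this thick subcategory $\operatorname{thick}(P)$.

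Next, I would observe that the derived duality $(-)^D = \uRHom_X(-,\Oscr_X)$ restricts to an anti-equivalence $(-)^D : \Perf(X)^{\operatorname{op}} \xrightarrow{\cong} \Perf(X)$: duality preserves perfectness, and for $P$ perfect the biduality map $P \to (P^D)^D$ is an isomorphism (both statements are local, and over an affine open reduce to the standard fact that a bounded complex of finitely generated projective modules is reflexive). As an anti-equivalence of triangulated categories, $(-)^D$ sends thick subcategories to thick subcategories and respects the generation relation.

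Combining these ingredients, if $P$ generates $D(\Qch(X))$ then $\operatorname{thick}(P) = \Perf(X)$; applying $(-)^D$ yields $\operatorname{thick}(P^D) = (-)^D(\Perf(X)) = \Perf(X)$, so $P^D$ generates $D(\Qch(X))$. The converse follows by the same argument applied to $P^D$, using the reflexivity $(P^D)^D \cong P$.

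There is no genuine obstacle here; the only thing to be careful about is that "generates" in the statement refers to generation of $D(\Qch(X))$ as a triangulated category with coproducts (not merely as a thick subcategory), so the invocation of the Bondal--Van den Bergh/Neeman equivalence between this notion and classical generation of $\Perf(X)$ is what makes the duality argument transport cleanly between the two sides.
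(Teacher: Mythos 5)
Your proof is correct and is essentially the paper's own argument: both reduce generation of $D(\Qch(X))$ to classical generation of $\Perf(X)$ via Neeman's results and then invoke the fact that $(-)^D$ is a duality on $\Perf(X)$. Your write-up merely spells out the duality step (thick subcategories, biduality) in more detail than the paper does.
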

\begin{proof} By \cite{Neeman1,Neeman3}  $P$ generates $D(\Qch(X))$
if and only if it classically generates the category $\Perf(X)$ of perfect complexes
in $D(\Qch(X))$. The fact that $(-)^D$ is a duality on $\Perf(X)$ proves what we want. 
\end{proof}
\begin{propositions} 
\label{ref-8.8.2-125} Let $T\in \Qch(X)$ be a tilting bundle, i.e.\ a
  vector bundle generating $D(\Qch(X))$ such that $\Ext^i_X(T,T)=0$ for $i>0$. Set $\Gamma=\End_X(T)$. Then
\[
c_{T,\Gamma}:\HH^\ast(X,M)\r \Ext^\ast_{\Qch(X)_\Gamma}(T,M\Lotimes_X T)
\]
is an isomorphism.
\end{propositions}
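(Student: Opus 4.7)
The plan is to identify both sides of $c_{T,\Gamma}$ with the same Hochschild cohomology group $\Ext^\ast_{\Gamma\otimes_k\Gamma^\circ}(\Gamma,\, R\Gamma(X,\uEnd(T)\otimes_X M))$, using the tilting property of $T$, and then to verify that under these identifications the characteristic morphism becomes the identity. The starting observation is that $T\boxtimes T^\vee$ is a tilting bundle on $X\times_k X$ with endomorphism ring $\Gamma\otimes_k\Gamma^\circ$: the vanishing of positive $\Ext$-groups follows from the K\"unneth formula together with the tilting hypothesis on $T$ (applied also to $T^\vee$), while generation of $D(\Qch(X\times_k X))$ follows from the fact that $T$ and $T^\vee$ generate $D(\Qch(X))$ (Lemma \ref{ref-8.8.1-124}) and that exterior tensor products of generators generate the product. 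The induced tilting equivalence
\[
\RHom_{X\times X}(T\boxtimes T^\vee,-)\colon D(\Qch(X\times_k X))\xrightarrow{\sim} D(\Gamma\otimes_k\Gamma^\circ),
\]
combined with the projection formula for the diagonal immersion $i_\Delta$, gives $\RHom_{X\times X}(T\boxtimes T^\vee, i_{\Delta,\ast}\Oscr_X)\cong R\Gamma(X,\uEnd(T))=\Gamma$ and $\RHom_{X\times X}(T\boxtimes T^\vee, i_{\Delta,\ast}M)\cong R\Gamma(X,\uEnd(T)\otimes_X M)$, yielding the desired identification of $\HH^n(X,M)$.

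For the target of $c_{T,\Gamma}$, I would use the standard equivariant-to-invariants formula
\[
\RHom_{\Qch(X)_\Gamma}(T,N)\cong \RHom_{\Gamma\otimes_k\Gamma^\circ}(\Gamma,\,\RHom_X(T,N))
\]
valid for $N\in D(\Qch(X)_\Gamma)$, where $\RHom_X(T,N)$ is a $\Gamma$-bimodule via the two commuting $\Gamma$-actions (left from $N$, right from $T$). Applied to $N = M\Lotimes_X T$ and using the vector-bundle identity $\RHom_X(T,M\otimes_X T)\cong R\Gamma(X,\uEnd(T)\otimes_X M)$ (together with flatness of $T$ so that $M\Lotimes_X T = M\otimes_X T$), this yields the matching identification of $\Ext^n_{\Qch(X)_\Gamma}(T, M\Lotimes_X T)$.

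Finally I would verify that $c_{T,\Gamma}(\eta)=\Fscr(\eta,\id_T)$ corresponds, under the two identifications above, to the identity on $\Ext^n_{\Gamma\otimes_k\Gamma^\circ}(\Gamma,\, R\Gamma(X,\uEnd(T)\otimes_X M))$. Since $\Fscr(i_{\Delta,\ast}\Oscr_X, T)=T$ and $\Fscr(i_{\Delta,\ast}M,T)=M\otimes_X T$, this amounts to the naturality of the tilting identifications with respect to the functor $\Fscr(-,T)$. The hard part will be tracking the two natural $\Gamma$-bimodule structures on $R\Gamma(X,\uEnd(T)\otimes_X M)$ arising from the two pictures and verifying that they coincide; both originate from the canonical decomposition $\uEnd(T)=T\otimes T^\vee$ and the tautological action of $\Gamma=\End_X(T)$, so the check is a careful but routine unwinding of definitions.
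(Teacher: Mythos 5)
Your argument is correct in outline, but it takes a different route from the paper's, and it is worth comparing the two. The paper works directly with the single functor $H=\Fscr(-,T):D(\Qch(X\times_k X))\r D(\Qch(X)_\Gamma)$, $N\mapsto R\pr_{1\ast}(N\Lotimes_{X\times X}\pr_2^\ast T)$, and shows it is an \emph{equivalence}: by Lemma \ref{ref-8.8.1-124} and \cite[\S3.4]{BondalVdb} the object $T\boxtimes T^D$ compactly generates $D(\Qch(X\times_k X))$, $H$ sends it to the compact generator $T\otimes_k\Gamma$ of $D(\Qch(X)_\Gamma)$, and $H$ induces an isomorphism $\REnd_{X\times_kX}(T\boxtimes T^D)\cong\Gamma\otimes_k\Gamma^\circ\cong\REnd_{\Qch(X)_\Gamma}(T\otimes_k\Gamma)$. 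Since $c_{T,\Gamma}$ is by definition the action of $H$ on $\Hom$-spaces (from $i_{\Delta,\ast}\Oscr_X$ to $\Sigma^n i_{\Delta,\ast}M$), full faithfulness of $H$ finishes the proof with no further verification. You instead factor both sides through $D(\Gamma\otimes_k\Gamma^\circ)$: the tilting property of $T\boxtimes T^D$ identifies $\HH^\ast(X,M)$ with $\Ext^\ast_{\Gamma\otimes_k\Gamma^\circ}(\Gamma,R\Gamma(X,\uEnd(T)\otimes_X M))$, and the equivariant-to-invariants adjunction identifies the target with the same group. This is a legitimate and more explicit description of both sides, and your verifications of the tilting property (K\"unneth plus Lemma \ref{ref-8.8.1-124}) and of the adjunction (a vector bundle $T$ makes $\RHom_X(T,-)$ underived, so the Grothendieck spectral sequence degenerates into the stated formula) are sound. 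The cost is the final compatibility check, which you acknowledge but do not carry out; it is genuinely the delicate point of your route, since one must match the two $\Gamma$-bimodule structures on $R\Gamma(X,\uEnd(T)\otimes_X M)$ and verify that the tilting identification intertwines $\Fscr(-,T)$ with $\RHom_X(T,-)$ (which follows from the projection-formula identity $\RHom_{X\times_kX}(T\boxtimes T^D,N)\cong\RHom_X(T,\Fscr(N,T))$). The paper's formulation is designed precisely to make this step vacuous, which is what you give up by splitting the identification in two.
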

\begin{proof} We claim that the functor
\[
H:D(\Qch(X\times_k X))\r D(\Qch(X)_\Gamma): N\mapsto R\pr_{1\ast}(N\Lotimes_{X\times X}\pr_2^\ast T)
\]
is an equivalence of categories. This implies what we want. 

By Lemma \ref{ref-8.8.1-124} and \cite[\S3.4]{BondalVdb},
$T\boxtimes T^D$ is a compact generator for $D(\Qch(X\times_k X))$ and it is also clear that $T\otimes_k \Gamma$ is a compact generator for $D(\Qch(X)_\Gamma)$. We compute
\begin{align*}
H(T\boxtimes T^D)&=R\pr_{1\ast}((T\boxtimes T^D)\Lotimes_{X\times X}\pr_2^\ast T)\\
&=R\pr_{1\ast}(T\boxtimes \uRHom_X(T,T))\\
&=T\otimes_k\Gamma
\end{align*}
and is clear that in this way $H$ yields an isomorphism between
\[
\REnd_{X\times_k X}(T\boxtimes T^D)=\Gamma\otimes_k \Gamma^\circ
\]
and
\[
\REnd_{\Qch(X)_\Gamma}(T\otimes_k\Gamma)=\Gamma\otimes_k \Gamma^\circ
\]
This implies that $H$ is an equivalence in the usual way.
\end{proof}
\section{Some properties of divisors}
\label{sec:divisors}
\subsection{Preliminaries}
Let $X$ be a quasi-compact separated scheme. 
If $\Ascr$, $\Bscr$ are sheaves of $k$-algebras on $X$ then an
$\Ascr-\Bscr$-bimodule $\Fscr$ is defined to be a sheaf of
$\Ascr\otimes_k \Bscr^\circ$-modules. Note that even if $\Ascr$,
$\Bscr$ are quasi-coherent this will usually not be the case for
$\Ascr\otimes_k \Bscr^\circ$.  To compute things like
$\Fscr\Lotimes_\Bscr-$ we may take a flat resolution of $\Fscr$ as sheaf of
$\Ascr\otimes_k \Bscr^\circ$-modules.  This is then automatically also
a flat resolution as right $\Bscr$-modules which can be used to
compute the derived tensor product.

\medskip

Define $D^\delta(\Oscr_X)$ as the full subcategory of $D_{\Qch}(\Oscr_X\otimes_k\Oscr_X)$
whose objects are obtained from complexes of quasi-coherent $\Oscr_X$-modules with $\Oscr_X\otimes \Oscr_X$ acting via the multiplication map $\Oscr_X \otimes \Oscr_X \to \Oscr_X$. We will need the following lemma
\begin{lemmas} There is an equivalence of categories
\[
D^\delta(\Oscr_X)\cong D^{\delta}(\Qch(X))
\]
which is the identity on objects where $D^{\delta}(\Qch(X))$ was introduced in \S\ref{ref-8.2-103}.
\end{lemmas}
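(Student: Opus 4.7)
The plan is to construct a functor $\Phi:D^\delta(\Oscr_X)\to D^\delta(\Qch(X))$ via the diagonal pushforward $i_{\Delta,\ast}$, which on underlying objects is literally the identity, and then to verify that it is fully faithful (essential surjectivity is then automatic).

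First, at the chain level, if $M^\bullet$ is a complex of quasi-coherent $\Oscr_X$-modules with $\Oscr_X\otimes_k\Oscr_X$ acting via the multiplication map, then applying the pushforward along the closed immersion $i_\Delta:X\hookrightarrow X\times_k X$ yields a complex of quasi-coherent $\Oscr_{X\times_k X}$-modules supported on the diagonal with the ``same'' underlying data. Since $i_{\Delta,\ast}$ is exact (closed immersion) and preserves quasi-isomorphisms, this descends to a well-defined functor on the derived categories, tautologically the identity on objects.

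The content is then to establish the isomorphism
\[
\Ext^n_{\Oscr_X\otimes_k\Oscr_X}(M,N)\xrightarrow{\cong}\Ext^n_{X\times_k X}(i_{\Delta,\ast}M,i_{\Delta,\ast}N)
\]
for $M,N\in D^\delta(\Oscr_X)$. My plan for this is a local-to-global (\v{C}ech) argument. Pick an affine open cover $X=\bigcup_{i=1}^n U_i$ with $U_i=\Spec A_i$, giving the product cover $\{U_i\times_k U_i\}$ of an open neighborhood of the diagonal in $X\times_k X$. Since $i_{\Delta,\ast}M$ and $i_{\Delta,\ast}N$ are supported on the diagonal, only the ``diagonal pieces'' $U_I\times_k U_I$ (for $I\subset\{1,\dots,n\}$, $U_I=\bigcap_{i\in I}U_i$) contribute to the right-hand side. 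Over each affine $U_I\times_k U_I=\Spec(A_I\otimes_k A_I)$ with $A_I=\Oscr_X(U_I)$, both Ext-groups identify tautologically with $\Ext^\ast_{A_I\otimes_k A_I}(M(U_I),N(U_I))$, and the \v{C}ech restriction maps on overlaps correspond naturally under the identification of $U_I$ with the diagonal inside $U_I\times_k U_I$. Hence the two \v{C}ech spectral sequences have identified $E_1$-pages with the same differentials and converge to the same abutment, yielding the desired isomorphism.

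The main obstacle will be the careful bookkeeping of the two distinct sheaves of rings --- $\Oscr_X\otimes_k\Oscr_X$ viewed on $X$ (a non-sheafified presheaf tensor product) versus $\Oscr_{X\times_k X}$ on $X\times_k X$ --- and making rigorous the two \v{C}ech spectral sequences and their compatibility under $i_{\Delta,\ast}$. On an affine patch $U_i=\Spec A_i$, the identifications $(A_i\otimes_k A_i)^{\sim}\cong\Oscr_{X\times_k X}|_{U_i\times_k U_i}$ and the fact that $i_{\Delta,\ast}$ corresponds to the multiplication $A_i\otimes_k A_i\to A_i$ render the local comparison tautological; the global statement then follows by gluing, using the separatedness of $X$ to guarantee that the diagonal is closed in $X\times_k X$ and contained in $\bigcup_i U_i\times_k U_i$.
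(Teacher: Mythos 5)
Your approach (direct \v{C}ech comparison of $\Ext$-groups after pushing forward along $i_\Delta$) is genuinely different from the paper's, which instead introduces the intermediate sheaf of $k$-algebras $\Ascr:=i_\Delta^{-1}(\Oscr_{X\times X})$ on $X$ and realizes the equivalence as a composite of two adjunctions
\[
D(\Oscr_X\otimes_k\Oscr_X)\ \xrightarrow[\text{restriction}]{\ \Ascr\otimes_{\Oscr_X\otimes\Oscr_X}-\ }\ D(\Ascr)\ \xrightarrow[\ i_\Delta^{-1}\ ]{\ i_{\Delta,\ast}\ }\ D(\Oscr_{X\times X}),
\]
showing that each unit/counit becomes an isomorphism on the $D^\delta(-)$ subcategories. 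The key input is that $\Oscr_X\otimes_k\Oscr_X\to\Ascr$ is flat (checked on stalks, where it is a localization), together with the usual $(i_\Delta^{-1},i_{\Delta,\ast})$ adjunction for the closed immersion.

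The place where your argument has a real gap is the claim that, over an affine $U_I=\Spec A_I$, the $\Ext$-groups on the $\Oscr_X\otimes_k\Oscr_X$ side ``identify tautologically'' with $\Ext^\ast_{A_I\otimes_k A_I}(M(U_I),N(U_I))$. This is not a tautology. The sheaf of rings $(\Oscr_X\otimes_k\Oscr_X)|_{U_I}$ lives on the topological space $U_I$, not on $U_I\times_k U_I$, and it is \emph{not} the structure sheaf of $\Spec(A_I\otimes_k A_I)$, nor is it quasi-coherent over $\Oscr_X|_{U_I}$. Consequently, identifying $\RHom$ computed in $D(\Mod((\Oscr_X\otimes_k\Oscr_X)|_{U_I}))$ with $\RHom_{A_I\otimes_k A_I}(M(U_I),N(U_I))$ is essentially the affine-local version of the very statement you are trying to prove (compare $\Ext$ over a non-quasi-coherent sheaf of rings on $U_I$ with $\Ext$ of modules over $A_I\otimes_k A_I$, i.e.\ with quasi-coherent sheaves on $U_I\times_k U_I$ supported on the diagonal). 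Until that local comparison is established, the \v{C}ech-spectral-sequence step has nothing to compare, so the argument does not yet close. The paper circumvents exactly this difficulty: the flatness of $\Oscr_X\otimes_k\Oscr_X\to\Ascr$ lets one compute $\Lotimes$ and $\RHom$ after extending scalars to $\Ascr$, and $\Ascr$-modules on the diagonal are then transparently the same as $\Oscr_{X\times X}$-modules supported there. If you want to retain the \v{C}ech flavor, the cleaner route in this paper's toolkit is the $\epsilon^\ast$ embedding into presheaf modules $\Mod(\Xscr)$ from the ``Sheaves and presheaves'' section; but the direct chain-of-adjunctions proof via $\Ascr$ is considerably shorter and, unlike your sketch, does not require constructing and matching two $\Ext$-spectral sequences for non-standard sheaves of rings.
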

\begin{proof} As usual let $i_\Delta:X\r X\times X$ be the diagonal. Put $\Ascr=i^{-1}_\Delta(\Oscr_{X\times X})$. Then
there is an obvious morphism of sheaves of algebras $\Oscr_X\otimes_k \Oscr_X\r \Ascr$ and we claim
it is flat. Indeed the stalk of $\Ascr$ at $x\in X$ is equal to the stalk
at $\Oscr_{X\times X}$ at $\Delta(x)$. So this stalk is equal
to the localization
of $\Oscr_{X,x}\otimes \Oscr_{X,x}$ at the kernel of the map $\Oscr_{X,x}\otimes \Oscr_{X,x}\r k(x)\otimes k(x)\r k(x)$.
Note that $\Ascr$ also maps to $\Oscr_X$, so we may define $D^\delta(\Ascr)$ as the full subcategory of
$D(\Ascr)$ spanned by objects which are obtained from complexes of quasi-coherent $\Oscr_X$-modules.

We have pairs of adjoint functors
\begin{equation}
\label{ref-9.1-126}
\xymatrix{
D(\Oscr_X\otimes \Oscr_X)\ar@/^1em/[rrr]^-{\Ascr\otimes_{\Oscr_X\otimes \Oscr_X}-}&&&
D(\Ascr)\ar@/^1em/[lll]^{-\mid\Oscr_X\otimes \Oscr_X} \ar@/_1em/[r]_{i_{\Delta,\ast}}&
D(\Oscr_{X\times X})\ar@/_1em/[l]_{i_\Delta^{-1}}
}
\end{equation}
whose unit/counit maps are isomorphisms in the categories $D^\delta(-)$. From this it follows
immediately that the functors in \eqref{ref-9.1-126} define inverse equivalences
between the $D^\delta(-)$.
\end{proof}
\begin{corollarys} 
\label{ref-9.1.2-127}  If $M\in D(\Qch(X))$ then
\[
\HH^\ast(X,M)=\Ext^\ast_{\Oscr_X\otimes\Oscr_X}(\Oscr_X,M).
\]
\end{corollarys}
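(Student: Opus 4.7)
The plan is to deduce this corollary directly from the preceding lemma. By definition,
\[
\HH^\ast(X,M) = \Ext^\ast_{X\times_k X}(i_{\Delta,\ast}\Oscr_X, i_{\Delta,\ast}M),
\]
which is the same as $\Hom^\ast_{D^\delta(\Qch(X))}(\Oscr_X, M)$ under the definition of $D^\delta(\Qch(X))$ from \S\ref{ref-8.2-103}. On the other hand, $\Ext^\ast_{\Oscr_X\otimes\Oscr_X}(\Oscr_X, M)$ is by definition the Hom from $\Oscr_X$ to $M$ in the category $D^\delta(\Oscr_X)$, since both objects lie in that subcategory (with $\Oscr_X\otimes_k\Oscr_X$ acting through the multiplication map).

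Thus the statement is obtained by invoking the equivalence
\[
D^\delta(\Oscr_X)\cong D^\delta(\Qch(X))
\]
of the previous lemma, which is the identity on objects, and evaluating the resulting isomorphism of $\Hom$-groups at the pair $(\Oscr_X, \Sigma^\ast M)$. Since the equivalence is the identity on objects, no further identification is needed. This really is a one-line deduction from the previous lemma, so the proof does not require any additional obstacle beyond unwinding the two definitions. The only subtlety worth double-checking is that the identification preserves the grading, i.e.\ sends shifts to shifts, but this is immediate since the inverse equivalences in \eqref{ref-9.1-126} are triangulated functors (being derived functors of exact or flat operations).
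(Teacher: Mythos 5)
Your proof is correct and is exactly the intended deduction: the paper gives no separate argument for this corollary, treating it as an immediate consequence of the preceding lemma's equivalence $D^\delta(\Oscr_X)\cong D^{\delta}(\Qch(X))$ (identity on objects), applied to the $\Hom$-groups from $\Oscr_X$ to shifts of $M$. Your unwinding of the two definitions and the remark on compatibility with shifts match what the paper leaves implicit.
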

\subsection{The characteristic class of a divisor}
Unless otherwise specified, in the rest of this section $X$ will be a
closed subscheme of a quasi-compact separated $k$-scheme $Y$ defined
by an invertible ideal $I$.  With a slight abuse of notation, we will
write $\Oscr_X=\Oscr_Y/I$ and we consider $\Oscr_X$ as a sheaf of
$k$-algebras on $Y$.

We prove a technical result (Lemma \ref{ref-9.2.1-132} below)
which will be used to show that certain Hochschild cohomology classes
are non-trivial (see Proposition \ref{ref-9.3.1-140} below). The result is probably known in some form to experts.  For
example Andrei C\u{a}ld\u{a}raru tells us that it would also follow from his
work with Arinkin \cite{CA} on derived self-intersections, modulo some technical verifications. Nonetheless, since
we were unable to find a written proof in the literature, we provide one here.

We consider the complex of $\Oscr_X$-bimodules
\[
C(X/Y)\overset{\text{def}}{=}\Oscr_X\Lotimes_{\Oscr_Y} \Oscr_X
\]
To compute the cohomology of $C(X/Y)$ we may view $C(X/Y)$ as a complex of $\Oscr_Y-\Oscr_Y$-bimodules. Using
the obvious $\Oscr_Y$-flat resolution of $\Oscr_X$ 
\[
0\r I\r \Oscr_Y \r \Oscr_X\r 0,
\]
we find
\begin{equation}
\label{ref-9.2-128}
H^\ast(C(X/Y))=
\begin{cases}
\Oscr_X&\text{if $i=0$}\\
I/I^2&\text{if $i=-1$}\\
0&\text{otherwise.}
\end{cases}
\end{equation}
In particular, we have a distinguished triangle of complexes of $\Oscr_X$-bimodules
\begin{equation}
\label{ref-9.3-129}
C(X/Y)\r \Oscr_X\xrightarrow{\xi_{X/Y}} \Sigma^2 I/I^2\r 
\end{equation}
with $\xi_{X/Y}\in \Ext^2_{\Oscr_X\otimes \Oscr_X}(\Oscr_X,I/I^2)=\HH^2(X,I/I^2)$, where we have used Corollary \ref{ref-9.1.2-127}. 

We will now give a classical avatar of $\xi_{X/Y}$ (see  Lemma \ref{ref-9.2.1-132}). By change of rings we have
\begin{equation}
\label{ref-9.4-130}
\HH^2(X,I/I^2)=\Ext^2_{\Oscr_X\otimes \Oscr_X}(\Oscr_X,I/I^2)=\Ext^2_{\Oscr_X}(\Oscr_X\Lotimes_{\Oscr_{X}\otimes \Oscr_X} \Oscr_X,I/I^2)
\end{equation}
and if $X,Y$ are smooth there is the Hochschild-Kostant-Rosenberg isomorphism
in the derived category of $\Oscr_X$-modules
\begin{equation}
\label{ref-9.5-131}
\HKR_\ast:\Oscr_X \Lotimes_{\Oscr_{X\times X}} \Oscr_X\r \wedge^\bullet\Omega_X
\end{equation}
If we represent $\Oscr_X \Lotimes_{\Oscr_{X}\otimes \Oscr_X} \Oscr_X$ by the usual Hochschild complex 
\[
\CC_\bullet(X):=\cdots \r\Oscr_X\otimes_k\Oscr_X\otimes_k\Oscr_X\r \Oscr_X\otimes_k\Oscr_X\r \Oscr_X,
\]
then $\HKR_\ast $ is given by sending a local section $f_0\otimes f_1\otimes\cdots \otimes f_n$ of $\Oscr_X^{\otimes n+1}$ to $f_0df_1\cdots df_n$.

In particular combining \eqref{ref-9.5-131} with \eqref{ref-9.4-130} we get a split injective map
\[
\HKR: \Ext^1_X(\Omega_X,I/I^2)\r \HH^2(X,I/I^2)
\]
\begin{lemmas}
\label{ref-9.2.1-132}
Assume that $X$, $Y$ are smooth, and
let $\xi'_{X/Y}\in \Ext^1_X(\Omega_X,I/I^2)$ correspond to the conormal sequence \cite[Prop.\ 8.4A]{H}
\[
0\r I/I^2\r \Oscr_X\otimes_{\Oscr_Y}\Omega_Y \r \Omega_X\r 0.
\]
Then
\begin{equation}
\label{ref-9.6-133}
\xi_{X/Y}=\HKR(\xi'_{X/Y}).
\end{equation}
In particular, if $\xi'_{X/Y}$ is non-zero then so is $\xi_{X/Y}$.
\end{lemmas}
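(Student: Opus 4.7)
The plan is to prove $\xi_{X/Y} = \HKR(\xi'_{X/Y})$ by realizing both classes as the same \v{C}ech $1$-cocycle in $H^1(X, T_X \otimes I/I^2)$, exploiting the HKR identification of this group with the $\Ext^1(\Omega_X, I/I^2)$-summand of the decomposition $\HH^2(X, I/I^2) \cong \bigoplus_{j} \Ext^{2-j}_X(\Omega_X^j, I/I^2)$.

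First I would cover $Y$ by affine opens $V_\alpha$ on which $I|_{V_\alpha} = (f_\alpha)$ is principal. The Koszul CDGA $\Oscr_Y|_{V_\alpha}\langle \epsilon_\alpha \mid d\epsilon_\alpha = f_\alpha,\ |\epsilon_\alpha| = -1\rangle$ is a cofibrant replacement of $\Oscr_X|_{V_\alpha}$ as a commutative $\Oscr_Y$-algebra. Tensoring with $\Oscr_X$ over $\Oscr_Y$ presents $C(X/Y)|_{V_\alpha}$ explicitly as the split DG bimodule $\Oscr_X|_{V_\alpha} \oplus (I/I^2)|_{V_\alpha}[1]$ with zero differential, so that $\xi_{X/Y}$ vanishes locally. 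By the local-to-global spectral sequence for $\Ext^{\bullet}_{\Oscr_X\otimes_k\Oscr_X}(\Oscr_X, I/I^2)$, and the HKR computation of the local $\mathcal{E}\!\mathit{xt}$-sheaves, this forces $\xi_{X/Y}$ into the $(p,q)=(1,1)$ slot $H^1(X, T_X \otimes I/I^2)$.

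Next I would compute the \v{C}ech $1$-cocycle representing $\xi_{X/Y}$. On a double overlap $V_{\alpha\beta}$ one has $f_\beta = u_{\alpha\beta} f_\alpha$ for a unit $u_{\alpha\beta}$; comparing the two Koszul splittings of $C(X/Y)$ on $V_{\alpha\beta}$ produces an $\Oscr_X$-linear map $\Omega_X \to I/I^2$ (the difference factors through $\Omega_X$ because both splittings are normalised on $I/I^2$), and a triple-overlap calculation confirms that this family is a strict cocycle. Performing the analogous exercise for the conormal sequence -- using the local retractions $r_\alpha : \Omega_Y|_X \to I/I^2$ sending $df_\alpha|_X \mapsto \bar f_\alpha$ and vanishing on a transverse complement -- yields the \emph{same} \v{C}ech $1$-cocycle, which by definition represents $\xi'_{X/Y}$. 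Since the HKR isomorphism intertwines the two local-to-global spectral sequences, this gives $\xi_{X/Y} = \HKR(\xi'_{X/Y})$, establishing \eqref{ref-9.6-133}.

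The main technical difficulty will be the first step: although the underlying one-sided $\Oscr_X$-module structure of $C(X/Y)$ is immediate from the Koszul calculation, the relevant $\Oscr_X$-bimodule structure -- which controls the extension class in $\Ext^2_{\Oscr_X\otimes_k\Oscr_X}(\Oscr_X, I/I^2)$ and, in particular, its HKR decomposition -- requires a delicate treatment. Concretely one must verify that the local CDGA model produces a splitting \emph{as an $\Oscr_X\otimes_k\Oscr_X$-module} (so that $\xi_{X/Y}$ really vanishes in the sheaf $\mathcal{E}\!\mathit{xt}^2$, not merely as an $\Oscr_X$-module map), and that the Koszul-splitting differences on overlaps agree with the conormal retractions under the HKR identification; this last point is the heart of the argument, and reduces the lemma to the vanishing of the local $\mathcal{E}\!\mathit{xt}^2$-obstruction.
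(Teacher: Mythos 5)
Your approach is genuinely different from the paper's: you localize, using Koszul CDGA models on an affine cover where $I$ is principal, and try to identify $\xi_{X/Y}$ and $\HKR(\xi'_{X/Y})$ as the same \v{C}ech $1$-cocycle in $H^1(X,T_X\otimes I/I^2)$ via the local-to-global spectral sequence. The paper instead works entirely globally: it represents $\Oscr_X\Lotimes_{\Oscr_Y}\Oscr_X$ by a bar complex, applies $\Oscr_X\otimes_{\Oscr_X\otimes\Oscr_X}-$ (which is automatically derived by flatness), and exhibits an explicit map of short exact sequences of complexes onto the conormal sequence whose rightmost vertical arrow is the HKR map $f_0\otimes\cdots\otimes f_n\mapsto f_0df_1\cdots df_n$; the equality \eqref{ref-9.6-133} then drops out of the resulting map of distinguished triangles and the adjunction \eqref{ref-9.4-130}, with no cover, no gluing, and no spectral sequence. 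Your route is viable in principle, but as written it has two genuine gaps.

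First, the claim that local vanishing of $\xi_{X/Y}$ ``forces it into the $(p,q)=(1,1)$ slot'' is not correct: killing the image in $H^0$ of the local $\Ext^2$-sheaf only places the class in the first step of the filtration, which (after HKR degeneration) still contains the summand $H^2(X,I/I^2)$ coming from $E_2^{2,0}$. Since $\HKR(\xi'_{X/Y})$ lives purely in the $H^1(X,T_X\otimes I/I^2)$ summand, the full equality \eqref{ref-9.6-133} requires you to also kill the $H^2(X,I/I^2)$ component of $\xi_{X/Y}$; this is true (the underlying one-sided extension splits globally, because the map $\Oscr_X\otimes_{\Oscr_Y}I\r\Oscr_X$ is zero), but it is a separate global argument that your proposal never makes. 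Second, and more seriously, the actual content of the lemma --- that the bimodule splitting defect of the Koszul models on overlaps coincides, under the local HKR identification, with the defect of the conormal retractions $r_\alpha$ --- is exactly the computation you defer as ``the heart of the argument.'' The same goes for the verification that the commutative Koszul model computes the correct \emph{bimodule} structure on $C(X/Y)$ (it does, because both structure maps $\Oscr_X\r\Oscr_X\langle\epsilon_\alpha\rangle$ coincide, but this must be checked). Until those two cocycles are actually computed and matched, including signs and the normalization of the HKR map, the proposal is a strategy rather than a proof; note that this overlap computation is precisely the fiddly step the paper's global bar-complex argument is designed to avoid.
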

\begin{proof}
To prove this lemma we have to understand better the distinguished triangle
\begin{equation}
\label{ref-9.7-134}
\Sigma I/I^2\r \Oscr_X\Lotimes_{\Oscr_Y}\Oscr_X\r \Oscr_X\r
\end{equation}
corresponding to $\xi_{X/Y}$. We represent $\Oscr_X\Lotimes_{\Oscr_Y}\Oscr_X$ by the bar complex $B_\bullet(X/Y)$
\[
\cdots \r\Oscr_X\otimes_k \Oscr_Y\otimes_k \Oscr_Y\otimes_k \Oscr_X\r  \Oscr_X\otimes_k \Oscr_Y\otimes_k\Oscr_X \r \Oscr_X\otimes_k \Oscr_X
\]
with the usual bar-differential. The analogous bar-complex $B_\bullet(X/X)$ is quasi-isomorphic to $\Oscr_X$ and the map
$\Oscr_X\Lotimes_{\Oscr_Y}\Oscr_X\r\Oscr_X$ in \eqref{ref-9.7-134} is represented by the map of complexes $B_\bullet(X/Y)\r B_\bullet(X/X)$. So
we obtain an exact sequence of complexes of $\Oscr_X$-bimodules
\begin{equation}
\label{ref-9.8-135}
0\r J_\bullet\r B_\bullet(X/Y)\r B_\bullet(X/X)\r 0
\end{equation}
where $J_\bullet$ is a complex concentrated in degrees $\le -1$ of the form
\[
\cdots \r\underbrace{\Oscr_X\otimes (I\otimes \Oscr_Y+\Oscr_Y\otimes I)\otimes \Oscr_X}_{J_2} \r 
\underbrace{\Oscr_X\otimes I\otimes \Oscr_X}_{J_1}\r \underbrace{0}_{J_0} \r 0
\]
Now one computes locally that $H^{-1}(J^\bullet)=I/I^2$ and since $J^\bullet$ is acyclic in other degrees by \eqref{ref-9.2-128} we obtain a quasi-isomorphism
\begin{equation}
\label{ref-9.9-136}
J_\bullet\r \Sigma(I/I^2)
\end{equation}
which sends local sections $f\otimes g\otimes h$ of $J_1$ to $f\bar{g}h$. Hence \eqref{ref-9.8-135}\eqref{ref-9.9-136} define a distinguished
triangle \emph{isomorphic} to \eqref{ref-9.7-134}. It takes some more straightforward verifications 
to show
that the two distinguished triangles are actually the same, which we leave to the reader. If one does not want to do this then  
one may take \eqref{ref-9.8-135}\eqref{ref-9.9-136} as defining $\xi_{X/Y}$. It then differs from the prior definition by
at most a global unit (as $I/I^2$ is a line bundle on $X$, this is the ambiguity in the choise of \eqref{ref-9.9-136}). Since we will be only interested in when $\xi_{X/Y}\neq 0$, this makes no difference.

Now we tensor \eqref{ref-9.8-135} on the left by $\Oscr_X\otimes_{\Oscr_X\otimes\Oscr_X}-$. Since all complexes in \eqref{ref-9.8-135} are flat over $\Oscr_X\otimes\Oscr_X$
this is in fact the derived tensor product. Furthermore one has obvious identifications of complexes of $\Oscr_X$-modules 
\begin{align*}
\Oscr_X\otimes_{\Oscr_X\otimes\Oscr_X} B_\bullet(X/Y)&=\Oscr_X\otimes_{\Oscr_Y} C_\bullet(Y)\\
\Oscr_X\otimes_{\Oscr_X\otimes\Oscr_X} B_\bullet(X/X)&=C_\bullet(X)
\end{align*}
Moreover $\Oscr_X\otimes_{\Oscr_{X}\otimes\Oscr_{X}} J_\bullet$ is a complex which ends with the term $\Oscr_X\otimes I$ in degree $-1$. We now immediately verify that we have a commutative
diagram of complexes
\begin{equation}
\label{ref-9.10-137}
\xymatrix{
0\ar[r]& \Oscr_X\otimes_{\Oscr_X\otimes\Oscr_X}J_\bullet\ar[r]\ar[d]_{\text{can}}& \Oscr_X\otimes_{\Oscr_Y} C_\bullet(Y)\ar[r]\ar[d] & C_\bullet(X)\ar[d]^{\operatorname{\HKR}}\ar[r]&0\\
0\ar[r]& \Sigma(I/I^2)\ar[r] &\Sigma(\Oscr_X\otimes_{\Oscr_Y}\Omega_Y)\ar[r]& \Sigma\Omega_X\ar[r]&0
}
\end{equation}
where the middle vertical map sends $f\otimes g\otimes h$ (in degree $-1$) to $f\bar{g}dh$. Then \eqref{ref-9.10-137} give a map between distinguished triangles in the derived category of $\Oscr_X$-modules. 
\begin{equation}
\label{ref-9.11-138}
\xymatrix{
\Oscr_X\Lotimes_{\Oscr_{X}\otimes\Oscr_X}\Sigma(I/I^2)\ar[d]_{\text{can}}
\ar[r]&\Oscr_X\Lotimes_{\Oscr_{X}\otimes\Oscr_X}C(X/Y)\ar[r]\ar[d]&
\Oscr_X\Lotimes_{\Oscr_X\otimes_k \Oscr_X} \Oscr_X\ar[r]^-{1\otimes \xi_{X/Y}}\ar[d]^{\HKR}&\\
\Sigma(I/I^2)\ar[r]& \Sigma(\Oscr_X\otimes_{\Oscr_Y}\Omega_Y)\ar[r]& \Sigma \Omega_X\ar[r]_{\Sigma \xi'_{X/Y}}&
}
\end{equation}
Completing \eqref{ref-9.11-138} with a third
commutative square we get
\begin{equation}
\label{ref-9.12-139}
\xymatrix{
\Oscr_X\Lotimes_{\Oscr_{X}\otimes_k \Oscr_X} \Oscr_X \ar[rr]^{\Id \otimes \xi_{X/Y}}
\ar[d]_{\operatorname{HKR}} 
& &\Sigma^2(\Oscr_X \Lotimes_{\Oscr_{X}\otimes \Oscr_X} I/I^2)
\ar[d]^{\text{can}}
\\
\Sigma\Omega_X\ar[rr]_{\Sigma \xi'_{X/Y}}
&&
\Sigma^2(I/I^2)
}
\end{equation}
Under the isomorphism \eqref{ref-9.4-130},
$\xi_{X/Y}$ corresponds to the diagonal composition 
$
\text{can}\circ (\xi_{X/Y}\otimes \id)
$,
 whereas $\operatorname{HKR}(\xi'_{X/Y})$ 
corresponds (by construction) to the other diagonal composition. Hence we are done.
\end{proof}
\subsection{A concrete example}
Here is a concrete example of a situation where $\xi_{X/Y}\neq 0$.
\begin{propositions} 
\label{ref-9.3.1-140} Let $X$ be a smooth hypersurface of degree $d>1$ in $Y=\PP^n$, $n\ge 2$.
Then $\xi_{X/Y}\neq 0$.
\end{propositions}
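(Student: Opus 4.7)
The plan is to leverage Lemma \ref{ref-9.2.1-132}: since $\HKR$ is split injective, it suffices to show that $\xi'_{X/Y}\in \Ext^1_X(\Omega_X, I/I^2)$ is nonzero, i.e.\ that the conormal sequence $0\to \Oscr_X(-d)\to \Omega_Y|_X\to \Omega_X\to 0$ does not split. Dualizing, this is equivalent to non-splitting of the normal bundle sequence
\[
0\to T_X\to T_Y|_X\to \Oscr_X(d)\to 0,
\]
whose class $\xi'\in H^1(X, T_X(-d))$ is, by a standard identification, the image of $1\in H^0(\Oscr_X)$ under the connecting homomorphism of the twist
\[
0\to T_X(-d)\to T_Y|_X(-d)\to \Oscr_X\to 0.
\]
So the goal reduces to showing $H^0(X, T_Y|_X(-d))=0$: if this holds, then $1$ has no preimage and $\xi'\ne 0$.

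Next, I would use the Euler sequence on $\PP^n$ restricted to $X$ and twisted by $-d$,
\[
0\to \Oscr_X(-d)\to \Oscr_X(1-d)^{n+1}\to T_Y|_X(-d)\to 0.
\]
Since $d>1$, $H^0(\Oscr_X(1-d))=0$, so the long exact sequence yields an injection
\[
H^0(T_Y|_X(-d))\hookrightarrow \ker\!\left(H^1(\Oscr_X(-d))\xrightarrow{(\cdot x_0,\ldots,\cdot x_n)} H^1(\Oscr_X(1-d))^{n+1}\right).
\]
For $n\ge 3$, $X$ has dimension $\ge 2$, and the standard vanishing $H^1(X,\Oscr_X(k))=0$ for all $k\in\ZZ$ (obtained from $0\to \Oscr_{\PP^n}(k-d)\to \Oscr_{\PP^n}(k)\to \Oscr_X(k)\to 0$ together with the vanishing of intermediate cohomology on $\PP^n$) concludes the argument at once.

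The main obstacle is the case $n=2$, in which $H^1(\Oscr_X(-d))$ generally does not vanish on the curve $X$. I would handle this by Serre duality: using $\omega_X=\Oscr_X(d-3)$, the multiplication map above is Serre dual to
\[
H^0(\Oscr_X(2d-4))^3 \xrightarrow{(g_0,g_1,g_2)\mapsto x_0g_0+x_1g_1+x_2g_2} H^0(\Oscr_X(2d-3)).
\]
Writing $H^0(\Oscr_X(k))=R_k/fR_{k-d}$ with $R_k=H^0(\PP^2,\Oscr(k))$, the image equals $(x_0,x_1,x_2)R_{2d-4}/fR_{d-3}=R_{2d-3}/fR_{d-3}$ for $d\ge 2$ (since then $2d-3\ge 1$). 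Hence this map is surjective, its Serre dual is injective, and once more $H^0(T_Y|_X(-d))=0$, finishing the proof.
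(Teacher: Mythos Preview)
Your argument is correct. Both the paper and you reduce via Lemma \ref{ref-9.2.1-132} to showing that the conormal sequence does not split. At that point the paper simply invokes a classical result of van de Ven \cite{vdv}, whereas you give a direct, self-contained cohomological proof: you show $H^0(X, T_Y|_X(-d))=0$ by restricting the Euler sequence, handling $n\ge 3$ by the vanishing of intermediate cohomology and the curve case $n=2$ by a Serre-duality computation. Your route is more hands-on and avoids the external reference; the paper's is of course shorter but relies on the cited result.
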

\begin{proof}
According to \cite{vdv}, the conormal sequence on $X$ is not split. The conclusion then follows
from Lemma \ref{ref-9.2.1-132}.
\end{proof}
\subsection{The long exact sequence associated with a divisor}
Recall the following:
\begin{lemmas}
Assume that $M,N$ be complexes of $\Oscr_X$-modules. Then there is a long exact sequence
\begin{equation}
\label{ref-9.13-141}
\cdots \r\Ext^{n-2}_X(I/I^2\otimes_{\Oscr_X} M,N)\xrightarrow{-\circ(\xi_{X/Y}\Lotimes \Id_M)}
\Ext^n_X(M,N)\xrightarrow{f_\ast}
\Ext^n_Y(M,N)\r\cdots
\end{equation}
\end{lemmas}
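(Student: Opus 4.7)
The plan is to start from the fundamental distinguished triangle \eqref{ref-9.3-129} of $\Oscr_X$-bimodules
\[
C(X/Y)\r \Oscr_X\xrightarrow{\xi_{X/Y}} \Sigma^2 I/I^2\r
\]
and derive the long exact sequence \eqref{ref-9.13-141} by applying the functor $\RHom_{\Oscr_X}(-\Lotimes_{\Oscr_X} M,N)$.

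First I would derive-tensor the above triangle on the right with $M$ over $\Oscr_X$. Using associativity of the derived tensor product together with the fact that $C(X/Y)=\Oscr_X\Lotimes_{\Oscr_Y}\Oscr_X$, the left term becomes
\[
C(X/Y)\Lotimes_{\Oscr_X}M=\Oscr_X\Lotimes_{\Oscr_Y}M=Lf^{\ast}f_{\ast}M,
\]
so one obtains a distinguished triangle in $D(\Oscr_X)$
\[
Lf^{\ast}f_{\ast}M\r M\xrightarrow{\xi_{X/Y}\Lotimes\Id_M} \Sigma^{2}(I/I^{2}\Lotimes_{\Oscr_X}M)\r .
\]
Applying $\RHom_{\Oscr_X}(-,N)$ to this triangle immediately produces a long exact sequence whose connecting maps are precisely precomposition with $\xi_{X/Y}\Lotimes\Id_M$, yielding the middle portion of~\eqref{ref-9.13-141}.

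Next I would identify $\Ext^{n}_X(Lf^{\ast}f_{\ast}M,N)$ with $\Ext^{n}_Y(M,N)$. Since $f$ is a closed immersion, $f_{\ast}$ is exact, and we have the standard derived adjunction
\[
\RHom_{\Oscr_X}(Lf^{\ast}f_{\ast}M,N)\cong \RHom_{\Oscr_Y}(f_{\ast}M,Rf_{\ast}N)=\RHom_{\Oscr_Y}(f_{\ast}M,f_{\ast}N),
\]
and we use the notational convention of the statement to suppress $f_{\ast}$ on the $\Oscr_Y$-side. Under this identification, the map $\Ext^{n}_X(M,N)\r\Ext^{n}_X(Lf^{\ast}f_{\ast}M,N)$ induced by the first arrow $Lf^{\ast}f_{\ast}M\r M$ of the triangle is exactly the functorial map $f_{\ast}\colon\Ext^{n}_X(M,N)\r\Ext^{n}_Y(f_{\ast}M,f_{\ast}N)$, because the arrow $Lf^{\ast}f_{\ast}M\r M$ is the counit of the $(Lf^{\ast},Rf_{\ast})$-adjunction and chasing this counit through the adjunction isomorphism recovers $f_{\ast}$ on $\Hom$-spaces.

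The only real point to verify is this last compatibility between the counit-induced map and the functor $f_{\ast}$ on $\Ext$'s; this is a routine unwinding of the adjunction and presents no conceptual difficulty, but it is the step one must actually check rather than simply assert. Assembling these pieces yields exactly the long exact sequence \eqref{ref-9.13-141}.
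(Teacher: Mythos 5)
Your proposal is correct and follows essentially the same route as the paper: tensor the triangle \eqref{ref-9.3-129} with $M$, identify $C(X/Y)\Lotimes_{\Oscr_X}M$ with $\Oscr_X\Lotimes_{\Oscr_Y}M$, and apply $\RHom_X(-,N)$ together with the change-of-rings (equivalently, the $(Lf^\ast,Rf_\ast)$) adjunction. Your explicit remark that one must check the counit-induced map agrees with $f_\ast$ on $\Ext$'s is a point the paper leaves implicit, and it is the right thing to verify.
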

\begin{proof} Using change of rings we have
\begin{align*}
\Ext^n_Y(M,N)&=\Ext^n_X((\Oscr_X\Lotimes_{\Oscr_Y}\Oscr_X)\Lotimes_{\Oscr_X} M,N)\\
&=\Ext^n_X( C(X/Y)\Lotimes_{\Oscr_X} M,N)
\end{align*}
From \eqref{ref-9.3-129} we obtain a distinguished triangle
\[
C(X/Y)\Lotimes_{\Oscr_X} M\r M\xrightarrow{\xi_{X/Y}\Lotimes\Id_M} \Sigma^2(I/I^2\Lotimes_{\Oscr_X} M)\r 
\]
which yields the required long exact sequence by applying $\Hom_X(-,N)$.
\end{proof}
\subsection{Application to Hochschild cohomology}
\begin{propositions}
\label{ref-9.5.1-142}
Let $M$ be a complex of $\Oscr_X$-modules. There is a  
long exact sequence
\begin{multline*}
\cdots \r
\HH^{n-2}(X,(I/I^2)^{-1}\otimes_X M)
\xrightarrow{\xi_{X/Y}\otimes-}
\HH^n(X,M)\xrightarrow{f_\ast}
\HH^n(Y,M)\r\cdots
\end{multline*}
\end{propositions}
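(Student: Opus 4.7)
The strategy is to apply $\Ext^\bullet_{\Oscr_X\otimes\Oscr_X}(-,M)$ to the distinguished triangle \eqref{ref-9.3-129} and then identify each of the three resulting Ext-groups with a Hochschild cohomology group.

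By Corollary~\ref{ref-9.1.2-127} the middle term is $\Ext^n_{\Oscr_X\otimes\Oscr_X}(\Oscr_X,M)=\HH^n(X,M)$. For the right-hand term, viewing $M$ as an $\Oscr_Y$-bimodule by restriction along the ring map $\Oscr_Y\otimes\Oscr_Y\to\Oscr_X\otimes\Oscr_X$ and combining the same corollary with change of rings, we get
\[
\HH^n(Y,M)=\Ext^n_{\Oscr_Y\otimes\Oscr_Y}(\Oscr_Y,M)\cong \Ext^n_{\Oscr_X\otimes\Oscr_X}\bigl((\Oscr_X\otimes\Oscr_X)\Lotimes_{\Oscr_Y\otimes\Oscr_Y}\Oscr_Y,\,M\bigr)\cong \Ext^n_{\Oscr_X\otimes\Oscr_X}(C(X/Y),M),
\]
where the last isomorphism uses $(\Oscr_X\otimes\Oscr_X)\Lotimes_{\Oscr_Y\otimes\Oscr_Y}\Oscr_Y\cong \Oscr_X\Lotimes_{\Oscr_Y}\Oscr_X=C(X/Y)$. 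For the left-hand term, since $I/I^2$ is an invertible \emph{central} $\Oscr_X$-bimodule, tensoring over $\Oscr_X$ with $(I/I^2)^{-1}$ is an auto-equivalence of the bimodule category sending $I/I^2$ to $\Oscr_X$ and $M$ to $(I/I^2)^{-1}\otimes_X M$; this yields
\[
\Ext^{n-2}_{\Oscr_X\otimes\Oscr_X}(I/I^2,M)\cong \Ext^{n-2}_{\Oscr_X\otimes\Oscr_X}(\Oscr_X,(I/I^2)^{-1}\otimes_X M)=\HH^{n-2}(X,(I/I^2)^{-1}\otimes_X M).
\]
Plugging these identifications into the long exact Ext-sequence coming from \eqref{ref-9.3-129} gives a long exact sequence of the stated shape.

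It remains to verify that the map $\HH^n(X,M)\to\HH^n(Y,M)$ so obtained agrees with $f_\ast$ of \eqref{ref-8.14-121}, and that the connecting map is indeed $\xi_{X/Y}\otimes-$. The latter is immediate from how $\xi_{X/Y}$ appears in \eqref{ref-9.3-129}. For the former, one traces through the change-of-rings isomorphism: the natural morphism $C(X/Y)\to\Oscr_X$ is precisely the counit of the extension-restriction adjunction applied to the $(\Oscr_Y\otimes\Oscr_Y)$-module $\Oscr_Y$, which is what induces $f_\ast$ on Hochschild cohomology under the identifications above. The only delicate point is bookkeeping of bimodule structures through the successive adjunctions; no new input beyond the fundamental triangle \eqref{ref-9.3-129} is required.
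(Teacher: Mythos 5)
Your proof is correct, and it takes a route that is close in spirit but not identical to the paper's. Both arguments ultimately rest on the triangle \eqref{ref-9.3-129}; the difference is in how it is deployed. The paper does not apply $\RHom(-,M)$ to \eqref{ref-9.3-129} directly: it instead invokes the already-established long exact sequence \eqref{ref-9.13-141} for the divisor pair $X\times_k X\subset Y\times_k X$ (with $M=\Oscr_{\Delta_X}$, $N=i_{\Delta_X,\ast}M$), and then identifies the three Ext-groups on the product spaces, in particular using the adjunction $\Ext^n_{Y\times_k Y}(\Oscr_{\Delta_Y},i_{\Delta_Y,\ast}M)\cong\Ext^n_{Y\times_k X}(\Oscr_{\Gamma_f},(f,\Id_X)_\ast M)$ via the graph of $f$. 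You stay entirely in the ``sheaf of $\Oscr\otimes_k\Oscr$-algebras'' picture of \S\ref{ref-9.1.2-127} and hit the triangle with $\RHom_{\Oscr_X\otimes\Oscr_X}(-,M)$, identifying the third term through the K\"unneth-type isomorphism $(\Oscr_X\otimes_k\Oscr_X)\Lotimes_{\Oscr_Y\otimes_k\Oscr_Y}\Oscr_Y\cong C(X/Y)$ followed by extension/restriction of scalars. Your version is shorter and avoids the passage through $Y\times_k X$, but it shifts the burden onto two identifications you only sketch: the K\"unneth isomorphism (harmless over the base field $k$, but worth a line), and the claim that the resulting map $\HH^n(X,M)\to\HH^n(Y,M)$ is the $f_\ast$ of \eqref{ref-8.14-121} --- since $f_\ast$ is defined there via pushforward on $D^\delta(\Qch(-))$, this requires the compatibility of the product-space and bimodule-sheaf pictures, which is exactly the content of the equivalence $D^\delta(\Oscr_X)\cong D^\delta(\Qch(X))$ and its naturality in $f$. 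The paper's detour buys it this identification essentially for free, since \eqref{ref-9.13-141} already names its middle map $f_\ast$ by construction. The treatment of the left-hand term via invertibility of $I/I^2$ is the same in both arguments.
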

\begin{proof}
Let $\Gamma_f\subset Y\times_k X$ be the graph of $f$.
 The long exact sequence \eqref{ref-9.13-141} applied to $X\times_k X\r Y\times_k X$
becomes (using the dictionary $X\mapsto X\times_k X$, $Y\mapsto Y\times_k X$, $I/I^2\mapsto I/I^2\boxtimes_k \Oscr_X$, 
$M\mapsto \Oscr_{\Delta_X}$, $f\mapsto (f,\Id)$, $N\mapsto i_{\Delta_X,\ast}M$)
\begin{multline}
\label{ref-9.14-143}
\cdots \r\Ext^{n-2}_{X\times_k X}((I/I^2\boxtimes \Oscr_X)\otimes_{\Oscr_{X\times_k X}} \Oscr_{\Delta_X},i_{\Delta_X,\ast}M)
\xrightarrow{-\circ ((\xi_{X/Y}\boxtimes 1)\otimes\Id_{\Oscr_{\Delta_X}})}\\
\Ext^n_{X\times_k X}(\Oscr_{\Delta_X},i_{\Delta_X,\ast}M)\xrightarrow{(f,\Id)_\ast}
\Ext^n_{Y\times_k X}(\Oscr_{\Gamma_f},(f,\Id_X)_\ast M)\r\cdots
\end{multline}
Now $(I/I^2\boxtimes \Oscr_X)\otimes_{\Oscr_{X\times_k X}} \Oscr_{\Delta_X}=i_{\Delta_X,\ast}(I/I^2)$, and it is easy to see that
with this identification we have $(\xi_{X/Y}\boxtimes1)\otimes \Id_{\Oscr_{\Delta_X}}=\xi_{X/Y}$.

We have 
\[
\HH^n(X,M)=\Ext^n_{X\times_k X}(\Oscr_{\Delta_X},i_{\Delta_X,\ast}M)
\]
If we consider $M$ as $\Oscr_{Y\times_k Y}$ module, then it is in fact supported on $X\times_k X$. Hence
by adjointness
\begin{align*}
\HH^n(Y,M)&=\Ext^n_{Y\times_k Y}(\Oscr_{\Delta_Y},i_{\Delta_Y,\ast}M)\\
&=\Ext^n_{Y\times_k X}((\Id_Y,f)^\ast\Oscr_{\Delta_Y},(f,\Id_X)_\ast M)\\
&=\Ext^n_{Y\times_k X}(\Oscr_{\Gamma_f},(f,\Id_X)_\ast M)
\end{align*}
Finally, since $I/I^2$ is an invertible $\Oscr_X$-module, we have
\[
\Ext^{n-2}_{X\times_k X}(i_{\Delta,\ast}(I/I^2),i_{\Delta_X,\ast}M)=\HH^{n-2}(X,(I/I^2)^{-1}\otimes_X M)
\]
Substituting all this in \eqref{ref-9.14-143} yields what we want. 
\end{proof}
\subsection{The smooth proper case}
\label{ref-9.6-144}
Here we assume that $X$ and $Y$ are smooth, proper and connected, and are of dimension $m$, $m+1$ respectively.
\begin{lemmas}
\label{ref-9.6.1-145}
One has
\[
\HH^{2m}(X,\omega_X^{\otimes 2})=k.
\]
\end{lemmas}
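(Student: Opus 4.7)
The plan is to compute $\HH^{2m}(X,\omega_X^{\otimes 2})$ using the Hochschild–Kostant–Rosenberg decomposition, which for a smooth variety $X$ and a quasi-coherent sheaf $M$ gives
\[
\HH^n(X,M)\cong \bigoplus_{p+q=n} H^p(X,\wedge^q T_X\otimes_{\Oscr_X} M).
\]
This may be obtained by applying $\Hom_X(-,M)$ to the HKR isomorphism \eqref{ref-9.5-131} together with the identification \eqref{ref-9.4-130}.

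First I would take $M=\omega_X^{\otimes 2}$ and $n=2m$. Since $\dim X=m$, we have $H^p(X,-)=0$ for $p>m$ and $\wedge^q T_X=0$ for $q>m$. Hence in the HKR sum $p+q=2m$ forces $p=q=m$, so
\[
\HH^{2m}(X,\omega_X^{\otimes 2})\cong H^m\bigl(X,\wedge^m T_X\otimes_{\Oscr_X}\omega_X^{\otimes 2}\bigr).
\]
Next, using the canonical identification $\wedge^m T_X\cong \omega_X^{-1}$, the right-hand side simplifies to $H^m(X,\omega_X)$.

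Finally, Serre duality (together with $X$ being smooth, proper, and connected) yields $H^m(X,\omega_X)\cong H^0(X,\Oscr_X)^\vee\cong k$, as required. The only real content is the HKR decomposition in the first step; the rest is a dimension count plus Serre duality.
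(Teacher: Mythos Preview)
Your proof is correct but follows a different route from the paper. The paper argues directly on $X\times_k X$: writing $\HH^{2m}(X,\omega_X^{\otimes 2})=\Ext^{2m}_{X\times X}(\Oscr_\Delta,i_{\Delta,\ast}\omega_X^{\otimes 2})$ and observing that $i_{\Delta,\ast}\omega_X^{\otimes 2}\cong \Oscr_\Delta\otimes_{\Oscr_{X\times X}}(\omega_X\boxtimes\omega_X)$, Serre duality on the $2m$-dimensional variety $X\times_k X$ (whose canonical bundle is $\omega_X\boxtimes\omega_X$) gives $\Ext^{2m}_{X\times X}(\Oscr_\Delta,\Oscr_\Delta\otimes\omega_{X\times X})\cong \Hom_{X\times X}(\Oscr_\Delta,\Oscr_\Delta)^\ast=k$. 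Your argument instead passes through the HKR decomposition on $X$ to reduce to $H^m(X,\omega_X)$ and then applies Serre duality on $X$. Both are short; the paper's approach avoids invoking HKR and fits naturally with the subsequent Lemma~\ref{ref-9.6.2-146}, which is proved by the same Serre-duality-on-$X\times X$ trick, while your approach has the virtue of making the single surviving Hodge-type summand explicit.
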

\begin{proof} We have
\begin{align*}
\HH^{2m}(X,\omega_X^{\otimes 2})&=\Ext^{2m}_{\Oscr_{X\times X}}(\Oscr_\Delta,\omega_{\Delta}^{\otimes 2})\\
&=\Ext^{2m}_{\Oscr_{X\times X}}(\Oscr_\Delta,\Oscr_{\Delta}\otimes_{\Oscr_{X\times X}}(\omega_{X}\boxtimes \omega_{X}))\\
&=\Hom_{\Oscr_{X\times X}}(\Oscr_\Delta,\Oscr_{\Delta})^\ast\\
&=k.
\end{align*}
In the third line we have used that $\omega_X\boxtimes \omega_X$ is the canonical sheaf on $X\times X$, together
with  Serre duality \cite{Bondal4}.
\end{proof}
\begin{lemmas}
\label{ref-9.6.2-146}
Let $\Lscr$ be an invertible $\Oscr_X$-module. The multiplication pairing
\begin{equation}
\label{ref-9.15-147}
\HH^i(X,\Lscr)\otimes \HH^{2m-i}(X,\Lscr^{-1}\otimes_X \omega_X^{\otimes 2})\r \HH^{2m}(X,\omega_X^{\otimes 2})=k
\end{equation}
is non-degenerate.
\end{lemmas}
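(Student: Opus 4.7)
The plan is to realize the pairing as the Serre duality pairing on $X\times X$. Recall from \S\ref{ref-8.2-103} that
\[
\HH^j(X,\Fscr)=\Ext^j_{X\times X}(\Oscr_\Delta,i_{\Delta,\ast}\Fscr)
\]
for any quasi-coherent $\Fscr$ on $X$. Since $X\times X$ is smooth and proper of dimension $2m$ with canonical bundle $\omega_X\boxtimes\omega_X$, and since $\Oscr_\Delta\otimes_{\Oscr_{X\times X}}(\omega_X\boxtimes\omega_X)=i_{\Delta,\ast}\omega_X^{\otimes 2}$ by the projection formula, Serre duality on $X\times X$ provides a perfect pairing
\[
\Ext^i_{X\times X}(\Oscr_\Delta,i_{\Delta,\ast}\Lscr)\otimes \Ext^{2m-i}_{X\times X}(i_{\Delta,\ast}\Lscr, i_{\Delta,\ast}\omega_X^{\otimes 2})\r \Ext^{2m}_{X\times X}(\Oscr_\Delta, i_{\Delta,\ast}\omega_X^{\otimes 2})\cong k
\]
given by Yoneda composition followed by the Serre trace; moreover, the final isomorphism to $k$ agrees with the one used in Lemma~\ref{ref-9.6.1-145} by construction.

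The second step is to identify the right-hand factor with $\HH^{2m-i}(X,\Lscr^{-1}\otimes\omega_X^{\otimes 2})$. Using that $\Lscr$ is invertible and the projection formula, I would write $i_{\Delta,\ast}\Lscr=(\Lscr\boxtimes\Oscr_X)\otimes_{\Oscr_{X\times X}}\Oscr_\Delta$. Then by tensor-hom adjunction, together with invertibility of $\Lscr\boxtimes\Oscr_X$,
\[
\Ext^{2m-i}_{X\times X}(i_{\Delta,\ast}\Lscr, i_{\Delta,\ast}\omega_X^{\otimes 2})\cong \Ext^{2m-i}_{X\times X}(\Oscr_\Delta, i_{\Delta,\ast}(\Lscr^{-1}\otimes\omega_X^{\otimes 2}))=\HH^{2m-i}(X,\Lscr^{-1}\otimes\omega_X^{\otimes 2}).
\]

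Combining the two steps will exhibit $\HH^i(X,\Lscr)^\ast\cong \HH^{2m-i}(X,\Lscr^{-1}\otimes\omega_X^{\otimes 2})$, and the transported pairing is non-degenerate by Serre duality. The remaining task is to verify that this transported Yoneda pairing coincides with the multiplication pairing \eqref{ref-9.15-147}. Given $\alpha:\Oscr_\Delta\r i_{\Delta,\ast}\Lscr[i]$ and $\beta:\Oscr_\Delta\r i_{\Delta,\ast}(\Lscr^{-1}\otimes\omega_X^{\otimes 2})[2m-i]$, the multiplication pairing produces the cup product $\alpha\cup\beta$ built from $\alpha\Lotimes_{\Oscr_\Delta}\beta$ composed with the coefficient multiplication $\Lscr\otimes(\Lscr^{-1}\otimes\omega_X^{\otimes 2})\r\omega_X^{\otimes 2}$, whereas the Yoneda pairing produces $\tilde\beta\circ\alpha$, where $\tilde\beta:i_{\Delta,\ast}\Lscr\r i_{\Delta,\ast}\omega_X^{\otimes 2}[2m-i]$ is the image of $\beta$ under the functor $(\Lscr\boxtimes\Oscr_X)\otimes-$.

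The main obstacle is this last compatibility: it is the standard identification of cup and Yoneda products for the commutative ring object $\Oscr_\Delta$ in $\Mod(\Oscr_{X\times X})$, and I would check it by unwinding both constructions on flat (or injective) resolutions, using that the adjunction above is essentially tensoring by the line bundle $\Lscr\boxtimes\Oscr_X$. Once it is granted, the non-degeneracy of \eqref{ref-9.15-147} is immediate from Serre duality on $X\times X$.
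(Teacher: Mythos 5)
Your proposal is correct and follows essentially the same route as the paper: Serre duality on the smooth proper variety $X\times_k X$ (with canonical bundle $\omega_X\boxtimes\omega_X$) gives non-degeneracy of the Yoneda composition pairing $\Ext^i_{X\times X}(\Oscr_\Delta,i_{\Delta,\ast}\Lscr)\otimes\Ext^{2m-i}_{X\times X}(i_{\Delta,\ast}\Lscr,\Oscr_\Delta\otimes(\omega_X\boxtimes\omega_X))\r\Ext^{2m}_{X\times X}(\Oscr_\Delta,\Oscr_\Delta\otimes(\omega_X\boxtimes\omega_X))$, which is then identified with the multiplication pairing \eqref{ref-9.15-147}. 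You spell out the identification of the middle factor via tensoring with the invertible sheaf $\Lscr\boxtimes\Oscr_X$ and flag the cup-versus-Yoneda compatibility, both of which the paper leaves as ``easy to see''.
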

\begin{proof} This again a straightforward application of Serre
  duality, which says that the following pairing by composition is
  non-degenerate
\[
\Ext^i_{X\times X}(\Oscr_{\Delta},i_{\Delta,\ast} \Lscr)\otimes \Ext^{2m-i}_{X\times X}(i_{\Delta,\ast} \Lscr,\Oscr_{\Delta}\otimes_{X\times X} (\omega_X\boxtimes\omega_X))
\r  \Ext^{2m}_{X\times X}(\Oscr_{\Delta},\Oscr_{\Delta}\otimes_{X\times X} (\omega_X\boxtimes\omega_X))
\]
It is easy to see that this pairing  coincides with \eqref{ref-9.15-147}.
\end{proof}
\begin{propositions} \label{ref-9.6.3-148} Assume that $\xi_{X/Y}\neq 0$. Then 
\[
f_\ast:\HH^{2m}(X,\omega_X^{\otimes 2})\r \HH^{2m}(Y,\omega_X^{\otimes 2})
\]
is the zero map.
\end{propositions}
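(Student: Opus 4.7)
The plan is to deduce the statement directly from the long exact sequence in Proposition \ref{ref-9.5.1-142} combined with the duality statement in Lemma \ref{ref-9.6.2-146}. With $n = 2m$ and $M = \omega_X^{\otimes 2}$, Proposition \ref{ref-9.5.1-142} gives a piece
\[
\HH^{2m-2}(X,(I/I^2)^{-1}\otimes_X \omega_X^{\otimes 2}) \xrightarrow{\xi_{X/Y}\cup -} \HH^{2m}(X,\omega_X^{\otimes 2}) \xrightarrow{f_*} \HH^{2m}(Y,\omega_X^{\otimes 2}).
\]
By exactness, it suffices to show that the left-hand map is surjective, i.e.\ that cupping with $\xi_{X/Y}$ hits the whole of $\HH^{2m}(X,\omega_X^{\otimes 2})$.

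First I would note that $\HH^{2m}(X,\omega_X^{\otimes 2}) = k$ by Lemma \ref{ref-9.6.1-145}, so surjectivity of the cup product is equivalent to non-vanishing. Next, apply Lemma \ref{ref-9.6.2-146} with $\Lscr = I/I^2$ and $i = 2$: the multiplication pairing
\[
\HH^2(X,I/I^2) \otimes \HH^{2m-2}(X,(I/I^2)^{-1}\otimes_X \omega_X^{\otimes 2}) \longrightarrow \HH^{2m}(X,\omega_X^{\otimes 2}) = k
\]
is non-degenerate. Since $\xi_{X/Y}\in \HH^2(X,I/I^2)$ is non-zero by hypothesis, non-degeneracy of the pairing guarantees that the induced map $\xi_{X/Y}\cup -$ from $\HH^{2m-2}(X,(I/I^2)^{-1}\otimes_X \omega_X^{\otimes 2})$ to $k$ is non-zero, hence surjective.

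The one point requiring care is the identification of the connecting/multiplication map in Proposition \ref{ref-9.5.1-142} with the cup product pairing of Lemma \ref{ref-9.6.2-146}; this is essentially a matter of unwinding definitions, using that $\xi_{X/Y}\in \HH^2(X,I/I^2)$ acts as multiplication on Hochschild cohomology with values in the invertible bimodule $I/I^2$, so that the composition $(I/I^2)\otimes_X (I/I^2)^{-1}\otimes_X \omega_X^{\otimes 2} \cong \omega_X^{\otimes 2}$ makes the arrow in the long exact sequence coincide with the pairing in \eqref{ref-9.15-147}. Granted this compatibility, the surjectivity above forces $f_*$ to vanish on $\HH^{2m}(X,\omega_X^{\otimes 2})$, completing the proof. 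The only ``hard part'' is this identification, but it is formal since everything is defined via derived tensor products of bimodules.
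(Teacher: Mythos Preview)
Your proof is correct and follows essentially the same approach as the paper: reduce via the long exact sequence of Proposition~\ref{ref-9.5.1-142} and Lemma~\ref{ref-9.6.1-145} to showing that $\xi_{X/Y}\otimes -$ is nonzero, then invoke the nondegenerate pairing of Lemma~\ref{ref-9.6.2-146}. The paper's proof is in fact terser than yours, omitting the explicit discussion of the compatibility between the connecting map and the multiplication pairing that you flag as the ``one point requiring care.''
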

\begin{proof}
By Lemma \ref{ref-9.6.1-145} and Proposition \ref{ref-9.5.1-142}, it is sufficient to show that the map
\[
\HH^{2m-2}(X,(I/I^2)^{-1}\otimes_X \omega_X^{\otimes 2})
\xrightarrow{\xi_{X/Y}\otimes-}
\HH^{2m}(X,\omega_X^{\otimes 2})
\]
is not zero. This follows from Lemma \ref{ref-9.6.2-146}.
\end{proof}

\section{Construction of potential non-Fourier-Mukai functors}
\label{sec:construction}
We now assume that $X/k$, $Y/k$ are smooth of dimension $m$, $m+1$, and that $X$ is embedded as a 
divisor in $Y$. Let $f:X\r Y$ be the inclusion.
Define $\Xscr$, $\Yscr$ as 
in \S\ref{ref-8.2-103}. We have a fully faithful embedding
$w:\Qch(X)\r\Mod(\Xscr)$ (see \S\ref{ref-8.2-103}). Recall the following:
\begin{lemma}
\label{ref-10.1-149}
\begin{enumerate}
\item If $E\in \Qch(X)$ is injective then so is $wE$.
\item Every object in $\Qch(X)$ has injective dimension $\le m$.
\end{enumerate}
\end{lemma}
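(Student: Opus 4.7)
The plan is to deduce both assertions from standard facts and from prior results of the paper. Part~(2) is the classical statement that for a smooth Noetherian scheme $X$ of dimension $m$ one has $\gldim\Qch(X)=m$: at each point $\Oscr_{X,x}$ is regular of Krull dimension at most $m$ and hence has global dimension at most $m$, and this combined with Grothendieck vanishing on a Noetherian space of dimension $m$ gives the required bound on injective dimensions; see for instance \cite[Ch.~III]{H}.

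For part~(1), I would use the factorization $w=\pi^*\circ\epsilon^*$ with $\pi^*$ an equivalence (see \S\ref{ref-8.2-103}), reducing to the claim that $\epsilon^*:\Qch(X)\to\Mod(\widehat{\Oscr}_X)$ preserves injectives. The strategy is to exhibit an exact left adjoint $\epsilon_!$ to $\epsilon^*$: it sends a presheaf $F$ of $\widehat{\Oscr}_X$-modules on $\Iscr$ to the quasi-coherent sheaf on $X$ obtained by gluing the $\Oscr(U_I)$-modules $F(I)$ along the restriction morphisms associated to the affine cover $\{U_I\}_{I\in\Iscr}$. Existence of $\epsilon_!$ is guaranteed by the adjoint functor theorem applied to the Grothendieck category $\Qch(X)$, and the adjunction identity $\Hom_{\Qch(X)}(\epsilon_!F,M)=\Hom_{\Mod(\widehat{\Oscr}_X)}(F,\epsilon^*M)$ is essentially tautological from the definition of $\epsilon^*$ on sections.

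The main obstacle will be verifying that $\epsilon_!$ is exact. Being a left adjoint, it is automatically right exact, so the work lies in checking left exactness. This amounts to the vanishing of higher \v{C}ech cohomology of quasi-coherent sheaves along the affine cover $\{U_I\}$, a classical acyclicity statement whose precise formulation in this presheaf-theoretic setup is the content of \cite{lowenvdb2} and is closely parallel to Lemma~\ref{ref-8.2.1-105} above. Once exactness of $\epsilon_!$ is in hand, preservation of injectives by $\epsilon^*$ is formal: a right adjoint to an exact functor preserves injectives. Combined with the trivial fact that the equivalence $\pi^*$ preserves injectives, this yields part~(1).
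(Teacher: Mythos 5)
Your reduction of part (1) to the existence of an \emph{exact left adjoint} $\epsilon_!$ of $\epsilon^*$ has the right formal shape (a functor with an exact left adjoint preserves injectives), but this left adjoint does not exist, so the argument cannot be repaired along these lines. If $\epsilon_!$ existed, then evaluating the adjunction on the representable $\Xscr$-module at a vertex $I$ would show that $\Gamma(U_I,-)\cong\Hom_{\Qch(X)}(\epsilon_!\Xscr(I,-),-)$ is corepresentable on $\Qch(X)$, hence commutes with arbitrary products there. It does not when $U_I\subsetneq X$: already in the affine model $X=\AA^1=\Spec k[t]$, $U=X\setminus\{0\}$, the natural map
$\Gamma\bigl(U,\prod_{n\in\NN}\Oscr_X\bigr)=\bigl(\prod_n k[t]\bigr)[t^{-1}]\r \prod_n k[t,t^{-1}]=\prod_n\Gamma(U,\Oscr_X)$
is not surjective (elements of the left-hand side have uniformly bounded denominators, so $(t^{-n})_n$ is not hit), and the same phenomenon occurs for any proper affine open of the projective $X$ at hand. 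Consequently the adjoint functor theorem does not apply to produce $\epsilon_!$. Relatedly, the ``gluing'' description you give, and the appeal to \v{C}ech acyclicity, really describe the \emph{right} adjoint $R\epsilon_*$ of Lemma \ref{ref-8.2.1-105}, which sits on the wrong side for preserving injectives. The paper's proof of (1) is instead direct: $\Mod(\Xscr)$ is locally noetherian, so one reduces to an indecomposable injective $E$, which on a locally noetherian scheme has the form $\Hscr^i_x(\Oscr_X)$ for a point $x$; then $wE$ is supported at the largest $I$ with $x\in U_I$ and $\Hom_{\Xscr}(M,wE)=\Hom_{\Oscr_X(U_I)}(M(I),\Gamma(X,E))$ is visibly exact.

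For part (2) the statement is standard and true, but the mechanism you invoke is not the one that yields the bound $m$: combining regularity of the local rings with Grothendieck vanishing (say via the local-to-global $\Ext$ spectral sequence) only gives vanishing of $\Ext^{>2m}$. The ingredient actually needed, and the one the paper uses, is that injectivity of a quasi-coherent sheaf on a noetherian scheme is a \emph{local} property \cite[Prop.\ 7.17]{RD}: the $m$-th cosyzygy of any $F$ restricts on each affine $U_i$ to an injective $\Oscr_X(U_i)$-module because $\gldim\Oscr_X(U_i)=m$, and is therefore injective.
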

\begin{proof}
\begin{enumerate}
\item In this case, $\Mod(\Xscr)$ is a locally noetherian category, so a
  direct sum of injectives in $\Mod(\Xscr)$ is injective.  Let $E\in \Qch(X)$ be
  injective. Since $w$ commutes with direct sums, and since $X$ is
  locally noetherian, we may without loss of generality assume that 
  $E$ is indecomposable, and hence as in $J(x)$ for
  $x$ a not necessarily closed point in $X$, see \cite[Thm II.7.18, proof]{RD}.
Since $J(x)$ only depends on the local ring $\Oscr_{X,x}$ we obtain
that $wE$ satisfies
\[
(wE)(I)=
\begin{cases}
\Gamma(X,E)&x\in U_I\\
0&\text{otherwise}
\end{cases}
\]
where $\Gamma(X,E)$ is an injective $\Gamma(X,\Oscr_{U_I})$-module for all $I$ such that $x\in U_I$.
Let $I$ be the largest subset of $\{1,\ldots,n\}$ such that $x\in U_I$.
We find for $M\in \Mod(\Xscr)$:
\[
\Hom_{\Xscr}(M,wI)=\Hom_{\Oscr_X(U_I)}(M(I),\Gamma(X,E))
\]
which is an exact functor.
\item For each $U_i\subset X$ we have $\gldim \Oscr_X(U_i)=m$. It now suffices to note that on a noetherian scheme the propery of being quasi-coherent injective is local
\cite[Prop.\ II.7.17]{RD}.\qedhere
\end{enumerate}
\end{proof}
Let $M$ be a line bundle on $X$, and let $\Mscr=W(M)$ be the corresponding
$\Xscr$-bimodule. By Lemma \ref{ref-8.7.1-122}  we know that $\Mscr$ is invertible. We will denote its two sided inverse by $\Mscr^{-1}$.

Choose $n\ge m+3$ and
let $\eta\in\ker(\HH^{n}(X, M)\r \HH^{n}(Y,f_\ast M))$. See Lemma \ref{ref-9.6.1-145} and in particular
Proposition \ref{ref-9.6.3-148} for how one may choose such $\eta$ in the proper case.

For simplicity, denote the corresponding element $W(\eta)\in \HH^{n}(\Xscr,\Mscr)$ by $\eta$ as well.  
Define $\Xscr_\eta$ as in \S\ref{ref-6.1-74}. By \eqref{ref-8.14-121}, we have that $W(\eta)
\in \ker(\HH^{n}(\Xscr,\Mscr)\r \HH^{n}(\Yscr,(f,f)_\ast \Mscr))$. Hence, by
Proposition \ref{ref-7.2.6-95}, we have a commutative diagram
\begin{equation}
\label{diag:lift}
\xymatrix{
\Xscr_\eta\ar[dr]&&\ar[ll]_{\tilde{f}} \Yscr\ar[dl]^{f}\\
&\Xscr
}
\end{equation}
Put $\Xscr^{\dg}_\eta=U^u(\Xscr_\eta)$ (see Appendix \ref{ref-C.1-183}). Then we
have 
\begin{equation}
\label{ref-10.1-150}
H^\ast(\Xscr^{\dg}_\eta)=
\begin{cases}
\Xscr&\text{if $i=0$}\\
\Mscr&\text{if $i=-n+2$}\\
0&\text{otherwise.}
\end{cases}
\end{equation}
We define the functor
\[
{L}:\Inj \Qch(X)\r D(\Xscr):E\mapsto {L}(wE),
\]
where ${L}(wE)$ is the derived injective (see \S\ref{ref-5.1-60}) in $D(\Xscr)$ associated to the injective $wE$.

Since $\Qch(X)$ has global dimension $m$, by Lemma \ref{ref-10.1-149}(2) we are now in
a position to apply  Proposition \ref{ref-5.3.1-69} with $\Ascr=\Qch(X)$, $\cc=\Xscr^{\dg}_\eta$.
This yields an exact functor
\begin{equation}
\label{ref-10.2-151}
{L}:D^b(\Qch(X))\r D(\Xscr^{\text{dg}}_\eta).
\end{equation}
\begin{remark} \label{ref-10.2-152} We cannot apply Proposition \ref{ref-5.3.1-69} with $\Ascr=\Mod(\Xscr)$, since if
$X$ is proper then
it is easy to see that $\gldim \Xscr\ge 2m$. 
\end{remark}

\begin{lemma}
\label{ref-10.3-153}
Let $B\in D^b(\Qch(X))$ and $\Bscr=wB$. Then there is a distinguished
  triangle in $D(\Xscr_{\eta}^{\dg})$
\begin{equation}
\label{ref-10.3-154}
\Bscr\r {L}(B)\r \Sigma^{-n+2} \Mscr^{-1}\otimes_{\Xscr} \Bscr\r 
\end{equation}
where $\Bscr$, $\Mscr^{-1}\otimes_{\Xscr} \Bscr$ are viewed as $\Xscr_{\eta}^{\dg}$-modules via the map
\[
\Xscr_\eta^{\dg}\r \Xscr.
\]
\end{lemma}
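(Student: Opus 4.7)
The plan is to derive the distinguished triangle \eqref{ref-10.3-154} uniformly in $B$ by applying $\RHom_{\Xscr_\eta^{\dg}}(-,{L}(B))$ to a suitable triangle attached to the canonical DG-algebra projection $p:\Xscr_\eta^{\dg}\to\Xscr$ onto zeroth cohomology. There is no need to first reduce to the case where $B$ is injective.

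First I would observe that since $\Xscr_\eta^{\dg}$ is concentrated in non-positive degrees, the projection onto $H^0$ gives a well-defined DG-algebra morphism $p:\Xscr_\eta^{\dg}\to\Xscr$; this is precisely the map appearing in the statement of the lemma. Viewing $\Xscr$ as an object of $D(\Xscr_\eta^{\dg})$ by restriction along $p$, the long exact cohomology sequence attached to the fiber triangle of $p$, together with the description \eqref{ref-10.1-150}, yields that the fiber $F$ of $p$ satisfies $H^{-n+2}(F)=\Mscr$ and $H^i(F)=0$ otherwise. Hence $F\simeq \Sigma^{n-2}\Mscr$ in $D(\Xscr_\eta^{\dg})$, and we obtain a distinguished triangle
\[
\Sigma^{n-2}\Mscr \to \Xscr_\eta^{\dg}\xrightarrow{p}\Xscr \to \Sigma^{n-1}\Mscr.
\]

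Next I would apply the contravariant functor $\RHom_{\Xscr_\eta^{\dg}}(-,{L}(B))$ to this triangle and identify the terms. The middle term becomes ${L}(B)$; the image of $\Xscr$ is $\Bscr$ by the bottom row of the commutative diagram \eqref{ref-5.9-70} from Proposition~\ref{ref-5.3.1-69} (where $B\in D^b(\Qch(X))\subset D^b(\Mod\Xscr)$ is identified with $\Bscr=wB$). For the third term, the adjunction between restriction along $p$ and its right adjoint, combined with the invertibility of $\Mscr$ (Lemma~\ref{ref-8.7.1-122}), gives
\[
\RHom_{\Xscr_\eta^{\dg}}(\Mscr,{L}(B)) \cong \RHom_\Xscr(\Mscr,\Bscr) \cong \Mscr^{-1}\otimes_\Xscr \Bscr,
\]
whence $\RHom_{\Xscr_\eta^{\dg}}(\Sigma^{n-2}\Mscr,{L}(B))\cong \Sigma^{-n+2}(\Mscr^{-1}\otimes_\Xscr \Bscr)$. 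Rotating the resulting triangle produces exactly \eqref{ref-10.3-154}.

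The main technical point I would verify is that the map $\Bscr\to {L}(B)$ arising from this construction (via precomposition with $p$ inside $\RHom$) agrees with the natural map implicit in the statement, i.e.\ the counit of the adjunction characterising $L$ as a derived injective. Tracing through the proof of Proposition~\ref{ref-5.3.1-69}, this compatibility should reduce to checking that the DG-enhancement $\bb$ of fibrant objects used there is compatible with the projection $p:\Xscr_\eta^{\dg}\to\Xscr$, which is a direct consequence of the defining adjunction \eqref{ref-5.1-61} for $L$. Once this is granted, the chain of identifications above yields the distinguished triangle of the lemma.
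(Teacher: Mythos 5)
Your proposal is correct and follows essentially the same route as the paper: the paper takes the (bimodule) triangle $\Sigma^{n-2}\Mscr\r\Xscr^{\dg}_\eta\r\Xscr$, applies $\RHom_{\Xscr^{\dg}_\eta}(-,L(B))$, identifies $\RHom_{\Xscr^{\dg}_\eta}(\Xscr,L(B))\cong\Bscr$ and $\RHom_{\Xscr^{\dg}_\eta}(\Sigma^{n-2}\Mscr,L(B))\cong\Sigma^{-n+2}\Mscr^{-1}\otimes_\Xscr\Bscr$ via \eqref{ref-5.11-72} and the invertibility of $\Mscr$. Your extra care about the compatibility of the map $\Bscr\r L(B)$ is not needed, since the lemma does not single out a particular morphism; just make sure the fiber triangle is taken in $D(\Xscr^{\dg}_\eta\otimes_k\Xscr^{\dg,\circ}_\eta)$ so that applying $\RHom$ lands in $D(\Xscr^{\dg}_\eta)$ rather than in complexes of $k$-vector spaces.
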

\begin{proof}
We have a distinguished triangle in $D(\Xscr^{\dg}_\eta\otimes_k \Xscr^{\dg,\circ}_\eta)$
\[
\Sigma^{n-2}\Mscr\r\Xscr^{\dg}_\eta\r \Xscr\r.
\]
Applying $\RHom_{\Xscr^{\dg}_\eta}(-,L(B))$
gives a distinguished triangle in $D(\Xscr_{\eta}^{\dg})$
\begin{equation}
\label{eq:dist2}
\Hom_{\Xscr^{\dg}_\eta}(\Xscr,L(B))
\r {L}(B)\r 
\Hom_{\Xscr^{\dg}_\eta}(\Sigma^{n-2} \Mscr,L(B))\r.
\end{equation}
 and using \eqref{ref-5.11-72} we get in $D(\Xscr)$
\begin{align*}
\Hom_{\Xscr^{\dg}_\eta}(\Xscr,L(B))&\cong\Bscr\\
\Hom_{\Xscr^{\dg}_\eta}(\Sigma^{n-2} \Mscr,L(B))&\cong\Hom_{\Xscr}(\Sigma^{n-2} \Mscr,\Bscr).
\end{align*}
By applying $D(\Xscr)\r D(\Xscr^{\dg}_\eta)$ we see that these identities also hold in $D(\Xscr^{\dg}_\eta)$.
So \eqref{eq:dist2} becomes a distinguished triangle in $D(\Xscr^{\dg}_\eta)$:
\[
\Bscr\r {L}(B)\r \Hom_{\Xscr}(\Sigma^{n-2} \Mscr,\Bscr)\r.
\]
It now suffices to observe that $\Mscr$ is invertible.
\end{proof}
\begin{corollary}
\label{ref-10.4-155}
If $B\in D^b(\coh(X))$ then $H^\ast({L}(B))\in D^b_{w\coh(X)}(\Xscr^{\dg}_\eta)$.
\end{corollary}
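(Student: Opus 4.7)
The plan is to use the distinguished triangle provided by Lemma \ref{ref-10.3-153} and reduce the claim to the fact that the outer terms already have bounded $w\coh(X)$-cohomology.

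First, set $\Bscr = wB$. Since $B\in D^b(\coh(X))$ we have $\Bscr\in D^b_{w\coh(X)}(\Xscr)$. Lemma \ref{ref-10.3-153} gives a distinguished triangle in $D(\Xscr_\eta^{\dg})$
\[
\Bscr\r L(B)\r \Sigma^{-n+2}\Mscr^{-1}\otimes_\Xscr\Bscr\r
\]
where both $\Bscr$ and $\Mscr^{-1}\otimes_\Xscr\Bscr$ are viewed as $\Xscr_\eta^{\dg}$-modules via the augmentation $\Xscr_\eta^{\dg}\r\Xscr$. Next I would identify the third term. By Lemma \ref{ref-8.7.1-122}(2) the bimodule $\Mscr^{-1}$ is projective on both sides, so no derivation is needed; and by a direct computation entirely analogous to Lemma \ref{ref-8.7.1-122}(1) (evaluating both sides at $I\in\Iscr$ yields $M^{-1}(U_I)\otimes_{\Oscr(U_I)} B(U_I)$) one obtains a natural identification
\[
\Mscr^{-1}\otimes_\Xscr\Bscr = w\bigl(M^{-1}\otimes_{\Oscr_X} B\bigr).
\]
Since $M^{-1}$ is a line bundle, $M^{-1}\otimes_{\Oscr_X} B$ lies in $D^b(\coh(X))$, so the third term of the triangle also lies in $D^b_{w\coh(X)}(\Xscr)$.

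I would then take the long exact cohomology sequence of the triangle in the abelian category $\Mod(\Xscr)$ (this is legitimate because cohomology over $\Xscr_\eta^{\dg}$ is cohomology over $H^0(\Xscr_\eta^{\dg})=\Xscr$, cf.\ \eqref{ref-10.1-150}):
\[
\cdots\r H^i(\Bscr)\r H^i(L(B))\r H^{i-n+2}\bigl(\Mscr^{-1}\otimes_\Xscr\Bscr\bigr)\r H^{i+1}(\Bscr)\r\cdots
\]
Both outer terms lie in $w\coh(X)$ for all $i$, and vanish for $|i|\gg 0$. Thus, to conclude, it suffices to know that $w\coh(X)$ is closed under kernels, cokernels and extensions inside $\Mod(\Xscr)$. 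For kernels and cokernels this is immediate from the exactness and full faithfulness of $w$. For extensions, it follows by combining the characterization of $w\Qch(X)\subset\Mod(\Xscr)$ as the essential image of $\epsilon^\ast$ (Lemma \ref{ref-8.2.1-105}), which shows closure under extensions in $\Mod(\Xscr)$, with the fact that $\coh(X)$ is closed under extensions in $\Qch(X)$.

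Given this, each $H^i(L(B))$ is an extension of a subobject of $H^{i-n+2}(\Mscr^{-1}\otimes_\Xscr\Bscr)$ by a quotient of $H^i(\Bscr)$, both of which are in $w\coh(X)$, whence $H^i(L(B))\in w\coh(X)$. Boundedness of $L(B)$ follows from the boundedness of the two outer terms. The main technical point is the mild but necessary verification of the identification $\Mscr^{-1}\otimes_\Xscr\Bscr\cong w(M^{-1}\otimes_{\Oscr_X} B)$ and of the extension-closure of $w\coh(X)\subset\Mod(\Xscr)$; both are routine once unravelled.
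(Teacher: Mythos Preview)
Your argument is correct and follows the same strategy as the paper: use the distinguished triangle from Lemma~\ref{ref-10.3-153} and show that the third term lies in $D^b_{w\coh(X)}(\Xscr)$. The only difference is that the paper invokes Lemma~\ref{ref-8.3.1-107} for the identification $\Mscr^{-1}\otimes_\Xscr \Bscr \cong w(M^{-1}\otimes_{\Oscr_X} B)$ (this is the special case $W(M^{-1})\Lotimes_\Xscr w(B)=w(\Fscr(M^{-1},B))$, and $\Fscr(M^{-1},B)=M^{-1}\otimes_X B$ since $M^{-1}$ is a line bundle), whereas you obtain it by a direct computation in the style of Lemma~\ref{ref-8.7.1-122}. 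Both routes are valid; the paper's is slightly cleaner since Lemma~\ref{ref-8.3.1-107} is already stated at the derived level, while your approach requires the extra observation that $\Mscr^{-1}$ is projective to pass from sheaves to complexes. Your explicit discussion of extension-closure of $w\coh(X)$ in $\Mod(\Xscr)$ is a point the paper leaves implicit (it is needed for $D^b_{w\coh(X)}$ to be a triangulated subcategory), and your justification via Lemma~\ref{ref-8.2.1-105} is reasonable, though a direct five-lemma argument using flatness of the restriction maps $\Oscr(U_I)\to\Oscr(U_J)$ would be more transparent.
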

\begin{proof} This follows from Lemma \ref{ref-10.3-153} and Lemma \ref{ref-8.3.1-107}.
\end{proof}
The functor we would now want to consider is the composition 
\begin{equation}
\label{ref-10.4-156}
\Psi:D^b(\coh(X))\xrightarrow{{L}} D^b_{w\coh(X)}(\Xscr^{\text{dg}}_\eta)\xrightarrow{\psi_{\Xscr_\eta,\ast}} D^b_{w\coh(X)}(\Xscr_\eta)\xrightarrow{\tilde{f}_\ast} D^b_{w\coh(Y)}(\Yscr)
\cong D^b(\coh(Y))
\end{equation}
($\psi_{\Xscr_\eta}$ is defined in \S\ref{ref-C.1-183}, the last isomorphism is from Lemma \ref{ref-8.2.1-105}).

\section{Proof of Theorem \ref{ref-11.1-160}}
\label{ref-11-159}
We remind the readers of the statement of the theorem:
\begin{theorem} Let $X$
  be a smooth quadric in $Y=\PP^{4}$ whose defining equation has maximal isotropy index\footnote{This condition ensures that the matrix factorization in Theorem \ref{thm:kapranov} is defined over any field. One may take  $x_0^2+x_1x_2+x_3x_4=0$.}
   and let $f:X\r
  Y$ be the inclusion. Let $M=\omega_X^{\otimes 2}$ and let $0\neq \eta\in\HH^{6}(X,\omega_X^{\otimes 2})\cong k$. Then
  $f_\ast\eta\in \HH^{6}(Y,f_\ast (\omega_X^{\otimes 2}))$ is zero. The functor
  $\Psi$ in \eqref{ref-1.7-9} restricts to an exact functor
\[
\Psi:D^b(\coh(X))\r D^b(\coh(Y))
\]
which is not a Fourier-Mukai functor. 
\end{theorem}
The fact that $\HH^6(X,M)=k$ is Lemma \ref{ref-9.6.1-145}.
The fact that $f_\ast(\eta)=0$ follows 
from Proposition \ref{ref-9.6.3-148} using Proposition \ref{ref-9.3.1-140}.

\medskip

Let $\Oscr_Y(1)$ be the tautological line bundle on $Y=\PP^4$ and let $\Oscr_X(1)$ be its restriction to $X$. Then we have
$\omega_X=\Oscr_X(-3)$. If, as usual, $I$ is the defining ideal of $X$ in $Y$, then $I\cong \Oscr_Y(-2)$ and hence
$I/I^2=\Oscr_X(-2)$. Recall the following:
\begin{theorem} \label{thm:kapranov} \cite{Kapranov3} There is a full strong exceptional sequence on $X$ given by
\begin{equation}
\label{ref-11.1-161}
\Oscr_X(-2), \Oscr_X(-1), \Oscr_X, C
\end{equation}
where $C$ is associated to a matrix factorization of the defining equation of $X$ in $4\times 4$-matrices.
In particular, it has a resolution
on $Y$ given by
\[
0\r \Oscr_Y(-1)^4\r \Oscr_Y^4\r C\r 0
\]
\end{theorem}
If follows that all the objects occurring in the exceptional sequence \eqref{ref-11.1-161} are arithmetically Cohen-Macaulay. Furthermore
since odd dimensional quadrics have up to shift only a single non-trivial indecomposable Cohen-Macaulay module $\Cscr$ (e.g.\ by Kn{\"o}rrer periodicity \cite{Knorrer}),
$\Cscr$ must be its own syzygy up to shift.
One finds that there is a short exact sequence on $X$
\begin{equation}
\label{ref-11.2-162}
0\r C(-1)\r \Oscr_X^{4}\r C\r 0.
\end{equation}
Let $T$ be the sum of the exceptional collection \eqref{ref-11.1-161} and put $\Gamma=\End_X(T)$. Since $\Gamma$ is given by a directed algebra
with maximal compositions of length 3, we find
\begin{equation}
\label{ref-11.3-163}
\gldim \Gamma\le 3.
\end{equation}
For use below we record the following technical vanishing result. This lemma, \eqref{ref-11.3-163} and the fact that $T$ is a coherent sheaf are the only special properties of $T$ that we will use.
\begin{lemma} 
\label{ref-11.3-164} One has
\[
\Ext^i_X(T,(I/I^2)^{-1}\otimes_X M\otimes_X T)=0
\]
for $i=1,2$.
\end{lemma}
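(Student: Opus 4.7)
The plan is to observe that $(I/I^2)^{-1} \otimes_X M = \Oscr_X(2) \otimes \Oscr_X(-6) = \Oscr_X(-4)$ and to check, summand by summand, that $\Ext^i_X(T_a, T_b(-4)) = 0$ for $i \in \{1, 2\}$ and every pair $T_a, T_b$ among $\Oscr_X(-2)$, $\Oscr_X(-1)$, $\Oscr_X$, $C$.

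For pairs of line bundles, $\Ext^i_X(\Oscr_X(a), \Oscr_X(b-4)) = H^i(X, \Oscr_X(b-a-4))$, which vanishes for $i \in \{1, 2\}$ and all twists by combining the Koszul sequence $0 \to \Oscr_Y(k-2) \to \Oscr_Y(k) \to \Oscr_X(k) \to 0$ with the standard vanishing $H^j(\PP^4, \Oscr(k)) = 0$ for $0 < j < 4$. Applying the same Koszul sequence to the resolution $0 \to \Oscr_Y(-1)^4 \to \Oscr_Y^4 \to f_\ast C \to 0$ (twisted) shows that $C$ is arithmetically Cohen--Macaulay: $H^i(X, C(k)) = 0$ for $0 < i < 3$ and all $k$. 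Hence $\Ext^i_X(\Oscr_X(a), C(-4)) = H^i(X, C(-4-a)) = 0$, while Serre duality on $X$ (using $\omega_X = \Oscr_X(-3)$) reduces $\Ext^i_X(C, \Oscr_X(b-4))$ to $H^{3-i}(X, C(1-b))^\ast$, which again vanishes by the ACM property.

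The only substantive case is $\Ext^i_X(C, C(-4))$, which we treat by pushing forward to $Y$, where $f_\ast C$ has projective dimension $1$. The spectral sequence associated to the above length-one resolution computes $\Ext^\bullet_Y(f_\ast C, f_\ast C(-4))$ from the two columns $H^\bullet(X, C(-4))^4$ and $H^\bullet(X, C(-3))^4$. A short induction from \eqref{ref-11.2-162} gives $H^0(X, C(k)) = 0$ for $k \le -1$; combined with the ACM vanishing and the Serre dual computation $H^3(X, C(-3))^\ast \cong H^0(X, C(-1)) = 0$, only the entry $H^3(X, C(-4))^4$ survives, contributing to $\Ext^3_Y$. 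Consequently $\Ext^i_Y(f_\ast C, f_\ast C(-4)) = 0$ for $i = 1, 2$.

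To conclude, the Koszul resolution $0 \to \Oscr_Y(-2) \to \Oscr_Y \to f_\ast \Oscr_X \to 0$ of the divisor $X \subset Y$ pulls back to a complex $[\Oscr_X(-2) \xrightarrow{0} \Oscr_X]$ with zero differential, giving $Lf^\ast f_\ast N \cong N \oplus \Sigma N(-2)$ for any $N \in D(\Qch(X))$. Adjunction then yields
\[
\Ext^i_Y(f_\ast C, f_\ast C(-4)) \;\cong\; \Ext^i_X(C, C(-4)) \oplus \Ext^{i-1}_X(C, C(-2)),
\]
so the vanishing just established forces $\Ext^1_X(C, C(-4)) = \Ext^2_X(C, C(-4)) = 0$. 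The main obstacle is precisely this $C$-$C$ case: since $C$ has infinite projective dimension on $X$ one cannot compute its self-$\Ext$ directly, and trading via $f_\ast$ to the finite-dimensional $Y$ pays the price of an auxiliary summand $\Ext^{i-1}_X(C, C(-2))$ which fortunately must also vanish.
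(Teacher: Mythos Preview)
Your argument is essentially correct but takes a different route from the paper for the key case $\Ext^i_X(C,C(-4))$, and there is one genuine gap worth naming.

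\textbf{Comparison with the paper.} The paper applies Serre duality once globally to convert $\Ext^i_X(T,T(-4))$ into $\Ext^{3-i}_X(T,T(1))^\ast$. After the ACM reductions, the only surviving piece is $\Ext^{1,2}_X(C,C(1))$, which is killed in two lines by the shifted periodicity sequence $0\to C\to\Oscr_X(1)^4\to C(1)\to 0$ together with the exceptionality of $C$. Your approach instead pushes $C$ forward to $\PP^4$, uses the length-one projective resolution of $f_\ast C$ to show $\Ext^{1,2}_Y(f_\ast C,f_\ast C(-4))=0$, and then pulls back. The paper's route is shorter and uses only the exceptionality of $C$; yours is more geometric but requires extra bookkeeping (and the implicit identity $C^\vee\cong C(-1)$, which you use in the Serre-dual step $H^3(X,C(-3))^\ast\cong H^0(X,C(-1))$; it follows from $\rk C=2$ and $\det C=\Oscr_X(1)$).

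\textbf{The gap.} Your claimed splitting $Lf^\ast f_\ast N\cong N\oplus\Sigma N(-2)$ is only established by your argument for $N=\Oscr_X$: pulling back the Koszul resolution computes $Lf^\ast f_\ast\Oscr_X$, not $Lf^\ast f_\ast N$ in general, and $Lf^\ast f_\ast(-)$ is not $(-)\otimes_X Lf^\ast f_\ast\Oscr_X$. For $N=C$ the obstruction to splitting lives in $\Ext^2_X(C,C(-2))$, which one checks is one-dimensional (via \eqref{ref-11.2-162} it identifies with $\Ext^1_X(C,C(-1))\cong\Hom_X(C,C)=k$), so the splitting is not automatic. Fortunately you do not need it: from the triangle $\Sigma C(-2)\to Lf^\ast f_\ast C\to C$ and adjunction one gets the exact sequence
\[
\Ext^{i-2}_X(C,C(-2))\longrightarrow \Ext^i_X(C,C(-4))\longrightarrow \Ext^i_Y(f_\ast C,f_\ast C(-4)),
\]
so your vanishing on $Y$ gives $\Ext^1_X(C,C(-4))=0$ immediately, and $\Ext^2_X(C,C(-4))=0$ once you note $\Hom_X(C,C(-2))=0$ (which is clear from \eqref{ref-11.2-162} since $H^0(X,C(-2))=0$). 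With this correction your proof goes through.
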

\begin{proof} We have $(I/I^2)^{-1}\otimes_X M=\Oscr_X(-4)$.
By Serre duality
\[
\Ext^i_X(T,T(-4))=\Ext^{3-i}_X(T,T(1))^\ast
\]
Since $T$ is arithmetically Cohen-Macaulay, we have
\[
H^i(X,T(j))=0
\]
for all $j$ and $i=1,2$. Hence by Serre duality
\[
\Ext^i_X(T,\Oscr_X(j))=0
\]
for all $j$ and $i=1,2$.

 So the only possible issue is the value of
\[
\Ext^i_X(C,C(1))
\]
for $i=1,2$. From (a shift by 1 of) the exact sequence \eqref{ref-11.2-162} we get 
\[
\Ext^1_X(C,C(1))=\Ext_X^2(C,C)=0
\]
since $C$ is exceptional. Similarly
\[
\Ext^2_X(C,C(1))\hookrightarrow \Ext_X^3(C,C)=0.\qed
\]
\def\qed{}\end{proof}
Consider first the functor (see \eqref{ref-10.2-151} and Corollary \ref{ref-10.4-155})
\[
{L}:D^b(\coh(X))\r D^b_{w\coh(X)} (\Xscr^{\dg}_\eta)
\]
Put $\Tscr=wT\in \Mod(\Xscr)$ and 
 $\tilde{\Tscr}={L}(T)\in D^b_{w\coh(X)} (\Xscr^{\dg}_\eta) $. Since $\Gamma$ acts on $T$ it also acts on $\tilde{\Tscr}$ and hence $\tilde{\Tscr}\in D(\Xscr^{\dg}_\eta)_\Gamma$.
According to Lemma \ref{ref-7.3.1-98} and Corollary \ref{ref-7.3.2-99}, there is a well defined obstruction 
\[
o_3(\tilde{\Tscr})\in \HH^3(\Gamma,\Ext^{-1}_{\Xscr^{\dg}_\eta}(\tilde{\Tscr},\tilde{\Tscr}))
\]
which vanishes if $\tilde{\Tscr}$ is in the essential image of
\[
D(\Xscr^{\dg}_\eta\otimes_k \Gamma)\r D(\Xscr^{\dg}_\eta)_\Gamma
\]
\begin{lemma} \label{ref-11.4-165} One has
\[
o_3(\tilde{\Tscr})\neq 0
\]
\end{lemma}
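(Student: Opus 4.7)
The plan is to identify $o_3(\tilde{\Tscr})$ with a non-zero component of the characteristic class $c_{T, \Gamma}(\eta) \in \Ext^6_{\Qch(X)_\Gamma}(T, M \otimes T)$, which is non-zero by Proposition \ref{ref-8.8.2-125}. First I would compute the coefficient group $\Ext^{-1}_{\Xscr^{\dg}_\eta}(\tilde{\Tscr}, \tilde{\Tscr})$. Applying $\RHom_{\Xscr^{\dg}_\eta}(-, \tilde{\Tscr})$ to the distinguished triangle of Lemma \ref{ref-10.3-153}, and using the change-of-rings identity \eqref{ref-5.11-72} in the form $\Ext^j_{\Xscr^{\dg}_\eta}(A, \tilde{\Tscr}) \cong \Ext^j_\Xscr(A, \Tscr)$ for any $\Xscr$-module $A$, combined with the tilting vanishing $\Ext^{> 0}_X(T, T) = 0$, the long exact sequence collapses to give
\[
\Ext^{-1}_{\Xscr^{\dg}_\eta}(\tilde{\Tscr}, \tilde{\Tscr}) \cong \Ext^{n-3}_X(T, M \otimes T) = \Ext^3_X(T, M \otimes T),
\]
the degree shift from $-1$ to $n-3 = 3$ reflecting the internal cohomology period $n-2$ of $\Xscr^{\dg}_\eta$.

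Next I would identify the obstruction using the spectral sequence
\[
\HH^p(\Gamma, \Ext^q_X(T, M \otimes T)) \Rightarrow \Ext^{p+q}_{\Qch(X)_\Gamma}(T, M \otimes T).
\]
The $\Gamma$-action on $\tilde{\Tscr}$ is induced from that on $\Tscr$ by functoriality of $L$, and since $T$ already lifts to an object of $\Qch(X)_\Gamma$, the naturality of the obstruction (Lemma \ref{ref-7.3.1-98}(2)) applied to the DG projection $\phi: \Xscr^{\dg}_\eta \to \Xscr$ should show that $o_3(\tilde{\Tscr})$ is controlled purely by the deformation datum $\eta$. One would then identify $o_3(\tilde{\Tscr}) \in \HH^3(\Gamma, \Ext^3_X(T, M \otimes T)) = E_2^{3,3}$ with the $(3, 3)$-piece of $c_{T, \Gamma}(\eta)$ on the $E_\infty$-page.

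By Proposition \ref{ref-8.8.2-125}, $c_{T, \Gamma}$ is an isomorphism, so $c_{T, \Gamma}(\eta) \neq 0$. To conclude one must verify that the $E_\infty^{3,3}$-contribution of $c_{T, \Gamma}(\eta)$ is non-zero; since $\dim X = 3$ forces $\Ext^q_X = 0$ for $q > 3$, only the bi-degrees $(p, q) = (3, 3), (4, 2), (5, 1), (6, 0)$ can contribute to $\Ext^6_{\Qch(X)_\Gamma}(T, M \otimes T)$, and a case analysis (potentially exploiting the specific structure of the tilting bundle $T$ along the lines of Lemma \ref{ref-11.3-164}) should isolate the $(3, 3)$-slot as the one carrying $c_{T, \Gamma}(\eta)$.

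The main technical obstacle lies in the second step: establishing a precise naturality diagram that identifies the abstract obstruction $o_3(\tilde{\Tscr})$ with the expected image of $\eta$. This requires carefully tracking the obstruction class through the derived injective construction of $L$ (Proposition \ref{ref-5.3.1-69}) and through the Hochschild obstruction framework of Section \ref{ref-7.2-82}, most likely via an auxiliary $A_\infty$-computation based on Corollary \ref{ref-7.2.2-89} together with a cofibrant replacement of $\Tscr$ as an $\Xscr^{\dg}_\eta\otimes_k \Gamma$-module.
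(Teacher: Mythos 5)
There is a genuine gap, and you have correctly located it yourself: the entire proof hinges on identifying $o_3(\tilde{\Tscr})$ with (a component of) $c_{T,\Gamma}(\eta)$, and your proposal never establishes this. The naturality tool you invoke does not do the job: Lemma \ref{ref-7.3.1-98}(2) applied to the projection $\Xscr^{\dg}_\eta\r\Xscr$ controls the obstruction of the \emph{restriction} of $\Tscr$ along that functor, which is just $\Tscr$ with its induced module structure (and whose obstruction vanishes, since $T$ genuinely lifts to $\Qch(X)_\Gamma$); it says nothing about $\tilde{\Tscr}={L}(\Tscr)$, which is a non-split extension of $\Tscr$ by $\Sigma^{-4}\Mscr^{-1}\otimes_\Xscr\Tscr$ produced by the derived-injective construction and is not the restriction of anything along $\Xscr^{\dg}_\eta\r\Xscr$. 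So the ``second step'' of your outline, which you flag as the main technical obstacle, is not a routine verification but the actual content of the lemma, and as written it would fail. (Your computation of the coefficient group $\Ext^{-1}_{\Xscr^{\dg}_\eta}(\tilde{\Tscr},\tilde{\Tscr})\cong\Ext^3_X(M^{-1}\otimes_X T,T)$ agrees with \eqref{ref-11.5-167}, and your endgame is actually easier than you think: since the Hochschild dimension of $\Gamma$ equals $\gldim\Gamma=3$, the terms $E_2^{p,q}$ with $p>3$ vanish outright, so $(3,3)$ is the only surviving bidegree and no case analysis is needed. But this does not repair the missing identification.)

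The paper avoids the direct identification entirely by arguing by contradiction. If $o_3(\tilde{\Tscr})=0$, then all higher obstructions $o_{3+i}(\tilde{\Tscr})\in\HH^{3+i}(\Gamma,\Ext^{-1-i}_{\Xscr^{\dg}_\eta}(\tilde{\Tscr},\tilde{\Tscr}))$ vanish as well (again because the Hochschild dimension of $\Gamma$ is $3$), so $\tilde{\Tscr}$ lifts to an honest object of $D(\Xscr^{\dg}_\eta\otimes_k\Gamma)\cong D_\infty(\Xscr_\eta\otimes_k\Gamma)$. The triangle \eqref{ref-10.3-154} shows that such a lift is exactly a \emph{colift} of $\Tscr\in D(\Xscr\otimes_k\Gamma)$ to the deformed category $(\Xscr\otimes_k\Gamma)_{\eta\cup 1}$ in the sense of \S\ref{ref-6.4-79}, and Lemma \ref{ref-6.4.1-80} (together with Lemma \ref{ref-6.3.1-77}, since $\Mscr$ is invertible) says the obstruction to such a colift is $c_{\Tscr,\Gamma}(\eta\cup 1)$. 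By the commutative square comparing $c_{\Tscr,\Gamma}$ with $c_{T,\Gamma}$ and Proposition \ref{ref-8.8.2-125}, this class is non-zero because $\eta\neq 0$ --- contradiction. If you want to salvage your direct approach, you would need to prove an analogue of Lemma \ref{ref-6.4.1-80} computing $o_3$ of the specific extension $\tilde{\Tscr}$ in terms of $\eta$; the paper's reformulation via colifts is precisely the device that makes that computation unnecessary.
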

\begin{proof} 
 If $o_3(\tilde{\Tscr})$ were to vanish, then the higher
obstructions $o_{3+i}(\tilde{\Tscr})$ (Lemma \ref{ref-7.3.1-98}), which lie in $\HH^{3+i}(\Gamma,\Ext^{-1-i}
_{\Xscr^{\dg}_\eta}(\tilde{\Tscr},\tilde{\Tscr}))$, would also vanish, since $\gldim \Gamma=3$ and hence
the Hochschild dimension of $\Gamma$ is $3$ as well.
  So $\tilde{\Tscr}$ may be viewed
as an object in $D(\Xscr^{\dg}_\eta\otimes_k \Gamma)\cong D_\infty(\Xscr_\eta\otimes_k \Gamma)$. By \eqref{ref-10.3-154} we get
a distinguished triangle in $D(\Xscr^{\dg}_\eta)$
\begin{equation}
\label{ref-11.4-166}
\Tscr \xrightarrow{\alpha} \tilde{\Tscr}\xrightarrow{\beta} \Sigma^{-4}\Mscr^{-1}\otimes_{\Xscr} \Tscr\r 
\end{equation}
and hence
\[
H^\ast(\tilde{\Tscr})=\Tscr\oplus \Sigma^{-4}(\Mscr^{-1}\otimes_{\Xscr} \Tscr),
\]
and moreover, by construction, this isomorphism is compatible with the $H^\ast(\Xscr^{\dg}_\eta)=H^\ast(\Xscr_\eta)$ and
$\Gamma$-actions. In the terminology of \S\ref{ref-6.4-79}, $\tilde{\Tscr}$ is a colift
of $\Tscr\in D(\Xscr\otimes_k \Gamma)$ to $D_\infty((\Xscr_\eta\otimes_k \Gamma)_{\eta\cup 1})$
(see \S\ref{ref-6.2-75}). 

The
obstruction against the existence of such a colift
is the image of $\eta\cup 1$ under the characteristic map
\[
\HH^6(\Xscr\otimes_k \Gamma, \Mscr\otimes_k\Gamma)\xrightarrow{c_{\Tscr}} \Ext^6_{\Xscr\otimes_k \Gamma}(\Tscr,\Mscr\otimes_{\Xscr} \Tscr)
\]
(see Lemma \ref{ref-6.4.1-80}, Lemma \ref{ref-6.3.1-77} and \S\ref{ref-6.2-75}).
Let $c_{\Tscr,\Gamma}$ be the composition 
\[
\HH^6(\Xscr,\Mscr)\xrightarrow{\eta\mapsto \eta\cup 1}\HH^6(\Xscr\otimes_k \Gamma, \Mscr\otimes_k\Gamma)\xrightarrow{c_{T}} \Ext^6_{\Xscr\otimes_k \Gamma}(\Tscr,\Mscr\otimes_{\Xscr} \Tscr)
\]
 By the $\Gamma$-equivariant version of \eqref{ref-8.11-117} we have a commutative diagram
\[
\xymatrix{
\HH^6(X,M)\ar[d]^{\cong}_W\ar[r]^-{c_{T,\Gamma}} & \Ext^6_{\Qch(X)_\Gamma}(T,M\Lotimes_X T)\ar[d]^{w}_{\cong}\\
\HH^6(\Xscr,\Mscr)\ar[r]_-{c_{\Tscr,\Gamma}} & \Ext^6_{\Xscr\otimes_k \Gamma}(\Tscr,\Mscr\Lotimes_\Xscr \Tscr)
}
\]
of $\Gamma$-equivariant characteristic maps.
The rightmost map is an isomorphism by \eqref{ref-8.10-116} and \eqref{eq:equivariant}. The leftmost map is an isomorphism by \eqref{eq:hochschild}.
By Proposition \ref{ref-8.8.2-125}, the upper horizontal map is also an isomorphism, finishing the proof that the colift does not exist (as $\eta\neq 0$).
\end{proof}
Applying $\RHom_{\Xscr^{\dg}_\eta}(-,\tilde{\Tscr})$ to \eqref{ref-11.4-166},  and using \eqref{ref-5.11-72}, we get
 a distinguished triangle of complexes
\[
\RHom_{\Xscr} (\Sigma^{-4}\Mscr^{-1}\otimes_{\Xscr} \Tscr,\Tscr) \r \RHom_{\Xscr^{\dg}_\eta}(\tilde{\Tscr},\tilde{\Tscr})\r \RHom_{\Xscr}(\Tscr,\Tscr)\r 
\]
and hence
\begin{equation}
\label{ref-11.5-167}
\Ext^3_{\Xscr}(\Mscr^{-1}\otimes_{\Xscr} \Tscr,\Tscr)\cong\Ext^{-1}_{\Xscr^{\dg}_\eta}(\tilde{\Tscr},\tilde{\Tscr})
\end{equation}
For use below, we note that this isomorphism sends a morphism $\phi:\Mscr^{-1}\otimes_{\Xscr} \Tscr\r \Tscr$ of degree three to the composition
\begin{equation}
\label{ref-11.6-168}
\tilde{\Tscr}\xrightarrow{\beta} \Sigma^{-4}\Mscr^{-1}\otimes_{\Xscr} \Tscr\xrightarrow{\phi} \Sigma^{-1}\Tscr\xrightarrow{\Sigma^{-1}\alpha} \Sigma^{-1}\tilde{\Tscr}.
\end{equation}
\begin{lemma}
There is a commutative diagram
\begin{equation}
\label{ref-11.7-169}
\xymatrix{
\Ext^3_{\Xscr}(\Mscr^{-1}\otimes_{\Xscr} \Tscr,\Tscr)\ar[d]_{f_\ast}\ar[r]^{\cong} &\Ext^{-1}_{\Xscr^{\dg}_\eta}(\tilde{\Tscr},\tilde{\Tscr})\ar[d]^{\tilde{f}_\ast\circ \psi_{\Xscr_\eta,\ast}}\\
\Ext^3_{\Yscr}(f_\ast(\Mscr^{-1}\otimes_{\Xscr} \Tscr),f_\ast(\Tscr))\ar[r]&\Ext^{-1}_{\Yscr}(\tilde{f}_\ast(\tilde{\Tscr}),\tilde{f}_\ast(\tilde{\Tscr}))
}
\end{equation}
where $\tilde{f}$ is as in \eqref{diag:lift}, the upper map is as in \eqref{ref-11.5-167} and the lower map is defined in a similar way as \eqref{ref-11.6-168}.
\end{lemma}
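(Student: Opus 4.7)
The plan is to obtain the lower horizontal map by pushing forward the triangle \eqref{ref-11.4-166} under the exact functor $\tilde{f}_\ast\circ\psi_{\Xscr_\eta,\ast}$, and then to recognize that both horizontal arrows are given by the same formal recipe \eqref{ref-11.6-168}, applied respectively before and after the pushforward.

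First, I would verify that $\tilde{f}_\ast\circ\psi_{\Xscr_\eta,\ast}$, when restricted to objects of $D(\Xscr)$ (viewed in $D(\Xscr^{\dg}_\eta)$ via the structure morphism $\Xscr^{\dg}_\eta\r\Xscr$), agrees with $f_\ast:D(\Xscr)\r D(\Yscr)$. This is a formal consequence of the commutative diagram
\[
\xymatrix{
\Xscr_\eta\ar[dr]&&\ar[ll]_{\tilde{f}} \Yscr\ar[dl]^{f}\\
&\Xscr
}
\]
together with the definition of $\psi_{\Xscr_\eta}$ given in Appendix \ref{ref-C.1-183}. In particular, one obtains $\tilde{f}_\ast\psi_{\Xscr_\eta,\ast}\Tscr=f_\ast\Tscr$ and $\tilde{f}_\ast\psi_{\Xscr_\eta,\ast}(\Mscr^{-1}\otimes_\Xscr\Tscr)=f_\ast(\Mscr^{-1}\otimes_\Xscr\Tscr)$.

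Next, applying $\tilde{f}_\ast\circ\psi_{\Xscr_\eta,\ast}$ to the distinguished triangle \eqref{ref-11.4-166} and using the above identifications yields a distinguished triangle in $D(\Yscr)$
\[
f_\ast\Tscr\xrightarrow{f_\ast\alpha}\tilde{f}_\ast\psi_{\Xscr_\eta,\ast}\tilde{\Tscr}\xrightarrow{f_\ast\beta}\Sigma^{-4}f_\ast(\Mscr^{-1}\otimes_\Xscr\Tscr)\r.
\]
Applying $\RHom_\Yscr(-,\tilde{f}_\ast\psi_{\Xscr_\eta,\ast}\tilde{\Tscr})$ to this triangle and extracting the connecting map exactly as in the derivation of \eqref{ref-11.5-167}, I obtain the lower horizontal map, which is given by the rule $\phi'\mapsto\Sigma^{-1}(f_\ast\alpha)\circ\Sigma^{-1}\phi'\circ f_\ast\beta$.

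Commutativity of \eqref{ref-11.7-169} is then immediate: for $\phi\in\Ext^3_\Xscr(\Mscr^{-1}\otimes_\Xscr\Tscr,\Tscr)$ the two ways around the square produce
\[
f_\ast\bigl(\Sigma^{-1}\alpha\circ\Sigma^{-1}\phi\circ\beta\bigr)\quad\text{and}\quad\Sigma^{-1}(f_\ast\alpha)\circ\Sigma^{-1}(f_\ast\phi)\circ f_\ast\beta,
\]
which coincide because $\tilde{f}_\ast\circ\psi_{\Xscr_\eta,\ast}$ is an additive triangle functor and therefore respects both composition of morphisms and the shift. The only genuine content is the identification of the pushforward with $f_\ast$ on the $\Xscr$-modules appearing in the triangle; this is the step I expect to require the most care, though once the morphisms in the diagram of $A_\infty$-categories are unpacked it is essentially formal.
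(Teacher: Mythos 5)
Your proposal is correct and is essentially the paper's argument: the paper dismisses the lemma as "a tautology starting from \eqref{ref-11.6-168}", and what you have done is spell out that tautology — identify $\tilde{f}_\ast\circ\psi_{\Xscr_\eta,\ast}$ with $f_\ast$ on the $\Xscr$-modules via the commutative triangle of $A_\infty$-categories, push forward the triangle \eqref{ref-11.4-166}, and observe that both routes around the square compute the same composite because the vertical functor is an exact (hence composition- and shift-preserving) functor. Nothing is missing.
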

\begin{proof} This is a tautology starting from \eqref{ref-11.6-168}.
\end{proof}
\begin{corollary} \label{ref-11.6-170}
The map $\tilde{f}_\ast \circ \psi_{\Xscr_\eta,\ast} $ in \eqref{ref-11.7-169} is an isomorphism.
\end{corollary}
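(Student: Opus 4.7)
The plan is to exploit the commutativity of the square \eqref{ref-11.7-169}. Writing $t,b,l,r$ for top, bottom, left, right arrows respectively, the relation $r\circ t=b\circ l$ together with the known fact that $t$ is an isomorphism (from \eqref{ref-11.5-167}) forces $r$ to be an isomorphism as soon as both $l$ and $b$ are. I will therefore prove (i) that $l=f_\ast$ is an isomorphism and (ii) that the bottom horizontal is an isomorphism.

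For (i), I would apply the long exact sequence \eqref{ref-9.13-141} with input sheaves $M\leadsto M^{-1}\otimes_{\Oscr_X} T$ and $N\leadsto T$ at $n=3$. The two flanking terms become
\[
\Ext^i_X\bigl(I/I^2 \otimes_{\Oscr_X} M^{-1}\otimes_{\Oscr_X} T,\,T\bigr)\;\cong\;\Ext^i_X\bigl(T,\,(I/I^2)^{-1}\otimes_{\Oscr_X} M\otimes_{\Oscr_X} T\bigr)
\]
for $i=1,2$, using invertibility of $I/I^2$ and $M=\omega_X^{\otimes 2}$. These vanish precisely by Lemma~\ref{ref-11.3-164}, giving the desired iso.

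For (ii), I would apply the exact functor $\tilde{f}_\ast\circ\psi_{\Xscr_\eta,\ast}$ to the defining triangle \eqref{ref-11.4-166}, producing a distinguished triangle
\[
f_\ast \Tscr\xrightarrow{\tilde{f}_\ast\alpha}\tilde{f}_\ast\tilde{\Tscr}\xrightarrow{\tilde{f}_\ast\beta}\Sigma^{-4} f_\ast(\Mscr^{-1}\otimes_\Xscr \Tscr)\to
\]
in $D(\Yscr)$, and then mimic the derivation of \eqref{ref-11.5-167}: apply $\RHom_\Yscr(-,\tilde{f}_\ast\tilde{\Tscr})$ to obtain a long exact sequence whose relevant connecting map is, by naturality of \eqref{ref-11.6-168}, precisely the bottom horizontal of \eqref{ref-11.7-169}. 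This map factors as $(\tilde{f}_\ast\alpha)_\ast$ followed by a boundary. The first factor is an iso provided $\Ext^{6,7}_\Yscr$ of $f_\ast(\Mscr^{-1}\otimes \Tscr)$ with itself vanishes, which holds because these $\Ext$ groups agree with $\Ext^{6,7}_Y$ by Lemma~\ref{ref-8.2.1-105} and $\dim Y=4$. The boundary is an iso once $\Ext^{-2,-1}_\Yscr(f_\ast\Tscr,\tilde{f}_\ast\tilde{\Tscr})$ vanish; applying the same pushforward triangle once more, this reduces to controlling $\Ext^{2,3}_Y(f_\ast T, f_\ast(M^{-1}\otimes T))$, which splits via the Koszul-type formula $\Ext^i_Y(f_\ast A,f_\ast B)\cong \Ext^i_X(A,B)\oplus \Ext^{i-1}_X(A,(I/I^2)^{-1}\otimes B)$ for a divisor into pieces on $X$ that can be handled using Lemma~\ref{ref-11.3-164}, Serre duality on $X$, and the ACM/tilting structure of $T=\Oscr_X(-2)\oplus \Oscr_X(-1)\oplus \Oscr_X\oplus C$.

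The main obstacle will be this last verification: extracting all the required $\Ext$-vanishings on $Y$ (or equivalently, on $X$ after the Koszul splitting), since each summand of $T$ must be analyzed individually using its specific exceptional/ACM properties on the quadric threefold $X$. Once that is in place, (i), (ii), and commutativity of \eqref{ref-11.7-169} together yield that the right vertical map is an isomorphism.
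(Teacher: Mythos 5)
Your reduction of the corollary to the two statements ``the left vertical arrow is an isomorphism'' and ``the bottom horizontal arrow is an isomorphism'' is exactly the paper's strategy, and your treatment of the left arrow (the long exact sequence \eqref{ref-9.13-141} applied with $M\leadsto M^{-1}\otimes_X T$, $N\leadsto T$, with the flanking terms killed by Lemma \ref{ref-11.3-164}) coincides with the paper's argument. The problem is in part (ii), where a degree-bookkeeping error sends you toward verifications that are both the wrong ones and genuinely hard. In the paper's conventions $\Sigma^{-4}N$ for a sheaf $N$ sits in cohomological degree $+4$ (compare \eqref{ref-10.1-150} and \eqref{ref-10.6-158}), so $\Ext^{i}_{\Yscr}(A,\Sigma^{-4}B)=\Ext^{i-4}_{\Yscr}(A,B)$, not $\Ext^{i+4}$. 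Consequently the flanking terms you need for the first factor are $\Ext^{-1}$ and $\Ext^{-2}$ of $f_\ast(\Mscr^{-1}\otimes_\Xscr\Tscr)$ with itself (not $\Ext^{6,7}$), and those for the second factor reduce to $\Ext^{-5}$ and $\Ext^{-6}$ of $f_\ast T$ against $f_\ast(M^{-1}\otimes_X T)$ (not $\Ext^{2,3}$). All of these are negative-degree Ext groups between sheaves and vanish trivially; your ``main obstacle'' of checking $\Ext^{2,3}_Y(f_\ast T, f_\ast(M^{-1}\otimes T))$ summand by summand is not the right computation, and it is not clear those groups even vanish (e.g.\ $\Ext^3_X(T,T(6))\oplus\Ext^2_X(T,T(8))$ after the Koszul splitting), so as written the argument is incomplete.

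The paper's actual treatment of the bottom arrow is a one-liner that you may want to adopt: the connecting map of the pushed-forward triangle lies in $\Ext^{5}_Y$ between coherent sheaves supported on the threefold $X$, which vanishes, so $\tilde{f}_\ast(\tilde{\Tscr})\cong f_\ast(\Tscr)\oplus\Sigma^{-4}f_\ast(\Mscr^{-1}\otimes_\Xscr\Tscr)$ splits. Then $\Ext^{-1}_{\Yscr}(\tilde{f}_\ast\tilde{\Tscr},\tilde{f}_\ast\tilde{\Tscr})$ decomposes into four blocks, three of which are negative-degree Ext groups between sheaves (hence zero), and the remaining block is exactly $\Ext^3_{\Yscr}(f_\ast(\Mscr^{-1}\otimes_\Xscr\Tscr),f_\ast\Tscr)$, identified with the target by the formula analogous to \eqref{ref-11.6-168}. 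With the degrees corrected, your LES approach would also close up, but the splitting argument makes part (ii) immediate.
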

\begin{proof}
Since $\dim Y=4$ and $T$, $M^{-1}\otimes T$ are coherent sheaves on $X$, we have $\Ext^1_{\Yscr}(f_\ast(\Sigma^{-4}\Mscr^{-1}\otimes_{\Xscr} \Tscr),f_\ast(\Tscr))=
\Ext^1_{Y}(f_\ast(\Sigma^{-4}M^{-1}\otimes_{X} T),f_\ast(T))=0$ (the first equality follows from \eqref{eq:fullyfaithful} and
Lemmas \ref{ref-8.3.1-107}, \ref{ref-8.6.1-119}) and hence
\[
\tilde{f}_\ast(\tilde{\Tscr})=f_\ast(\Tscr)\oplus f_\ast(\Sigma^{-4}\Mscr^{-1}\otimes_{\Xscr} \Tscr),
\]
from which it follows right away that the lower arrow on \eqref{ref-11.7-169} is an isomorphism. So it
suffices to show that the left arrow is an isomorphism. Again using \eqref{eq:fullyfaithful} and  Lemmas  \ref{ref-8.3.1-107}, \ref{ref-8.6.1-119}, it is sufficient
to prove that
\[
\Ext^3_{X}(M^{-1}\otimes_X T,T)\xrightarrow{f_\ast} \Ext^3_{Y}(f_\ast(M^{-1}\otimes_{X} T),f_\ast(T))
\]
is an isomorphism.
We have a long exact sequence (see \eqref{ref-9.13-141})
\begin{multline*}
\Ext^1_{X}(T,(I/I^2)^{-1}\otimes_X M\otimes_X T))\r 
\Ext^3_{X}(T,M\otimes_X T)\xrightarrow{f_\ast}\\
\Ext^3_{Y}(f_\ast(T),f_\ast(M\otimes_X T))\r
\Ext^2_{X}(T,(I/I^2)^{-1}\otimes_X M\otimes_X T)).
\end{multline*}
Now by Lemma  \ref{ref-11.3-164} we have
\[
\Ext^1_{X}(T,(I/I^2)^{-1}\otimes_X M\otimes_X T)=\Ext^2_{X}(T,(I/I^2)^{-1}\otimes_X M\otimes_X T)=0,
\]
so that we have
\[
\Ext^3_{X}(M^{-1}\otimes_X T,T)\xrightarrow[\cong]{f_\ast}
\Ext^3_{Y}(f_\ast(M^{-1}\otimes_X T),f_\ast T)
\]
and we are done.
\end{proof}
\begin{proof}[Proof of Theorem \ref{ref-11.1-160}]
We follow the strategy exhibited in the introduction.
Since $\Psi$ is a functor, we obviously have 
\[
\Psi(T)=(\tilde{f}_\ast\circ \psi_{\Xscr_\eta,\ast})(\tilde{\Tscr})\in D(\Yscr)_\Gamma.
\]
If $\Psi$ is Fourier-Mukai then $\Psi(T)\in D(\Yscr\otimes_k \Gamma)$.  It now suffices to use Lemma \ref{nonvanishing obstruction} below to obtain a contradiction.
\end{proof}
\begin{lemma}\label{nonvanishing obstruction} The obstruction  (see Lemma \ref{ref-7.3.1-98})
\[
o_{3}(\Psi(T))\in \HH^3(\Gamma,\Ext^{-1}_{\Yscr}(\Psi(T),\Psi(T))
\]
is not vanishing. 
\end{lemma}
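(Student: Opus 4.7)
The plan is to deduce the non-vanishing of $o_3(\Psi(T))$ directly from the non-vanishing of $o_3(\tilde{\Tscr})$ (established in Lemma \ref{ref-11.4-165}) via the naturality of the obstruction classes under DG-functoriality. The whole argument is short because the difficult work has already been done: Lemma \ref{ref-11.4-165} identifies the relevant obstruction on $\tilde{\Tscr}$ with the characteristic class of $\eta$ on $T$ and shows it is nonzero, and Corollary \ref{ref-11.6-170} shows that the map $\tilde{f}_\ast\circ\psi_{\Xscr_\eta,\ast}$ induces an isomorphism on the coefficient modules in which these obstructions live.

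First, I would apply Lemma \ref{ref-7.3.1-98}(2) to the composition of DG-functors that realizes $\Psi(T)=\tilde{f}_\ast(\psi_{\Xscr_\eta,\ast}(\tilde{\Tscr}))$. Applying the naturality statement twice (once for $\psi_{\Xscr_\eta}$ and once for $\tilde{f}$), one can choose compatible data so that
\[
o_3(\Psi(T)) \;=\; (\tilde{f}_\ast\circ \psi_{\Xscr_\eta,\ast})_\ast \bigl(o_3(\tilde{\Tscr})\bigr),
\]
where the right-hand side denotes application of the induced $\Gamma$-equivariant map
\[
\tilde{f}_\ast\circ \psi_{\Xscr_\eta,\ast}:\Ext^{-1}_{\Xscr^{\dg}_\eta}(\tilde{\Tscr},\tilde{\Tscr})\longrightarrow \Ext^{-1}_{\Yscr}(\Psi(T),\Psi(T))
\]
to the Hochschild $3$-cocycle representing $o_3(\tilde{\Tscr})$.

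Second, I would invoke Corollary \ref{ref-11.6-170} to conclude that this map on $\Ext^{-1}$ is an isomorphism. Functoriality of $\HH^3(\Gamma,-)$ in the coefficient module then yields an isomorphism
\[
\HH^3\bigl(\Gamma,\Ext^{-1}_{\Xscr^{\dg}_\eta}(\tilde{\Tscr},\tilde{\Tscr})\bigr)\xrightarrow{\;\cong\;}\HH^3\bigl(\Gamma,\Ext^{-1}_{\Yscr}(\Psi(T),\Psi(T))\bigr),
\]
and $o_3(\tilde{\Tscr})\neq 0$ by Lemma \ref{ref-11.4-165}, hence $o_3(\Psi(T))\neq 0$.

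Finally, this immediately completes the proof of Theorem \ref{ref-11.1-160}: if $\Psi$ were isomorphic to a Fourier-Mukai functor with kernel in $D^b(\coh(X\times_k Y))$, then $\Psi(T)$ would come from an object of $D(\Qch(Y)_\Gamma)$, hence via $w$ (see \eqref{ref-8.10-116}) would lift to an object of $D(\Yscr\otimes_k\Gamma)$; by Lemma \ref{ref-7.3.1-98}(1) this would force $o_3(\Psi(T))=0$, contradicting the non-vanishing just established. The only mild obstacle is the bookkeeping needed to apply the naturality of Lemma \ref{ref-7.3.1-98}(2) to a \emph{composite} of DG-functors, but this is handled by applying that lemma twice and noting that the choices made at each stage may be chosen compatibly.
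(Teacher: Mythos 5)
Your proposal is correct and follows essentially the same route as the paper: naturality of the obstruction (Lemma \ref{ref-7.3.1-98}) gives $o_3(\Psi(T))=(\tilde{f}_\ast\circ\psi_{\Xscr_\eta,\ast})(o_3(\tilde{\Tscr}))$, Corollary \ref{ref-11.6-170} shows this map is an isomorphism, and Lemma \ref{ref-11.4-165} gives $o_3(\tilde{\Tscr})\neq 0$. The additional remarks on concluding Theorem \ref{ref-11.1-160} are consistent with the paper but not part of the statement being proved.
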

\begin{proof} By the naturality of obstructions (see Lemma \ref{ref-7.3.1-98}),
we have
\[
o_{3}(\Psi(T))=(\tilde{f}_\ast\circ \psi_{\Xscr_\eta,\ast})(o_3(\tilde{\Tscr})).
\]
By Corollary \ref{ref-11.6-170} we know that $\tilde{f}_\ast\circ \psi_{\Xscr_\eta,\ast}$ is
an isomorphism and by Lemma \ref{ref-11.4-165} we have $o_3(\tilde{\Tscr})\neq 0$.
\end{proof}
Done!
\appendix
\section{The virtual kernel cohomology}
\label{sec:virtual}
\begin{theoremdefinition} \label{thm:virtual} \cite[Thm 1.1, proof]{Rizzardo1} (see also \cite{CS2})
Let $X$, $Y$ be smooth projective $k$-varieties and let
$F:D^b(\coh(X))\r D^b(\coh(Y))$ be an exact functor. 
Choose an ample line bundle $\Oscr_X(1)$ on $X$, and
let $\Gamma_\ast(X)$ be the corresponding homogeneous coordinate ring.

Define 
\[
\widetilde{\Hscr}^i =\bigoplus_j H^i(F(\Oscr_X(j))).
\]
Then $\widetilde{\Hscr}^i$ is a noetherian $\Gamma_\ast(X)\otimes_k \Oscr_Y$-module. Let $\Hscr^i$ be the
corresponding sheaf of $\Oscr_{X\times_k Y}$-modules. 

If $F$ is a Fourier-Mukai functor with kernel $\Kscr\in D^b(\coh(X\times_k Y))$, then $H^i(\Kscr)\cong \Hscr^i$.
\end{theoremdefinition}
We will refer to $(\tilde{H}^i)_i$ as the \emph{virtual kernel
  cohomology} of $F$. 
\begin{proposition}
Let $F=\Psi$ where $\Psi$ is as in \eqref{ref-10.4-156}. Then
\begin{equation}
\label{ref-10.6-158}
\Hscr^\ast=i_\ast \Oscr_X\oplus \Sigma^{-n+2}i_\ast M^{-1}.
\end{equation}
where $i:X\r X\times_k Y$ is given by $i(x)=(x,f(x))$. 
\end{proposition}
\begin{proof}
According to Lemma \ref{ref-10.3-153} and
Lemma \ref{ref-8.3.1-107} we find
\[
H^\ast({L}(\Oscr_X(j)))=w (\Oscr_X(j)\oplus \Sigma^{-n+2}(M^{-1}\otimes_X \Oscr_X(j)))
\]
and since the last 3 morphisms in \eqref{ref-10.4-156} are essentially the pushforward by $f$ on the level of cohomology:
\begin{equation}
\label{ref-10.5-157}
\tilde{\Hscr}^\ast=f_\ast \Oscr_X(j)\oplus \Sigma^{-n+2}f_\ast(M^{-1}\otimes_X \Oscr_X(j))
\end{equation}
From
\eqref{ref-10.5-157} we easily deduce \eqref{ref-10.6-158}.
\end{proof}
\section{(Non-)existence of topological lifts}\label{stable infinity}
\def\Cat{\operatorname{Cat}}
\def\Sp{\operatorname{Sp}}
\def\dgcat{\operatorname{dgcat}}
\subsection{Spectral categories} 
Let $\Sp$
be the symmetric monoidal category of symmetric spectra
\cite{SchwedeBook}. It is useful to think of $\Sp$ as an ``absolute''  analogue
of the category of unbounded complexes of abelian groups. 
A \emph{spectral category/functor} is a category/functor enriched
in $\Sp$. These are absolute analogues of DG-categories and functors.

Given a spectral category $\Ascr$, one can form a category
$\pi_0\Ascr$ by keeping the same set of objects and  putting $(\pi_0\Ascr)(x,y)=\pi_0(\Ascr(x,y))$.
A spectral functor
 $F:\Ascr\r \Bscr$ is a \emph{quasi-equivalence} if $\pi_0 F:\pi_0\Ascr\r \pi_0\Bscr$ is an equivalence and moreover
$F$ induces isomorphisms $\pi_\ast(\Ascr(x,y))\r \pi_\ast(\Bscr(Fx,Fy))$. Let $\Cat_{\Sp}$ be the category
of spectral categories/functors and let $\Ho(\Cat_{\Sp})$ be
the corresponding homotopy category obtained by inverting quasi-equivalences.

\medskip

If $R$ is a commutative ring then
the Eilenberg Maclane spectrum $H\!R$ is a commutative monoid in $\Sp$ \cite[Example 5.25]{SchwedeBook}.
Let $\Cat_{\Sp}(H\!R)$ be the category of $H\!R$-linear spectral categories and let $\Ho(\Cat_{\Sp}(H\!R))$ be
the corresponding homotopy category. We use similar notations for DG-categories: $\dgcat(R)$ and $\Ho(\dgcat(R))$.
Let~$U:\Ho(\Cat_{\Sp}(HR))\r \Ho(\Cat_{\Sp})$ be the forgetful functor.
\begin{lemmas}\label{Q-linear}
The forgetful functor $U:\Ho(\Cat_{\Sp}(H\QQ))\r \Ho(\Cat_{\Sp})$ is fully faithful.
\end{lemmas}
\begin{proof}
The functor $U$ has a left adjoint $L$ given by smashing Hom-spaces with the Eilenberg-MacLane spectrum $H\QQ$. Because $H\QQ$ is a localization of the sphere spectrum \cite[Theorem 7.11]{Rudyak}, $LU$ is naturally isomorphic to the identity and hence $U$ is fully faithful.
\end{proof}
In \cite{Tabuada} (following \cite{Shipley}) Tabuada constructs an equivalence\footnote{The result is stated for $R=\ZZ$
but it works for  any $R$, see the comment in \cite[\S2.2]{Shipley}} ${\mathbb H}:\Ho(\dgcat(R))\r \Ho(\Cat_{\Sp}(H\!R))$. 
The construction of ${\mathbb H}$ is quite involved but it is not hard to verify that 
for a $R$-linear DG-category $\cc$ one has an $R$-linear equivalence between $\pi_0{\mathbb H}(\cc)$ and $H^0(\cc)$.

If $\cc$, $\dd$ are $R$-linear DG-categories and $\Psi:H^0(\cc)\r H^0(\dd)$ is an $R$-linear functor
then a \emph{spectral lift} of $\Psi$ is a morphism $\widetilde{\Psi}:U\mathbb{H}(\cc)\r U\mathbb{H}(\dd)$
in $\Ho(\Cat_{\Sp})$ such that $\pi_0\widetilde{\Psi}=\Psi$.

The following lemma shows that if $R=\QQ$ there is no difference between a spectral lift and a $\QQ$-linear DG-lift.
\begin{lemmas} \label{lem:QQ} If $R=\QQ$ then a spectral lift of $\Psi:H^0(\cc)\r H^0(\dd)$ exists if and only if there exists
a morphisms $\overline{\Psi}:\cc\r \dd$ in $\Ho(\dgcat(\QQ))$ such that $H^0(\overline{\Psi})=\Psi$.
\end{lemmas}
\begin{proof} Let $\widetilde{\Psi}:U\mathbb{H}(\cc)\r U\mathbb{H}(\dd)$ in $\Cat_{\Sp}$ be a spectral lift of $\Psi$.
Since $U$ is fully faithful by Lemma \ref{Q-linear} and $\mathbb{H}$
is an equivalence we may put $\overline{\Psi}=(U\mathbb{H})^{-1}\widetilde{\Psi}$.
\end{proof}
\subsection{Our functor}
For a noetherian scheme $X$ we let $D_{\dg}^b(\coh(X))$ be the standard DG-enhancement of $D^b(\coh(X))$ using injective resolutions.
\begin{propositions}\label{spectral lift}
If $k=\QQ$ then the functor $\Psi$ defined in Theorem \ref{ref-11.1-160} does not have a spectral lift.
\end{propositions}
\begin{proof}
If $\Psi$ has a spectral lift then by Lemma \ref{lem:QQ} it has a $\QQ$-linear DG-lift. 
But then
$\Psi_{\QQ}$ has to be a Fourier-Mukai functor by \cite[Thm\ 8.15]{Toen}
which contradicts Theorem \ref{ref-11.1-160} 
\end{proof}
\begin{remarks} We conjecture that Proposition \ref{spectral lift} holds for any field. 
\end{remarks}
\begin{remarks}
Spectral categories form a rigid model for the category of $\infty$-categories.
See  \cite[Thm.\ 4.23]{Blumberg} for a precise statement.
From  Proposition \ref{spectral lift} one may deduce that the functor $\Psi$
  does not lift to an exact $\infty$-functor in the sense of \cite[\S1.1.4]{LurieHA}.
\end{remarks}
\subsection{Vologodsky's functor}
Let us first remind the reader how Vologodsky's construction \cite{Vologodsky} works. Let $Y$ be a smooth projective scheme over $\ZZ_p$ and let~$X/\FF_p$ be its special fiber. Let $i:X\r Y$ be the corresponding embedding and put $\Phi=Li^\ast\circ i_\ast$. For a carefully chosen $Y$, Vologodsky shows that $\Phi$ is not a Fourier-Mukai functor over $\FF_p$.
\begin{observations} \label{lem:vg} The functor $\Phi$ has a $\ZZ$-linear DG-lift and hence a $H\ZZ$-linear spectral lift.
\end{observations}
\begin{proof} Let $i^!:\coh(Y)\r \coh(X)$ be the right adjoint to $i_\ast:\coh(X)\r \coh(Y)$. It is easy to see that $Li^\ast\cong \Sigma\circ Ri^!$. Hence it is sufficient to construct
a $\ZZ$-linear DG-lift for $\Phi':=Ri^!\circ i_\ast$. As $\Phi'$ is a composition of two right derived functors
it suffices to invoke Lemma \ref{lem:dglift} below.
\end{proof}
We now recall some standard facts. If $\mathfrak{a}$, $\mathfrak{b}$ are DG-categories then a \emph{co-quasi-functor}
$M:\mathfrak{b}\r \mathfrak{a}$ is a $\mathfrak{a}{-}\mathfrak{b}$-bimodule (i.e.\ a DG-functor $\bb^\circ\otimes \aa\r C(\Ab)$) such that for
 for every
$B\in \mathfrak{b}$ there is an $M(B)\in \mathfrak{a}$ as well as a morphism of DG-functors
$\mathfrak{a}(M(B),-)\r M(B,-)$ (by enriched Yoneda this is the same as an element of $\xi_B\in Z^0 M(B,A)$)
such that for all $A'\in\mathfrak{a}$ the induced map $\mathfrak{a}(M(B),A')\r M(B,A')$
is a quasi-isomorphism. 
A co-quasi-functor $M$  induces an honest functor $H^0(\mathfrak{b})\r H^0(\mathfrak{a})$
 by sending $B\mapsto M(B)$ and a corresponding construction for morphisms. We denote this functor by $H^0(M)$.
\begin{lemmas} \label{lem:coquasi}
 If $M$ is a co-quasi-functor then $H^0(M)$ has a DG-lift.  
\end{lemmas}
\begin{proof} Let $\cc=\aa\coprod_{M}\bb$ be the  gluing of $\aa$ and $\bb$ along $M$ as in \cite[\S3.2]{Orlov5}.
Let $\cc'$ be the full DG-subcategory of $\cc$ spanned by the objects $(M(B),B,\xi_B)$ for $B\in \bb$.
From the fact that $M$ is a co-quasi-functor one deduces that the projection functor $\pr_{\bb}:\cc'\r \bb$ is a quasi-isomorphism.
Let $\overline{M}:\bb\r \aa$ be the morphism in $\Ho(\dgcat(\ZZ))$ given by the composition $\bb\xrightarrow{\pr_{\bb}^{-1}}\cc'\xrightarrow{\pr_{\aa}}\aa$. It is easy to see that $H^0(\overline{M})\cong H^0(M)$.
\end{proof}
\begin{lemmas}
\label{lem:dglift} Let $F$ be a left exact functor between Grothendieck categories $\Cscr\r\Dscr$. Equip
$D^+(\Cscr)$, $D^+(\Dscr)$ with their standard enhancements $D^+_{\dg}(\Cscr)$, $D^+_{\dg}(\Dscr)$ given by injective resolutions.
Then  the right derived functor
$
RF:D^+(\Cscr)\r D^+(\Dscr)
$
of $F$ has a DG-lift.
\end{lemmas}
\begin{proof}
  For each $C\in \Ob(D^+(\Cscr))$, $D\in \Ob(D^+(\Dscr))$ fix injective resolutions $I_C$, $I_D$. Then we may define a co-quasi-fuctor
$RF^{\dg}:D^+_{\dg}(\Cscr)\r D^+_{\dg}(\Dscr)$ 
by putting
$
RF_{\dg}(C,D)=\underline{\Hom}_{C(\Dscr)}(FI_C,I_D)
$.
It is easy to see that $H^0(RF_{\dg})=RF$.
It now suffices to invoke Lemma \ref{lem:coquasi}.
\end{proof}
\section{Proof of Theorem  \ref{ref-1.2-2}}
\label{ref-B-182}
Concerning (1), again the most general proof follows from Appendix \ref{sec:amnon} as follows. Let $B_1=\tau_{\leq 0}B$ and let $B_2$ be the pullback of the diagram 
\[\xymatrix{
& R \ar[d] \\
B_1 \ar[r] & H^0(B_1).
}\]
Then $H^0(B_2)=R$ and $H^i(B_2)=0$ for $i=-1,\ldots,-m$ and $i>0$. Let $\Proj R$ the category of projective $R$-modules. We can then define a functor $L:R\to D(B_2)$ sending $R$ to $B_2$, and then extend to 
\begin{equation}\label{L for B_2}
L:\Proj R \to D(B_2). 
\end{equation}
Moreover since $B$ is concentrated in nonpositive degrees we have a functor $H$ given by $-\Lotimes_{B_2}H^0(B_2):D(B_2) \to D(H^0(B_2))=D(R)$ which sends $L(R)$ to $R$ and hence $L(P)$ to $P$ for $P\in \Proj R$. In the same way as in Proposition \ref{ref-5.3.1-69}, by considering the good couple $\Ascr=\{\Sigma^n P |P\in L(\Proj R), n\leq m\}$ and $\Bscr=\{\Sigma^n P|P\in L(\Proj R), n \geq 0\}$ we obtain a functor $L:D^b(R)\to D(B_2)$ extending \eqref{L for B_2}. The result follows by composing with $-\Lotimes_{B_2} B$.

We now concentrate on (2). So we have an exact functor
\[
{L}:D^b(R)\r D(B)
\]
which sends $R$ to $B$ in a way that is compatible with the right $R$-action on both sides
in $D(B)$.
Assume now that ${L}$ is of the form $U\Lotimes_R -$ for $U$ an object
in $D(B\otimes_k R^\circ)$. We assume that $U$ represented by a
cofibrant object in $\underline{\Mod}(B\otimes_k R^\circ)$ also
denoted by $U$. 
Since by construction the isomorphism $U={L}(R)\cong B$
is compatible with the right $R$-action on both sides in $D(B)$, it follows
that $B$ as an object in $D(B)_{R^\circ}$ lifts to an object in $D(B\otimes_k R^\circ)$. In other
words, the obstructions $o_i(B)$ exhibited in \S\ref{ref-7.3-97} vanish, which by the
proof of Lemma \ref{ref-7.3.1-98} is the same as saying that there is a $B$-linear right  $A_\infty$-$R$-action
on $B$ lifting the right $R$-action on $H^\ast(B)$, i.e.\ there is a  $A_\infty$-morphism
$R^\circ\r \End_B(B)=B^\circ$, finishing the proof.
\section{Proof of Proposition \ref{ref-1.3-5}}
\label{sec:ref-1.3-5}
\subsection{The unital DG-hull of a strictly unital $A_\infty$-category}
\label{ref-C.1-183}
In this section we temporarily drop our blanket convention that
$A_\infty$-notions are automatically strictly unital.

If $\aa$ is a strictly
unital $A_\infty$-category then there exists a universal strictly unital
$A_\infty$ morphism $\psi_\aa: \aa\r U^u(\aa)$ to a DG-algebra \cite[p127]{Lefevre}. 

Concretely, $U^u(\aa)$ is a suitable quotient of the non-unital DG-hull $\Omega\mathbb{B} \aa$ with 
 identity  morphisms adjoined. 
Taking the quotient is necessary to make the adjoined identity morphisms
compatible with the ones in $\aa$. From this explicit description 
one shows easily that $\psi_\aa$ is a quasi-isomorphism and in particular one has 
equivalences of categories
\[
D_\infty(\aa)\cong D_\infty(U^u(\aa))\cong D(U^u(\aa))
\]
where on the right we have the usual derived category of a DG-algebra. The second 
equivalence
is \cite[Cor. 4.1.3.11]{Lefevre}.

Since $\Omega\mathbb{B}
\aa=T^c(\Sigma^{-1} T(\Sigma \aa))$ (tensor (co)categories without (co)unit)
we find that $U^u(\aa)$ is concentrated in degrees $\le 0$.
\subsection{The proof} 
\label{ref-C.2-184}
By Theorem \ref{ref-1.2-2} we have to compute the obstruction against lifting the
natural map $R\r H^\ast(R^{\dg}_\eta)$. By construction, there is a 
$A_\infty$-quasi-isomorphism $R_\eta\r R^{\dg}_\eta$. Hence, by Corollary \ref{ref-7.2.2-89}, it is
sufficient to compute the obstructions for lifting the natural map $f:R\r H^0(R_\eta)$. 
By Remark \ref{ref-7.2.5-94}, the first possible non-vanishing obstruction
is $o_{n}(f)=\eta$, and it is indeed non-vanishing since we have assumed
$\eta\neq 0$. So lifting is not possible, finishing the proof.

\section{(by Amnon Neeman)}
\label{sec:amnon}

\hyphenation{mon-o-mor-phism mon-o-mor-phisms fi-nitely ap-pen-dex
man-u-script man-u-scripts co-lim-it co-lim-its homo-mor-phism
homo-mor-phisms epi-mor-phism epi-mor-phisms}

% Handles spaces after macros:

% Put "i.e." in italics.
\newcommand{\ie}                {\emph{i.e.,}\xspace}
% Angle brackets:
\newcommand{\angles}[1]         {{\langle #1 \rangle}}

% For putting removable comments in a document.
\newcommand{\thought}[1]{}
% Comment out this line to turn comments off:
\renewcommand{\thought}[1]{ \textbf{[#1]}}

\newenvironment{roenumerate}{\begin{enumerate}[\upshape (i)]}{\end{enumerate}}
% Make punctuation upright.  Do we want to use these?
\newcommand{\ulp}{\textup{(}}
\newcommand{\urp}{\textup{)}}
\newcommand{\uc}{\textup{:}}
\newcommand{\iref}[1]{(\ref{#1})}

\newcommand{\df}[1]{\emph{#1}}

\newcommand{\iso}{\cong}

\newcommand{\period}    {{\makebox[0pt][l]{\hspace{2pt} .}}}

\newcommand\nc {\newcommand}
\newcommand\rnc{\renewcommand}

\newcount\blopone
\newcount\xone
\newcount\xtwo
\newcount\ytwo

\theoremstyle{plain}
\let\theorem\undefined
\let\notation\undefined
\let\remark\undefined
\let\lemma\undefined
\let\corollary\undefined
\let\example\undefined
\let\conclusion\undefined
\let\case\undefined
\let\proposition\undefined
\let\definition\undefined
\let\hypothesis\undefined
\let\comment\undefined
\makeatletter
\let\c@case\undefined
\makeatother
\let\cor\undefined
\newtheorem{theorem}{Theorem}[section]
\newtheorem{prop}[theorem]{Proposition}
\newtheorem{com}[theorem]{Comment}
\newtheorem{redu}[theorem]{Reduction}
\newtheorem{refinement}[theorem]{Refinement}
\newtheorem{summary}[theorem]{Summary}
\newtheorem{importnota}[theorem]{Important Notation}
\newtheorem{prblm}[theorem]{Problem}
\newtheorem{notation}[theorem]{Notation}
\newtheorem{defin}[theorem]{Definition}
\newtheorem{caution}[theorem]{Caution}
\newtheorem{remark}[theorem]{Remark}
\newtheorem{reminder}[theorem]{Reminder}
\newtheorem{illustration}[theorem]{Illustration}
\newtheorem{lemma}[theorem]{Lemma}
\newtheorem{construction}[theorem]{Construction}
\newtheorem{corollary}[theorem]{Corollary}
\newtheorem{example}[theorem]{Example}
\newtheorem{conclusion}[theorem]{Conclusion}
\newtheorem{triviality}[theorem]{Triviality}
\newtheorem{proto}[theorem]{Prototype Quasifibration}
\newtheorem{cauex}[theorem]{Cautionary Example}
\newtheorem{hypo}[theorem]{Hypothesis}
\newtheorem{subth}{ }[theorem]
\newtheorem{case}{Case}[theorem]
\newtheorem{ssubth}{ }[subth]
\newtheorem{facts}[theorem]{Facts}

\nc\tri[1]{\begin{triviality}
\label{#1}}
\nc\fac[1]{\begin{facts}
\label{#1}
\begin{em}}
\nc\cas[1]{\begin{case}
\label{#1}
\begin{em}}
\nc\rfn[1]{\begin{refinement}
\label{#1}}
\nc\prt[1]{\begin{proto}
\label{#1}}
\nc\lem[1]{\begin{lemma}
\label{#1}}
\nc\pro[1]{\begin{prop}
\label{#1}}
\nc\thm[1]{\begin{theorem}
\label{#1}}
\nc\cor[1]{\begin{corollary}
\label{#1}}
\nc\dfn[1]{\begin{defin}
\label{#1}}
\nc\sthm[1]{\begin{subth}
\label{#1}}
\nc\exm[1]{\begin{example}
\label{#1}
\begin{em}}
\nc\plm[1]{\begin{prblm}
\label{#1}
\begin{em}}
\nc\rmk[1]{\begin{remark}
\label{#1}
\begin{em}}
\nc\rmd[1]{\begin{reminder}
\label{#1}
\begin{em}}
\nc\ntn[1]{\begin{notation}
\label{#1}
\begin{em}}
\nc\smr[1]{\begin{summary}
\label{#1}
\begin{em}}
\nc\cau[1]{\begin{caution}
\label{#1}
\begin{em}}
\nc\hyp[1]{\begin{hypo}
\label{#1}}
\nc\imn[1]{\begin{importnota}
\label{#1}
\begin{em}}
\nc\rdn[1]{\begin{redu}
\label{#1}
\begin{em}}
\nc\cax[1]{\begin{cauex}
\label{#1}
\begin{em}}
\nc\cmt[1]{\begin{com}
\label{#1}
\begin{em}}
\nc\con[1]{\begin{construction}
\label{#1}
\begin{em}}
\nc\ill[1]{\begin{illustration}
\label{#1}
\begin{em}}
\nc\ssthm[1]{\begin{ssubth}
\label{#1}
\begin{em}}
\nc\cnc[1]{\begin{conclusion}
\label{#1}
\begin{em}}

\nc\elem{\end{lemma}}
\nc\erdn{\end{em}\end{redu}}
\nc\erfn{\end{refinement}}
\nc\eprt{\end{proto}}
\nc\ethm{\end{theorem}}
\nc\ecor{\end{corollary}}
\nc\edfn{\end{defin}}
\nc\esthm{\end{subth}}
\nc\epro{\end{prop}}
\nc\etri{\end{triviality}}
\nc\eexm{\end{em}
\end{example}}
\nc\ecmt{\end{em}
\end{com}}
\nc\efac{\end{em}
\end{facts}}
\nc\ermk{\end{em}
\end{remark}}
\nc\ermd{\end{em}
\end{reminder}}
\nc\eill{\end{em}
\end{illustration}}
\nc\eplm{\end{em}
\end{prblm}}
\nc\ecas{\end{em}
\end{case}}
\nc\ecau{\end{em}
\end{caution}}
\nc\ecax{\end{em}
\end{cauex}}
\nc\eimn{\end{em}
\end{importnota}}
\nc\entn{\end{em}
\end{notation}}
\nc\econ{\end{em}
\end{construction}}
\nc\esmr{\end{em}
\end{summary}}
\nc\ehyp{
\end{hypo}}
\nc\ecnc{\end{em}
\end{conclusion}}
\nc\essthm{\end{em}
\end{ssubth}}

\newenvironment{beweis}{\noindent{\bf Proof}:\ \ }{\hfill{$\Box$}\vskip 2mm }
\newenvironment{rem}{{\bf Remark}:}{\vskip 5mm }

\makeatletter
\let\remarks\@undefined
\let\endremarks\@undefined
\makeatother
\newenvironment{remarks}{{\bf Remarks}:\begin{enumerate}}{\end{enumerate}}
\makeatletter
\let\examples\@undefined
\let\endexamples\@undefined
\makeatother
\newenvironment{examples}{{\bf Examples}:\begin{enumerate}}{\end{enumerate}}
\newtheorem{proposition}[theorem]{Proposition}
\newtheorem{definition}[theorem]{Definition}
\newtheorem{pretheorem}[theorem]{Pretheorem}
\newtheorem{hypothesis}[theorem]{Hypothesis}
\nc\sst{\scriptstyle}
\newcommand{\comment}[1]{}
\newcommand{\ri}{\longrightarrow}
\newcommand{\sr}{\rightarrow}
\newcommand{\slft}{\leftarrow}
\newcommand{\zz}{{\mathbb Z}}
\newcommand{\zq}{{\mathbb Z}_{qfh}}
\newcommand{\nn}{{\mathbb N}}
\newcommand{\K}{{\mathbf K}}
\newcommand{\D}{{\mathbf D}}
\newcommand{\qq}{{\mathbb Q}}
\newcommand{\rr}{{\mathbb R}}
\newcommand{\C}{{\mathbf C}}
\newcommand{\pp}{{\mathbb P}}
\newcommand{\cp}{{\mathbb{CP}}}
\newcommand{\nq}{{\mathbb N}_{qfh}}
\newcommand{\oo}{\otimes}
\newcommand{\uu}{\underline}
\newcommand{\ih}{\uu{Hom}}
\newcommand{\af}{{\mathbb A}^1}
\newcommand{\A}{{\mathbb A}^2}
\newcommand{\an}{{\mathbb A}^{n+1}}
\newcommand{\ak}[1]{{\mathbb A}^{#1}}
\nc\op{^{\hbox{\rm\tiny op}}}
\nc\mth{^{\hbox{\rm\tiny th}}}

\nc\script{\mathscr}
\nc\z{\zeta}
\nc\bc{{\mathbb{BC}}}
\nc\ct{{\script T}}
\nc\cl{{\script L}}
\nc\cv{{\script V}}
\nc\ce{{\script E}}
\nc\cs{{\script S}}
\nc\car{{\script R}}
\let\cd\undefined
\nc\cd{{\script D}}
\let\cc\undefined
\nc\cc{{\script C}}
\nc\ca{{\script A}}
\nc\ci{{\script I}}
\nc\co{{\script O}}
\nc\cx{{\script X}}
\nc\cz{{\script Z}}
\let\ch\undefined
\nc\ch{{\script H}}
\nc\bd{\begin{description}}
\nc\ed{\end{description}}
\nc\ctob{{\script C}at\big(\ci^{op},\ca\big)}
\nc\clim{{\ds\mathop{\rm lim}_{\ds\longleftarrow}}\,}
\nc\climi{\clim^{\!i}\,}
\nc\climn{\clim^{\!n}\,}
\nc\colim{{\ds\mathop{\rm colim}_{\ds\la}}}
\nc\oa{\overline{\ca}}
\nc\s{\sigma}
\nc\ta{\tau}
\nc\os{\overline\sigma}
\nc\ot{\overline\tau}
\nc\T{\Sigma}
\nc\Tm{\Sigma^{-1}}
\nc\de[1]{{\mathop{\rm deg(#1)}}}
\nc\Ad[1]{\mathop{\rm Ad}(#1)}
\nc\ad[1]{\mathop{\rm ad}(#1)}
\nc\kth{{\it K}--theory}

\def\der #1 {D\left(#1\right)}
%\nc\prf{\begin{beweis}}
%\nc\eprf{\end{beweis}}
% I think amsart already has a proof environment.  It has
% the advantage that the spacing around it is better.
\nc\prf{\begin{proof}}
\nc\eprf{\end{proof}}
\nc\ds{\displaystyle}
\let\Tor\undefined
\nc\Tor{\text{\rm Tor}}

\nc\cb{{\script B}}
\nc\ab{{\script A}b}

\nc\be{\begin{roenumerate}}
\nc\ee{\end{roenumerate}}

\nc\csab{{\script C}at\big(\cs^{op},\ab\big)}
\nc\ctab{{\script C}at\Big({\{\ct^\alpha\}}^{op},\ab\Big)}
\nc\csex{{\script E}x\big(\cs^{op},\ab\big)}
\nc\ctex{{\script E}x\Big({\{\ct^\alpha\}}^{op},\ab\Big)}
\nc\sub{\qquad\subset\qquad}
\nc\ctr[1]{{\left.\ct\left(-,#1\right)\right|}_{\cs}}
\nc\ctrf[2]{{\left.\ct\left(#1,#2\right)\right|}_{\cs}}
\nc\Ctr[1]{{\left.\ct\left(-,#1\right)\right|}_{\ct^\alpha}}
\nc\Ctrf[2]{{\left.\ct\left(#1,#2\right)\right|}_{\ct^\alpha}}

\nc\la{\longrightarrow}
\nc\nin{\noindent}
\nc\cad[1]{\text{card}(#1)}
\nc\eq{\quad=\quad}
\nc\BA{\begin{array}{c}}
\nc\EA{\end{array}}
\nc\barr{
\[
\begin{array}{cccccccccccccccc}
}
\nc\earr{
\end{array}
\]
}
\nc\as[1]{{\langle S\rangle}^{#1}}
\nc\sh{\text{\it shift}}

\nc\yy[1]{{\left.\ct\left(-,#1\right)\right|}_{\ct^c}}
\nc\vrep[2]{{\left.\ct\left(#1,#2\right)\right|}_{\ct^\alpha}}
\nc\da{\downarrow}
\rnc\Hom{{\mathop{\rm Hom}}}
\rnc\HHom{{\script H}{\mathop{\rm om}}}
\rnc\RHom{{\script {RH}}{\mathop{\rm om}}}

\let\End\undefined
\nc\End{{\mathop{\rm End}}}
\let\Ext\undefined
\nc\Ext{{\mathop{\rm Ext}}}
%\nc\mr{{\rm{Mod-$\ct^c$}}\xspace}
\nc\mr\Modtc
\nc\PExt{{\mathop{\rm PExt}}}
\nc\stm{\text{\rm stmod}(kG)}
\nc\stM{\text{\rm StMod}(kG)}
\nc\e{\varepsilon}
\nc\p{\varphi}
\newcommand{\m}{\mathfrak{m}}
\newcommand{\Dqc}{{\mathbf D_{\text{\bf qc}}}}

\nc\rs{\s^{-1}A}
\nc\br{{\{\s^{-1}A\}}}
\nc\ra\ri
\def\TT\undefined
\nc\TT{\mathbf{T}}
\nc\Ss{\mathbf{S}}
\nc\LL{\mathbf{L}}
\nc\y[1]{\mathbf{y}#1}
\nc\x[1]{\mathbf{z}#1}
\rnc\Mod[1]{#1\text{--\rm Mod}}
\nc\MMod[1]{\text{\rm Mod--}#1}
\nc\Md {\ensuremath{\mathop{\textup{Mod}}}}
\rnc\mod[1]{#1\text{--\rm mod}}
%\nc\Mod[1]{\text{Mod-}#1}
\nc\Modtc{\Mod{\ct^c}}
\nc\pgldim[1]{\mathop{\rm pgldim}\,#1}
\nc\tf{{\rm [TR5]}}
\nc\tfs{{\rm [TR5$^*$]}}
\nc\Fun{\text{\rm Funct}(F\op,\ab)}
\nc\sym{\text{\rm Sym}}
\nc\sgn{\text{\rm sgn}}
\rnc\Pro{\text{\rm Prod}^{}_\alpha(F\op,\ab)}
\nc\Yt[1]{{\left.\Hom_\ct^{}\left(-,#1\right)\right|}_F^{}}
\nc\dl{\delta}
\rnc\Proj[1]{#1\text{--\rm Proj}}
\rnc\proj[1]{#1\text{--\rm proj}}
\nc\Flat[1]{#1\text{--\rm Flat}}
\rnc\Inj[1]{#1\text{--\rm Inj}}
\nc\ov{\overline}

\nc\wt{\widetilde}
\nc\wh{\widehat}

\nc\hoco{
\begin{picture}(40,10)
\put(20,0){\makebox(0,0)[b]{\text{\rm Hocolim}}}
\put(5,-2){\vector(1,0){30}}
\end{picture}\,\,}

\nc\holim{
\begin{picture}(40,10)
\put(20,0){\makebox(0,0)[b]{\text{\rm Holim}}}
\put(35,-2){\vector(-1,0){30}}
\end{picture}}
\nc\ph{\varphi}
\nc\tstr{{\it t}--structure}
\rnc\id{\text{\rm id}}

\subsection{Some basic facts about \tstr{s}}
\label{SA1}

\lem{LA1.-1}
Let $\ct$ be a triangulated category with a \tstr, suppose we are given
in $\ct$ a morphism of triangles
\[
\xymatrix{
X\ar[r]^u \ar[d]^0 &Y\ar[r]^v \ar[d]^g &Z\ar[r]^w \ar[d]^0 &X[1]\ar[d]^{0} \\ 
X'\ar[r]^{u'} & Y'\ar[r]^{v'} &Z'\ar[r]^{w'} &X'[1]
}\]
and assume $X\in\ct^{\leq0}$ and $Z'\in\ct^{\geq0}$. Then there exists
$\theta:Z\la X'$ with $g=u'\theta v$. 
\elem

\prf
Because $gu=0$ the map $g$ must factor as $hv$ for some $h:Z\la Y'$. But then
$0=v'g=v'hv$, and $v'h$ must factor as $kw$ for some $k:X[1]\la Z'$. Since
$X[1]\in\ct^{\leq-1}$ and $Z'\in\ct^{\geq0}$ we conclude that the map $k$ must
vanish, hence $v'h=0$. Therefore $h$ must factor as $u'\theta$ for some
$\theta:Z\la X'$, and $g=hv=u'\theta v$.
\eprf

\lem{LA1.-2}
As in Lemma~\ref{LA1.-1} let $\ct$ be a triangulated category with a \tstr.
Assume we are given two triangles
\[
\xymatrix@R-20pt{
X\ar[r]^u  &Y\ar[r]^v  &Z\ar[r]^w  &X[1] \\
X'\ar[r]^{u'} & Y'\ar[r]^{v'} &Z'\ar[r]^{w'} &X'[1]
}\]
with $Y\in\ct^{\leq0}$ and $Z'\in\ct^{\geq0}$. If $\theta:Z\la X'$ is a map
such that $u'\theta v=0$ then there exists a morphism
$\s:X[1]\la X'$ with $\theta=\s w$.
\elem

\prf
We are given that $u'\theta v=0$, hence $\theta v$ must factor as
$\theta v=w'[-1]\rho$ for some $\rho:Y\la Z'[-1]$. But $Y\in\ct^{\leq0}$ and
$Z'[-1]\in\ct^{\geq1}$, hence $\rho$ must vanish. Therefore so does
$\theta v=w'[-1]\rho$, and we conclude that $\theta$ factors
as $\theta=\s w$ for some $\s:X[1]\la X'$.
\eprf

\subsection{Main results}
\label{S1}

\rmd{R1.0}
We adopt the notation first introduced in Be{\u\i}linson, Bernstein
and Deligne~\cite[1.3.9]{BeiBerDel82}. If $\ct$ is a triangulated
category and $\cx,\cz$ are full subcategories, then the full subcategory
$\cx*\cz$ has for objects all the $y\in\ct$ for which there exists a
triangle $x\la y\la z$ with $x\in\cx$ and $z\in\cz$.
\ermd

\dfn{D1.1}
Let $H:\car\la\ct$ be a triangulated functor between triagulated
categories. The pair of full subcategories $(\ca\subset\car,\cb\subset\car)$ is called a
\emph{good couple with respect to $H$} if
\be
\item
$\ca[-1]\subset\ca$ and $\cb[1]\subset\cb$.
\item
The map $\car(a,b)\la\ct(Ha,Hb)$ is an isomorphism if $a\in\ca$ and
$b\in\cb$, and is surjective if $a\in\ca$ and $b\in\cb[-1]$.
\setcounter{enumiv}{\value{enumi}}
\ee
The good couple $(\ca,\cb)$ is called \emph{excellent} if, in addition
to (i) and (ii) above, we have
\be
\setcounter{enumi}{\value{enumiv}}
\item
$\ca*\ca\subset\ca$ and $\cb*\cb\subset\cb$.
\ee
\edfn

\rmk{R1.2}
We note the easy facts
\be
\item
If $(\ca,\cb)$ is a good couple for $H$,
and $\ca'\subset\ca$ and $\cb'\subset\cb$ are full subcategories
satisfying $\ca'[-1]\subset\ca'$ and $\cb'[1]\subset\cb'$,
then  $(\ca',\cb')$ is also a
good couple for $H$. In this situation we will say that the good
couple $(\ca',\cb')$ is contained in the good couple $(\ca,\cb)$.
\item
 If $(\ca,\cb)$ is a good couple
 for $H$, then the restriction of $H$ to $\ca\cap\cb\subset\car$ is
 fully faithful.
\ee
\ermk

\lem{L1.3}
If $(\ca,\cb)$ is a good couple with respect to $H$ then so are
the couples $(\ca*\ca,\cb)$ and $(\ca,\cb*\cb)$.
\elem

\prf
It is enough to prove that $(\ca,\cb*\cb)$ is a good couple, the
statement about $(\ca*\ca,\cb)$ is obtained by applying this first
case
to the functor $H\op:\car\op\la\ct\op$ and the good couple
$(\cb\op\subset\car\op,\ca\op\subset\car\op)$.

The fact that $\ca[-1]\subset\ca$ is given, while
$(\cb*\cb)[1]=\cb[1]*\cb[1]\subset\cb*\cb$ is obvious.
Suppose $\wt b\in(\cb*\cb)[-1]$; then there exists a triangle in $b\la\wt
b\la b'\la $ with $b,b'\in\cb[-1]$. Let $a\in\ca$ be an object, then we
have a commutative diagram where the rows are exact
\[\xymatrix{
\car(a,b)\ar[d]^\beta \ar[r]&\car(a,\wt b)\ar[d]^\gamma \ar[r]&\car(a,b')\ar[d]^\delta\ar[r]
&\car(a,b[1])\ar[d]^\e \\
\ct(Ha,Hb) \ar[r] & \ct(Ha,H\wt b) \ar[r] & \ct(Ha,Hb') \ar[r] & \ct(Ha,Hb[1]) 
}\]
Since $b[1]$ belongs to $\cb$ the map
$\e$ is an isomorphism, while $\beta$ and $\delta$
are surjective. Hence $\gamma$ is surjective. 

Now suppose $\wt b\in\cb*\cb$. Then there exists a triangle
$b\la\wt
b\la b'\la $ with $b,b'\in\cb$. Let $a\in\ca$ be an object, then we
have a commutative diagram where the rows are exact 
\[\xymatrix{
\car(a,b'[-1])\ar[d]^\alpha \ar[r] &\car(a,b)\ar[d]^\beta \ar[r]&\car(a,\wt b)\ar[d]^\gamma \ar[r]&\car(a,b')\ar[d]^\delta\ar[r]
&\car(a,b[1])\ar[d]^\e \\
\ct(Ha,Hb'[-1]) \ar[r] & \ct(Ha,Hb) \ar[r] & \ct(Ha,H\wt b) \ar[r] & \ct(Ha,Hb') \ar[r] & \ct(Ha,Hb[1]) 
}\]
We know that $b,b'\in\cb$, and as $\cb[1]\subset\cb$ it follows that
also $b[1]\in\cb$. Therefore $\beta$, $\delta$ and $\e$ are
isomorphisms. Since $b[-1]\in\cb[-1]$ the map $\alpha$ is
surjective. The fine 5-lemma now tells us that $\gamma$ is an isomorphism.
\eprf

The following is now immediate

\cor{C1.5}
Every good couple $(\ca,\cb)$ is contained in an excellent couple.
In fact: the smallest excellent couple containing $(\ca,\cb)$ is
the pair $(\ca^*,\cb^*)$,
where $\cx^*$ is defined to be the union
\[
\cx^*=\bigcup_{n=1}^\infty \underbrace{\cx*\cx*\cdots*\cx}_{n \text{ times}}
\]
\ecor

\lem{L1.7}
Suppose $(\ca,\cb)$ is an excellent couple for $H$. Then the category
$\cc=H(\ca\cap\cb)$, the essential image of $\ca\cap\cb$
under $H$, satisfies $\cc*\cc\subset\cc$.
\elem

\prf
Let $c$ be an object in $\cc*\cc$. Then there exists in $\ct$ a triangle
$H(b[-1])\la c\la H(a) \stackrel g\la H(b)$ with $a,b[-1]$ both objects in
$\ca\cap\cb$.
In particular $a$ belongs to $\ca$ and $b[-1]$ belongs to $\cb$, but
as $\cb[1]\subset\cb$ we have that $b\in\cb$. Therefore
the map $\car(a,b)\la\ct(Ha,Hb)$ is an isomorphism, and hence
there exists a (unique) morphism $f:a\la b$ in $\car$ with $H(f)=g$.
Form in $\car$ the triangle $b[-1]\la \wt c\la a\stackrel f\la b$.
Then $\wt c$ belongs to $(\ca\cap\cb)*(\ca\cap\cb)\subset\ca\cap\cb$, and 
the functor  $H$ takes the triangle above to
$Hb[-1]\la H\wt c\la Ha\stackrel g\la Hb$. Hence $c\cong H\wt c$
with $\wt c\in\ca\cap\cb$.
\eprf

\cor{C22.5}
Suppose we are given a good couple $(\ca,\cb)$ and let $\cc$ be the essential
image of $\ca\cap\cb$ under the functor $H$.
Assume we are also given a subcategory
$\Dscr\subset\ct$, and suppose further that
every object in $\Dscr$ lies in $\cc^*$, where the notation $\cc^*$ is as
in Corollary~\ref{C1.5}.

Then there is an
excellent couple $(\ca',\cb')$, containing $(\ca,\cb)$, and such that
 $\Dscr$ lies in the essential image under $H$ of $\ca'\cap\cb'$.
\ecor

\prf
From Corollary~\ref{C1.5} it follows that the good couple $(\ca,\cb)$
may be included in an excellent couple $(\ca',\cb')$.
Let $\cc'\subset\ct$ be the essential image of $\ca'\cap\cb'$ under $H$;
then $\cc$ is clearly contained in $\cc'$, 
Lemma~\ref{L1.7} informs us that $\cc'*\cc'\subset\cc'$, and hence $\cc'$
contains $\cc^*$ which contains $\Dscr$.
\eprf

Now for the main result.

\thm{T1.13}
Let $H:\car\la\ct$ be a triangulated functor. Assume the
category $\car$ satisfies the axioms of 
the article \cite{Neeman91}. Suppose further that
$\ct$ has a non-degenerate
\tstr\ with heart $\ct^\heartsuit$, let $\ch:\ct\la\ct^\heartsuit$
be the
standard homological functor from $\ct$ to the heart, and
let $\ca\subset\ct^\heartsuit$ be a
full, abelian subcategory closed under extensions.
Assume $(\ca',\cb')$ is an excellent couple such that $\ca$ is contained
in the essential image of $\ca'\cap\cb'$.

Then there exists a triangulated functor $G:\ct^b_\ca\la\car$, where
$\ct^b_\ca\subset\ct$ is the full subcategory defined by
\[
\ct^b_\ca\eq\left\{t\in\ct\left|
\begin{array}{c}
\ch^i(t)=0\text{ for all but finitely many }i\in\zz\\
\ch^i(t)\in\ca\text{ for every }i\in\zz  
\end{array}
\right.
\right\}
\]
and such that the composite $HG:\ct^b_\ca\la\ct$ is naturally
isomorphic to the 
inclusion.

More precisely:
if we let $\ct^{[m,n]}_\ca=\ct^{\leq n}\cap\ct^{\geq m}\cap\ct^b_\ca$, our
construction will be such that $G(\ct^{[m,n]}_\ca)\subset\ca'[-m]\cap\cb'[-n]$.
\ethm

\prf
On the category $\ca=\ct_\ca^{[0,0]}$ we have little choice: we are looking for
an additive functor $G:\ca\la\ca'\cap\cb'$ so that the composite
$\ca\stackrel G\la \ca'\cap\cb'\hookrightarrow\car\stackrel H\la \ct$
is isomorphic
to the inclusion. But $H$ is fully faithful on $\ca'\cap\cb'$
by Remark~\ref{R1.2}(ii), and the essential image $H(\ca'\cap\cb')$ contains
$\ca$ by hypothesis. To define $G$ on $\ca$ we just choose a quasi-inverse,
and let $\ph:I\la HG$ be the natural isomorphism of $HG$ with the inclusion
functor.
Since we want $G$ and $\ph$ to be compatible with the shift this
determines $G$ and $\ph$ on $\ct_\ca^{[m,m]}=\ca[-m]$ for every integer $m$.

The strategy will be to prove, by induction on $n-m$, that the additive
functor $G$ and
the natural isomorphism $\ph$ may be
extended to $G:\ct_\ca^{[m,n]}\la\car$, compatibly with the shift. We have
proved the case $n-m=0$, and it remains to extend from $[m,n-1]$ to $[m,n]$.
By shifting we may assume $n=0$, that is for $m\leq-1$
we extend from $[m,-1]$ to
$[m,0]$.
It will be handy in the induction to note the following little fact.

\be
\item
Suppose the additive functor $G:\ct_\ca^{[m,n]}\la\ca'[-m]\cap\cb'[-n]$ has been
defined, as has the natural isomorphism $\ph:I\la HG$.
Then for $X$, $Y$ objects of $\ct_\ca^{[m,n]}$ we have that any morphism 
$\beta:HG(X)\la HG(Y)$ is equal to $HG(b)$ for some morphism $b:X\la
Y$.
If, for some integer $i$ with $m\leq i\leq n$, we have that $X$
belongs to $\ct_\ca^{[i,n]}$ and $Y$ belongs to $\ct_\ca^{[m,i]}$,
 then any morphism $\gamma:G(X)\la G(Y)$ is equal to
$G(g)$ for some $g:X\la Y$.
\setcounter{enumiv}{\value{enumi}}
\ee

\medskip

\nin
\emph{Proof of }(i).\ \ 
Because $\ph:I\la HG$ is a natural transformation we have, for any morphism
$f:X\la Y$ in $\ct_\ca^{[m,n]}$, the commutative square
\[\xymatrix@C+20pt{
  X \ar[r]^-{f} \ar[d]_{\ph_X^{}}& Y\ar[d]^{\ph_Y^{}} \\
  HG(X) \ar[r]^-{HG(f)} & HG(Y)
}\]
Applying this to the morphism $f=\ph_X^{}:X\la HG(X)$ we deduce the commutativity
of
\[\xymatrix@C+20pt{
  X \ar[r]^-{\ph_X^{}} \ar[d]_{\ph_X^{}}& HG(X)\ar[d]^{\ph_{HG(X)}^{}} \\
  HG(X) \ar[r]^-{HG(\ph_X^{})} & HGHG(X)
}\]
In other words the two composites
\[\xymatrix@C+20pt{
  X \ar[r]^-{\ph_X^{}}  & HG(X)\ar@<0.5ex>[r]^{\ph_{HG(X)}^{}} \ar@<-0.5ex>[r]_{HG(\ph_{X}^{})} & HGHG(X)
}\]
are equal. Since $\ph_X^{}:X\la HG(X)$ is an isomorphism we deduce
that $\ph_{HG(X)}^{}=HG(\ph_{X}^{})$ are equal maps $HG(X)\la HGHG(X)$.

In view of the above the commutative square
\[\xymatrix@C+20pt{
  HG(X) \ar[r]^-{\beta} \ar[d]_{\ph_{HG(X)}^{}}& HG(Y)\ar[d]^{\ph_{HG(Y)}^{}} \\
  HGHG(X) \ar[r]^-{HG(\beta)} & HGHG(Y)
}\]
may be rewritten as
\[\xymatrix@C+20pt{
  HG(X) \ar[r]^-{\beta} \ar[d]_{HG(\ph_X^{})}& HG(Y)\ar[d]^{HG(\ph_{Y}^{})} \\
  HGHG(X) \ar[r]^-{HG(\beta)} & HGHG(Y)
}\]
in other words if $b= \ph_Y^{-1}\beta\ph_X^{}$ then $\beta=HG(b)$.

Now suppose we are given a map $\gamma:G(X)\la G(Y)$.
Applying the previous assertion to $H(\gamma):HG(X)\la HG(Y)$ we learn
that
there is a map $g:X\la Y$ with $H(\gamma)=HG(g)$. Hence $H$ takes
the map $\gamma-G(g)$ to zero. But $\gamma-G(g)$ is a morphism
$G(X)\la G(Y)$, and as $X\in\ct_\ca^{[i,n]}$ we have $G(X)\in\ca'[-i]$
while $Y\in\ct_\ca^{[m,i]}$ implies that $G(Y)\in\cb'[-i]$. The
fact that $H$ annihilates $\gamma-G(g)$ therefore means $\gamma-G(g)=0$.
 \hfill{$\Box$}

\medskip

The preparatory result being proved, it's time to extend $G$ and $\ph$
from
$\ct_\ca^{[m,-1]}$ to $\ct_\ca^{[m,0]}$.
Let us begin with objects: assume $Y$ is an object in $\ct_\ca^{[m,0]}$. The
\tstr\ gives us a triangle
$Y^{\leq-1}\stackrel u\la Y\stackrel v\la Y^{\geq0}\stackrel w\la Y^{\leq-1}[1]$ in $\ct_\ca^b$,
with $Y^{\leq-1}\in\ct_\ca^{[m,-1]}$ and $Y^{\geq0}\in\ct_\ca^{[0,0]}$. By induction
we have already defined $G(Y^{\leq-1})\in\ca'[-m]\cap\cb'[1]$ and
$G(Y^{\geq0})\in\ca'\cap\cb'$.
The triangle gives a map $w:Y^{\geq0}\la Y^{\leq-1}[1]$, and induction gives
isomorphisms
$\ph_{Y^{\geq0}}^{}:Y^{\geq0}\cong HG(Y^{\geq0})$ and $\ph_{Y^{\leq-1}}^{}:Y^{\leq-1}\cong HG(Y^{\leq-1})$.
This permits us to form the commutative square
\[
\xymatrix@C+30pt{
Y^{\geq0}\ar[r]^-w \ar[d]_{\ph_{Y^{\geq0}}^{}} &Y^{\leq-1}[1]\ar[d]^{\ph_{Y^{\leq-1}}^{}[1]} \\ 
HG(Y^{\geq0})\ar[r]^-{\wh w} &HG(Y^{\leq-1})[1]
}\]
In other words we define $\wh w$ to be the composite making the square
commute.
On the other hand
$G(Y^{\geq0})\in\ca'$ and
$G(Y^{\leq-1})[1]\in\cb'[2]\subset\cb'$, and this implies that the map
\[\CD
\car\Big(G(Y^{\geq0}),G(Y^{\leq-1})[1]\Big)@>>>\ct\Big(HG(Y^{\geq0}),HG(Y^{\leq-1})[1]\Big)
\endCD\]
is an isomorphism. There is a unique
$\wt w:G(Y^{\geq0})\la G(Y^{\leq-1})[1]$ with $H(\wt w)=\wh w$.

Let $\cs$ be the category of triangles in $\car$
in sense of \cite[Axiom~3.4]{Neeman91}. By
\cite[Axiom~3.4(GTR4) and (GTR6)]{Neeman91}
we may choose an object $S\in\cs$
so that $F(S)$ is a candidate triangle where the third morphism is
$\wt w:G(Y^{\geq0})\la G(Y^{\leq-1})[1]$. Choose and fix such an $S=S(Y)$
for every object $Y\in\ct^{[m,0]}$, and declare $F(S(Y))$ to be
$G(Y^{\leq-1})\stackrel{\wt u}\la G(Y)\stackrel{\wt v}\la G(Y^{\geq0})\stackrel {\wt w}\la G(Y^{\leq-1})[1]$.
In other words we define $G(Y)$ to be the third edge of a triangle
on $\wt w$; but for the sake of future definitions we keep track,
in the enriched category of triangles, of the entire triangle $S(Y)$ defining
$G(Y)$.
Our first observation is that, since
$G(Y^{\leq-1})\in\ca'[-m]\cap\cb'[1]\subset\ca'[-m]\cap\cb'$ and
$G(Y^{\geq0})\in\ca'\cap\cb'\subset\ca'[-m]\cap\cb'$, we have
that $G(Y)$ belongs to
$(\ca'[-m]\cap\cb')*(\ca'[-m]\cap\cb')\subset \ca'[-m]\cap\cb'$.

We are also given, in the category $\ct_\ca^b$, a commutative diagram
where the rows are triangles
\[
\xymatrix@C+20pt{
Y^{\leq-1}\ar[r]^u  &Y\ar[r]^v  &Y^{\geq0}\ar[r]^-w \ar[d]^{\ph_{Y^{\geq0}}^{}} &Y^{\leq-1}[1]\ar[d]^{\ph_{Y^{\leq-1}}^{}[1]} \\ 
HG(Y^{\leq-1})\ar[r]^{H(\wt u)} & HG(Y)\ar[r]^{H(\wt v)} &HG(Y^{\geq0})\ar[r]^-{H(\wt w)} &HG(Y^{\leq-1})[1]
}\]
which may be extended, in the category $\ct_\ca^b$, to a morphism of triangles
\[
\xymatrix@C+20pt{
Y^{\leq-1}\ar[r]^u \ar[d]^{\ph_{Y^{\leq-1}}^{}} &Y\ar[r]^v \ar[d]^{\ph_Y^{}} &Y^{\geq0}\ar[r]^-w \ar[d]^{\ph_{Y^{\geq0}}^{}} &Y^{\leq-1}[1]\ar[d]^{\ph_{Y^{\leq-1}}^{}[1]} \\ 
HG(Y^{\leq-1})\ar[r]^{H(\wt u)} & HG(Y)\ar[r]^{H(\wt v)} &HG(Y^{\geq0})\ar[r]^-{H(\wt w)} &HG(Y^{\leq-1})[1]
}\]
Fix such a $\ph_Y^{}$. Since $\ph_{Y^{\leq-1}}^{}$ and $\ph_{Y^{\geq0}}^{}$ are
both isomorphisms so is $\ph_Y^{}$. For every object $Y\in\ct^{[m,0]}$
we have defined the object
$G(Y)$ and the isomorphism $\ph_Y^{}:Y\la HG(Y)$. It remains to define the
functor $G$ on morphisms. As we will see below we are done making choices,
the rest of the construction will be forced on us.

One note: 
If $Y\in\ct_\ca^{[m,-1]}\subset\ct_\ca^{[m,0]}$ then our choice of triangle
$Y^{\leq-1}\stackrel u\la Y\stackrel v\la Y^{\geq0}\stackrel w\la
Y^{\leq-1}[1]$ in $\ct_\ca^b$
will be 
$Y\stackrel \id\la Y\la 0\la
Y[1]$, and $S(Y)$ will be the
unique object in $\cs$ with $F(S(Y))$ being $G(Y)\stackrel\id\la G(Y) \la
0\la G(Y)[1]$. If $Y\in\ca=\ct_\ca^{[0,0]}\subset \ct_\ca^{[m,0]}$ then
our choice 
 of triangle
$Y^{\leq-1}\stackrel u\la Y\stackrel v\la Y^{\geq0}\stackrel w\la
Y^{\leq-1}[1]$ in $\ct_\ca^b$ will be 
$0\la Y\stackrel \id\la Y\la 0$, and 
$S(Y)$ is
the unique object in $\cs$ with $F(S(Y))$ being $0\la G(Y)\stackrel\id\la
G(Y)\la 0$. 
For the sake
of compatibility with earlier constructions, we also make sure that on the
category $\ct_\ca^{[m+1,0]}$ our choices are the same as they were when we were
dealing with extending from intervals of length $-m-2$ to
intervals of length $-m-1$. 

Suppose next that we are given in $\ct_\ca^{[m,0]}$ a morphism $g:Y\la Z$.
The construction above gives, in the category $\cs$, the
enriched triangles $S(Y)$ and $S(Z)$, with $F(S(Y))$ and $F(S(Z))$
being
\[
\xymatrix@R-20pt{
  G(Y^{\leq-1})\ar[r]^-{\wt u} &  G(Y)\ar[r]^-{\wt v} &G(Y^{\geq0})\ar[r]^-{\wt w} &G(Y^{\leq-1})[1] \\
  G(Z^{\leq-1})\ar[r]^-{\wt u'} &  G(Z)\ar[r]^-{\wt v'} &G(Z^{\geq0})\ar[r]^-{\wt w'} &G(Z^{\leq-1})[1]
}\]
Induction gives us the vertical maps in the square below
\[
\xymatrix@C+10pt{
  G(Y^{\geq0})\ar[r]^-{\wt w}\ar[d]_{G(g^{\geq0})}\ar@{}[dr]|-{\spadesuit} &G(Y^{\leq-1})[1]\ar[d]^{G(g^{\leq-1})[1]} \\
  G(Z^{\geq0})\ar[r]^-{\wt w'} &G(Z^{\leq-1})[1]
}\]
and we would like to show that the square $\spadesuit$ commutes.
But $G(Y^{\geq0})$ belongs
to $\ca'$ while $G(Z^{\leq-1})[1])$ belongs to
$\cb'[2]\subset\cb'$, hence it suffices to show that
the two composites become equal after applying the functor $H$. But
the squares
\[
\xymatrix@C+20pt{
  Y^{\geq0}\ar[r]^-{\ph_{Y^{\geq0}}^{}}\ar[d]^{g^{\geq0}} & HG(Y^{\geq0})\ar[d]^{HG(g^{\geq0})} &
  Y^{\leq-1}[1]\ar[r]^-{\ph_{Y^{\leq-1}}^{}[1]}\ar[d]^{g^{\leq-1}[1]} & HG(Y^{\leq-1})[1]\ar[d]^{HG(g^{\leq-1})[1]} \\
Z^{\geq0}\ar[r]^-{\ph_{Z^{\geq0}}^{}} &  HG(Z^{\geq0}) &Z^{\leq-1}[1]\ar[r]^-{\ph_{Z^{\leq-1}}^{}[1]} & HG(Z^{\leq-1})[1]
}\]
commute by induction, more precisely
by the naturality of $\ph$ on objects of length $<-m$.
And the squares 
\[
\xymatrix@C+10pt{
  Y^{\geq0}\ar[r]^-w \ar[d]^{\ph_{Y^{\geq0}}^{}} &Y^{\leq-1}[1]\ar[d]^{\ph_{Y^{\leq-1}}^{}[1]} &
  Z^{\geq0}\ar[r]^-{w'} \ar[d]^{\ph_{Z^{\geq0}}^{}} &Z^{\leq-1}[1]\ar[d]^{\ph_{Z^{\leq-1}}^{}[1]} \\ 
HG(Y^{\geq0})\ar[r]^-{H(\wt w)} &HG(Y^{\leq-1})[1] &
HG(Z^{\geq0})\ar[r]^-{H(\wt w')} &HG(Z^{\leq-1})[1]
}\]
commute by the construction of the maps $\wt w:G(Y^{\geq0})\la G(Y^{\leq-1})[1]$
and $\wt w':G(Z^{\geq0})\la G(Z^{\leq-1})[1]$ above. We deduce that $H(\spadesuit)$
is isomorphic to the obviously commutative square
\[
\xymatrix@C+10pt{
  Y^{\geq0}\ar[r]^-{w}\ar[d]_{g^{\geq0}} &Y^{\leq-1}[1]\ar[d]^{g^{\leq-1}[1]} \\
  Z^{\geq0}\ar[r]^-{w'} &Z^{\leq-1}[1]
}\]
Hence the square $\spadesuit$ does commute. Next we will prove
\be
\setcounter{enumi}{\value{enumiv}}
 \item
There is a unique morphism $\wt k:S(Y)\la S(Z)$, in the category $\cs$,
  so that $F(\wt k)$ is a morphism of candidate triangles
  in $\car$
  \[
\xymatrix@C+20pt{
  G(Y^{\leq-1})\ar[r]^-{\wt u}\ar[d]^{G(g^{\leq-1})} &  G(Y)\ar[r]^-{\wt v}\ar[d]^{k} &G(Y^{\geq0})\ar[d]_{G(g^{\geq0})}\ar[r]^-{\wt w}\ar@{}[dr]|{\spadesuit} &G(Y^{\leq-1})[1]\ar[d]^{G(g^{\leq-1})[1]} \\
  G(Z^{\leq-1})\ar[r]^-{\wt u'} &  G(Z)\ar[r]^-{\wt v'} &G(Z^{\geq0})\ar[r]^-{\wt w'} &G(Z^{\leq-1})[1]
}\]
and the square
  \[\xymatrix@C+20pt{
    Y\ar[r]^-g\ar[d]_{\ph_Y^{}} & Z\ar[d]^{\ph_Z^{}} \\
    HG(Y)\ar[r]^-{H(k)} & HG(Z)
  }\]
commutes in $\ct_\ca^b$.
\setcounter{enumiv}{\value{enumi}}
\ee

\medskip

\nin
\emph{Proof of\ }(ii).\ \
We begin with the proof of existence. 
From 
\cite[Axiom~3.4(GTR5)
  and (GTR6)]{Neeman91} we may extend the commutative square
$\spadesuit$ to a morphism of
triangles. That is
  there exists
in the category $\cs$ a morphism $\wt h:S(Y)\la SZ)$ so that
$F(\wt h)$ is a map
\[
\xymatrix{
  G(Y^{\leq-1})\ar[r]^-{\wt u}\ar[d]^{G(g^{\leq-1})} &  G(Y)\ar[r]^-{\wt v}\ar[d]^{h} &G(Y^{\geq0})\ar[d]^{G(g^{\geq0})}\ar[r]^-{\wt w} &G(Y^{\leq-1})[1]\ar[d]^{G(g^{\leq-1})[1]} \\
  G(Z^{\leq-1})\ar[r]^-{\wt u'} &  G(Z)\ar[r]^-{\wt v'} &G(Z^{\geq0})\ar[r]^-{\wt w'} &G(Z^{\leq-1})[1]
}\]
Applying the functor $H$ we obtain in $\ct_\ca^b$ the morphism of triangles
\[\xymatrix@C+20pt{
  HG(Y^{\leq-1})\ar[r]^-{H(\wt u)}\ar[d]^{HG(g^{\leq-1})} &  HG(Y)\ar[r]^-{H(\wt v)}\ar[d]^{H(h)} &HG(Y^{\geq0})\ar[d]^{HG(g^{\geq0})}\ar[r]^-{H(\wt w)} &HG(Y^{\leq-1})[1]\ar[d]^{HG(g^{\leq-1})[1]} \\
  HG(Z^{\leq-1})\ar[r]^-{H(\wt u')} &  HG(Z)\ar[r]^-{H(\wt v')} &HG(Z^{\geq0})\ar[r]^-{H(\wt w')} &HG(Z^{\leq-1})[1]
}\]
There is no reason for this morphism of triangles to agree with the composite
\[
\xymatrix@C+20pt{
  HG(Y^{\leq-1})\ar[r]^-{H(\wt u)} &  HG(Y)\ar[r]^-{H(\wt v)} 
  &HG(Y^{\geq0})\ar[r]^-{H(\wt w)} &HG(Y^{\leq-1})[1] \\
  Y^{\leq-1}\ar[r]^-u  \ar[u]_{\ph_{Y^{\leq-1}}^{}} \ar[d]^{g^{\leq-1}}
  &Y\ar[r]^-v \ar[d]^-{g} \ar[u]_{\ph_Y^{}}
  &Y^{\geq0}\ar[r]^-w \ar[d]^{g^{\geq0}}\ar[u]_{\ph_{Y^{\geq0}}^{}}
  &Y^{\leq-1}[1]\ar[d]^{g^{\leq-1}}[1]\ar[u]_{\ph_{Y^{\leq-1}}^{}[1]} \\ 
  Z^{\leq-1}\ar[r]^-{u'}  \ar[d]^{\ph_{Z^{\leq-1}}^{}}
  &Z\ar[r]^-{v'}  \ar[d]^{\ph_Z^{}} 
  &Z^{\geq0}\ar[r]^-{w'}\ar[d]^{\ph_{Z^{\geq0}}^{}}
  &Z^{\leq-1}[1]\ar[d]^{\ph_{Z^{\leq-1}}^{}}\\
    HG(Z^{\leq-1})\ar[r]^-{H(\wt u')} &  HG(Z)\ar[r]^-{H(\wt v')} &HG(Z^{\geq0})\ar[r]^-{H(\wt w')} &HG(Z^{\leq-1})[1]
}\]
but the difference is a morphism of triangles
\[\xymatrix@C+20pt{
  HG(Y^{\leq-1})\ar[r]^-{H(\wt u)}\ar[d]^{0} &
  HG(Y)\ar[r]^-{H(\wt v)}\ar[d]^{H(h)-\ph_Z^{}g\ph_Y^{-1}} &HG(Y^{\geq0})\ar[d]^{0}\ar[r]^-{H(\wt w)}
  &HG(Y^{\leq-1})[1]\ar[d]^{0} \\
  HG(Z^{\leq-1})\ar[r]^-{H(\wt u')} &  HG(Z)\ar[r]^-{H(\wt v')} &HG(Z^{\geq0})\ar[r]^-{H(\wt w')} &HG(Z^{\leq-1})[1]
}\]
Since $HG(Y^{\leq-1})\cong Y^{\leq-1}$ belongs to $\ct^{\leq-1}$ and
$HG(Z^{\geq0})\cong Z^{\geq0}$ belongs to $\ct^{\geq0}$ Lemma~\ref{LA1.-1} applies,
and tells us that there exists a morphism $\theta:HG(Y^{\geq0})\la HG(Z^{\leq-1})$
with $H(h)-\ph_Z^{}g\ph_Y^{-1}=H(\wt u')\theta H(\wt v)$. But now
$G(Y^{\geq0})$ belongs to $\ca'$ and $G(Z^{\leq-1})$ belongs
to $\cb'[1]\subset\cb'$, and hence the map
$\theta:HG(Y^{\geq0})\la HG(Z^{\leq-1})$ can be expressed (uniquely) as
$H(\rho)$ for a morphism $\rho:G(Y^{\leq0})\la G(Z^{\leq-1})$. By
\cite[Axiom~3.4(GTR2) and (GTR6)]{Neeman91} the morphisms of triangles 
$\wt h:S(Y)\la S(Z)$ in $\cs$, whose images under
the functor $F$ are of the form
\[
\xymatrix{
  G(Y^{\leq-1})\ar[r]^-{\wt u}\ar[d]^{G(g^{\leq-1})} &  G(Y)\ar[r]^-{\wt v}\ar[d]^{h} &G(Y^{\geq0})\ar[d]^{G(g^{\geq0})}\ar[r]^-{\wt w} &G(Y^{\leq-1})[1]\ar[d]^{G(g^{\leq-1})[1]} \\
  G(Z^{\leq-1})\ar[r]^-{\wt u'} &  G(Z)\ar[r]^-{\wt v'} &G(Z^{\geq0})\ar[r]^-{\wt w'} &G(Z^{\leq-1})[1]
}\]
are acted on transitively by the group
$\wt u'\circ\Hom\big(G(Y^{\geq0}),G(Z^{\leq-1})\big)\circ\wt v$.
Hence we may choose a morphism
of triangles $\wt k:S(Y)\la S(Z)$ whose image under $F$ is
\[
\xymatrix{
  G(Y^{\leq-1})\ar[r]^-{\wt u}\ar[d]^{G(g^{\leq-1})} &  G(Y)\ar[r]^-{\wt v}\ar[d]^{k=h-\wt u'\rho\wt v} &G(Y^{\geq0})\ar[d]^{G(g^{\geq0})}\ar[r]^-{\wt w} &G(Y^{\leq-1})[1]\ar[d]^{G(g^{\leq-1})[1]} \\
  G(Z^{\leq-1})\ar[r]^-{\wt u'} &  G(Z)\ar[r]^-{\wt v'} &G(Z^{\geq0})\ar[r]^-{\wt w'} &G(Z^{\leq-1})[1]
}\]
But
$H(k) =H(h-\wt u'\rho\wt v)=H(h)-H(\wt u')\theta H(\wt v)=\ph_Z^{}g\ph_Y^{-1}$.
Hence the diagram
  \[\xymatrix@C+20pt{
    Y\ar[r]^-g\ar[d]_{\ph_Y^{}} & Z\ar[d]^{\ph_Z^{}} \\
    HG(Y)\ar[r]^-{H(k)} & HG(Z)
  }\]
commutes, proving the existence.

Next we need to show the uniqueness.
Suppose $\wt h,\wt h'$ are two morphisms $S(Y)\la S(Z)$
as in (i): then $F$ takes $\wt h-\wt h'$ to
the morphism of triangles
\[
\xymatrix{
  G(Y^{\leq-1})\ar[r]^-{\wt u}\ar[d]^{0} &  G(Y)\ar[r]^-{\wt v}\ar[d]^{h-h'} &G(Y^{\geq0})\ar[d]^{0}\ar[r]^-{\wt w} &G(Y^{\leq-1})[1]\ar[d]^{0} \\
  G(Z^{\leq-1})\ar[r]^-{\wt u'} &  G(Z)\ar[r]^-{\wt v'} &G(Z^{\geq0})\ar[r]^-{\wt w'} &G(Z^{\leq-1})[1]
}\]
and \cite[Axioms~3.4 (GRT2) and (GTR6)]{Neeman91} guarantee that there exists
a $\rho:G(Y^{\geq0})\la G(Z^{\leq-1})$ with $h-h'=\wt u'\rho\wt v$. Since
$H(h)$ and $H(h')$ are both equal to
$\ph_Z^{}g\ph_Y^{-1}$ we have
$0=H(h-h')=H(\wt u')H(\rho)H(\wt v)$. But now the triangles in $\ct$
\[
\xymatrix@R-20pt{
  HG(Y^{\leq-1})\ar[r]^-{H(\wt u)} &  HG(Y)\ar[r]^-{H(\wt v)}
  &HG(Y^{\geq0})\ar[r]^-{H(\wt w)} &HG(Y^{\leq-1})[1] \\
  HG(Z^{\leq-1})\ar[r]^-{H(\wt u')} &  HG(Z)\ar[r]^-{H(\wt v')} &HG(Z^{\geq0})\ar[r]^-{H(\wt w')} &HG(Z^{\leq-1})[1]
}\]
are such that $HG(Y)\cong Y$ lies
in $\ct^{\leq0}$ and $HG(Z^{\geq0})\cong Z^{\geq0}$ belongs to $\ct^{\geq0}$.
From Lemma~\ref{LA1.-2} we learn that $H(\rho)$ must factor as
$\s H(\wt w)$, for some $\s:HG(Y^{\leq-1})[1]\la HG(Z^{\leq-1})$.
Let $\gamma:Z^{\leq-2}\la Z^{\leq-1}$ be the canonical \tstr\ map. It is a
morphism in $\ct_\ca^{[m,-1]}$, on which $G$ and the isomorphism $\ph:I\la
HG$ are already defined---hence $\gamma$ is isomorphic to
$HG(\gamma):HG(Z^{\leq-2})
\la HG(Z^{\leq-1})$. In particular $HG(\gamma)$ identifies as the map
from the \tstr\ truncation.
Therefore $\s:HG(Y^{\leq-1})[1]\la HG(Z^{\leq-1})$, which is a map
from $HG(Y^{\leq-1})[1]\cong Y^{\leq-1}[1]\in\ct^{\leq-2}$ to the
object $HG(Z^{\leq-1})$, must factor (uniquely) as
\[
\CD
HG(Y^{\leq-1})[1] @>\beta>> HG(Z^{\leq-2})
@>HG(\gamma)>>  HG(Z^{\leq-1})
\endCD
\]
The objects $Y^{\leq-1}[1]$ and $Z^{\leq-2}$ both belong to
$\ct_\ca^{[m-1,-2]}$, on which $G$ and the isomorphism $I\la HG$ are
already
defined. By (i) above there exists a morphism $b:Y^{\leq-1}[1]\la
Z^{\leq-2}$
with $\beta=HG(b)$. Define $\theta:G(Y^{\leq-1}[1])\la G(Z^{\leq-1})$
to be the composite
\[
\CD
G(Y^{\leq-1})[1] @>G(b)>> G(Z^{\leq-2})
@>G(\gamma)>>  G(Z^{\leq-1})
\endCD
\]
then $H(\theta)=\s$. Hence
$\rho-\theta\wt w$ satisfies the identity
$H(\rho-\theta\wt w)=H(\rho)-H(\theta)H(\wt w)=H(\rho)-\s H(\wt w)=0$.

But $\rho-\theta\wt w$ is a morphism $G(Y^{\geq0})\la G(Z^{\leq-1})$, with
$G(Y^{\geq0})\in\ca'$ and
$G(Z^{\leq-1})\in\cb'[1]\subset\cb'$. The fact that
$H(\rho-\theta\wt w)=0$ implies that $\rho-\theta\wt w=0$. It follows
that $h-h'=\wt u'\rho\wt v=\wt u'\theta\wt w\wt v=0$, where the
vanishing is because the composite $\wt w\wt v$ vanishes.\hfill{$\Box$}

\medskip

This completes the proof of (ii), and allows us to make the key definition
\be
\setcounter{enumi}{\value{enumiv}}
\item
  If $g:Y\la Z$ is a morphism in $\ct_\ca^{[m,0]}$, let $\wt g:S(Y)\la S(Z)$ be
  the unique morphism satisfying the conditions of (ii). We define
  $G(g):G(Y)\la G(Z)$ by letting $F(\wt g)$ be the morphism
  \[
\xymatrix@C+20pt{
  G(Y^{\leq-1})\ar[r]^-{\wt u}\ar[d]^{G(g^{\leq-1})} &  G(Y)\ar[r]^-{\wt v}\ar[d]^{G(g)} &G(Y^{\geq0})\ar[d]^{G(g^{\geq0})}\ar[r]^-{\wt w} &G(Y^{\leq-1})[1]\ar[d]^{G(g^{\leq-1})[1]} \\
  G(Z^{\leq-1})\ar[r]^-{\wt u'} &  G(Z)\ar[r]^-{\wt v'} &G(Z^{\geq0})\ar[r]^-{\wt w'} &G(Z^{\leq-1})[1]
}\]
The reader will note (ii) guarantees the commutativity of the square
 \[\xymatrix@C+20pt{
    Y\ar[r]^-g\ar[d]_{\ph_Y^{}} & Z\ar[d]^{\ph_Z^{}} \\
    HG(Y)\ar[r]^-{HG(g)} & HG(Z)
 }\]
which is precisely what we need to prove the naturality of $\ph$.
\setcounter{enumiv}{\value{enumi}}
\ee
The fact that $G(\id)=\id$, $G(g+h)=G(g)+G(h)$ and $G(hg)=G(h)G(g)$
are all immediate from the uniqueness proved in (ii).
We have extended the additive functor $G$
and the natural transformation $\ph:I\la HG$
from $\ct_\ca^{[m,-1]}$ to
$\ct_\ca^{[m,0]}$. It remains to show that
the functor $G$, which by induction has been extended to all of $\ct_\ca^b$,
is a triangulated functor. We need to prove that $G$ takes triangles to
triangles. The next little fact will help.

\be
\setcounter{enumi}{\value{enumiv}}
\item
  Suppose $X\stackrel u\la Y\stackrel v\la Z\stackrel w\la X[1]$ is
  a triangle in $\ct_\ca^b$. If we can exhibit in $\car$ some triangle of
  the form
  $G(X)\stackrel{G(u)}\la G(Y)\stackrel{G(v')}\la G(Z')\stackrel{G(w')}\la G(X)[1]$, then the sequence
  $G(X)\stackrel{G(u)}\la G(Y)\stackrel{G(v)}\la G(Z)\stackrel{G(w)}\la G(X)[1]$
  is also a triangle in $\car$.
\setcounter{enumiv}{\value{enumi}}
\ee

\medskip

\nin
\emph{Proof of }(iv).\ \ 
Consider the diagram
\[
\xymatrix@C+20pt{
  X\ar[r]^-{u}\ar[d]^{\ph_X^{}} & Y\ar[r]^-{v'}\ar[d]^{\ph_Y^{}} & Z'\ar[d]^{\ph_Z^{}}\ar[r]^-{w'} & X[1]\ar[d]^{\ph_X^{}[1]}\\
  HG(X)\ar[r]^-{HG(u)} & HG(Y)\ar[r]^-{HG(v')} &HG(Z') \ar[r]^-{HG(w')} & HG(X)[1]
}\]
which commutes by the naturality of $\ph$. The bottom row is obtained
by applying the triangulated functor $H$ to the triangle
$G(X)\stackrel{G(u)}\la G(Y)\stackrel{G(v')}\la G(Z')\stackrel{G(w')}\la G(X)[1]$
in the category $\car$. Hence the bottom row is a triangle. Since $\ph$
is an isomorphism the top row is isomorphic to the bottom row,
hence a triangle in $\ct_\ca^b$.
But in the diagram 
\[
\xymatrix@C+20pt{
  X\ar[r]^-{u}\ar@{=}[d] & Y\ar[r]^-{v'}\ar@{=}[d] & Z'\ar[r]^-{w'} & X[1]\\
X\ar[r]^-{u} & Y\ar[r]^-{v} & Z\ar[r]^-{w} & X[1]
}\]
the rows are triangles, hence we may extend to an isomorphism 
\[
\xymatrix@C+20pt{
  X\ar[r]^-{u}\ar@{=}[d] & Y\ar[r]^-{v'}\ar@{=}[d] & Z'\ar[d]^{\psi}\ar[r]^-{w'} & X[1]\ar@{=}[d]\\
 X\ar[r]^-{u} & Y\ar[r]^-{v} & Z\ar[r]^-{w} & X[1]
}\]
Applying the functor $G$ we have in $\car$ an isomorphism of the top and bottom rows
\[
\xymatrix@C+20pt{
  G(X)\ar[r]^-{G(u)}\ar@{=}[d] & G(Y)\ar[r]^-{G(v')}\ar@{=}[d] & G(Z')\ar[d]^{G(\psi)}\ar[r]^-{G(w)'} & G(X)[1]\ar@{=}[d]\\
 G(X)\ar[r]^-{G(u)} & G(Y)\ar[r]^-{G(v)} & G(Z)\ar[r]^-{G(w)} & G(X)[1]
}\]
Since the top row is a triangle so is the bottom.\hfill{$\Box$}

\medskip

The preliminaries are now out of the way, we have to prove
that $G$ takes triangles to triangles.
Let us begin with two easy special cases.

\be
\setcounter{enumi}{\value{enumiv}}
\item
  Let $Y$ be an object in $\ct^{\leq0}\cap\ct^b_\ca$ and consider the triangle
  $Y^{\leq-1}\stackrel u\la Y\stackrel v\la Y^{\geq0}\stackrel w\la Y^{\leq-1}[1]$.
  Then $G$ takes it to a triangle.
\setcounter{enumiv}{\value{enumi}}
\ee

\medskip

\nin
\emph{Proof of }(v).\ \ 
Choose an $m\leq0$ with $Y\in\ct_\ca^{[m,0]}$; then $Y^{\leq-1}$ and $Y^{\geq0}$ also
belong to $\ct_\ca^{[m,0]}$, hence we can work inside $\ct_\ca^{[m,0]}$ to compute
what $G$ does. Recalling
the construction that allowed us to extend
from $\ct_\ca^{[m,-1]}$ to $\ct_\ca^{[m,0]}$,
the triangles $S(Y^{\leq-1})$, $S(Y)$ and $S(Y^{\geq0})$ are such
that $F(S(Y^{\leq-1}))$, $F(S(Y))$ and $F(S(Y^{\geq0}))$ are the rows
in the diagram below
\[\xymatrix@C+10pt{
  G(Y^{\leq-1})\ar[r]^-\id & G(Y^{\leq-1})\ar[r] & 0 \ar[r]\ar[d] & G(Y^{\leq-1})[1]\ar[d]^\id\\
  G(Y^{\leq-1})\ar[r]^{\wt u} & G(Y)\ar[r]^{\wt v} & G(Y^{\geq0})\ar[d]^\id \ar[r]^{\wt w} & G(Y^{\leq-1})[1]\ar[d]\\
  0 \ar[r] & G(Y^{\geq0}) \ar[r]^-\id &G(Y^{\geq0}) \ar[r] & 0
}\]
and there is only one way to extend to morphisms of triangles. Thus
the complicated definition of (iii) specializes to
\[\xymatrix@C+10pt{
  G(Y^{\leq-1})\ar[r]^-\id\ar[d]^\id & G(Y^{\leq-1})\ar[r]\ar[d]^{G(u)=\wt u} & 0 \ar[r]\ar[d] & G(Y^{\leq-1})[1]\ar[d]^\id\\
  G(Y^{\leq-1})\ar[r]^{\wt u}\ar[d] & G(Y)\ar[r]^{\wt v}\ar[d]^{G(v)=\wt v} & G(Y^{\geq0})\ar[d]^\id \ar[r]^{\wt w} & G(Y^{\leq-1})[1]\ar[d]\\
  0 \ar[r] & G(Y^{\geq0}) \ar[r]^-\id &G(Y^{\geq0}) \ar[r] & 0
}\]
The morphisms $\wt w$ and $G(w)$ have the property
that $H(\wt w)=\ph_{Y^{\leq-1}}^{}[1]w\ph_{Y^{\geq0}}^{-1}=HG(w)$. Thus
$\wt w, G(w)$ are morphisms from $G(Y^{\geq0})\in\ca'$ to
$G(Y^{\leq-1})[1]\in\cb'$, whose images under $H$ agree---we
conclude that $G(w)=\wt w$. Therefore
$G$ takes the triangle 
$Y^{\leq-1}\stackrel u\la Y\stackrel v\la Y^{\geq0}\stackrel w\la Y^{\leq-1}[1]$
to $G(Y^{\leq-1})\stackrel{\wt u}\la G(Y)\stackrel{\wt v}\la G(Y^{\geq0})\stackrel {\wt w}\la G(Y^{\leq-1})[1]$, which is a triangle by construction.
\hfill{$\Box$}

\be
\setcounter{enumi}{\value{enumiv}}
\item
  Suppose we are given in $\ct_\ca^b$ a triangle
  $A\stackrel u\la B\stackrel v\la C\stackrel w\la A[1]$ with
  $A,B,C\in\ca=\ct_\ca^{[0,0]}$. Then $G$ takes it to a triangle.
\setcounter{enumiv}{\value{enumi}}
\ee

\medskip

\nin
\emph{Proof of }(vi).\ \ 
Complete $G(w):G(C)\la G(A)[1]$ to a triangle
$G(A)\stackrel{u'}\la Y\stackrel{v'}\la G(C)\stackrel{G(w)}\la G(A)[1]$.
Then $Y$ belongs to $(\ca'\cap\cb')*(\ca'\cap\cb')\subset\ca'\cap\cb'$. Now
consider the diagram
\[
\xymatrix{
  HG(A) \ar[r]^-{HG(u)} & HG(B)\ar[r]^-{HG(v)} &
  HG(C)\ar@{=}[d]\ar[r]^-{HG(w)} & HG(A)[1]\ar@{=}[d] \\
  HG(A)\ar[r]^-{H(u')} & H(Y) \ar[r]^-{H(v')} & HG(C)\ar[r]^-{HG(w)} &
  HG(A)[1]
}\]
The top row is isomorphic to
$A\stackrel u\la B\stackrel v\la C\stackrel w\la A[1]$,
hence a triangle in $\ct_\ca^b$. And the bottom row is obtained
by applying the functor $H$ to the triangle
$G(A)\stackrel{u'}\la Y\stackrel{v'}\la G(C)\stackrel{G(w)}\la G(A)[1]$.
Hence both rows are triangles and we may complete to a morphism
of triangles
\[
\xymatrix{
  HG(A)\ar@{=}[d] \ar[r]^-{HG(u)} & HG(B)\ar[r]^-{HG(v)} \ar[d]^\psi&
  HG(C)\ar@{=}[d]\ar[r]^-{HG(w)} & HG(A)[1]\ar@{=}[d] \\
  HG(A)\ar[r]^-{H(u')} & H(Y) \ar[r]^-{H(v')} & HG(C)\ar[r]^-{HG(w)} &
  HG(A)[1]
}\]
with $\psi$ an isomorphism.
Since the functor $H$ is fully faithful on $\ca'\cap\cb'$ we learn
first that the isomorphism $\psi:HG(B)\la H(Y)$ must be $H(\rho)$
for some isomorphism $\rho:G(B)\la Y$. But the
diagram
\[
\xymatrix{
  G(A)\ar@{=}[d] \ar[r]^-{G(u)} & G(B)\ar[r]^-{G(v)} \ar[d]^\rho&
  G(C)\ar@{=}[d] \\
  G(A)\ar[r]^-{u'} & Y \ar[r]^-{v'} & G(C)
}\]
is a diagram in $\ca'\cap\cb'$ whose image under $H$ commutes, hence the
diagram commutes. We conclude that 
$G(A)\stackrel {G(u)}\la G(B)\stackrel {G(v)}\la G(C)\stackrel{G(w)}\la G(A)[1]$
is
isomorphic in $\car$ to the triangle
$G(A)\stackrel{u'}\la Y\stackrel{v'}\la G(C)\stackrel{G(w)}\la G(A)[1]$,
hence is a triangle.\hfill{$\Box$}

\medskip

\be
\setcounter{enumi}{\value{enumiv}}
\item
  Suppose we are given in $\ct_\ca^b$ a triangle
  $X\stackrel u\la Y\stackrel v\la Z\stackrel w\la X[1]$ with
  $X,Y\in\ct^{\leq0}\cap\ct^b_\ca$ and $Z\in\ca=\ct_\ca^{[0,0]}$.
  Then $G$ takes it to a triangle.
\setcounter{enumiv}{\value{enumi}}
\ee

\medskip

\emph{Proof of }(vii).\ \ 
In the category $\ct_\ca^b$ factor the map
$v$ canonically as $Y\stackrel{v'}\la Y^{\geq0}\stackrel {\wt v}\la Z$.
Complete to an octahedron
\[
\xymatrix@C+30pt{
 &  X^{\leq-1}\ar[d]^{u''} \ar[r]^-{u^{\leq-1}} & Y^{\leq-1}\ar[d]^{u'} &  \\
  Z[-1]\ar@{=}[d]\ar[r]^-{-w[-1]} & X\ar[d]^{v''}\ar[r]^-{u} & Y \ar[d]^{v'}\ar[r]^-{v} & Z\ar@{=}[d]\\
  Z[-1]\ar[r]^-{-\wt w[-1]} & X^{\geq0}\ar[r]^-{u^{\geq0}}\ar[d]^{w''} &Y^{\geq0}\ar[d]^{w'} \ar[r]^-{\wt v} & Z\\
 &  X^{\leq-1}[1] \ar[r]^-{u^{\leq-1}[1]} & Y^{\leq-1}[1] &  
}\]
In the category $\car$ complete to an octahedron the composable morphisms
$G(X^{\geq0})\stackrel{G(u^{\geq0})}\la G(Y^{\geq0})\stackrel{G(w')}\la G(Y^{\leq-1}[1])$.
We obtain a diagram where the rows and columns are triangles
\[
\xymatrix@C+30pt{
 &  G(X^{\leq-1})\ar[d]^{G(u'')} \ar[r]^-{G(u^{\leq-1})} & G(Y^{\leq-1})\ar[d]^{G(u')} &  \\
  G(Z)[-1]\ar@{=}[d]\ar[r]^-\alpha & G(X)\ar[d]^{G(v'')}\ar[r]^-\beta &G(Y) \ar[d]^{G(v')}\ar[r]^-{G(v)} & G(Z)\ar@{=}[d]\\
  G(Z)[-1]\ar[r]^-{-G(\wt w)[-1]} & G(X^{\geq0})\ar[r]^-{G(u^{\geq0})}\ar[d]^{G(w'')} &G(Y^{\geq0}) \ar[r]^-{G(\wt v)}\ar[d]^{G(w')} & G(Z)\\
&  G(X^{\leq-1}[1]) \ar[r]^-{G(u^{\leq-1}[1])} & G(Y^{\leq-1}[1]) & }\]
The third row is the triangle in $\car$ obtained by applying (vi) to
the triangle
$X^{\geq0}\stackrel{u^\geq0}\la Y^{\geq0}\stackrel{\wt v}\la Z \stackrel{\wt w}\la X^{\geq0}[1]$. The second and third column are by applying (v) to
the triangles $X^{\leq -1}\stackrel{u''}\la X\stackrel{v''}\la X^{\geq0}\stackrel{w''}\la X^{\leq-1}[1]$ and
$Y^{\leq -1}\stackrel{u'}\la Y\stackrel{v'}\la
Y^{\geq0}\stackrel{w'}\la Y^{\leq-1}[1]$.
The map $\alpha$ is a morphism $G(Z[-1])\la G(X)$ with
$Z[-1]\in\ct^{\geq0}\cap\ct^b_\ca$ and $X\in\ct^{\leq0}\cap\ct^b_\ca$, and
(i) tells us that $\alpha=G(a)$ with $a:Z[-1]\la X$ a morphism in
$\ct_\ca^b$. 
Now the fact that $G$ is a functor gives us a commutative square
\[
\xymatrix@C+20pt{
 G(X^{\leq-1})\ar[d]^{G(u'')} \ar[r]^-{G(u^{\leq-1})} & G(Y^{\leq-1})\ar[d]^{G(u')} \\
 G(X)\ar[r]^-{G(u)} &G(Y) 
}\]
and hence $[\beta-G(u)]G(u'')=0$. It follows that $\beta-G(u)$ factors
as $\gamma G(v'')$ for some $\gamma:G(X^{\geq0})\la G(Y)$. Since $X^{\geq0}$
belongs to $\ct^{\geq 0}\cap\ct^b_\ca$ and
$Y$ belongs to $\ct^{\leq0}\cap\ct^b_\ca$, part (i)
tells us that there must
be a map $g:X^{\geq0}\la Y$ with $G(g)=\gamma$. Thus $\beta=G(u)+\gamma
G(v'')=G(u) +G(g)G(v'')=G(u+gv'')$. Putting $b=u+gv''$ we have that 
 $G(Z)[-1]\stackrel{G(a)}\la G(X)\stackrel{G(b)}\la G(Y)\stackrel{G(v)}\la
G(Z)$ is a triangle in $\car$, and by (iv) so is 
$G(Z)[-1]\stackrel{G(-w[1]))}\la G(X)\stackrel{G(u)}\la G(Y)\stackrel{G(v)}\la
G(Z)$.\hfill{$\Box$}

\medskip

It remains to conclude the proof of Theorem~\ref{T1.13}, we must show
that $G$ takes any triangle in $\ct_\ca^b$ to a triangle in $\car$. The
proof will be by induction on the length of the cohomology sequence. 
We have a homological functor $\ch:\ct_\ca^b\la \ca$,
from $\ct_\ca^b$ to the heart of
its \tstr. Every triangle in $\ct_\ca^b$ maps under this functor to a long
exact
sequence in $\ca$ which vanishes outside a bounded interval. We let
$\ell$ be the smallest integer so that the long exact sequence has 
nonzero terms all contained in an interval of length $\ell$.

If $\ell\leq3$ then a triangle $X\la Y\la Z\la X[1]$ of length $\ell$
may
be rotated to have $X,Y,Z\in\ca$, and (vi) tells us that $G$ takes it
to a triangle. It remains to prove the induction step: we must show
that
if all triangles of length $<\ell$ map under $G$ to triangles, then so
do triangles of length $\ell$. Let $X\stackrel u\la Y\stackrel v\la
Z\stackrel w\la X[1]$ be a triangle of length $\ell$. By rotating we
may assume that $X,Y,Z$ all belong to $\ct^{\leq0}\cap\ct^b_\ca$ and
$Z^{\geq0}\neq0$.
In the category $\ct_\ca^b$ complete the composable maps $Y\la Z\la
Z^{\geq0}$ to an octahedron
\[\xymatrix{
 & Z^{\geq0}[-1] \ar@{=}[r]\ar[d]^{w'}  & Z^{\geq0}[-1]\ar[d]^{w''}  & \\
X \ar[r]^-{\wt u}\ar@{=}[d] & \wh Y \ar[r]^-{\wt v}\ar[d]^{u'}  & Z^{\leq-1}\ar[d]^{u''} \ar[r]^-{\wt w}
& X[1] \ar@{=}[d] \\
X \ar[r]^-{u} & Y \ar[r]^-{v}\ar[d]^{v'}  & Z \ar[r]^-{w}\ar[d]^{v''}
& X[1] \\
& Z^{\geq0} \ar@{=}[r]  & Z^{\geq0} & 
}\]
Now complete the composable maps $G(Z^{\geq0}[-1] )\stackrel {G(w')}\la
G(\wh Y) \stackrel {G(\wt v)}\la G(Z^{\leq-1})$
to an octahedron in
$\car$
\[\xymatrix{
 & G(Z^{\geq0}[-1] )\ar@{=}[r]\ar[d]^{G(w')}  & G(Z^{\geq0}[-1])\ar[d]^{G(w'')}  & \\
G(X) \ar[r]^-{G(\wt u)}\ar@{=}[d] & G(\wh Y) \ar[r]^-{G(\wt v)}\ar[d]^{G(u')}& G(Z^{\leq-1})\ar[d]^{G(u'')} \ar[r]^-{G(\wt w)}
& G(X[1]) \ar@{=}[d] \\
G(X) \ar[r]^-{G(u)} & G(Y) \ar[r]^-{\beta}\ar[d]^{G(v')}  & G(Z) \ar[r]^-{\gamma}\ar[d]^{G(v'')}
& G(X[1]) \\
& G(Z^{\geq0}) \ar@{=}[r]  & G(Z^{\geq0}) & 
}\]
The fact that $G$ takes $Z^{\geq0}[-1]\la Z^{\leq-1}\la Z\la
Z^{\geq0}$ to a triangle is by (v). The fact that $G$ takes
$Z^{\geq0}[-1]\la \wh Y\la Y\la
Z^{\geq0}$ to a triangle is by (vii). And the fact that $G$ takes
$X\la \wh Y\la Z^{\leq -1}\la X[1]$ to a triangle is by induction on
$\ell$.

Applying the functor $G$ to the octahedron in $\ct_\ca^b$ gives a commutative
diagram, in particular the two squares in
\[\xymatrix{
 G(\wh Y) \ar[r]^-{G(\wt v)}\ar[d]^{G(u')}& G(Z^{\leq-1})\ar[d]^{G(u'')} \ar[r]^-{G(\wt w)}
& G(X[1]) \ar@{=}[d] \\
 G(Y) \ar[r]^-{G(v)}  & G(Z) \ar[r]^-{G(w)}
& G(X[1])
}\]
both commute. Hence $[\beta-G(v)]G(u')=0$ and $[\gamma-G(w)]G(u'')=0$.
It follows that there exist maps $\beta':G(Z^{\geq0})\la G(Z)$ and
$\gamma':G(Z^{\geq0})\la G(X[1])$ with $\beta-G(v)=\beta'G(v')$
and $\gamma-G(w)=\gamma'G(v'')$. But $Z^{\geq0}$
belongs to $\ct^{\geq0}\cap\ct^b_\ca$ and
$Z,X[1]$ are both in $\ct^{\leq0}\cap\ct^b_\ca$,
and (i) tells us that there exist
morphisms $b:Z^{\geq0}\la Z$ and $g:Z^{\geq0}\la X[1]$ with $\beta'=G(b)$
and $\gamma'=G(g)$. Therefore $\beta=G(v+bv')$ and $\gamma=G(w+gv'')$.
Thus we have produced a triangle
\[
\CD
G(X) @>G(u)>> G(Y) @>G(v+bv')>> G(Z) @>G(w+gv'')>> G(X[1])
\endCD
\]
and (iv) tells us that $G$ takes $X\stackrel u\la Y\stackrel v\la Z
\stackrel w\la X[1]$
to a triangle.
\eprf

%\bibliographystyle{amsplain}
%\bibliography{stan}

%\end{document}

%\bibliographystyle{amsplain}
%\bibliography{/home/vdbergh/TEX/LATEXMACROS/mybibs}

\begin{thebibliography}{10}

\bibitem{AJL1}
L.~Alonso~Tarr{\'{\i}}o, A.~Jerem{\'{\i}}as~L{\'o}pez, and J.~Lipman,
  \emph{Local homology and cohomology on schemes}, Ann. Sci. \'Ecole Norm. Sup.
  (4) \textbf{30} (1997), no.~1, 1--39.

\bibitem{CA}
D.~Arinkin and A.~Caldararu, \emph{When is the self-intersection of a
  subvariety a fibration?}, Adv. Math. \textbf{231} (2012), no.~2, 815--842.

\bibitem{Ballard}
M.~R. Ballard, \emph{Equivalences of derived categories of sheaves on
  quasi-projective schemes}, arXiv:0905.3148 [math.AG].

\bibitem{BeiBerDel82}
A.A. Beilinson, J.~Bernstein, and P.~Deligne, \emph{Faisceaux pervers},
  Ast{\'e}risque, vol. 100, Soc. Math. France, 1983.


\bibitem{BKS}
D.~Benson, H.~Krause, and S.~Schwede, \emph{Introduction to realizability of
  modules over {T}ate cohomology}, Representations of algebras and related
  topics, Fields Inst. Commun., vol.~45, Amer. Math. Soc., Providence, RI,
  2005, pp.~81--97.



\bibitem{Blumberg}
A.~J. Blumberg, D.~Gepner, and G.~Tabuada, \emph{A universal characterization
  of higher algebraic {$K$}-theory}, Geom. Topol. \textbf{17} (2013), no.~2,
  733--838.

\bibitem{Neeman}
M.~B{\"o}ckstedt and A.~Neeman, \emph{Homotopy limits in triangulated
  categories}, Compositio Math. \textbf{86} (1993), 209--234.


\bibitem{Bondal4}
A.~Bondal and M.~Kapranov, \emph{Representable functors, {S}erre functors, and
  reconstructions}, Izv. Akad. Nauk SSSR Ser. Mat. \textbf{53} (1989), no.~6,
  1183--1205, 1337.

\bibitem{Bondal5}
\bysame, \emph{Enhanced triangulated categories}, Math. USSR Sbornik
  \textbf{70} (1991), 93--107.

\bibitem{BondalVdb}
A.~Bondal and M.~Van~den Bergh, \emph{Generators and representability of
  functors in commutative and noncommutative geometry}, Mosc. Math. J.
  \textbf{3} (2003), no.~1, 1--36, 258.

\bibitem{COV}
A.~Canonaco, D.~Orlov, and P.~Stellari, \emph{Does full imply faithful?}, J.
  Noncommut. Geom. \textbf{7} (2013), no.~2, 357--371.

\bibitem{CS4}
A. Canonaco and P. Stellari, \emph{Fourier-{M}ukai functors in the
  supported case}, Compos. Math. \textbf{150} (2014), no.~8, 1349--1383.

\bibitem{CanonacoStellari}
\bysame, \emph{A tour about existence and uniqueness of dg enhancements and
  lifts}, J. Geom. Phys. \textbf{122} (2017), 28--52.


\bibitem{CS3}
\bysame, \emph{Twisted {F}ourier-{M}ukai functors}, Adv. Math. \textbf{212}
  (2007), no.~2, 484--503.

\bibitem{CS1}
\bysame, \emph{Fourier-{M}ukai functors: a survey}, Derived categories in
  algebraic geometry, EMS Ser. Congr. Rep., Eur. Math. Soc., Z\"urich, 2012,
  pp.~27--60.

\bibitem{CS2}
\bysame, \emph{Non-uniqueness of {F}ourier-{M}ukai kernels}, Math. Z.
  \textbf{272} (2012), no.~1-2, 577--588.

\bibitem{DeDekenLowen}
O.~De~Deken and W.~Lowen, \emph{On deformations of triangulated models}, Adv.
  Math. \textbf{243} (2013), 330--374.

\bibitem{GS1}
M.~Gerstenhaber and S.~D. Schack, \emph{The cohomology of presheaves of
  algebras. {I}. {P}resheaves over a partially ordered set}, Trans. Amer. Math.
  Soc. \textbf{310} (1988), no.~1, 135--165.

\bibitem{RD}
R.~Hartshorne, \emph{Residues and duality}, Lecture notes in mathematics,
  vol.~20, Springer Verlag, Berlin, 1966.

\bibitem{H}
\bysame, \emph{Algebraic geometry}, Springer-Verlag, 1977.

\bibitem{Kapranov3}
M.~M. Kapranov, \emph{On the derived categories of coherent sheaves on some
  homogeneous spaces}, Invent. Math. \textbf{92} (1988), no.~3, 479--508.

\bibitem{Kawamata2}
Y.~Kawamata, \emph{Equivalences of derived categories of sheaves on smooth
  stacks}, Amer. J. Math. \textbf{126} (2004), no.~5, 1057--1083.

\bibitem{Keller}
B.~Keller, \emph{Introduction to ${A}$-infinity algebras and modules}, Homology
  Homotopy Appl. \textbf{3} (2001), no.~1, 1--35.

\bibitem{Knorrer}
H.~Kn{\"o}rrer, \emph{Cohen-{M}acaulay modules on hypersurface singularities
  {I}}, Invent. Math. \textbf{116} (1985), 147--164.

\bibitem{Kuznetsov1}
A.~Kuznetsov, \emph{Height of exceptional collections and hochschild cohomology
  of quasiphantom categories}, arXiv:1211.4693 [math.AG].

\bibitem{Lefevre}
K.~Lef\`evre-Hasegawa, \emph{Sur les {$A_\infty$}-cat\'egories}, Ph.D. thesis,
  Universit\'e Paris 7, 2003.

\bibitem{Loday1}
J.-L. Loday, \emph{Cyclic homology}, second ed., Grundlehren der Mathematischen
  Wissenschaften [Fundamental Principles of Mathematical Sciences], vol. 301,
  Springer-Verlag, Berlin, 1998, Appendix E by Mar\'\i a O. Ronco, Chapter 13
  by the author in collaboration with Teimuraz Pirashvili.

\bibitem{lowen6}
W.~Lowen, \emph{Hochschild cohomology, the characteristic morphism and derived
  deformations}, Compos. Math. \textbf{144} (2008), no.~6, 1557--1580.

\bibitem{LowenVdBFormal}
\bysame, \emph{The curvature problem for formal and infinitesimal
  deformations}, arXiv:1505.03698 [math.KT].

\bibitem{lowenvdb2}
\bysame, \emph{Hochschild cohomology of abelian categories and ringed spaces},
  Adv. Math. \textbf{198} (2005), no.~1, 172--221.

\bibitem{lowenvdb3}
\bysame, \emph{A {H}ochschild cohomology comparison theorem for prestacks},
  Trans. Amer. Math. Soc. \textbf{363} (2011), no.~2, 969--986.

\bibitem{OrlovLunts}
V.~A. Lunts and D.~O. Orlov, \emph{Uniqueness of enhancement for triangulated
  categories}, J. Amer. Math. Soc. \textbf{23} (2010), no.~3, 853--908.

\bibitem{LuntsSchnurer}
V.~A. Lunts and O.~M. Schn\"{u}rer, \emph{New enhancements of derived
  categories of coherent sheaves and applications}, J. Algebra \textbf{446}
  (2016), 203--274.



\bibitem{LurieHA}
J.~Lurie, \emph{Higher algebra},
  http://www.math.harvard.edu/\~lurie/papers/HA.pdf.

%\bibitem{muro1}
%F.~Muro, \emph{{On Massey products and triangulated categories}},
%  http://personal.us.es/fmuro/dcw2.pdf.

\bibitem{Neeman3}
A.~Neeman, \emph{The connection between the ${K}$-theory localization theorem
  of {T}homason, {T}robaugh and {Y}ao and the smashing subcategories of
  {B}ousfield and {R}avenel}, Ann. Sci. \'Ecole Norm. Sup. (4) \textbf{25}
  (1992), no.~5, 547--566.

%\bibitem{Neeman10}
%\bysame, {\em Some new axioms for triangulated categories}, J. Algebra {\bf
%  139} (1991), no.~1, 221--255.

\bibitem{Neeman91}
\bysame, \emph{Some new axioms for triangulated categories}, J. Algebra
  \textbf{139} (1991), 221--255.


\bibitem{Neeman6}
\bysame, \emph{Stable homotopy as a triangulated functor}, Invent. Math.
  \textbf{109} (1992), no.~1, 17--40.

\bibitem{Neeman1}
\bysame, \emph{The {G}rothendieck duality theorem via {B}ousfield's techniques
  and {B}rown representability}, J. Amer. Math. Soc. \textbf{9} (1996),
  205--236.

\bibitem{Neeman11}
\bysame, \emph{Triangulated categories}, Annals of Mathematics Studies, vol.
  148, Princeton University Press, Princeton, NJ, 2001.

\bibitem{Orlov5}
D.~Orlov, \emph{Smooth and proper noncommutative schemes and gluing of {DG}
  categories}, Adv. Math. \textbf{302} (2016), 59--105.

\bibitem{Orlov4}
\bysame, \emph{Equivalences of derived categories and {$K3$} surfaces}, J.
  Math. Sci. (New York) \textbf{84} (1997), no.~5, 1361--1381, Algebraic
  geometry, 7.

\bibitem{Positselski1}
L.~Positselski, \emph{{Weakly curved A-infinity algebras over a topological
  local ring}}, arXiv:1202.2697 [math.CT].

%\bibitem{Ri}
%J.~Rickard, \emph{Morita theory for derived categories}, J. London Math. Soc.
%  (2) \textbf{39} (1989), 436--456.

\bibitem{Ri2}
\bysame, \emph{Derived equivalences as derived functors}, J. London Math. Soc.
  (2) \textbf{43} (1991), no.~1, 37--48.


\bibitem{Rizzardo1}
A.~Rizzardo, \emph{On the existence of {F}ourier-{M}ukai kernels}, Math. Z.
  \textbf{287} (2017), no.~1-2, 155--179.


\bibitem{Rizzardo}
\bysame, \emph{Representability of cohomological functors over extension
  fields}, J. Noncommut. Geom. \textbf{11} (2017), no.~4, 1267--1287.




\bibitem{RizzardoVdB}
Alice Rizzardo and Michel Van~den Bergh, \emph{Scalar extensions of derived
  categories and non-{F}ourier-{M}ukai functors}, Adv. Math. \textbf{281}
  (2015), 1100--1144.


\bibitem{Rudyak}
Y.~B. Rudyak, \emph{On {T}hom spectra, orientability, and cobordism}, Springer
  Monographs in Mathematics, Springer-Verlag, Berlin, 1998, With a foreword by
  Haynes Miller.

\bibitem{SchwedeBook}
S.~Schwede, \emph{Symmetric spectra},
  http://www.math.uni-bonn.de/people/schwede/SymSpec-v3.pdf.

\bibitem{Shipley}
B.~Shipley, \emph{{$H\mathbb Z$}-algebra spectra are differential graded
  algebras}, Amer. J. Math. \textbf{129} (2007), no.~2, 351--379.

\bibitem{Swan}
R.~G. Swan, \emph{Hochschild cohomology of quasiprojective schemes}, J. Pure
  Appl. Algebra \textbf{110} (1996), no.~1, 57--80.

\bibitem{Tabuada}
G.~Tabuada, \emph{Generalized spectral categories, topological {H}ochschild
  homology and trace maps}, Algebr. Geom. Topol. \textbf{10} (2010), no.~1,
  137--213.

\bibitem{Toen}
B.~To{\"e}n, \emph{The homotopy theory of {$dg$}-categories and derived
  {M}orita theory}, Invent. Math. \textbf{167} (2007), no.~3, 615--667.

\bibitem{vdv}
A.~van~de Ven, \emph{A property of algebraic varieties in complex projective
  spaces}, Colloque {G}\'eom. {D}iff. {G}lobale ({B}ruxelles, 1958), Centre
  Belge Rech. Math., Louvain, 1959, pp.~151--152. \MR{0116361 (22 \#7149)}

\bibitem{Vologodsky}
V.~Vologodsky, \emph{Triangulated endofunctors of the derived category of
  coherent sheaves which do not admit dg liftings}, arXiv:1604.08662 [math.AG].



\end{thebibliography}

\def\cprime{$'$} \def\cprime{$'$} \def\cprime{$'$}
\providecommand{\bysame}{\leavevmode\hbox to3em{\hrulefill}\thinspace}
\providecommand{\MR}{\relax\ifhmode\unskip\space\fi MR }
% \MRhref is called by the amsart/book/proc definition of \MR.
\providecommand{\MRhref}[2]{%
  \href{http://www.ams.org/mathscinet-getitem?mr=#1}{#2}
}
\providecommand{\href}[2]{#2}

\end{document}